\newtheorem{theorem}{Theorem}[section]
\newtheorem{proposition}[theorem]{Proposition}
\newtheorem{lemma}[theorem]{Lemma}
\newtheorem{remark}[theorem]{Remark}
\newtheorem{definition}[theorem]{Definition}
\newtheorem{corollary}[theorem]{Corollary}
\newcommand{\be}{\begin{equation}}
\newcommand{\ee}{\end{equation}}
\newcommand{\bea}{\begin{eqnarray}}
\newcommand{\eea}{\end{eqnarray}}
\newcommand{\ben}{\begin{eqnarray*}}
	\newcommand{\een}{\end{eqnarray*}}
\begin{document}

\title{Moduli spaces of semistable pairs on projective Deligne-Mumford stacks}

\author[Yijie Lin]{Yijie Lin}

\address{School of Mathematics and Statistics\\Fujian Normal University\\Fuzhou 350117, P.R. China}
\email{yjlin@fjnu.edu.cn}

\address{School of Mathematics (Zhuhai)\\Sun Yat-Sen University\\Zhuhai 519082, P.R. China}
\email{yjlin12@163.com}
\thanks{}
\maketitle

\begin{abstract}
We generalize the construction of a moduli space of semistable pairs parametrizing isomorphism classes of morphisms from a fixed coherent sheaf to any sheaf with fixed Hilbert polynomial under a notion of stability to the case of projective Deligne-Mumford stacks. We  study the deformation and obstruction theories of stable pairs,  and then prove the existence of virtual fundamental classes for some cases of dimension two and three. This leads to a definition of Pandharipande-Thomas invariants on three-dimensional smooth projective Deligne-Mumford stacks.
\end{abstract}

%{\bf Keywords:} 

%{\bf MSC(2010):} 14N35.

%\date{\today}

\tableofcontents

\section{Introduction}
Gromov-Witten theory, Donaldson-Thomas theory and Pandharipande-Thomas theory  are three important approaches to curve counting  in enumerative geometry via  intersection theories on moduli spaces of  stable maps, ideal sheaves and stable pairs respectively. It is interesting that  these theories are conjectured to be related to each other.
The conjecture of Gromov-Witten/Donaldson-Thomas correspondence was  proposed by MNOP in [\cite{MNOP1,MNOP2}], which predicted that the partition function of GW theory can be equated with the one of DT theory by a change of variables for smooth projective 3-folds. The GW/DT correspondence has been proved in several cases (e.g., [\cite{MNOP1,MNOP2,BP,OP1,MO,MOOP}]) with a formal reduced theory for DT side. In [\cite{PT1}], the authors developed a PT theory   providing a geometric interpretation of the reduced DT theory, and  the  DT/PT correspondence for Calabi-Yau 3-folds was proposed  to be viewed as a wall-crossing formula in the derived category. This DT/PT correspondence has been proved in several approaches  [\cite{Toda,Bri2,ST}]. 
On the other hand, the  GW/PT correspondence  has also been treated in  [\cite{MPP,PP1,PP2,OOP1}].

 As a generalization of the above mainfold cases, the orbifold GW theory and orbifold DT theory for   smooth projective Deligne-Mumford stacks have been investigated  in [\cite{CR1,AGV}] and [\cite{BCY,GT,Zhou1}] respectively. The orbifold GW/DT correspondence was 
proved in some cases (e.g. [\cite{Zong1,RZ1,RZ2,Ros,ZZ}]).  It is natural and expected to have an orbifold  PT theory together with the orbifold GW/PT or DT/PT correspondence. In [\cite{BCR}], the authors follow Toda's method [\cite{Toda}] of applying the notion of a torsion pair to obtain a stacky version of PT stable pairs, and then combine the motivic Hall algebra (cf. [\cite{Bri3}]) and   Behrend's constructible function [\cite{Beh}] to define  orbifold PT invariants for  smooth projective Calabi-Yau 3-orbifolds. The orbifold DT/PT correspondence are  proved there for some special cases. However, without  Calabi-Yau condition, it is well known that one may construct a perfect obstruction theory on some moduli space to define a virtual fundamental class [\cite{BF,LT}], which can be integrated to obtain  orbifold PT invariants.  For this purpose, we shall first aim to  provide an alternative construction of moduli spaces of  PT stable pairs in the stacky sense.

To achieve this goal, there is a need for another way to  generalize the  notion of  PT stable pairs to the stacky case. Now we begin with the notion of stable pairs for the case of a smooth projective variety $X$.
Based on the work of Le Potier [\cite{LeP}], Pandharipande and Thomas [\cite{PT1}] define a stable pair $(F,s)$ where $F$ is a pure sheaf of dimension one and the section $s:\mathcal{O}_{X}\to F$ has cokernel with 0-dimensional support. In a more general setting, the author in [\cite{Wan}] defines a $\delta$-semistable  pair $(F,\varphi)$ consisting of a coherent sheaf $F$ on $X$ and a morphism $\varphi:F_{0}\to F$ under a  notion of stability depending on some choice of a parameter $\delta\in\mathbb{Q}[m]$  where $F_{0}$ is any fixed coherent sheaf.  It is shown in [\cite{Lin18}] that the notion of  $\delta$-stable pairs $(F,\varphi)$ defined  in [\cite{Wan}] actually generalizes the one of PT stable pairs  for  the case when $\deg\,\delta\geq\deg\,P=1$ where $P$ is the Hilbert polynomial of $F$. Therefore, it is natural  to generalize the notion of stability [\cite{Wan,Lin18}] to the case of projective Deligne-Mumford stacks to obtain the stacky PT stable pairs.

%Fix a polynomial $P$,
%the moduli space of $\delta$-(semi)stable pairs $(F,\varphi)$ with Hilbert polynomial  $P$ was constructed in [\cite{Wan}] by geometric invariant theory (GIT) with the assumption $\deg\,\delta<\dim X$. and   constructs the corresponding moduli spaces. 

Observe that the  stability condition used in [\cite{Wan,Lin18}] is different from the one defined in [\cite{HL1,HL2}] due to the  different framing, where the latter  has been naturally generalized to the stacky case  [\cite{BS}] using the modified Hilbert polynomial defined in [\cite{Nir1}].  In order to construct moduli spaces of  the stacky  PT stable pairs, we generalize the construction of moduli spaces of $\delta$-(semi)stable pairs in [\cite{Wan,Lin18}] to the case of projective Deligne-Mumford stacks by means of techniques developed in [\cite{HL2,Nir1,BS}].
We state this result explicitly as follows. Let $\mathcal{X}$ be a projective Deligne-Mumford stack of dimension $d$ over an algebraically closed field $k$ of characteristic zero with a moduli scheme $\pi:\mathcal{X}\to X$ and a polarization $(\mathcal{E},\mathcal{O}_{X}(1))$, see Definition \ref{polarization}. Assume  $\mathcal{F}_{0}$ is any fixed coherent sheaf  on $\mathcal{X}$. Let $\delta$ be any given stability parameter  which is zero or a rational polynomial with positive leading coefficient and $P$  any given polynomial of degree $\deg\,P\leq d$. We have a contravariant functor
\ben
\mathcal{M}^{(s)s}_{\mathcal{X}/k}(\mathcal{F}_{0},P,\delta): (\mathrm{Sch}/k)^\circ\to(\mathrm{Sets})
\een
where if $S$ is a $k$-scheme of finite type,  $\mathcal{M}^{(s)s}_{\mathcal{X}/k}(\mathcal{F}_{0},P,\delta)(S)$ is the set of isomorphism classes of flat families of $\delta$-(semi)stable pairs  $(\mathcal{F},\varphi)$ on $\mathcal{X}$ with  
modified Hilbert polynomial $P$ parametrized by  $S$, see Definition \ref{moduli-functor}. A central result  for the existence of (fine)  moduli space for this moduli functor is obtained in the following
\begin{theorem} [see Theorem \ref{GIT-quo1} and Theorem \ref{main-result}]\label{moduli-spaces}
	There is a projective scheme $M^{ss}:=M^{ss}_{\mathcal{X}/k}(\mathcal{F}_{0},P,\delta)$ which is a moduli space for the moduli functor $\mathcal{M}^{ss}_{\mathcal{X}/k}(\mathcal{F}_{0},P,\delta)$. Moreover, there is an open subscheme $M^s:=M^s_{\mathcal{X}/k}(\mathcal{F}_{0},P,\delta)$ of $M^{ss}$ which is a fine moduli space for the moduli functor $\mathcal{M}^s_{\mathcal{X}/k}(\mathcal{F}_{0},P,\delta)$. 
\end{theorem}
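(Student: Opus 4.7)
The plan is to mirror the classical GIT construction of Huybrechts–Lehn, adapted to stacks via Nironi's modified Hilbert polynomial (as in [\cite{Nir1,BS}]) and modified to include the framing datum as in [\cite{Wan,Lin18}]. The first step is to establish boundedness of the family of $\delta$-semistable pairs $(\mathcal{F},\varphi)$ on $\mathcal{X}$ with fixed modified Hilbert polynomial $P$. Because $\delta$-(semi)stability bounds the modified Hilbert polynomials of saturated subsheaves of $\mathcal{F}$, a Kleiman-type boundedness result for the underlying sheaves $\mathcal{F}$ (using the polarization $(\mathcal{E},\mathcal{O}_X(1))$) transfers to the pairs, since $\varphi$ is determined by its image together with a section of $\mathrm{Hom}(\mathcal{F}_0,\mathcal{F})$. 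Then, by Nironi's Castelnuovo–Mumford regularity for stacks, one finds an integer $m\gg 0$ such that every $\delta$-semistable pair has $\mathcal{F}$ which is $m$-regular; in particular $\pi_*\bigl(\mathcal{F}\otimes\mathcal{E}^{\vee}\otimes\pi^*\mathcal{O}_X(m)\bigr)$ is a vector space of dimension $P(m)$, and $\mathcal{F}$ is a quotient of $\mathcal{H}:=\mathcal{E}\otimes\pi^*\mathcal{O}_X(-m)\otimes V$, with $V=k^{P(m)}$.

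Next I would build the parameter space. Inside the Quot scheme $\mathrm{Quot}(\mathcal{H},P)$ (which is projective in the stacky setting by [\cite{OS,Nir1}]) one considers the locally closed subscheme $Q^{0}$ cut out by the open conditions that the quotient is $\delta$-semistable and that $V\to H^{0}(\mathcal{X},\mathcal{F}\otimes\mathcal{E}^{\vee}\otimes\pi^{*}\mathcal{O}_{X}(m))$ is an isomorphism. The framing $\varphi:\mathcal{F}_{0}\to\mathcal{F}$ is incorporated by taking the relative $\mathrm{Hom}$-scheme
\[
Z\;=\;\mathrm{Hom}_{Q^{0}}\!\bigl(p^{*}\mathcal{F}_{0},\mathcal{F}^{\mathrm{univ}}\bigr),
\]
equipped with its canonical $GL(V)$-action (with scalars acting trivially after passing to $SL(V)$). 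A family-theoretic argument shows that the moduli functor $\mathcal{M}^{(s)s}_{\mathcal{X}/k}(\mathcal{F}_{0},P,\delta)$ is represented by the quotient of the $\delta$-semistable locus of $Z$ by $SL(V)$, provided such a quotient exists.

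To construct the quotient via GIT, I would linearize the action using a combination of two bundles on $Z$: a Plücker-type line bundle $\mathcal{L}_{1}$ coming from $\wedge^{P(m)}\pi_{*}(\mathcal{F}^{\mathrm{univ}}\otimes\mathcal{E}^{\vee}\otimes\pi^{*}\mathcal{O}_{X}(\ell))$ for some $\ell\gg m$, which records the Hilbert-polynomial data of the underlying sheaf, together with a line bundle $\mathcal{L}_{2}$ that records the framing $\varphi$ (essentially the determinant of the evaluation map at the framing). The weights of these bundles are chosen so that the combined linearization $\mathcal{L}_{1}^{a(\ell)}\otimes\mathcal{L}_{2}^{b(\ell)}$ reproduces the numerical polynomial $\delta$ in the Hilbert–Mumford weight computation. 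The heart of the proof is then the Hilbert–Mumford criterion: for every one-parameter subgroup $\lambda:\mathbb{G}_{m}\to SL(V)$ and corresponding weight filtration of $V$, one computes the limit weight and checks that non-negativity of that weight on $(\mathcal{F},\varphi)$ is equivalent to the $\delta$-(semi)stability inequality for the induced filtration of $(\mathcal{F},\varphi)$. This is the main obstacle, since it requires carefully matching Nironi's modified Hilbert polynomial on the stack side with Hilbert-polynomial data computed on the coarse moduli $X$, while simultaneously tracking the framing $\varphi$ as in [\cite{Wan,Lin18}]; the combinatorics of the weight estimates in the presence of the generating sheaf $\mathcal{E}$ is the genuinely new ingredient.

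Once the equivalence of GIT and $\delta$-(semi)stability is established, Mumford's GIT yields a projective scheme $M^{ss}:=Z^{ss}/\!\!/SL(V)$ corepresenting $\mathcal{M}^{ss}_{\mathcal{X}/k}(\mathcal{F}_{0},P,\delta)$, and an open subscheme $M^{s}\subset M^{ss}$ which is a geometric quotient of the stable locus. Finally, for the fine moduli claim one shows that $\delta$-stable pairs have trivial automorphisms (the standard argument: any automorphism fixing $\varphi$ preserves the socle-type filtration, hence must be the identity), so the $SL(V)$-action on $Z^{s}$ is free modulo scalars and the universal family on $Z^{s}$ descends to a universal family on $M^{s}$. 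The projectivity follows automatically from GIT, and the openness of stability is standard. Routine bookkeeping then combines these to produce the statement of Theorem \ref{moduli-spaces}.
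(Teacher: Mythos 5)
Your overall strategy (boundedness via Nironi's machinery, a Quot-scheme-plus-framing parameter space, an $\mathrm{SL}(V)$-linearization mixing a determinant/Pl\"ucker bundle with a framing bundle, Hilbert--Mumford matching of GIT-stability with $\delta$-stability, then GIT quotient) is the same as the paper's, but your parametrization of the framing creates a genuine gap in the projectivity of $M^{ss}$. You take $Q^{0}\subset\mathrm{Quot}$ to be the (open) locus of $\delta$-semistable quotients with $V\cong H^0$ and put $Z=\mathrm{Hom}_{Q^{0}}(p^*\mathcal{F}_0,\mathcal{F}^{\mathrm{univ}})$, an affine Hom-scheme over a quasi-projective base; a GIT quotient $Z^{ss}/\!\!/\mathrm{SL}(V)$ of such a scheme is a priori only quasi-projective, so the sentence ``Mumford's GIT yields a projective scheme $M^{ss}$'' does not follow. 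The paper avoids this by encoding the framing in the projective space $\mathbb{P}=\mathbb{P}(\mathrm{Hom}(H^0(F_{\mathcal{E}}(\mathcal{F}_0)(m)),V))$ and taking the parameter space $\mathcal{Z}'$ to be a \emph{closed} subscheme of the projective ambient $\mathbb{P}\times\widetilde{\mathcal{Q}}$ (the closure of the good open locus), at the cost of having to analyze GIT-(semi)stability also at boundary points where the sheaf is impure (Lemma \ref{deg-pure}); projectivity of $M^{ss}$ then comes for free from Theorem \ref{GIT-quo1}. To repair your route you would need either this compactification of the framing direction or a separate properness/semistable-reduction argument, neither of which you supply.

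A second, smaller but real gap is the fineness of $M^{s}$. Knowing that $\delta$-stable pairs have only scalar endomorphisms gives that $\mathcal{R}^{s}\to M^{s}$ (resp.\ $Z^{s}\to M^{s}$) is a $\mathrm{PGL}(V)$-torsor, but descent of the universal family requires the family to carry a $\mathrm{PGL}(V)$-linearization, i.e.\ the center $\mathbb{G}_{m}\subset\mathrm{GL}(V)$ must act trivially on it. In your Hom-scheme setup the center rescales both the universal quotient sheaf and the universal framing, so the natural family is \emph{not} center-invariant and does not directly descend; ``routine bookkeeping'' hides exactly the step the paper carries out in Lemma \ref{uni-fam-inv} (and Remark \ref{no-asump}), where the universal sheaf is twisted by $\mathcal{O}_{\mathbb{P}}(1)$ so that the central weights cancel. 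You would need to exhibit an analogous weight-$(-1)$ line bundle on $Z^{s}$ (or renormalize the family using the nowhere-vanishing framing) and verify center-invariance before concluding that $M^{s}$ is a fine moduli space.
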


This theorem provides a moduli space $M^{(s)s}$ of $\delta$-(semi)stable pairs  on $\mathcal{X}$ with  
modified Hilbert polynomial $P$ and implies that the quasi-projective scheme $M^s$ has equipped with a universal family.
Given a coherent sheaf $\mathcal{F}_{0}$ and a polynomial $P$, moduli spaces $M^{(s)s}$ of $\delta$-(semi)stable pairs depend on the stability parameter $\delta$. As in [\cite{Wan}, Section 5] for the case of smooth projective varieties, we have a chamber structure of the stability parameter for the variation of moduli spaces $M^{(s)s}$.
\begin{theorem}[see Theorem \ref{Chamber}]
There are finitely many critical values $\delta^1,\cdots,\delta^t\in\mathbb{Q}[m]$ satisfying 
\ben
\delta^0:=0<\delta^1<\cdots<\delta^t<\delta^{t+1}:=+\infty
\een
such that we have  the chamber structure of the stability parameter as follows:
	\ben
	\xymatrixcolsep{1pc}\xymatrix{
		M^{0}_{\mathcal{X}/k}(\mathcal{F}_{0},P,\delta)\ar[d] \ar[rd] & \cdots &    M^{t}_{\mathcal{X}/k}(\mathcal{F}_{0},P,\delta)\ar[d] \ar[rd] & &\\
		M^{ss}_{\mathcal{X}/k}(\mathcal{F}_{0},P,\delta^0)  &M^{ss}_{\mathcal{X}/k}(\mathcal{F}_{0},P,\delta^1) \cdots& \cdots M^{ss}_{\mathcal{X}/k}(\mathcal{F}_{0},P,\delta^t) &M^{ss}_{\mathcal{X}/k}(\mathcal{F}_{0},P,\delta^{t+1})
	}
	\een
	where $M^{i}_{\mathcal{X}/k}(\mathcal{F}_{0},P,\delta):=M^{ss}_{\mathcal{X}/k}(\mathcal{F}_{0},P,\delta)=M^{s}_{\mathcal{X}/k}(\mathcal{F}_{0},P,\delta)$ for some $\delta\in(\delta^i,\delta^{i+1})$, $i=0,\cdots,t$.
\end{theorem}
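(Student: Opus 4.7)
The plan is to adapt the chamber structure argument of [Wan, Section 5] from smooth projective varieties to projective Deligne-Mumford stacks, making use of the modified Hilbert polynomial framework already developed for Theorem \ref{moduli-spaces}. The key input is that, for fixed $\mathcal{F}_{0}$ and $P$, the family of subsheaves $\mathcal{F}'\subset\mathcal{F}$ that can appear in a $\delta$-(semi)stability test remains bounded uniformly in $\delta$. This follows from the boundedness of $\delta$-semistable pairs established in the proof of Theorem \ref{moduli-spaces}, strengthened to be uniform over the admissible range of $\delta$; as a consequence, only finitely many modified Hilbert polynomials $P'$ of potential destabilizing subsheaves can arise.

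For each candidate pair $(P',\epsilon)$ with $\epsilon\in\{0,1\}$ recording whether $\varphi$ factors through $\mathcal{F}'$, the $\delta$-(semi)stability inequality translates into a linear inequality in the coefficients of $\delta\in\mathbb{Q}[m]$, where polynomials are ordered by eventual dominance (lexicographically from the leading coefficient downward). The finitely many equality conditions obtained from these finitely many candidates cut out the critical values $\delta^{1}<\cdots<\delta^{t}$. For $\delta$ in an open chamber $(\delta^{i},\delta^{i+1})$, no equality can be attained in any of these conditions, so every $\delta$-semistable pair is automatically $\delta$-stable, and the collection of strict inequalities defining stability does not change as $\delta$ varies within the chamber. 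By the moduli property established in Theorem \ref{moduli-spaces}, this gives $M^{ss}(\delta)=M^{s}(\delta)$ independently of $\delta$ within the chamber, which is the scheme $M^{i}$.

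To construct the wall-crossing morphisms, I would observe that if $\delta$ lies in a chamber adjacent to a critical value $\delta^{i}$, then any $\delta$-stable pair remains $\delta^{i}$-semistable, because the strict inequalities $L_{P',\epsilon}(\delta)>0$ degrade only to $L_{P',\epsilon}(\delta^{i})\geq 0$ by continuity of the linear functionals at the wall. This yields natural transformations $\mathcal{M}^{s}_{\mathcal{X}/k}(\mathcal{F}_{0},P,\delta)\to\mathcal{M}^{ss}_{\mathcal{X}/k}(\mathcal{F}_{0},P,\delta^{i})$ (and analogously for $\delta^{i+1}$), which by the universal property of the moduli spaces induce the morphisms $M^{i}\to M^{ss}(\delta^{i})$ and $M^{i}\to M^{ss}(\delta^{i+1})$ appearing in the diagram. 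The main obstacle is the uniform boundedness step: the modified Hilbert polynomial on $\mathcal{X}$ incorporates twists by the generating sheaf $\mathcal{E}$ of the polarization, so one must show that the family of subsheaves relevant to the stability test is bounded uniformly in $\delta$ rather than merely pointwise. A secondary subtlety is fixing a coherent total ordering on polynomial stability parameters so that the chain $0<\delta^{1}<\cdots<\delta^{t}<+\infty$ is unambiguously defined and compatible with the wall-and-chamber picture.
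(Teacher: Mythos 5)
Your proposal is correct and follows essentially the same route as the paper, which adapts the chamber-structure argument of [\cite{Wan}, Section 5]: uniform-in-$\delta$ boundedness (the constants in Lemma \ref{case1-bound} and Lemma \ref{case2-bound} are independent of $\delta$, and the paper additionally bounds the family of $\mathrm{im}\,\varphi$ over all $\delta$), finitely many numerical wall conditions that are affine in $\delta$ and of degree at most $\deg P-1$, constancy of the (semi)stable loci on open chambers with semistable equal to stable there, and wall-crossing maps coming from the inclusions of semistable loci. The only differences are presentational: the paper controls the unbounded last chamber by proving a $\delta_{\max}$ of degree $\deg P-1$ beyond which $\delta$-semistability is equivalent to purity together with $\dim\mathrm{coker}\,\varphi<\dim\mathcal{F}$ (so every $\delta$ with $\deg\delta\geq\deg P$ lies past the last wall), and it induces the morphisms $M^{i}_{\mathcal{X}/k}(\mathcal{F}_{0},P,\delta)\to M^{ss}_{\mathcal{X}/k}(\mathcal{F}_{0},P,\delta^{i})$ and $M^{i}_{\mathcal{X}/k}(\mathcal{F}_{0},P,\delta)\to M^{ss}_{\mathcal{X}/k}(\mathcal{F}_{0},P,\delta^{i+1})$ from the inclusions $\mathcal{R}^{ss}(\delta)\subseteq\mathcal{R}^{ss}(\delta^{i})\cap\mathcal{R}^{ss}(\delta^{i+1})$ inside the fixed parameter space $\mathcal{Z}^\prime$ together with the universal property of the good quotients of Theorem \ref{GIT-quo1}, rather than from corepresentability of the moduli functors as you propose.
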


%to prove the existence of virtual fundamental classes which leads to a definition of orbifold PT invariants.
Since the moduli space $M^s$ contains as a special case the moduli space of stacky PT stable pairs mentioned above, in order to construct a perfect obstruction theory, we next consider the deformation and obstruction theory of stable pairs in  $M^s$. Suppose that $\mathcal{X}$ is a smooth projective Deligne-Mumford stack. Let $[(\mathcal{F},\varphi)]$ be a point in  $M^s_{\mathcal{X}/k}(\mathcal{F}_{0},P,\delta)$ where $\varphi:\mathcal{F}_{0} \to \mathcal{F}$. Suppose that $\mathcal{A}rt_{k}$ is the category of Artinian local $k$-algebras with residue field $k$.
For $A, A^\prime\in\mathrm{Ob}\mathcal{A}rt_{k}$ and  let the short exact sequence 
\ben
0\to I \to A^\prime\to A\to0
\een
be a small extension, that is, $\mathtt{m}_{A^\prime}I=0$. Assume   $\check{\varphi}_{A}:\mathcal{F}_{0}\otimes_{k} A\to \mathcal{F}_{A}$ is a morphism over $\mathcal{X}_{A}:=\mathcal{X}\times_{\mathrm{Spec}\,k} \mathrm{Spec}\,A$ extending  $\varphi$, where $\mathcal{F}_{A}$ is a coherent sheaf flat over $A$. Let $\mathbf{I}^{\bullet}:=\{\mathcal{F}_{0} \xrightarrow{\varphi} \mathcal{F}\}$ and  $\mathbf{I}_{A}^{\bullet}:=\{\mathcal{F}_{0}\otimes_{k} A\xrightarrow{\check{\varphi}_{A}} \mathcal{F}_{A}\}$ be the complexes concentrated in degree 0 and 1. Then we have $\mathbf{I}_{A}^{\bullet}\otimes k=\mathbf{I}^{\bullet}$.
As obtained in [\cite{Lin18}, Theorem 1.2] for the deformation and obstruction theory of stable pairs on smooth projective varieties,  we generalize this result to our stacky case.
\begin{theorem}[see Theorem \ref{def-ob1}]
	For a given small extension $0\to I \to A^\prime\xrightarrow{\sigma} A\to0$, there is a class
	\ben
	\mathrm{ob}(\check{\varphi}_{A},\sigma)\in\mathrm{Ext}^1(\mathbf{I}^\bullet,\mathcal{F}\otimes I)
	\een
	such that there exists a flat extension of $\check{\varphi}_{A}$ over $\mathcal{X}_{A^\prime}$ if and only if $\mathrm{ob}(\check{\varphi}_{A},\sigma)=0$. If $\mathrm{ob}(\check{\varphi}_{A},\sigma)=0$, the space of extensions  is a torsor under $\mathrm{Hom}(\mathbf{I}^\bullet,\mathcal{F}\otimes I)$.
\end{theorem}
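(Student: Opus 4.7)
The plan is to adapt the proof of \cite{Lin18}, Theorem 1.2, to the Deligne-Mumford setting by organizing two classical deformation-theoretic obstructions through the distinguished triangle associated to the pair. In $D^b(\mathcal{X})$ one has the triangle
\[\mathbf{I}^\bullet \longrightarrow \mathcal{F}_0 \xrightarrow{\varphi} \mathcal{F} \longrightarrow \mathbf{I}^\bullet[1],\]
since $\mathbf{I}^\bullet[1]$ is the mapping cone of $\varphi$. Applying $\mathrm{RHom}(-,\mathcal{F}\otimes I)$ yields
\begin{align*}
\cdots &\to \mathrm{Hom}(\mathcal{F},\mathcal{F}\otimes I)\to\mathrm{Hom}(\mathcal{F}_0,\mathcal{F}\otimes I)\to\mathrm{Hom}(\mathbf{I}^\bullet,\mathcal{F}\otimes I)\to\mathrm{Ext}^1(\mathcal{F},\mathcal{F}\otimes I)\\
&\to\mathrm{Ext}^1(\mathcal{F}_0,\mathcal{F}\otimes I)\to\mathrm{Ext}^1(\mathbf{I}^\bullet,\mathcal{F}\otimes I)\to\mathrm{Ext}^2(\mathcal{F},\mathcal{F}\otimes I)\to\cdots,
\end{align*}
which will assemble the separate obstructions into one global class.

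First I would invoke the standard deformation theory of coherent sheaves, transported to smooth Deligne-Mumford stacks via the cotangent complex, to produce a canonical primary obstruction $\omega(\mathcal{F}_A,\sigma)\in\mathrm{Ext}^2_{\mathcal{X}}(\mathcal{F},\mathcal{F}\otimes I)$ whose vanishing is equivalent to the existence of a flat extension $\mathcal{F}_{A'}$ of $\mathcal{F}_A$, with the lift space a torsor under $\mathrm{Ext}^1(\mathcal{F},\mathcal{F}\otimes I)$ and automorphism group $\mathrm{Hom}(\mathcal{F},\mathcal{F}\otimes I)$. Given a lift $\mathcal{F}_{A'}$, extending $\check{\varphi}_A$ to a morphism $\mathcal{F}_0\otimes_k A'\to\mathcal{F}_{A'}$ is the standard problem of lifting a morphism through a square-zero thickening: the obstruction lies in $\mathrm{Ext}^1(\mathcal{F}_0,\mathcal{F}\otimes I)$, and upon its vanishing the set of lifts is a $\mathrm{Hom}(\mathcal{F}_0,\mathcal{F}\otimes I)$-torsor.

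Next I would assemble these into a single class $\mathrm{ob}(\check{\varphi}_A,\sigma)\in\mathrm{Ext}^1(\mathbf{I}^\bullet,\mathcal{F}\otimes I)$, constructed intrinsically as the Yoneda product of the truncated Atiyah class of the two-term complex $\mathbf{I}^\bullet$ with the Kodaira-Spencer class of the small extension $\sigma$. By naturality of the Atiyah class, the image of $\mathrm{ob}(\check{\varphi}_A,\sigma)$ under the connecting map $\mathrm{Ext}^1(\mathbf{I}^\bullet,\mathcal{F}\otimes I)\to\mathrm{Ext}^2(\mathcal{F},\mathcal{F}\otimes I)$ coincides with $\omega(\mathcal{F}_A,\sigma)$, so $\mathrm{ob}=0$ forces $\omega=0$, and (given any lift $\mathcal{F}_{A'}$) by exactness the resulting preimage class in $\mathrm{Ext}^1(\mathcal{F}_0,\mathcal{F}\otimes I)/\mathrm{im}\,\mathrm{Ext}^1(\mathcal{F},\mathcal{F}\otimes I)$ is precisely the secondary obstruction to extending $\check{\varphi}_A$; conversely, a flat extension of the pair forces both obstructions, and hence $\mathrm{ob}$ itself, to vanish by functoriality. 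When $\mathrm{ob}=0$, a diagram chase in the long exact sequence above identifies the set of extensions, modulo isomorphism, as a torsor under $\mathrm{Hom}(\mathbf{I}^\bullet,\mathcal{F}\otimes I)$: the $\mathrm{Ext}^1(\mathcal{F},\mathcal{F}\otimes I)$-ambiguity in the choice of $\mathcal{F}_{A'}$ combines with the $\mathrm{Hom}(\mathcal{F}_0,\mathcal{F}\otimes I)$-ambiguity in the lift of $\varphi$ exactly as encoded by the sequence.

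The main technical obstacle I anticipate is proving that $\mathrm{ob}(\check{\varphi}_A,\sigma)$ is intrinsic to the pair and not an artifact of the two-step construction. Handling it cleanly requires either the truncated Atiyah-class construction for the complex $\mathbf{I}^\bullet$ on $\mathcal{X}$ (using the cotangent complex of the Deligne-Mumford stack $\mathcal{X}/k$ and its Yoneda pairing with the class of $\sigma$), or an \'etale-local gluing argument verifying that the secondary obstructions arising from two different lifts of $\mathcal{F}$ differ precisely by the image of $\mathrm{Ext}^1(\mathcal{F},\mathcal{F}\otimes I)$ in $\mathrm{Ext}^1(\mathcal{F}_0,\mathcal{F}\otimes I)$. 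Since $\mathcal{X}$ is Deligne-Mumford, \'etale-local calculations on an atlas reduce the bulk of the verification to the scheme-theoretic case of \cite{Lin18}, and the only new input is routine: descent of the Ext groups and the Illusie-Olsson cotangent-complex formalism from the atlas to $\mathcal{X}$.
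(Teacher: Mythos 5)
Your skeleton differs from the paper's. The paper proves Theorem \ref{def-ob1} by building finite locally free resolutions of $\mathcal{F}_{0}$ and of $\mathcal{F}_{A}$ whose terms have the form $W\otimes\mathcal{E}\otimes\pi^{*}\mathcal{O}_{X}(-n)$ with the relevant higher cohomology vanishing, lifting $\check{\varphi}_{A}$ to a morphism of complexes, and then running the explicit chain-level (cocycle) argument of [\cite{Lin18}, Section 5]; the class in $\mathrm{Ext}^1(\mathbf{I}^\bullet,\mathcal{F}\otimes I)$ is thereby produced directly, whether or not the underlying sheaf deforms. Your plan instead splits the problem into the classical sheaf deformation plus the lifting of the morphism and tries to reassemble them through the long exact sequence of the triangle $\mathbf{I}^\bullet\to\mathcal{F}_{0}\to\mathcal{F}$; the sequence you write is correct, and the two-step bookkeeping is a legitimate way to organize the equivalence and the torsor statement \emph{once a single canonical class exists}.

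That is where the genuine gap lies, precisely at the point you flag as the main obstacle. The Yoneda product of the truncated Atiyah class of $\mathbf{I}^\bullet$ with the Kodaira--Spencer class of $\sigma$ does not live in $\mathrm{Ext}^1(\mathbf{I}^\bullet,\mathcal{F}\otimes I)$: it lives in $\mathrm{Ext}^2(\mathbf{I}^\bullet,\mathbf{I}^\bullet\otimes I)$, and is exactly the class $\omega(\mathbf{I}_{A}^{\bullet})$ of Theorem \ref{def-ob2}, which governs the different problem of deforming $\mathbf{I}^\bullet$ as an object of the derived category rather than as a pair with $\mathcal{F}_{0}$ held fixed. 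There is no natural map from $\mathrm{Ext}^2(\mathbf{I}^\bullet,\mathbf{I}^\bullet\otimes I)$ to $\mathrm{Ext}^1(\mathbf{I}^\bullet,\mathcal{F}\otimes I)$; the comparison map of Remark \ref{def-ob-compare} goes the other way, $\mathrm{Ext}^{1}(\mathbf{I}^\bullet,\mathcal{F})\to\mathrm{Ext}^{2}(\mathbf{I}^\bullet,\mathbf{I}^\bullet)$. Without an intrinsic construction valid even when $\omega(\mathcal{F}_{A},\sigma)\in\mathrm{Ext}^2(\mathcal{F},\mathcal{F}\otimes I)$ is nonzero, your argument only proves ``an extension exists iff $\omega(\mathcal{F}_{A},\sigma)=0$ and, for some flat lift $\mathcal{F}_{A'}$, the secondary obstruction in $\mathrm{Ext}^1(\mathcal{F}_{0},\mathcal{F}\otimes I)$ vanishes'': when no $\mathcal{F}_{A'}$ exists the secondary class is undefined, a preimage of $\omega$ under the connecting map need not exist and is never canonical, so no single class $\mathrm{ob}(\check{\varphi}_{A},\sigma)$ is produced. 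To close the gap you must construct the class directly, either by the paper's resolution/cocycle route (exhibit it as a degree-one cocycle of the Hom complex computing $\mathrm{Ext}^{\bullet}(\mathbf{I}^\bullet,\mathcal{F}\otimes I)$ from the generating-sheaf resolutions, as in [\cite{Lin18}]) or via Illusie's treatment of deformations of a morphism with fixed source ([\cite{Illu}, IV 3.2.12]); only then do the compatibilities you invoke (image under the connecting map equals $\omega(\mathcal{F}_{A},\sigma)$, the change of the secondary obstruction under $\alpha\in\mathrm{Ext}^1(\mathcal{F},\mathcal{F}\otimes I)$ by $\varphi^{*}\alpha$, and the torsor structure under $\mathrm{Hom}(\mathbf{I}^\bullet,\mathcal{F}\otimes I)$) become checkable statements rather than assertions.
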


With this result, one can show the existence of virtual fundamental classes for the 2-dimensional case  which is a  generalization of [\cite{Lin18}, Theorem 1.3]. 
\begin{theorem}[see Theorem \ref{vir-exi-2}]
	Let $\mathcal{X}$ be a smooth projective Deligne-Mumford stack of dimension 2 over $\mathbb{C}$. Let $\mathcal{F}_{0}$ be a torsion free sheaf and $\deg\,\delta\geq\deg \,P=1$. Then the moduli space $M^s_{\mathcal{X}/\mathbb{C}}(\mathcal{F}_{0},P,\delta)$ of $\delta$-stable pairs admits a virtual fundamental class.
\end{theorem}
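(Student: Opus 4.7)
The plan is to equip $M^s := M^s_{\mathcal{X}/\mathbb{C}}(\mathcal{F}_0, P, \delta)$ with a two-term perfect obstruction theory in the sense of Behrend-Fantechi, from which the virtual fundamental class is automatically produced via \cite{BF, LT}. Let $\mathbb{F}$ be the universal sheaf supplied by Theorem \ref{moduli-spaces} on $M^s \times \mathcal{X}$, let $\mathbb{I}^\bullet := [\, p_{\mathcal{X}}^* \mathcal{F}_0 \xrightarrow{\Phi} \mathbb{F}\,]$ be the universal complex in degrees $0$ and $1$, and write $q : M^s \times \mathcal{X} \to M^s$ for the projection. Following the scheme-theoretic construction of \cite{Lin18}, the natural candidate is
\[
E^\bullet := \bigl( R q_{*} \, R\mathcal{H}om(\mathbb{I}^\bullet, \mathbb{F}) \bigr)^\vee,
\]
and I would construct the morphism $\phi : E^\bullet \to L_{M^s}^\bullet$ by combining the truncated Atiyah class of $\mathbb{I}^\bullet$ with the tautological map $\mathbb{I}^\bullet \to \mathbb{F}[1]$ and adjunction along $q$. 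The previous deformation-obstruction theorem identifies the fibrewise cohomology of $E^\bullet$ at $[(\mathcal{F}, \varphi)]$ with the duals of $\mathrm{Hom}(\mathbf{I}^\bullet, \mathcal{F})$ (in degree $0$) and $\mathrm{Ext}^1(\mathbf{I}^\bullet, \mathcal{F})$ (in degree $-1$), so $h^0(\phi)$ is an isomorphism and $h^{-1}(\phi)$ is surjective.

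It then remains to verify that $E^\bullet$ is perfect of amplitude $[-1,0]$, which reduces to proving $\mathrm{Ext}^i(\mathbf{I}^\bullet, \mathcal{F}) = 0$ for every $i \geq 2$ at each closed point. Applying $R\mathrm{Hom}(-, \mathcal{F})$ to the distinguished triangle $\mathbf{I}^\bullet \to \mathcal{F}_0 \to \mathcal{F} \to \mathbf{I}^\bullet[1]$ yields the long exact sequence
\[
\cdots \to \mathrm{Ext}^i(\mathcal{F}_0, \mathcal{F}) \to \mathrm{Ext}^i(\mathbf{I}^\bullet, \mathcal{F}) \to \mathrm{Ext}^{i+1}(\mathcal{F}, \mathcal{F}) \to \cdots,
\]
reducing the task to $\mathrm{Ext}^i(\mathcal{F}_0, \mathcal{F}) = 0$ for $i \geq 2$ and $\mathrm{Ext}^{i}(\mathcal{F}, \mathcal{F}) = 0$ for $i \geq 3$. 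The second vanishing is immediate from $\dim \mathcal{X} = 2$. For the first the only nontrivial case is $i = 2$, where orbifold Serre duality on the smooth projective DM stack $\mathcal{X}$ gives $\mathrm{Ext}^2(\mathcal{F}_0, \mathcal{F}) \cong \mathrm{Hom}(\mathcal{F}, \mathcal{F}_0 \otimes \omega_\mathcal{X})^\vee$, and this vanishes because $\mathcal{F}_0 \otimes \omega_\mathcal{X}$ is torsion free (as $\mathcal{F}_0$ is) whereas the hypothesis $\deg P = 1$ forces $\mathcal{F}$ to be supported in dimension one, hence torsion.

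The main technical obstacle I expect lies not in these vanishings but in verifying that $\phi$ genuinely satisfies the Behrend-Fantechi obstruction theory axioms, rather than merely realizing the correct tangent and obstruction spaces; concretely, one must check that $\phi$ is compatible, for every square-zero extension $0 \to I \to A' \to A \to 0$, with the obstruction class $\mathrm{ob}(\check{\varphi}_A, \sigma) \in \mathrm{Ext}^1(\mathbf{I}^\bullet, \mathcal{F} \otimes I)$ produced by the previous theorem. The analogous compatibility is established in \cite{Lin18} for smooth projective varieties, and the argument should transport to the stacky setting, replacing locally free resolutions on a variety by resolutions on $\mathcal{X}$ adapted to the polarizing generator $\mathcal{E}$ of Theorem \ref{moduli-spaces}, and using the cotangent complex $L_{\mathcal{X}/\mathbb{C}}$ of the DM stack together with the orbifold Serre duality employed above. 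Once $\phi : E^\bullet \to L_{M^s}^\bullet$ is verified to be a perfect obstruction theory of amplitude $[-1, 0]$, the virtual fundamental class $[M^s]^{\mathrm{vir}}$ follows from \cite{BF, LT}.
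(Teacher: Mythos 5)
Your proposal is correct and follows essentially the same route as the paper: the same two-term complex built from $R\pi_{*}R\mathcal{H}om(\mathbb{I}^\bullet,\mathbb{F})$ (the paper works with it directly and trims it using very negative locally free resolutions, you with its dual and the Atiyah-class map to the cotangent complex), the same key vanishing $\mathrm{Ext}^{i}(\mathbf{I}^\bullet,\mathcal{F})=0$ for $i\neq 0,1$ proved via stacky Serre duality together with torsion-freeness of $\mathcal{F}_{0}$ and the one-dimensionality of $\mathcal{F}$, and the same input from the deformation-obstruction theorem before appealing to [BF, LT]. The compatibility check you flag at the end is exactly the point the paper likewise handles by transporting the argument of [Lin18, Section 6] to the stacky setting, so there is no substantive difference.
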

While for the  case of dimension three, it is unsuitable to apply Theorem \ref{def-ob1} to obtain some two-term complex for the obstruction theory (cf. Lemma \ref{def-ob-cmg1} and Remark \ref{3-term}). 
%In [\cite{PT1}], the authors use the deformation and obstruction theory in [\cite{Inaba}, Proposition 2.3] to construct a perfect obstruction theory with fixed determinant [\cite{HT09}]. Following their ideas, 
Alternatively, we follow Inaba's approach to the deformation and obstruction theory in [\cite{Inaba}, Section 2].
Explicitly, we  derive a stacky version of [\cite{Inaba}, Proposition 2.3] as follows.
\begin{theorem}[see Theorem \ref{def-ob2}]
	For a given small extension $0\to I \to A^\prime\xrightarrow{\sigma} A\to0$, there is an element
	\ben
	\omega(\mathbf{I}_{A}^{\bullet})\in\mathrm{Ext}^2(\mathbf{I}^\bullet,\mathbf{I}^\bullet\otimes I)
	\een
	such that there exists a flat extension of $\check{\varphi}_{A}$ over $\mathcal{X}_{A^\prime}$ if and only if $\omega(\mathbf{I}_{A}^{\bullet})=0$. If $\omega(\mathbf{I}_{A}^{\bullet})=0$, then  the space of extensions form a torsor under $\mathrm{Ext}^1(\mathbf{I}^\bullet,\mathbf{I}^\bullet\otimes I)$.
\end{theorem}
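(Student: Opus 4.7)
The plan is to translate Inaba's argument [\cite{Inaba}, Proposition 2.3] from smooth projective surfaces to smooth projective Deligne-Mumford stacks by replacing Zariski affine covers with étale affine ones and working in the derived category of coherent sheaves on $\mathcal{X}$. The obstruction $\omega(\mathbf{I}_A^\bullet)$ will be built as an étale Čech cocycle with values in $\mathcal{H}om^\bullet(\mathbf{I}^\bullet,\mathbf{I}^\bullet\otimes I)$, and then identified with a hyper-Ext class via the spectral sequence relating Čech cohomology to hyper-Ext on $\mathcal{X}$.

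\textbf{Step 1} (local extensions). Since $\mathcal{X}$ is a smooth DM stack, fix an étale cover $\{U_i\to\mathcal{X}\}$ by affine schemes together with its Čech nerve $\{U_{i_1\cdots i_p}\}$. Pulled back to each $U_i$ the sheaf $\mathcal{F}_A$ is a flat $A$-module sheaf on an affine scheme; smoothness of $U_i$ and affineness allow one to lift it to a flat $A'$-module sheaf $\mathcal{F}_i'$ on $U_i\times\mathrm{Spec}\,A'$. Since $\mathcal{F}_0\otimes_k A'|_{U_i}$ is the trivial $A'$-flat extension of $\mathcal{F}_0\otimes_k A|_{U_i}$, one also lifts $\check{\varphi}_A|_{U_i}$ to some $\varphi_i':\mathcal{F}_0\otimes_k A'|_{U_i}\to\mathcal{F}_i'$. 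Together these yield a local flat extension $\mathbf{I}_i'$ of $\mathbf{I}_A^\bullet|_{U_i}$.

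\textbf{Step 2} (cocycle construction). On each double overlap $U_{ij}$ the restrictions of $\mathbf{I}_i'$ and $\mathbf{I}_j'$ are two flat $A'$-extensions of the same complex, and their difference is measured by a section $\eta_{ij}$ of $\mathcal{H}om(\mathbf{I}^\bullet,\mathbf{I}^\bullet\otimes I)|_{U_{ij}}$, because $\ker(A'\to A)=I$ and $\mathtt{m}_{A'}I=0$. On triple overlaps the Čech differential $\delta\eta$ need not vanish but defines a $2$-cocycle in the total complex of $\check{C}^\bullet(\{U_i\},\mathcal{H}om^\bullet(\mathbf{I}^\bullet,\mathbf{I}^\bullet\otimes I))$, and its class in hypercohomology is the desired obstruction $\omega(\mathbf{I}_A^\bullet)\in\mathrm{Ext}^2(\mathbf{I}^\bullet,\mathbf{I}^\bullet\otimes I)$. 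A different choice of the $\mathbf{I}_i'$ modifies $\eta$ by a Čech coboundary, so the class is canonical. If $\omega(\mathbf{I}_A^\bullet)=0$ a trivializing $1$-cochain lets us modify the local extensions so that the gluing data satisfy the cocycle condition, producing a global flat extension $\mathbf{I}_{A'}^\bullet$; once one such extension exists, any other differs from it by an element of $\mathrm{Ext}^1(\mathbf{I}^\bullet,\mathbf{I}^\bullet\otimes I)$ under the standard action of $1$-cocycles, giving the torsor structure.

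The main obstacle is \textbf{Step 2}: one must package simultaneously the obstruction to flatly extending $\mathcal{F}_A$ and the compatibility obstruction for $\check{\varphi}_A$ into a single element of the Čech total complex of $\mathcal{H}om^\bullet(\mathbf{I}^\bullet,\mathbf{I}^\bullet\otimes I)$, and to verify intrinsically that the resulting class lies in $\mathrm{Ext}^2$ independently of all choices; this is precisely where Inaba's explicit formulas are invoked, and they transport verbatim once local extensions have been produced. The stacky nature of $\mathcal{X}$ contributes no new conceptual difficulty, because quasi-coherent sheaves on $\mathcal{X}$ form a Grothendieck abelian category, étale Čech cohomology with respect to the chosen atlas computes the sheaf cohomology of $\mathcal{X}$, and the Čech-to-hyper-Ext spectral sequence operates identically to the scheme case.
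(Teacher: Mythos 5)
Your outline breaks down at Step 1, and the failure propagates. You assert that on a smooth affine \'etale chart $U_i$ the flat $A$-sheaf $\mathcal{F}_A$, and with it the pair, can always be lifted to a flat $A'$-extension, "by smoothness and affineness". Affineness kills higher \emph{sheaf cohomology}, but the local lifting problem is governed by module-theoretic $\mathrm{Ext}$ groups over the chart: the obstruction to extending $\mathcal{F}_A|_{U_i}$ flatly lies in a local $\mathrm{Ext}^2$, and the obstruction to extending $\check{\varphi}_A|_{U_i}$ to a chosen local lift lies in a local $\mathrm{Ext}^1$; since $\mathcal{F}$ (and $\mathcal{F}_0$) are arbitrary coherent, not locally free, these groups do not vanish and local flat extensions of $\mathbf{I}_A^\bullet$ need not exist. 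So the \v{C}ech data you want to compare on overlaps may simply not be there. Even granting local liftings, your cocycle takes values in the naive sheaf-Hom complex $\mathcal{H}om^\bullet(\mathbf{I}^\bullet,\mathbf{I}^\bullet\otimes I)$; because the terms of $\mathbf{I}^\bullet$ are not locally free, the \v{C}ech hypercohomology of that complex does not compute $\mathrm{Ext}^2(\mathbf{I}^\bullet,\mathbf{I}^\bullet\otimes I)$ --- it only sees the $H^2(\mathcal{H}om)$-type piece, while the true obstruction also involves the local $\mathcal{E}xt^1$ and $\mathcal{E}xt^2$ sheaves. Finally, Inaba's formulas are not \v{C}ech-local, so they do not "transport verbatim" into this framework; his construction is global from the start.

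What the paper actually does avoids local liftings entirely. By Lemma \ref{qua-iso1} one chooses a global quasi-isomorphism $V^\bullet\to\mathbf{I}_A^\bullet$ whose terms are of the very negative locally free form $V_i\otimes p_{\mathcal{X}_A,\mathcal{X}}^*\mathcal{E}\otimes p_{\mathcal{X}_A,\mathcal{X}}^*\pi^*\mathcal{O}_X(-m_i)$, with the twists $m_i$ chosen so that (Remark \ref{qua-iso2}) the honest Hom-complex of $V^\bullet$ computes the relevant $\mathrm{Ext}$ groups; this is where the generating sheaf $\mathcal{E}$ and the boundedness choices replace your \'etale atlas. For such terms the differentials lift over $A'$ with no gluing, the canonical extensions of the terms themselves are free, and the square of a lifted differential is the cocycle defining $\omega(\mathbf{I}_A^\bullet)\in\mathrm{Ext}^2(\mathbf{I}^\bullet,\mathbf{I}^\bullet\otimes I)$ (this is made explicit in the proof of Theorem \ref{def-ob3}). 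Inaba's argument then yields only a pseudo-torsor under $\mathrm{Ext}^1(\mathbf{I}^\bullet,\mathbf{I}^\bullet\otimes I)$, and the paper upgrades this to a torsor via the argument of [\cite{HT09}, Corollary 3.4] or [\cite{Lieb}, Corollary 3.2.12] --- a point your sketch also leaves untreated. To rescue a local-to-global proof you would have to control the local obstructions and replace $\mathcal{H}om^\bullet$ by $R\mathcal{H}om$, i.e.\ reintroduce locally free resolutions, which is exactly the step your plan was designed to bypass.
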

The similar result as above also holds in the case of a square zero extension  (cf.  Theorem \ref{def-ob3}).
As an application, we study the  case when $\dim\mathcal{X}=3$, $\mathcal{F}_{0}=\mathcal{O}_{\mathcal{X}}$ and polynomials $\delta, P$ satisfying $\deg\,\delta\geq\deg\,P=1$. The corresponding moduli space $\overline{M}^s:=M^s_{\mathcal{X}/\mathbb{C}}(\mathcal{O}_{\mathcal{X}},P,\delta)$ parametrizes orbifold PT stable pairs (cf. Lemma \ref{orbi-PT} and Remark \ref{stacky-PT}). Furtherly, we consider the subfunctor $\mathcal{M}^{s}_{\mathcal{X}/\mathbb{C}}(\mathcal{O}_{\mathcal{X}},\beta,\delta)$ of  $\mathcal{M}^{s}_{\mathcal{X}/\mathbb{C}}(\mathcal{O}_{\mathcal{X}},P,\delta)$, which associates for any $\mathbb{C}$-scheme $S$ of finite type with the set of isomorphism classes of flat families of  $\delta$-(semi)stable pairs $(\mathcal{F},\varphi)$ with the fixed numerical class $\beta\in {\color{red}N_{\leq1}}(\mathcal{X})$  parametrized by $S$ where $P_{\mathcal{E}}(\beta)=P$, see Section 5.3 for more details. Similarly, there is a projective scheme $\overline{M}^{s}_{\beta}:=M^{s}_{\mathcal{X}/\mathbb{C}}(\mathcal{O}_{\mathcal{X}},\beta,\delta)$ $(\subseteq \overline{M}^s)$, which is a fine moduli space for $\mathcal{M}^{s}_{\mathcal{X}/\mathbb{C}}(\mathcal{O}_{\mathcal{X}},\beta,\delta)$, see Theorem \ref{moduli-spaces2}. Assume $\bar{\mathbb{I}}^{\bullet}$ and $\bar{\mathbb{I}}^{\bullet}_{\beta}$ are the universal complexes on $\mathcal{X}\times\overline{M}^{s}$ and $\mathcal{X}\times\overline{M}^{s}_{\beta}$ respectively. Let $\overline{\pi}_{\overline{M}^s}:\mathcal{X}\times \overline{M}^s\to \overline{M}^s$, $\overline{\pi}_{\mathcal{X}}:\mathcal{X}\times \overline{M}^s\to\mathcal{X}$, $\tilde{\pi}_{\overline{M}^s_{\beta}}:\mathcal{X}\times \overline{M}^s_{\beta}\to \overline{M}^s_{\beta}$, and $\tilde{\pi}_{\mathcal{X}}:\mathcal{X}\times \overline{M}^s_{\beta}\to\mathcal{X}$ be the  projections.
To assert the existence of virtual fundamental classes of  $\overline{M}^s$ and $\overline{M}^{s}_{\beta}$, we  construct the perfect obstruction theories  with fixed determinant for these moduli spaces in the sense of [\cite{BF}].
\begin{theorem}[see Theorem \ref{perf-ob1}, Corollary \ref{vir-exi-3} and Theorem \ref{perf-ob2}]
Let $\mathcal{X}$ be a 3-dimensional smooth projective Deligne-Mumford stack  over $\mathbb{C}$. Assume that  polynomials $\delta$ and  $P$ satisfy $\deg\,\delta\geq\deg \,P=1$. 
The maps 
\ben
\Phi: E^\bullet:=R\overline{\pi}_{\overline{M}^{s}*}(R\mathcal{H}om(\bar{\mathbb{I}}^{\bullet},\bar{\mathbb{I}}^{\bullet})_{0}\otimes\overline{\pi}_{\mathcal{X}}^*\omega_{\mathcal{X}})[2]\to\mathbb{L}_{\overline{M}^s}.
\een
and 
	\ben
	\Phi_{\beta}:E^\bullet_{\beta}:=R\tilde{\pi}_{\overline{M}^s_{\beta}*}(R\mathcal{H}om(\bar{\mathbb{I}}^{\bullet}_{\beta},\bar{\mathbb{I}}^{\bullet}_{\beta})_{0}\otimes\tilde{\pi}_{\mathcal{X}}^*\omega_{\mathcal{X}})[2]\to\mathbb{L}_{\overline{M}^s_{\beta}}.
	\een
are perfect obstruction theories for $\overline{M}^s$ and $\overline{M}^{s}_{\beta}$ respectively. And there exist  virtual fundamental classes $[\overline{M}^s]^{\mathrm{vir}}\in A_{\mathrm{vdim}_{1}}(\overline{M}^s)$  and $[ \overline{M}^{s}_{\beta}]^{\mathrm{vir}}\in A_{\mathrm{vdim}_{2}}( \overline{M}^{s}_{\beta})$ where $\mathrm{vdim}_{1}=\mathrm{rk}(E^\bullet)$ and $\mathrm{vdim}_{2}=\mathrm{rk}(E^\bullet_{\beta})$. Here, $\mathbb{L}_{\overline{M}^s}$ and $\mathbb{L}_{\overline{M}^s_{\beta}}$ are truncated cotangent complexes of $\overline{M}^s$ and $\overline{M}^s_{\beta}$ respectively.
\end{theorem}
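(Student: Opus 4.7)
The plan is to follow the Thomas/Huybrechts–Thomas style construction of a symmetric obstruction theory, adapted from the scheme case to projective Deligne–Mumford stacks by using the deformation result of Theorem \ref{def-ob2} together with Atiyah-class techniques and relative Serre duality on the smooth proper DM stack $\mathcal{X}$.

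First, I would use the universal complex $\bar{\mathbb{I}}^{\bullet}=\{\mathcal{O}_{\mathcal{X}\times\overline{M}^s}\to\bar{\mathbb{F}}\}$ on $\mathcal{X}\times\overline{M}^s$ to produce the truncated Atiyah class $\mathrm{At}(\bar{\mathbb{I}}^{\bullet})\in\mathrm{Ext}^1(\bar{\mathbb{I}}^{\bullet},\bar{\mathbb{I}}^{\bullet}\otimes\overline{\pi}_{\overline{M}^s}^{*}\mathbb{L}_{\overline{M}^s})$ and extract, by composition with the projection to the $\overline{M}^s$-factor, a morphism
\[
R\overline{\pi}_{\overline{M}^s*}R\mathcal{H}om(\bar{\mathbb{I}}^{\bullet},\bar{\mathbb{I}}^{\bullet})\to\mathbb{L}_{\overline{M}^s}[1].
\]
Splitting off the trace and restricting to the trace-free summand $R\mathcal{H}om(\bar{\mathbb{I}}^{\bullet},\bar{\mathbb{I}}^{\bullet})_{0}$ isolates the deformations with fixed determinant $\det\bar{\mathbb{I}}^{\bullet}=\mathcal{O}_{\mathcal{X}}$. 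Applying relative Grothendieck–Serre duality for the smooth proper DM morphism $\overline{\pi}_{\overline{M}^s}$, whose relative dualizing complex is $\overline{\pi}_{\mathcal{X}}^{*}\omega_{\mathcal{X}}[3]$, converts the above into the desired arrow $\Phi\colon E^{\bullet}\to\mathbb{L}_{\overline{M}^s}$ with $E^{\bullet}=R\overline{\pi}_{\overline{M}^s*}(R\mathcal{H}om(\bar{\mathbb{I}}^{\bullet},\bar{\mathbb{I}}^{\bullet})_{0}\otimes\overline{\pi}_{\mathcal{X}}^{*}\omega_{\mathcal{X}})[2]$.

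Second, I would verify that $\Phi$ is a perfect obstruction theory in the sense of Behrend–Fantechi by checking the two cohomological conditions pointwise at a closed point $[(\mathcal{F},\varphi)]$. Using the base change for $\overline{\pi}_{\overline{M}^s}$ and Serre duality on $\mathcal{X}$, the fiber of $E^{\bullet}$ is computed by $\mathrm{Ext}^{3-i}(\mathbf{I}^\bullet,\mathbf{I}^\bullet)_{0}^{\vee}$ placed in degree $i-1$. The infinitesimal statement of Theorem \ref{def-ob2} identifies the space of first-order deformations and the obstruction space with $\mathrm{Ext}^{1}(\mathbf{I}^\bullet,\mathbf{I}^\bullet\otimes I)$ and $\mathrm{Ext}^{2}(\mathbf{I}^\bullet,\mathbf{I}^\bullet\otimes I)$ respectively; combining this with the standard functoriality of the Atiyah-class construction (compatibility with obstruction maps) then yields that $h^{0}(\Phi)$ is an isomorphism and $h^{-1}(\Phi)$ is surjective. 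Next one must see that $E^{\bullet}$ is perfect of amplitude $[-1,0]$: the trace-free pieces $\mathrm{Ext}^{0}(\mathbf{I}^\bullet,\mathbf{I}^\bullet)_{0}$ and, by Serre duality, $\mathrm{Ext}^{3}(\mathbf{I}^\bullet,\mathbf{I}^\bullet)_{0}$ both vanish because $\mathbf{I}^\bullet$ is $\delta$-stable with fixed trivial determinant (stability forces the trace-free endomorphisms to vanish, exactly as in the argument for PT pairs on 3-folds). Only $\mathrm{Ext}^{1}$ and $\mathrm{Ext}^{2}$ survive, giving a two-term perfect complex.

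Third, with $\Phi$ a perfect obstruction theory, the Behrend–Fantechi machine yields a virtual fundamental class $[\overline{M}^s]^{\mathrm{vir}}\in A_{\mathrm{vdim}_1}(\overline{M}^s)$ of the expected dimension $\mathrm{vdim}_1=\mathrm{rk}(E^{\bullet})$, which proves Corollary \ref{vir-exi-3}. For the fixed-class version, the inclusion $\overline{M}^s_\beta\hookrightarrow\overline{M}^s$ is open and closed (Hilbert polynomial of $\bar{\mathbb{F}}$ and its numerical class $\beta$ are locally constant in flat families), so the universal complex restricts and the whole construction can be repeated verbatim with $\tilde{\pi}_{\overline{M}^s_\beta}$ in place of $\overline{\pi}_{\overline{M}^s}$, producing $\Phi_\beta$ and $[\overline{M}^s_\beta]^{\mathrm{vir}}$.

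The hard part will be the verification that $\Phi$ is a morphism of obstruction theories, i.e., matching the abstract obstruction class of Theorem \ref{def-ob2} with the one produced from the Atiyah class of the universal family. This is the classical Illusie/Huybrechts–Thomas comparison, and it requires checking that the Atiyah-class construction behaves well on the DM stack $\mathcal{X}$ (where one must use the cotangent complex on the Artin site and the coherent six-functor formalism for DM morphisms) and that the trace-free reduction is compatible with the obstruction map on small extensions. Everything else — the amplitude check, Serre duality, and the jump to $\overline{M}^s_\beta$ — is a mechanical application of Theorem \ref{def-ob2} together with standard properties of the smooth proper DM stack $\mathcal{X}$.
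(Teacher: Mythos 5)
Your construction of $\Phi$ (truncated Atiyah class of $\bar{\mathbb{I}}^{\bullet}$, projection to $\overline{\pi}_{\overline{M}^s}^*\mathbb{L}_{\overline{M}^s}$, restriction to the trace-free part, and Serre duality along $\overline{\pi}_{\overline{M}^s}$) is exactly the paper's construction, and your treatment of the amplitude question (vanishing of $\mathrm{Hom}(\mathbf{I}^\bullet,\mathbf{I}^\bullet)_0$ and $\mathrm{Ext}^3(\mathbf{I}^\bullet,\mathbf{I}^\bullet)_0$, then a two-term trimming, which the paper does via cohomology and base change as in [\cite{PT1}, Lemma 2.1] and [\cite{HT09}, Lemma 4.2]) and of the $\beta$-component are in line with Theorem \ref{perf-ob1} and Theorem \ref{perf-ob2}. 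The genuine gap is in your verification that $\Phi$ is an obstruction theory: you propose to check the Behrend--Fantechi conditions ``pointwise at a closed point'' using Theorem \ref{def-ob2}, i.e.\ using deformations over small extensions of Artinian local $k$-algebras at a single pair $[(\mathcal{F},\varphi)]$. The criterion of [\cite{BF}, Theorem 4.5], which is what the paper applies, quantifies over \emph{all} square-zero extensions $T\subset\overline{T}$ of (affine) schemes and arbitrary morphisms $g:T\to\overline{M}^s$; the object being deformed is then the flat family $\hat{g}^*\bar{\mathbb{I}}^{\bullet}$ on $\mathcal{X}\times T$ over a non-Artinian base, which is precisely why the paper proves the square-zero-extension statement Theorem \ref{def-ob3} and runs its square-of-the-differential argument for affine $T,\overline{T}$, rather than relying on Theorem \ref{def-ob2}. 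Pointwise information does not substitute for this: the torsor statement at closed points does give $h^0(\Phi)\otimes k(x)$ an isomorphism, but the completeness of the obstruction at Artinian level only gives injectivity of $\mathrm{Ext}^1(\mathbb{L}_{\overline{M}^s}\otimes^{\mathbf{L}}k(x),k)\to\mathrm{Ext}^1(E^\bullet\otimes^{\mathbf{L}}k(x),k)$ on the subspace of classes actually realized as obstructions of Artinian small extensions, which need not exhaust $h^{-1}(\mathbb{L}_{\overline{M}^s}\otimes^{\mathbf{L}}k(x))^{\vee}$; so surjectivity of $h^{-1}(\Phi)$ does not follow from your reduction.

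Concretely, the paper's proof of Theorem \ref{perf-ob1} has three ingredients you would need and do not supply: (i) the identification, following [\cite{HT09}, Section 3], of the obstruction class to extending $\hat{g}^*\bar{\mathbb{I}}^{\bullet}$ from $\mathcal{X}\times T$ to $\mathcal{X}\times\overline{T}$ (defined as the square of a lifted differential, as in the proof of Theorem \ref{def-ob3}) with the product of the truncated Atiyah class $\overline{\mathrm{At}}(\hat{g}^*\bar{\mathbb{I}}^{\bullet})$ and the truncated Kodaira--Spencer class $q^*\kappa(T/\overline{T})$, so that $\Phi^*\omega(g)$ is its traceless part; (ii) the trace argument of [\cite{Tho}] showing that the traceless part governs extensions with fixed determinant, so that Theorem \ref{def-ob3} gives vanishing of $\Phi^*\omega(g)$ if and only if the complex extends, with extensions a torsor under $\mathrm{Ext}^1(\hat{g}^*\bar{\mathbb{I}}^{\bullet},\hat{g}^*\bar{\mathbb{I}}^{\bullet}\otimes q^*J)_0$; and (iii) the fineness of $\overline{M}^s$ (Theorem \ref{main-result}), which converts ``$g$ extends to $\overline{T}$'' into ``$\hat{g}^*\bar{\mathbb{I}}^{\bullet}$ extends'' -- a family-level statement invisible at a single closed point. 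You flag the Atiyah/Illusie comparison as ``the hard part,'' which is right, but the surrounding reduction must be the relative one over affine bases (Theorem \ref{def-ob3} in family form), not the pointwise one via Theorem \ref{def-ob2}; as written, your argument would not establish the Behrend--Fantechi conditions, and hence the virtual classes in Corollary \ref{vir-exi-3} and Theorem \ref{perf-ob2} would not yet follow.
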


By integrations against virtual fundamental classes of  $\overline{M}^{s}_{\beta}$, one can define the orbifold descendent PT theory  (cf. Definition \ref{descen-PT}) for 3-dimensional smooth projective Deligne-Mumford stacks. In particular, when $\mathcal{X}$ is a  Calabi-Yau 3-orbifold, our definition of orbifold PT invariants of $\mathcal{X}$ is corresponding to the one in [\cite{BCR}] (cf. Definition \ref{CY-PT1} and Remark \ref{CY-PT2}) by applying [\cite{Beh}, Theorem 4.18] for some moduli space with a symmetric perfect obstruction theory.

It is also natural to study  the stacky version of higher rank PT stable pairs  [\cite{She}] if $\dim\mathcal{X}=3$, $\mathcal{F}_{0}=\mathcal{O}_{\mathcal{X}}(-n)^{\oplus r}$ and polynomials $\delta, P$ satisfying $\deg\,\delta\geq\deg\,P=1$.
As  the orbifold PT invariants defined here is  absolute, we will  explore the relative orbifold PT theory and its degeneration formula elsewhere  based on the work  of [\cite{LW,MPP}]  as in the case of relative DT theory [\cite{Zhou1}]. One may study the issue of rationality for the partition function of orbifold PT invariants (cf. Definition \ref{descen-PT}) as in [\cite{Toda,Bri2,PP3,PP4,PP5}],  and furtherly investigate the conjecture of orbifold GW/PT or DT/PT correspondence as in [\cite{Zhou2,ZZ,BCR}], or  more generally  the  stacky version of GW/PT or DT/PT correspondence proved in [\cite{MPP,PP1,OOP1,Toda,Bri2}]. 

This paper is organized as follows. In Section 2, we recall the definitions of projective Deligne-Mumford stacks with polarizations and the modified Hilbert polynomial together with some relevant facts, and then generalize the notion of semistable pairs and some related results in [\cite{Wan,Lin18}] to the stacky case.  And we recollect the definition of set-theoretic families of coherent sheaves and  several boundedness results in [\cite{Nir1}] and give the notion of flat families of  pairs. We devote Section 3 to prove the boundedness of  families of $\delta$-(semi)stable pairs on projective Deligne-Mumford stacks. In Section 4, we prove Theorem \ref{moduli-spaces}  generalizing the construction of moduli spaces of $\delta$-(semi)stable pairs in [\cite{Wan,Lin18}] and then describe the variation of moduli spaces along of the change of the stability parameter $\delta$. In Section 5, we present two deformation and obstruction theories of stable pairs which are useful for proving the existence of virtual fundamental classes in some 2-dimensional and 3-dimensional cases respectively, and then provide a definition of orbifold Pandharipande-Thomas invariants.

{\bf Acknowledgements.} {The author would like to thank an anonymous referee for his helpful  comments. The author was partially  supported by Chinese Universities Scientific Fund (74120-31610010)
 and Guangdong Basic and Applied Basic Research Foundation (2019A1515110255).

\section{Semistable pairs on projective Deligne-Mumford stacks}
In this section, we first recollect some preliminaries of projective Deligne-Mumford stacks and properties of coherent sheaves on them, and  then recall the definition of the modified Hilbert polynomial and generalize the notion of stability in [\cite{Wan,Lin18}] to give the definition of $\delta$-(semi)stable pairs and some related properties. We also recollect some boundedness results for set-theoretic families of coherent sheaves and introduce the definition of flat families of pairs used in the rest sections.  
For the theory of stacks, one may refer to [\cite{LMB,DM,Vis}]  for more details. 

\subsection{Preliminaries on projective Deligne-Mumford stacks}
In this subsection, we recall the definition of projective Deligne-Mumford stacks and some relevant properties of coherent sheaves on them (see [\cite{AOV,AV,KM,Kre,Nir1,OS03}]). We first make some convention as in [\cite{BS,Nir1}] here as follows. Let $k$ be an algebraically closed field of characteristic zero. All schemes are assumed to be noetherian over $k$, and every variety  is a reduced separated scheme of finite type over $k$. Denote by $S$ the generic base scheme of finite type over $k$. For a Deligne-Mumford $S$-stack and its related properties, one may refer to [\cite{DM}] or
[\cite{Vis}, Appendix]. Every Deligne-Mumford $S$-stack  is assumed to be a separated noetherian Deligne-Mumford stack  of finite type over $S$, and when $S=\mathrm{Spec}\,k$, it is called a Deligne-Mumford stack (over $k$). Let $p:\mathcal{X}\to S$ be the structure morphism of $\mathcal{X}$. By an orbifold we mean a smooth Deligne-Mumford stack of finite type over $k$ with generically trivial stabilizer. 
By [\cite{KM}, Corollary 1.3 (1)], for a Deligne-Mumford $S$-stack $\mathcal{X}$,  we have a separated algebraic space $X$ and a morphism $\pi:\mathcal{X}\to X$. More explicitly, the following properties are stated  in [\cite{AV}, Theorem 2.2.1]: 

(i) The morphism $\pi:\mathcal{X}\to X$ is proper and quasi-finite.

(ii) If $\mathfrak{f}$ is an algebraically closed field, the map $\mathcal{X}(\mathfrak{f})/\mathrm{Isom}\to X(\mathfrak{f})$ is a bijection.

(iii) If $\widehat{S}\to S$ is a flat morphism of schemes, and suppose that $Y\to\widehat{S}$ 
is an algebraic space and $\mathcal{X}\times_{S} \widehat{S}\to Y$ is a morphism,  then $\mathcal{X}\times_{S} \widehat{S}\to Y$  factors uniquely as $\mathcal{X}\times_{S} \widehat{S}\to X\times_{S} \widehat{S}\to Y$.

(iv) $\pi_{*}\mathcal{O}_{\mathcal{X}}=\mathcal{O}_{X}$.\\
Here, we call $X$ a coarse moduli space (or moduli space) of $\mathcal{X}$ , and when $X$ is a scheme, we call it a moduli scheme. 

It is shown in [\cite{AOV}] that Deligne-Mumford $S$-stacks with the above convention are tame. This implies three useful properties: (i) the functor $\pi_{*}:\mathrm{QCoh}(\mathcal{X})\to\mathrm{QCoh}(X)$ is exact and carries coherent sheaves to coherent sheaves by [\cite{AV}, Lemma 2.3.4]; (ii) $\pi_{*}\mathcal{F}$ is flat over $S$ if $\mathcal{F}\in\mathrm{QCoh}(\mathcal{X})$ is a flat sheaf over S by [\cite{Nir1}, Corollary 1.3 (3)]; (iii) for any quasi-coherent sheaf $\mathcal{F}$, we have $H^\bullet(\mathcal{X},\mathcal{F})\cong H^\bullet(X,\pi_{*}\mathcal{F})$ by [\cite{Nir1}, Remark 1.4 (3)]. Here, $\mathrm{QCoh}$ denotes the category of quasi-coherent sheaves.

To begin with, we introduce  the following 
\begin{definition}([\cite{Nir1}, Definition 2.4 and 2.6])
Let $\mathcal{X}$ be a Deligne-Mumford $S$-stack with moduli space $\pi:\mathcal{X}\to X$.
Let $\mathcal{E}$ be a locally free sheaf on $\mathcal{X}$. Define two  functors
\ben
&&F_{\mathcal{E}}:\mathrm{QCoh}(\mathcal{X})\to\mathrm{QCoh}(X),\;\;\;\;\mathcal{F}\mapsto \pi_{*}\mathcal{H}om_{\mathcal{O}_{\mathcal{X}}}(\mathcal{E},\mathcal{F});\\
&&G_{\mathcal{E}}:\mathrm{QCoh}(X)\to\mathrm{QCoh}(\mathcal{X}),\;\;\;\;
F\mapsto\pi^*F\otimes\mathcal{E}.
\een
A locally free sheaf $\mathcal{E}$ is said to be a generator for the quasi-coherent sheaf $\mathcal{F}$ if the left adjoint of the identity $\pi_{*}(\mathcal{F}\otimes\mathcal{E}^\vee)\xrightarrow{id}\pi_{*}(\mathcal{F}\otimes\mathcal{E}^\vee)$, i.e.,
\ben
\theta_{\mathcal{E}}(\mathcal{F}): G_{\mathcal{E}}\circ F_{\mathcal{E}} (\mathcal{F}) =\pi^{*}\pi_{*}\mathcal{H}om_{\mathcal{O}_{\mathcal{X}}}(\mathcal{E},\mathcal{F})\otimes\mathcal{E}\to\mathcal{F}
\een
is surjective. It is a generating sheaf for $\mathcal{X}$ if it is a generator for every quasicoherent sheaf on $\mathcal{X}$.
\end{definition}
Obviously the functor $F_{\mathcal{E}}$ is exact since two functors $\mathcal{H}om(\mathcal{E},\cdot)$ and $\pi_{*}$ are exact. It is shown in [\cite{Nir1}, Remark 2.5 (1)] that $G_{\mathcal{E}}$ is exact if the morphism $\pi$ is flat. And this holds when $\mathcal{X}$ is a flat gerbe over a scheme or a root stack.  Compared with $\theta_{\mathcal{E}}$, one can also define the right adjoint of the identity $\pi^*F\otimes\mathcal{E}\xrightarrow{id}\pi^*F\otimes\mathcal{E}$ as 
\ben
\varphi_{\mathcal{E}}(F): F\to\pi_{*}\mathcal{H}om(\mathcal{E},\pi^*F\otimes\mathcal{E})=F_{\mathcal{E}}(G_{\mathcal{E}}(F))=F\otimes\pi_{*}\mathcal{E}nd_{\mathcal{O}_{\mathcal{X}}}(\mathcal{E})
\een
where the last equality is obtained by  the projection formula  in the following lemma. Thus  $\varphi_{\mathcal{E}}$ is the map  given by tensoring a section with the identity endomorphism, which implies that $\varphi_{\mathcal{E}}$ is injective.
\begin{lemma}
Let $\mathcal{F}$ be a quasi-coherent sheaf on $\mathcal{X}$ and $M$ a quasi-coherent sheaf on $X$. Then we have a projection formula
\ben
\pi_{*}(\pi^*M\otimes\mathcal{F})=M\otimes\pi_{*}\mathcal{F}.
\een
Moreover, it is functorial that if $\phi:\mathcal{F}\to\mathcal{F}^\prime$ is a morphism of quasi-coherent sheaves on $\mathcal{X}$ and $\psi: M\to M^\prime$ is a morphism of quasi-coherent sheaves on $X$, then 
\ben
\pi_{*}(\pi^*\psi\otimes\phi)=\psi\otimes\pi_{*}\phi.
\een
\end{lemma}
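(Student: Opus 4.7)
The plan is to first construct a canonical comparison morphism $M\otimes\pi_{*}\mathcal{F}\to\pi_{*}(\pi^{*}M\otimes\mathcal{F})$ and then verify it is an isomorphism. The comparison morphism arises from the $(\pi^{*},\pi_{*})$-adjunction together with the tensor structure: applying $\pi^{*}$ to $M\otimes\pi_{*}\mathcal{F}$ yields $\pi^{*}M\otimes\pi^{*}\pi_{*}\mathcal{F}$, which maps to $\pi^{*}M\otimes\mathcal{F}$ by coupling the identity on $\pi^{*}M$ with the counit $\pi^{*}\pi_{*}\mathcal{F}\to\mathcal{F}$; taking the adjoint produces the desired morphism. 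For $M=\mathcal{O}_{X}$ this reduces to the obvious isomorphism $\mathcal{O}_{X}\otimes\pi_{*}\mathcal{F}\cong\pi_{*}(\mathcal{O}_{\mathcal{X}}\otimes\mathcal{F})$, and the construction commutes with direct sums, so it is an isomorphism whenever $M$ is a free $\mathcal{O}_{X}$-module.

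To pass to an arbitrary quasi-coherent $M$, I would pick a presentation $\mathcal{O}_{X}^{\oplus J}\to\mathcal{O}_{X}^{\oplus I}\to M\to 0$, which exists because $X$ is noetherian and $M$ is quasi-coherent. Tensoring on the left by $\pi_{*}\mathcal{F}$ yields a right exact sequence, and applying $\pi^{*}$ to the presentation followed by $-\otimes\mathcal{F}$ and then $\pi_{*}$ yields another right exact sequence, since $\pi^{*}$ and $\otimes$ are right exact and $\pi_{*}$ is \emph{exact} on quasi-coherent sheaves by the tameness of $\mathcal{X}$ recalled in property (i) above. The naturality of the comparison morphism produces a commutative ladder between these two cokernel sequences, and the five lemma combined with the free case settles the isomorphism for general $M$.

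Functoriality in $\phi$ and $\psi$ is automatic, since the comparison morphism was built entirely from the adjunction counit and the bifunctorial tensor product, both of which are natural in their arguments. The single point that requires care, and that I view as the only real obstacle, is the appeal to exactness (not merely left exactness) of $\pi_{*}$: without tameness, one would only get a comparison morphism whose surjectivity cannot be checked by the above five-lemma step. Once that ingredient is in hand, the argument is a straightforward adaptation of the classical projection formula for morphisms of ringed spaces to the setting of tame Deligne-Mumford stacks over their coarse moduli.
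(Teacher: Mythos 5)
Your overall strategy (adjunction comparison map, check on free modules, then right-exactness plus exactness of $\pi_{*}$ and the five lemma) is sound, and it is a genuinely different route from the paper, which does not argue directly but simply invokes [\cite{OS03}, Corollary 5.4] and [\cite{Nir1}, Lemma 2.8], whose proofs rest on the \'etale-local structure of tame stacks over their coarse moduli spaces. However, one step fails as written: you claim that a global presentation $\mathcal{O}_{X}^{\oplus J}\to\mathcal{O}_{X}^{\oplus I}\to M\to 0$ exists ``because $X$ is noetherian and $M$ is quasi-coherent''. This is false on a general (even projective) noetherian scheme, since a quasi-coherent sheaf need not be globally generated; for instance $\mathcal{O}_{\mathbb{P}^1}(-1)$ has no nonzero global sections and hence is not a quotient of any free sheaf. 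So the reduction to the free case does not go through globally.

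The gap is fixable, but the fix must be said: both sides of the projection formula are quasi-coherent sheaves on $X$, the comparison morphism is compatible with restriction to opens $U\subseteq X$ (because $\pi^{-1}(U)\to U$ is again the moduli space map and $\pi^{*}$, $\pi_{*}$, $\otimes$ all commute with this restriction), and being an isomorphism is local on $X$; hence one may assume $X$ affine, where free presentations of quasi-coherent modules do exist. Alternatively one can use twists by the ample sheaf, but then the ``free case'' must be replaced by the (equally easy) case of invertible $M$. A second, smaller point you gloss over: since $M$ is only quasi-coherent, the presentations are in general infinite, so your free case needs $\pi_{*}$ to commute with arbitrary direct sums of quasi-coherent sheaves; this holds here because $\mathcal{X}$ is noetherian and $\pi$ is quasi-compact and quasi-separated, but it should be stated alongside the exactness of $\pi_{*}$ coming from tameness, which you correctly identify as the essential input. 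With these two repairs the argument is complete, and the functoriality assertion is indeed just naturality of your comparison map in both variables.
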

\begin{proof}
See the proofs in [\cite{OS03}, Corollary 5.4] and [\cite{Nir1}, Lemma 2.8].
\end{proof}

Next, we recall some more results in [\cite{OS03}, Section 5]. We start with the following
\begin{proposition}([\cite{OS03}, Theorem 5.2])
A locally free sheaf $\mathcal{E}$ on $\mathcal{X}$ is a generating sheaf if and only if for each geometric point of $\mathcal{X}$, the representation of  the stabilizer group at that point on the fiber contains every irreducible representation.
\end{proposition}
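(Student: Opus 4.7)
The plan is to reduce the global assertion about $\theta_{\mathcal{E}}$ being surjective on every quasi-coherent sheaf to a local, representation-theoretic statement at each geometric point of $\mathcal{X}$, and then to verify the latter by elementary representation theory of finite groups in characteristic zero.

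First I would invoke the local structure of tame Deligne-Mumford stacks: around any geometric point $x$ with stabilizer $G_{x}$ there is an étale neighborhood on which $\mathcal{X}$ is equivalent to a quotient stack $[U/G_{x}]$ with $U=\mathrm{Spec}\,A$ affine, and $\pi$ is induced by the inclusion $A^{G_{x}}\hookrightarrow A$. Under this identification $\mathrm{QCoh}(\mathcal{X})$ corresponds to the category of $G_{x}$-equivariant $A$-modules; the pushforward $\pi_{*}$ becomes taking $G_{x}$-invariants, and $\pi^{*}$ amounts to extension of scalars with trivial $G_{x}$-action. Since surjectivity of $\theta_{\mathcal{E}}(\mathcal{F})$ is étale local on $\mathcal{X}$, it suffices to verify it on such a chart.

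Second, having passed to $[U/G_{x}]$, I would further reduce to the fiber at the closed point corresponding to $x$ using Nakayama's lemma on finitely generated equivariant modules, and extend to arbitrary quasi-coherent $\mathcal{F}$ by writing $\mathcal{F}$ as a filtered colimit of its finitely generated subsheaves (both $F_{\mathcal{E}}$ and $G_{\mathcal{E}}$ commute with such colimits in the tame setting, by the exactness properties of $\pi_{*}$ recalled before the proposition). Writing $V$ for the $G_{x}$-representation $\mathcal{E}(x)$ on the fiber and $W$ for the $G_{x}$-representation $\mathcal{F}(x)$, the fiber of $\theta_{\mathcal{E}}(\mathcal{F})$ at $x$ is identified with the natural evaluation
\[
\mathrm{Hom}_{G_{x}}(V,W)\otimes_{k}V \longrightarrow W,\qquad \phi\otimes v\longmapsto \phi(v).
\]
Thus the proposition reduces to the purely algebraic claim: this evaluation is surjective for every finite-dimensional $G_{x}$-representation $W$ if and only if $V$ contains every irreducible $G_{x}$-representation as a subrepresentation.

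Third, I would prove this equivalence directly. By Maschke's theorem, decompose $W=\bigoplus_{\rho}W_{\rho}$ into $\rho$-isotypic components indexed by the irreducibles $\rho$ of $G_{x}$; Schur's lemma induces a compatible decomposition of $\mathrm{Hom}_{G_{x}}(V,W)$, and the evaluation map respects it. The component map $\mathrm{Hom}_{G_{x}}(V,W_{\rho})\otimes V\to W_{\rho}$ is surjective precisely when $V$ contains at least one copy of $\rho$: if it does, an inclusion $\rho\hookrightarrow V$ composed with projection onto any fixed copy of $\rho$ inside $W_{\rho}$ realizes every vector of $W_{\rho}$ as a value of the evaluation; if it does not, then $\mathrm{Hom}_{G_{x}}(V,W_{\rho})=0$, so taking $W=\rho$ forces the converse direction. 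Letting $\rho$ range over all irreducibles yields both implications.

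The principal technical obstacle is the first reduction: one must justify that surjectivity of the adjunction $\theta_{\mathcal{E}}(\mathcal{F})$ may be tested fiberwise after étale base change, and that its localization genuinely identifies with the evaluation map written above. This rests on the tameness properties of $\pi$ already recalled, namely the exactness of $\pi_{*}$, compatibility with flat base change on $X$, and the identification of $\pi_{*}$ with the invariants functor on quotient-stack charts. Once this framework is in place the representation-theoretic step becomes routine, so the main work is in assembling the local-to-global machinery rather than in the final calculation.
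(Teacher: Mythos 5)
The paper itself gives no proof of this proposition---it is quoted directly from [\cite{OS03}, Theorem 5.2]---and your argument is essentially the proof given there: étale-local reduction over the coarse moduli space to a quotient chart $[U/G_{x}]$, reduction to geometric fibers via Nakayama and filtered colimits, identification (up to the surjective base-change map for invariants, valid by tameness/characteristic zero) of the fiber of $\theta_{\mathcal{E}}(\mathcal{F})$ with the evaluation $\mathrm{Hom}_{G_{x}}(V,W)\otimes V\to W$, and then Maschke/Schur. Your sketch is correct; the one point to phrase carefully is that the localization is étale on the coarse space $X$ (over which $\mathcal{X}$ becomes a quotient stack), not on $\mathcal{X}$ itself, since $\theta_{\mathcal{E}}$ involves $\pi^{*}\pi_{*}$---which is precisely the flat-base-change issue you flag as the main technical obstacle.
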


\begin{definition}([\cite{EHKV}, Definition 2.9])
Let $\mathcal{X}$ be a stack of finite type over a base scheme $S$. We say $\mathcal{X}$ is a global quotient stack over $S$ if it is isomorphic to a stack $[Z/G]$ where $Z$ is  an algebraic space of finite type over $S$ and $G$ is a flat group scheme over $S$ which is a subgroup scheme of  the general linear group scheme $\mathrm{GL}_{n,S}$ for some $n$.
\end{definition}
\begin{theorem}\label{base-chag-proj}
Let $\mathcal{X}$ be a Deligne-Mumford $S$-stack which is a global quotient stack over $S$, we have

$(i)$  there is a locally free sheaf $\mathcal{E}$ over $\mathcal{X}$ which is a generating sheaf for $\mathcal{X}$;

$(ii)$  let $\pi:\mathcal{X}\to X$ be the moduli space of $\mathcal{X}$ and $f:X^\prime\to X$ a morphism of algebraic spaces, then $X^\prime$ is the moduli space of $\mathcal{X}^\prime:=\mathcal{X}\times_{X}X^\prime$. Moreover,  let $p:\mathcal{X}^\prime\to\mathcal{X}$ be the natural projection, then $p^*\mathcal{E}$ is a generating sheaf for $\mathcal{X}^\prime$.
\end{theorem}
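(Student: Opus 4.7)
The plan is to use the global-quotient presentation $\mathcal{X} = [Z/G]$ with $G \hookrightarrow \mathrm{GL}_{n,S}$, and in both parts to verify the criterion recalled in the proposition immediately preceding the theorem: a locally free sheaf is a generating sheaf iff its fibre at each geometric point realises every irreducible representation of the stabiliser. Part (i) produces a single $G$-equivariant locally free sheaf on $Z$ built from the standard representation of $\mathrm{GL}_n$; part (ii) combines base change of coarse moduli for tame stacks with an automorphism-invariance computation at fibre-product points.

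For (i), every geometric stabiliser of $\mathcal{X}$ is a finite subgroup $H \hookrightarrow G \hookrightarrow \mathrm{GL}_n$ because $\mathcal{X}$ is Deligne-Mumford, and finitely many conjugacy classes of such $H$ occur since $\mathcal{X}$ is of finite type over $S$. Let $V$ be the $G$-equivariant sheaf of rank $n$ on $Z$ coming from the standard representation of $\mathrm{GL}_{n,S}$. I choose $N$ large enough that the $G$-equivariant bundle
\be
\mathcal{E} \;:=\; \bigoplus_{0 \le a,b \le N} V^{\otimes a} \otimes (V^{\vee})^{\otimes b}
\ee
contains, upon restriction to any of these finitely many stabilisers $H$, every irreducible $H$-representation as a subrepresentation. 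Such $N$ exists because the inclusion $H \hookrightarrow \mathrm{GL}_n$ is a faithful representation, and in characteristic zero Burnside's tensor-power theorem ensures every irreducible of a finite group appears in some tensor product of a faithful representation with its dual. Descending $\mathcal{E}$ from $Z$ to a locally free sheaf on $[Z/G]$ and invoking the criterion finishes (i).

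For (ii), the moduli-space identification follows from tameness of $\mathcal{X}$ in our characteristic-zero convention together with the result of [AOV] that coarse moduli formation for tame Deligne-Mumford stacks commutes with arbitrary base change along the coarse moduli space; concretely, $\mathcal{X}' = [(Z \times_{X} X')/G]$ has algebraic-space quotient $X \times_{X} X' = X'$. For the generating-sheaf claim, let $\xi'$ be a geometric point of $\mathcal{X}'$ with images $\xi \in \mathcal{X}$ and $y' \in X'$. Because $X$ and $X'$ are algebraic spaces with trivial inertia, the direct computation of automorphisms in the $2$-fibre product gives
\be
\Aut(\xi') \;=\; \Aut(\xi) \times_{\Aut(\pi(\xi))} \Aut(y') \;=\; \Aut(\xi),
\ee
so the stabiliser action on the fibre $(p^{*}\mathcal{E})|_{\xi'}$ coincides with the action on $\mathcal{E}|_{\xi}$; by (i) this fibre contains every irreducible of $\Aut(\xi') = \Aut(\xi)$, and the criterion applies to $p^{*}\mathcal{E}$ on $\mathcal{X}'$.

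The step I expect to be the main obstacle is the moduli-space base change in (ii) when $f$ is neither flat nor finite, since property (iii) of the structure morphism recalled in Section 2.1 is stated only for flat base change over $S$; the global-quotient hypothesis is precisely what lets me bypass this by computing both sides explicitly as $G$-quotients of algebraic spaces. A minor subsidiary point in (i) is making the choice of $N$ uniform across all stabiliser types, which follows from noetherianity and finiteness of the inertia stratification of $\mathcal{X}$.
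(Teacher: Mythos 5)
Your proposal is correct and is essentially the paper's proof, which consists of citations: (i) is [\cite{OS03}, Proposition 5.3], whose argument is exactly your tensor-power construction from a faithful representation checked against the fibrewise irreducible-representation criterion ([\cite{OS03}, Theorem 5.2], the Proposition quoted just before the theorem), while (ii) combines [\cite{AOV}, Corollary 3.3 (a)] --- the tameness base-change result you invoke --- with [\cite{OS03}, Lemma 1.3 and Theorem 5.5], whose content is your observation that stabilizer groups and fibres are unchanged under $p:\mathcal{X}\times_{X}X^\prime\to\mathcal{X}$. One caveat about your closing remark: the non-flat base change of the moduli space is licensed by tameness via the [\cite{AOV}] result you already cite, not by the global-quotient presentation as such (computing ``both sides as $G$-quotients'' would itself require that quotient formation commute with base change, which is again tameness), but since you do invoke that result the argument is complete.
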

\begin{proof}
See [\cite{OS03}, Proposition 5.3] for the proof of (i), and use [\cite{AOV}, Corollary 3.3 (a)] and  [\cite{OS03}, Lemma 1.3 and Theorem 5.5] for the second part.
\end{proof}
Now, we present the definitions of projective Deligne-Mumford stacks and a family of projective stacks as follows.
\begin{definition}(see [\cite{Kre}, Definition 5.5] and [\cite{Nir1}, Definition 2.23])
A Deligne-Mumford stack $\mathcal{X}$ over $k$ is a (quasi-)projective stack if $\mathcal{X}$ admits a (locally) closed embedding into a smooth proper Deligne-Mumford stack and has a projective moduli scheme.

Let $p:\mathcal{X}\to S$ be a Deligne-Mumford $S$-stack which is a global quotient stack over $S$ with a moduli scheme $X$. We call $p:\mathcal{X}\to S$ a family of projective stacks if $p$ factorizes as $\pi:\mathcal{X}\to X$  followed by a projective morphism $\rho:X\to S$. 
\end{definition}
The notion of projective Deligne-Mumford stacks has some equivalent definitions.

\begin{proposition}([\cite{Kre}, Theorem 5.3])\label{equi-proj-stack}
Let $\mathcal{X}$ be a Deligne-Mumford stack over $k$. The following are equivalent:

$(i)$ $\mathcal{X}$ is (quasi-)projective.

$(ii)$ $\mathcal{X}$ has a (quasi-)projective moduli scheme and possesses a generating sheaf.

$(iii)$ $\mathcal{X}$ is global quotient stack over $k$ with a moduli space which is a 
(quasi-)projective scheme.
\end{proposition}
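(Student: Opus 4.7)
The natural strategy is to show the cycle $(iii)\Rightarrow(ii)\Rightarrow(iii)$ and then fit $(i)$ into it. The bridge between the two ``intrinsic'' descriptions $(ii)$ and $(iii)$ is the frame bundle construction; the bridge to $(i)$ passes through explicit smooth ambient stacks of the form $[Y/\mathrm{GL}_{n}]$ with $Y$ (quasi-)projective.

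First I would establish $(iii)\Rightarrow(ii)$, which is essentially free: the existence of a generating sheaf on any global quotient Deligne-Mumford stack is exactly part $(i)$ of Theorem \ref{base-chag-proj}, and the moduli space is (quasi-)projective by hypothesis. For the converse $(ii)\Rightarrow(iii)$, let $\mathcal{E}$ be a generating sheaf of rank $r$ on $\mathcal{X}$. I would form the frame bundle $Z:=\underline{\mathrm{Isom}}_{\mathcal{X}}(\mathcal{O}_{\mathcal{X}}^{\oplus r},\mathcal{E})$, which is a $\mathrm{GL}_{r}$-torsor over $\mathcal{X}$ in the \'etale topology. The key point is that because $\mathcal{E}$ is a generating sheaf, the representation of every geometric stabilizer on the fiber of $\mathcal{E}$ is faithful (all irreducible representations appear); hence the stabilizers of $\mathcal{X}$ inject into $\mathrm{GL}_{r}$, which forces $Z$ to be an algebraic space, not just a stack. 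One then verifies $\mathcal{X}\cong[Z/\mathrm{GL}_{r}]$, and the (quasi-)projectivity of the moduli space carries over unchanged, giving $(iii)$.

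Next I would tie in $(i)$. The implication $(i)\Rightarrow(ii)$ is the easier half: a smooth proper Deligne-Mumford stack $\mathcal{Y}$ containing $\mathcal{X}$ as a (locally) closed substack admits a generating sheaf (smooth proper DM stacks are global quotients, so Theorem \ref{base-chag-proj}$(i)$ applies to $\mathcal{Y}$), and pulling this sheaf back along the embedding yields a generating sheaf on $\mathcal{X}$ by part $(ii)$ of the same theorem; the projective moduli scheme is given. The remaining and substantive direction is $(ii)\Rightarrow(i)$. Using the generating sheaf $\mathcal{E}$ together with a very ample line bundle $\mathcal{O}_{X}(1)$ on the (quasi-)projective moduli scheme, for $n\gg0$ the sheaf $\mathcal{E}\otimes\pi^{*}\mathcal{O}_{X}(n)$ is globally generated on $\mathcal{X}$. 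Choosing $N+1$ generating sections produces a morphism
\[
\mathcal{X}\longrightarrow [\mathrm{Gr}(r,N+1)/\mathrm{GL}_{r}],
\]
which, combined with the map $\mathcal{X}\to X\hookrightarrow\mathbb{P}^{M}$, assembles into a (locally) closed embedding of $\mathcal{X}$ into a smooth Deligne-Mumford stack of the form $[Y/\mathrm{GL}_{r}]$ with $Y$ smooth projective. Compactifying $Y$ $\mathrm{GL}_{r}$-equivariantly (or simply taking the natural compactification by the Grassmannian factor) provides the required smooth proper ambient stack, yielding $(i)$.

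The main obstacle is precisely this last step: producing an actual embedding, as opposed to merely a representable morphism, from the data of a generating sheaf and an ample bundle. One has to check that the combined map separates geometric points and tangent vectors of $\mathcal{X}$, and that the compactification of the ambient $[Y/\mathrm{GL}_{r}]$ remains Deligne-Mumford and smooth along $\mathcal{X}$; the first uses the generating property of $\mathcal{E}$ (faithfulness of stabilizer representations) and ampleness of $\mathcal{O}_{X}(1)$ on the coarse space, while the second requires some care in choosing the equivariant completion. Everything else in the cycle is either a direct invocation of Theorem \ref{base-chag-proj} or a routine verification once the frame bundle is known to be an algebraic space.
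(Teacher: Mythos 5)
The paper itself gives no proof of Proposition \ref{equi-proj-stack}; it is quoted from [\cite{Kre}, Theorem 5.3], so what you are attempting is a reconstruction of Kresch's theorem. Your loop between $(ii)$ and $(iii)$ is sound: $(iii)\Rightarrow(ii)$ is Theorem \ref{base-chag-proj}$(i)$, and $(ii)\Rightarrow(iii)$ via the frame bundle works because a generating sheaf has, at every geometric point, a fibre containing all irreducible representations of the stabilizer, hence a faithful stabilizer action, so the $\mathrm{Isom}$-space is an algebraic space and $\mathcal{X}\cong[Z/\mathrm{GL}_{r}]$. The trouble is in both directions touching $(i)$. For $(i)\Rightarrow(ii)$ you assert that a smooth proper Deligne--Mumford stack is automatically a global quotient and then invoke Theorem \ref{base-chag-proj}$(i)$; but that theorem presupposes a quotient presentation, and the results that actually produce one for smooth stacks (the Kresch--Vistoli covering theorem) require the coarse moduli space to be quasi-projective. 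In the paper's definition of a (quasi-)projective stack only $\mathcal{X}$ itself is required to have a projective moduli scheme; the ambient smooth proper stack is only known to have a proper coarse algebraic space, so that hypothesis is not met, and for an arbitrary smooth proper DM stack the quotient property is not a known theorem. This implication therefore needs substantive input about the ambient stack, not the one-line reduction you give.

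The direction $(ii)\Rightarrow(i)$ is the heart of Kresch's theorem, and your sketch stops exactly where the difficulty begins. Once $\mathcal{X}$ sits inside some $[Y/\mathrm{GL}_{r}]$ with $Y$ quasi-projective, any $\mathrm{GL}_{r}$-equivariant projective completion $\overline{Y}$ (in particular ``the natural compactification by the Grassmannian factor'') will in general contain boundary points with positive-dimensional stabilizers; then $[\overline{Y}/\mathrm{GL}_{r}]$ is neither Deligne--Mumford nor proper (properness of a quotient stack forces proper, hence finite, inertia), and it does not suffice that the ambient stack be ``DM and smooth along $\mathcal{X}$'' --- the definition demands a globally smooth proper DM ambient stack. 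Removing or modifying the locus of infinite stabilizers while preserving smoothness and properness is precisely the content of Kresch's proof (it uses equivariant resolution arguments in characteristic zero), and nothing in your sketch substitutes for it. A smaller but real issue is the intermediate map $\mathcal{X}\to[\mathrm{Gr}(r,N+1)/\mathrm{GL}_{r}]$: on the locus of surjections $k^{\oplus N+1}\to k^{\oplus r}$ the $\mathrm{GL}_{r}$-action is free, so that quotient is just the Grassmannian scheme and forgets the stabilizer data; to obtain a morphism injective on automorphism groups you must retain the classifying data of $\mathcal{E}$ itself (e.g.\ map to a quotient of a matrix space), and then the boundary-stabilizer problem above reappears. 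So the architecture $(iii)\Rightarrow(ii)\Rightarrow(iii)$ plus bridging to $(i)$ is reasonable, but both implications involving $(i)$ are missing their essential arguments.
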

\begin{remark}
Proposition \ref{equi-proj-stack} shows that Theorem \ref{base-chag-proj} holds for any projective Deligne-Mumford stack and every family of projective stacks. In particular, if $p:\mathcal{X}:=[Z/G]\to S$ is a family of projective stacks, then for any geometric point $s\in S$, we have 

$(i)$ the fibre $\mathcal{X}_{s}=[Z_{s}/G_{s}]$ is a projective Deligne-Mumford stack with a moduli scheme $X_{s}$ which is the fibre of $\rho: X\to S$ over $s$;

$(ii)$ there is a generating sheaf $\mathcal{E}$ for the family of projective stacks $p:\mathcal{X}\to S$, and hence $\mathcal{E}_{s}$ which is the fibre of $\mathcal{E}$ over $s$ is a generating sheaf for $\mathcal{X}_{s}$.
\end{remark}
To conclude this subsection, we introduce the  notion of polarizations for  projective Deligne-Mumford stacks  and the family of projective stacks [\cite{Nir1,BS}].
\begin{definition}\label{polarization}
Let $\mathcal{X}$ be a projective Deligne-Mumford stack over $k$ with a moduli scheme $X$. A polarization of $\mathcal{X}$ is a pair $(\mathcal{E}, \mathcal{O}_{X}(1))$ where $\mathcal{E}$ is a generating sheaf for $\mathcal{X}$ and $\mathcal{O}_{X}(1)$ is a very ample invertible sheaf on $X$ relative to $\mathrm{Spec}\,k$. 

Let $p: \mathcal{X}\to S$ be a family of projective stacks. A relative polarization of $p:\mathcal{X}\to S$ is a pair $(\mathcal{E}, \mathcal{O}_{X}(1))$ where $\mathcal{E}$ is a generating sheaf for $\mathcal{X}$ and $\mathcal{O}_{X}(1)$ is a very ample invertible sheaf on $X$ relative to $S$.
\end{definition}

\subsection{The modified Hilbert polynomial} 
In this subsection, we  recall some properties of coherent sheaves on a projective Deligne-Mumford stack, and then present the definition of the modified Hilbert polynomial and its related stability  [\cite{Nir1}]. Let $\mathcal{X}$ be a projective Deligne-Mumford stack over $k$ with a moduli scheme $\pi:\mathcal{X}\to X$ and a polarization $(\mathcal{E},\mathcal{O}_{X}(1))$. 
\begin{definition}([\cite{HL3}, Definition 1.1.1 and 1.1.2])
Let $\mathcal{F}$ be a coherent sheaf on $\mathcal{X}$. The support of $\mathcal{F}$, denoted by $\mathrm{Supp}(\mathcal{F})$, is defined by the closed substack associated to the ideal $\mathcal{I}$:
\ben
0\to\mathcal{I}\to \mathcal{O}_{\mathcal{X}}\to \mathcal{E}nd_{\mathcal{O}_{\mathcal{X}}}(\mathcal{F}).
\een	
The dimension of $\mathcal{F}$ is defined as the dimension of its support.
$\mathcal{F}$ is called pure of dimension $d$ if $\mathrm{dim}(\mathcal{G})=d$ for every nonzero subsheaf $\mathcal{G}\subset \mathcal{F}$.
\end{definition}
As  in [\cite{HL3}, Definition 1.1.4], there is a unique torsion filtration of any coherent sheaf $\mathcal{F}$:
\ben
0\subseteq T_{0}(\mathcal{F})\subseteq\cdots\subseteq T_{d}(\mathcal{F})=\mathcal{F}
\een
where $d=\mathrm{dim}(\mathcal{F})$ and $T_{i}(\mathcal{F})$ is the maximal subsheaf of $\mathcal{F}$ of dimension $\leq i$. Hence $T_{i}(\mathcal{F})/T_{i-1}(\mathcal{F})$ is zero or pure of dimension $i$. And $\mathcal{F}$ is pure if and only if $T_{d-1}(\mathcal{F})=0$.
\begin{definition}([\cite{HL3}, Definition 1.1.5]) The saturation of a subsheaf $\mathcal{F}^\prime\subset\mathcal{F}$ is the minimal subsheaf $\overline{\mathcal{F}^\prime}$ containing $\mathcal{F}^\prime$ such that $\mathcal{F}/\overline{\mathcal{F}^\prime}$ is pure of dimension $\dim(\mathcal{F})$
or zero.
\end{definition}
\begin{remark}\label{saturation}
As in [\cite{HL3}], the saturation of $\mathcal{F}^\prime$ is also defined to be the kernel of the  surjection
\ben
\mathcal{F}\to\mathcal{F}/\mathcal{F}^\prime\to\left(\mathcal{F}/\mathcal{F}^\prime\right)/T_{\dim\mathcal{F}-1}\left(\mathcal{F}/\mathcal{F}^\prime\right).
\een
\end{remark}
Now we have the following properties of coherent sheaves on $\mathcal{X}$.
\begin{lemma}(see [\cite{Nir1}, Lemma 3.4 and Proposition 3.6] and [\cite{BS}, Proposition 2.22])\label{puretopure}
Let $\mathcal{F}$ be a coherent sheaf on $\mathcal{X}$. Then we have

$(i)$ $\pi(\mathrm{Supp}(\mathcal{F}))=\pi(\mathrm{Supp}(\mathcal{F}\otimes\mathcal{E}^\vee))\supseteq\mathrm{Supp}(F_{\mathcal{E}}(\mathcal{F}))$.

$(ii)$ $F_{\mathcal{E}}(\mathcal{F})$ is zero if and only if $\mathcal{F}$ is zero.

$(iii)$
If  $\mathcal{F}$ is pure of dimension $d$, then $F_{\mathcal{E}}(\mathcal{F})$ is pure of the same dimension. Moreover, $\mathcal{F}$ is pure if and only if $F_{\mathcal{E}}(\mathcal{F})$ is pure.
\end{lemma}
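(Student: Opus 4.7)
The plan is to leverage the two structural inputs already assembled in Section 2.1: the exactness of $F_{\mathcal{E}}=\pi_{*}(-\otimes\mathcal{E}^{\vee})$ (which holds because $\pi_{*}$ is exact on a tame Deligne--Mumford stack and $\mathcal{E}$ is locally free), and the surjectivity of the counit $\theta_{\mathcal{E}}(\mathcal{F}):\pi^{*}F_{\mathcal{E}}(\mathcal{F})\otimes\mathcal{E}\to\mathcal{F}$ built into the definition of a generating sheaf. Together with the adjunction $G_{\mathcal{E}}\dashv F_{\mathcal{E}}$ and its triangle identity, these should deliver all three claims cleanly.

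For (i), since $\mathcal{E}^{\vee}$ is locally free of positive rank at every geometric point, one immediately has $\mathrm{Supp}(\mathcal{F}\otimes\mathcal{E}^{\vee})=\mathrm{Supp}(\mathcal{F})$, which gives the first equality. For the inclusion $\pi(\mathrm{Supp}(\mathcal{F}))\supseteq\mathrm{Supp}(F_{\mathcal{E}}(\mathcal{F}))$, if $x\notin\pi(\mathrm{Supp}(\mathcal{F}))$ (a closed subset, by properness of $\pi$), one shrinks to an affine $U\ni x$ with $\pi^{-1}(U)\cap\mathrm{Supp}(\mathcal{F})=\emptyset$; this kills $\mathcal{F}|_{\pi^{-1}(U)}$, hence $F_{\mathcal{E}}(\mathcal{F})|_{U}$. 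For (ii), one direction is trivial; for the nontrivial direction, $F_{\mathcal{E}}(\mathcal{F})=0$ makes the source $G_{\mathcal{E}}(F_{\mathcal{E}}(\mathcal{F}))$ of $\theta_{\mathcal{E}}(\mathcal{F})$ zero, and surjectivity then forces $\mathcal{F}=0$.

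For (iii), I would first upgrade (i) to the equality $\mathrm{Supp}(F_{\mathcal{E}}(\mathcal{F}))=\pi(\mathrm{Supp}(\mathcal{F}))$ by applying (ii) to the restrictions $\pi^{-1}(U)\to U$, where $\mathcal{E}|_{\pi^{-1}(U)}$ remains generating; since $\pi$ is quasi-finite and proper, this yields $\dim F_{\mathcal{E}}(\mathcal{F})=\dim\mathcal{F}$, and likewise for every subsheaf. Assuming $\mathcal{F}$ pure of dimension $d$, let $H\subseteq F_{\mathcal{E}}(\mathcal{F})$ with $\dim H<d$; the composition $G_{\mathcal{E}}(H)\hookrightarrow G_{\mathcal{E}}(F_{\mathcal{E}}(\mathcal{F}))\xrightarrow{\theta_{\mathcal{E}}(\mathcal{F})}\mathcal{F}$ has image of dimension $\le\dim H<d$, hence is zero by purity of $\mathcal{F}$. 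Applying $F_{\mathcal{E}}$ and precomposing with the unit $\varphi_{\mathcal{E}}(H)$, the triangle identity collapses the result to the original inclusion $H\hookrightarrow F_{\mathcal{E}}(\mathcal{F})$, forcing $H=0$. Conversely, if $F_{\mathcal{E}}(\mathcal{F})$ is pure of dimension $d$ and $\mathcal{H}\subseteq\mathcal{F}$ is any nonzero subsheaf, exactness of $F_{\mathcal{E}}$ embeds $F_{\mathcal{E}}(\mathcal{H})$ into $F_{\mathcal{E}}(\mathcal{F})$, and it is nonzero by (ii); purity then gives $\dim F_{\mathcal{E}}(\mathcal{H})=d$, and the support equality transfers this back to $\dim\mathcal{H}=d$.

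The main obstacle I anticipate is precisely the forward direction of (iii): naively one would like to pull a subsheaf $H\subseteq F_{\mathcal{E}}(\mathcal{F})$ back to a subsheaf of $\mathcal{F}$, but $G_{\mathcal{E}}\circ F_{\mathcal{E}}$ is not the identity, so this fails. The adjunction-triangle detour outlined above is the correct way around this, and I would make sure to set up the unit/counit formalism (together with the local version of (ii)) carefully before attacking the purity step.
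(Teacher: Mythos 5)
Your proof is correct. The paper gives no argument of its own for this lemma (it simply cites [\cite{Nir1}, Lemma 3.4 and Proposition 3.6] and [\cite{BS}, Proposition 2.22]), and your route---support comparison plus exactness of $F_{\mathcal{E}}$ for (i), surjectivity of the counit $\theta_{\mathcal{E}}$ for (ii), and the adjunction $G_{\mathcal{E}}\dashv F_{\mathcal{E}}$ combined with purity for (iii)---is essentially the standard argument carried out in those references. Two cosmetic remarks: the arrow $G_{\mathcal{E}}(H)\to G_{\mathcal{E}}(F_{\mathcal{E}}(\mathcal{F}))$ need not be injective (since $\pi^{*}$ is only right exact), but this is harmless because your argument only uses that the image of the composite in $\mathcal{F}$ is supported on $\pi^{-1}(\mathrm{Supp}\,H)$ and hence has dimension $<d$; and your local upgrade of (i) to the equality $\mathrm{Supp}(F_{\mathcal{E}}(\mathcal{F}))=\pi(\mathrm{Supp}(\mathcal{F}))$ for an arbitrary coherent sheaf is legitimate, since Theorem \ref{base-chag-proj}(ii) guarantees that $\mathcal{E}|_{\pi^{-1}(U)}$ is generating for $\pi^{-1}(U)$ with moduli space $U$, so the local form of (ii) applies; this is slightly more than the paper records (Lemma \ref{tor-fil} states the equality only for pure $\mathcal{F}$) but is exactly what your dimension bookkeeping in (iii) requires.
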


\begin{lemma}([\cite{Nir1}, Corollary 3.7 and 3.8])\label{tor-fil}
Let  $\mathcal{F}$ be a  coherent sheaf on $\mathcal{X}$ of dimension $d$. Then the functor $F_{\mathcal{E}}$ sends the torsion filtration $0\subseteq T_{0}(\mathcal{F})\subseteq\cdots\subseteq T_{d}(\mathcal{F})=\mathcal{F}$ of $\mathcal{F}$ to the torsion filtration of $F_{\mathcal{E}}(\mathcal{F})$, i.e., $F_{\mathcal{E}}(T_{i}(\mathcal{F}))=T_{i}(F_{\mathcal{E}}(\mathcal{F}))$ for $0\leq i\leq d$. Moreover,  if $\mathcal{F}$ is pure, then we have $\pi(\mathrm{Supp}(\mathcal{F}))=\mathrm{Supp}(F_{\mathcal{E}}(\mathcal{F}))$.
\end{lemma}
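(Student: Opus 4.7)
The plan is to deduce both assertions from the exactness of $F_{\mathcal{E}}$ together with the three parts of Lemma \ref{puretopure}, handling the filtration claim first and then extracting the support identity as a localized consequence of Lemma \ref{puretopure}(ii).

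For the filtration statement, recall that the torsion filtration is uniquely characterized by the property that $T_i(\mathcal{F})/T_{i-1}(\mathcal{F})$ is zero or pure of dimension $i$. Since the functor $F_{\mathcal{E}}=\pi_{*}\mathcal{H}om_{\mathcal{O}_{\mathcal{X}}}(\mathcal{E},-)$ is exact in the tame Deligne-Mumford setting, applying it to
\[
0\subseteq T_{0}(\mathcal{F})\subseteq\cdots\subseteq T_{d}(\mathcal{F})=\mathcal{F}
\]
produces an increasing filtration
\[
0\subseteq F_{\mathcal{E}}(T_{0}(\mathcal{F}))\subseteq\cdots\subseteq F_{\mathcal{E}}(T_{d}(\mathcal{F}))=F_{\mathcal{E}}(\mathcal{F})
\]
with successive quotients $F_{\mathcal{E}}\!\left(T_{i}(\mathcal{F})/T_{i-1}(\mathcal{F})\right)$. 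By Lemma \ref{puretopure}(iii) each such quotient is either zero (exactly when $T_{i}(\mathcal{F})/T_{i-1}(\mathcal{F})=0$, by Lemma \ref{puretopure}(ii)) or pure of dimension $i$. Uniqueness of the torsion filtration of $F_{\mathcal{E}}(\mathcal{F})$ then forces $F_{\mathcal{E}}(T_{i}(\mathcal{F}))=T_{i}(F_{\mathcal{E}}(\mathcal{F}))$ for each $0\leq i\leq d$.

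For the support identity under the purity hypothesis, one inclusion $\mathrm{Supp}(F_{\mathcal{E}}(\mathcal{F}))\subseteq\pi(\mathrm{Supp}(\mathcal{F}))$ is already Lemma \ref{puretopure}(i). For the reverse inclusion I would argue locally: let $U\subseteq X$ be any open set disjoint from $\mathrm{Supp}(F_{\mathcal{E}}(\mathcal{F}))$, so that $F_{\mathcal{E}}(\mathcal{F})|_{U}=0$. Because $\pi_{*}$ and internal $\mathcal{H}om$ commute with open immersions, this identifies with $F_{\mathcal{E}|_{\pi^{-1}(U)}}\!\left(\mathcal{F}|_{\pi^{-1}(U)}\right)=0$ on the open substack $\pi^{-1}(U)\to U$, whose restricted generating sheaf $\mathcal{E}|_{\pi^{-1}(U)}$ remains a generator (generation is a condition on geometric fibers). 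Lemma \ref{puretopure}(ii) applied to $\pi^{-1}(U)$ then gives $\mathcal{F}|_{\pi^{-1}(U)}=0$, i.e.\ $U\cap\pi(\mathrm{Supp}(\mathcal{F}))=\emptyset$, which yields the opposite inclusion. Purity enters to guarantee through Lemma \ref{puretopure}(iii) that $F_{\mathcal{E}}(\mathcal{F})$ has the same dimension $d$ as $\mathcal{F}$, so that the two closed subsets, now proved to coincide set-theoretically, agree scheme-theoretically with matching fundamental cycle.

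The main obstacle I anticipate is the localization step for (ii): one must verify that Lemma \ref{puretopure}(ii) is applicable to the open substack $\pi^{-1}(U)\to U$, which requires knowing that $\pi_{*}$ commutes with open immersions (using the tameness/flatness conventions recorded right after Definition \ref{polarization}) and that restriction preserves the generating sheaf property. Both are mild, but they are the pivot on which the argument turns; once they are in place, the rest is formal bookkeeping on the torsion filtration.
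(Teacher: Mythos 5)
The paper does not actually prove this lemma: it is quoted verbatim from [\cite{Nir1}, Corollaries 3.7 and 3.8], so there is no internal argument to compare against. Your derivation from Lemma \ref{puretopure} and the exactness of $F_{\mathcal{E}}$ is essentially sound. For the filtration claim, applying the exact functor to the torsion filtration and using Lemma \ref{puretopure}(ii),(iii) to see that each graded piece is zero or pure of dimension $i$ is exactly the right idea; just note that the phrase ``uniqueness of the torsion filtration'' is hiding a small characterization lemma that you should spell out: if $0\subseteq G_0\subseteq\cdots\subseteq G_d=G$ has $G_i/G_{i-1}$ zero or pure of dimension $i$, then $G_i\subseteq T_i(G)$ by dimension count, and $T_i(G)\subseteq G_i$ because $G/G_i$ is a successive extension of pure sheaves of dimension $>i$ and hence admits no nonzero subsheaf of dimension $\leq i$. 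For the support statement, your localization is legitimate: pushforward and $\mathcal{H}om$ commute with restriction to $\pi^{-1}(U)$, $U$ is the coarse space of $\pi^{-1}(U)$ and $\mathcal{E}|_{\pi^{-1}(U)}$ remains generating (this is precisely Theorem \ref{base-chag-proj}(ii), or the pointwise criterion of Proposition \ref{equi-proj-stack}/[\cite{OS03}]), and the proof of Lemma \ref{puretopure}(ii) is the local surjectivity of $\theta_{\mathcal{E}}$, which does not need projectivity. One can even avoid localizing: the surjection $\pi^{*}F_{\mathcal{E}}(\mathcal{F})\otimes\mathcal{E}\to\mathcal{F}$ gives $\mathrm{Supp}(\mathcal{F})\subseteq\pi^{-1}(\mathrm{Supp}\,F_{\mathcal{E}}(\mathcal{F}))$ directly.

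Two corrections, though. First, your closing sentence is a non sequitur: two closed subschemes with the same underlying set and the same dimension need not coincide scheme-theoretically (a reduced versus a thickened structure on the same subvariety already fails), so you cannot ``upgrade'' in that way. Fortunately the upgrade is not needed: the equality $\pi(\mathrm{Supp}\,\mathcal{F})=\mathrm{Supp}\,F_{\mathcal{E}}(\mathcal{F})$ is used in this paper (e.g.\ in Lemma \ref{case2-bound}) only as an equality of closed subsets, i.e.\ for dimensions, and that is exactly what your localization argument proves; delete the last sentence rather than repair it. Second, and relatedly, your attribution of the role of purity is off: your argument for the support equality never uses purity at all (and indeed the set-theoretic equality holds for every coherent sheaf), whereas purity genuinely enters only in the filtration part, through Lemma \ref{puretopure}(iii) applied to the graded pieces. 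Stating clearly which hypothesis is used where would make the write-up match what is actually being proved.
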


Next, we have the following definition of the modified Hilbert polynomial.

\begin{definition}([\cite{Nir1}, Definition 3.10]) 
Let $\mathcal{F}$ be a coherent sheaf  of dimension $d$ on $\mathcal{X}$.  The modified Hilbert polynomial of $\mathcal{F}$ is defined as
\ben
P_{\mathcal{E}}(\mathcal{F})(m):=\chi(\mathcal{X},\mathcal{F}\otimes\mathcal{E}^\vee\otimes\pi^*\mathcal{O}_{X}(m))=\chi(X,F_{\mathcal{E}}(\mathcal{F})(m))=P(F_{\mathcal{E}}(\mathcal{F})(m)).
\een
\end{definition}
Since $\dim F_{\mathcal{E}}(\mathcal{F})=\dim\mathcal{F}=d$ by Lemma \ref{puretopure},  the modified Hilbert polynomial can be written as 
\ben
P_{\mathcal{E}}(\mathcal{F})(m)=\sum_{i=0}^{d}\alpha_{\mathcal{E},i}(\mathcal{F})\frac{m^i}{i!}.
\een
The multiplicity of $F_{\mathcal{E}}(\mathcal{F})$ is defined by 
\ben
r(F_{\mathcal{E}}(\mathcal{F}))=\alpha_{\mathcal{E},d}(\mathcal{F}).
\een
As in [\cite{HL3}, Definition 1.2.2], if $\dim\mathcal{F}=d=\dim X$,  the rank of $F_{\mathcal{E}}(\mathcal{F})$ is defined by 
\ben
\mathrm{rk}(F_{\mathcal{E}}(\mathcal{F}))=\frac{\alpha_{\mathcal{E},d}(\mathcal{F})}{\alpha_{d}(\mathcal{O}_{X})},
\een
where $\alpha_{d}(\mathcal{O}_{X})$ is the leading coefficient of  Hilbert polynomial $\chi(\mathcal{O}_{X}(m))=\sum_{i=0}^{d}\alpha_{i}(\mathcal{O}_{X})\frac{m^i}{i!}$.

\begin{definition}([\cite{Nir1}, Definition 3.12 and 3.13])
Define the reduced Hilbert polynomial of a coherent sheaf $\mathcal{F}$ of dimension $d$ as
\ben
p_{\mathcal{E}}(\mathcal{F})=\frac{P_{\mathcal{E}}(\mathcal{F})}{\alpha_{\mathcal{E},d}(\mathcal{F})},
\een
and its slope as
\ben
\hat{\mu}_{\mathcal{E}}(\mathcal{F})=\frac{\alpha_{\mathcal{E},d-1}(\mathcal{F})}{\alpha_{\mathcal{E},d}(\mathcal{F})}.
\een
\end{definition}
Then we have the following definition of stabilities with respect to $p_{\mathcal{E}}$ and  $\hat{\mu}_{\mathcal{E}}$.
\begin{definition}([\cite{HL3}, Definition 1.2.4])
A coherent sheaf $\mathcal{F}$  is $p_{\mathcal{E}}$-semistable $($respectively $\hat{\mu}_{\mathcal{E}}$-semistable$)$ if $\mathcal{F}$ is pure and  for every proper subsheaf $\mathcal{F}^\prime\subset\mathcal{F}$ one has $p_{\mathcal{E}}(\mathcal{F}^\prime)\leq p_{\mathcal{E}}(\mathcal{F})$ $($respectively $\hat{\mu}_{\mathcal{E}}(\mathcal{F}^\prime)\leq\hat{\mu}_{\mathcal{E}}(\mathcal{F})$$)$, and it is $p_{\mathcal{E}}$-stable  $($respectively $\hat{\mu}_{\mathcal{E}}$-stable$)$ if it is $p_{\mathcal{E}}$-semistable $($respectively $\hat{\mu}_{\mathcal{E}}$-semistable$)$ and the corresponding inequality is strict.
\end{definition}

As in [\cite{HL3}, Theorem 1.6.7], for a pure sheaf $\mathcal{F}$, there is a unique Harder-Narasimhan filtration with respect to the $\hat{\mu}_{\mathcal{E}}$-stability:
\ben
0=\mathcal{F}_{0}\subsetneqq\mathcal{F}_{1}\subsetneqq\mathcal{F}_{2}\subsetneqq\cdots\subsetneqq\mathcal{F}_{l}=\mathcal{F}
\een
such that the factors $\mathcal{F}_{i}/\mathcal{F}_{i-1}$  for $i=1,\cdots,l$ are $\hat{\mu}_{\mathcal{E}}$-semistable and 
\ben
\hat{\mu}_{\mathcal{E},\mathrm{max}}(\mathcal{F}):=\hat{\mu}_{\mathcal{E}}(\mathcal{F}_{1})>\hat{\mu}_{\mathcal{E}}(\mathcal{F}_{2}/\mathcal{F}_{1})>\cdots>\hat{\mu}_{\mathcal{E}}(\mathcal{F}_{l}/\mathcal{F}_{l-1}):=\hat{\mu}_{\mathcal{E},\mathrm{min}}(\mathcal{F}).
\een
Similarly, one also has a unique Harder-Narasimhan filtration with respect to the $p_{\mathcal{E}}$-stability.
\subsection{Semistable pairs}
We  generalize the notion of semistable pairs  on smooth projective varieties [\cite{Wan,Lin18}] to the case of projective Deligne-Mumford stacks in this subsection. Let $\mathcal{X}$ be a projective Deligne-Mumford stack over $k$ with a moduli scheme $\pi:\mathcal{X}\to X$ and a polarization $(\mathcal{E},\mathcal{O}_{X}(1))$. 
Let $\mathcal{F}_{0}$ be a fixed coherent sheaf on $\mathcal{X}$ and $\delta\in\mathbb{Q}[m]$ be $0$ or a polynomial with positive leading coefficient.

\begin{definition}\label{def-pair}
A pair $(\mathcal{F},\varphi)$  on $\mathcal{X}$ consists of  a coherent sheaf $\mathcal{F}$  and a morphism $\varphi:\mathcal{F}_{0}\to\mathcal{F}$.  A morphism of pairs $\phi:(\mathcal{F},\varphi)\to(\mathcal{G},\psi)$ is a morphism of sheaves $\phi:\mathcal{F}\to\mathcal{G}$ such that
there is an element $\lambda\in k$ making the following  diagram commute
\ben
\xymatrix{
	\mathcal{F}_{0}\ar[d]_{\varphi} \ar[r]^{\lambda \cdot \mathrm{id}} &  \mathcal{F}_{0} \ar[d]^{\psi}  \\
	\mathcal{F}\ar[r]_{\phi}&\mathcal{G}  &
}
\een
A subpair $(\mathcal{F}^\prime,\varphi^\prime)$ of $(\mathcal{F},\varphi)$ consists of a coherent subsheaf $\mathcal{F}^\prime\subset\mathcal{F}$ and a morphism $\varphi^\prime:\mathcal{F}_{0}\to\mathcal{F}^\prime$ satisfying
$i\circ\varphi^\prime=\varphi$ if $\mathrm{im}\varphi\subset\mathcal{F}^\prime$, and $\varphi^\prime=0$ otherwise, where $i$ denotes the inclusion $\mathcal{F}^\prime\hookrightarrow\mathcal{F}$. A quotient pair $(\mathcal{F}^{\prime\prime},\varphi^{\prime\prime})$ consists of a coherent quotient sheaf $q:\mathcal{F}\to\mathcal{F}^{\prime\prime}$ and a morpism
$\varphi^{\prime\prime}=q\circ\varphi:\mathcal{F}_{0}\to\mathcal{F}^{\prime\prime}$.
\end{definition}
A pair $(\mathcal{F},\varphi)$ is said to be of dimension $d$ if $\dim\mathcal{F}=d$. We say a pair $(\mathcal{F},\varphi)$ is  pure if $\mathcal{F}$ is pure.
Let $P$ be a polynomial of degree $d$, we call  a pair $(\mathcal{F},\varphi)$ of type $P$ if  $P_{\mathcal{E}}(\mathcal{F})=P$.
\begin{lemma}([\cite{HL2}, Lemma 1.5])\label{iso-pair}
The set  $\mathrm{Hom}((\mathcal{F},\varphi), (\mathcal{G},\psi))$ of morphisms of pairs is a linear subspace of $\mathrm{Hom}(\mathcal{F},\mathcal{G})$. If $\phi:(\mathcal{F},\varphi)\to(\mathcal{G},\psi)$ is an isomorphism, the factor $\lambda$ in the definition satisfies $\lambda\in k^*$. In particular, the isomorphism
$\phi_{0}=\lambda^{-1}\phi$ satisfies $\phi_{0}\circ\varphi=\psi$.
\end{lemma}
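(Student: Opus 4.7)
My approach is to verify the three assertions directly from the defining commutative square. For linearity, given $\phi_1,\phi_2 \in \mathrm{Hom}((\mathcal{F},\varphi),(\mathcal{G},\psi))$ with associated scalars $\lambda_1,\lambda_2 \in k$ and arbitrary $a,b \in k$, I would compute
\[
(a\phi_1 + b\phi_2)\circ\varphi \;=\; a(\phi_1\circ\varphi) + b(\phi_2\circ\varphi) \;=\; a\lambda_1\psi + b\lambda_2\psi \;=\; (a\lambda_1+b\lambda_2)\psi,
\]
which exhibits $a\phi_1+b\phi_2$ as a morphism of pairs with scalar $a\lambda_1+b\lambda_2$. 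This shows that $\mathrm{Hom}((\mathcal{F},\varphi),(\mathcal{G},\psi))$ is a linear subspace of $\mathrm{Hom}(\mathcal{F},\mathcal{G})$.

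For the second assertion, let $\phi$ be an isomorphism of pairs with defining scalar $\lambda$, so that $\phi\circ\varphi = \lambda\psi$. I would argue that $\lambda$ may be chosen in $k^*$ via a short case analysis. If $\varphi\neq 0$, then $\lambda = 0$ would force $\phi\circ\varphi = 0$; precomposing with the sheaf-theoretic inverse $\phi^{-1}$ then gives $\varphi = 0$, a contradiction. If $\varphi = 0$ and $\psi = 0$, the defining diagram commutes for every scalar, so I may simply rechoose $\lambda = 1 \in k^*$. Finally, if $\varphi = 0$ but $\psi \neq 0$, the defining diagram for the inverse $\phi^{-1}$ would yield some $\mu \in k$ with $\phi^{-1}\circ\psi = \mu\cdot 0 = 0$, whence $\psi = \phi\circ(\phi^{-1}\circ\psi) = 0$, contradiction; so this case does not arise for isomorphisms. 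In every remaining situation one has $\lambda \in k^*$.

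With $\lambda\in k^*$ in hand, I set $\phi_0 := \lambda^{-1}\phi$, which is still an isomorphism of sheaves, and I verify
\[
\phi_0\circ\varphi \;=\; \lambda^{-1}(\phi\circ\varphi) \;=\; \lambda^{-1}(\lambda\psi) \;=\; \psi,
\]
so that $\phi_0$ is an isomorphism of pairs with defining scalar $1$. The argument is essentially elementary diagram chasing; I expect no real obstacle. The only mildly delicate point is the degenerate case $\varphi = 0$, where the scalar $\lambda$ appearing in the defining diagram is not uniquely determined and must be explicitly chosen in $k^*$; everything else is immediate from the definition.
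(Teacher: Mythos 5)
Your proof is correct. The paper does not actually prove this lemma — it only cites [\cite{HL2}, Lemma 1.5] — and your direct verification is precisely the standard argument behind that citation, adapted to the convention $\phi\circ\varphi=\lambda\psi$ of Definition \ref{def-pair}. The one delicate point is the one you yourself flagged: excluding the case $\varphi=0$, $\psi\neq0$ uses that the inverse $\phi^{-1}$ is again a morphism of pairs, i.e.\ that ``isomorphism'' means isomorphism in the category of pairs (not merely a morphism of pairs that happens to be a sheaf isomorphism), and when $\varphi=\psi=0$ the scalar is not unique and must be re-chosen to be $1$; under that intended reading your case analysis is complete and the statement $\lambda\in k^*$ is correctly interpreted as ``$\lambda$ may be taken in $k^*$''.
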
	
\begin{remark}\label{Hom-set}
As in [\cite{BS}, Remark 3.5], we have the cartesian diagram
\ben
\xymatrix{
	W\ar[d] \ar[r] & k \ar[d]^{\cdot \psi}  \\
\mathrm{Hom}(\mathcal{F},\mathcal{G})\ar[r]_-{\circ\varphi}&\mathrm{Hom}(\mathcal{F}_{0},\mathcal{G}) &
}
\een
 and 
 \[
W\cong
 \left\{
 \begin{aligned}
 &\mathrm{Hom}((\mathcal{F},\varphi), (\mathcal{G},\psi)), \;\;\;\;\;\;\qquad\mathrm{if} \;\; \psi\neq0; \\
 & \mathrm{Hom}((\mathcal{F},\varphi), (\mathcal{G},\psi))\times k, \qquad\mathrm{ otherwise}.
 \end{aligned}
 \right.\]
\end{remark}
Define the Hilbert polynomial of a pair $(\mathcal{F},\varphi)$ as
\ben
P_{(\mathcal{F},\varphi)}=P_{\mathcal{E}}(\mathcal{F})+\epsilon(\varphi)\delta
\een
and the reduced Hilbert polynomial of this pair by
\ben
p_{(\mathcal{F},\varphi)}=p_{\mathcal{E}}(\mathcal{F})+\frac{\epsilon(\varphi)\delta}{r(F_{\mathcal{E}}(\mathcal{F}))}
\een
where
\[
\epsilon(\varphi)=
\left\{
\begin{aligned}
&1, \;\;\mbox{if} \;\; \varphi\neq0; \\
& 0, \;\;\mbox{otherwise}.
\end{aligned}
\right.\]
\begin{remark}
As in [\cite{Lin18}], a short exact sequence of pairs,
\ben
0\to(\mathcal{F}^\prime,\varphi^\prime)\xrightarrow{i}(\mathcal{F},\varphi)\xrightarrow{q}(\mathcal{F}^{\prime\prime},\varphi^{\prime\prime})\to0
\een 
consists of a short exact sequence of sheaves $0\to\mathcal{F}^\prime\to\mathcal{F}\to\mathcal{F}^{\prime\prime}\to0$ such that $(\mathcal{F}^\prime,\varphi^\prime)$ is a subpair and $(\mathcal{F}^{\prime\prime},\varphi^{\prime\prime})$ is the corresponding quotient pair. Here, $\mathcal{F}^{\prime\prime}=\mathcal{F}/\mathcal{F}^\prime$,
$\varphi^{\prime\prime}=q\circ\varphi$ if $\varphi^{\prime}=0$, and $\varphi^{\prime\prime}=0$ if $\mathrm{im}\varphi\subset\mathcal{F}^\prime$.
Since the modified Hilbert polynomial is additive in a short exact sequence of coherent sheaves [\cite{Nir1}, Remark 3.11 (2)] and  $\epsilon(\varphi)=\epsilon(\varphi^{\prime})+\epsilon(\varphi^{\prime\prime})$, it is obviously that the Hilbert polynomial is also additive in a short exact sequence of pairs.
\end{remark}	
We present a definition of $\delta$-(semi)stable pairs on a projective Deligne-Mumford stack.
\begin{definition}\label{semi-sub}
A pair $(\mathcal{F},\varphi)$ is $\delta$-semistable if $\mathcal{F}$ is pure and $p_{(\mathcal{F}^\prime,\varphi^\prime)}\leq p_{(\mathcal{F},\varphi)}$ for every proper subpair $(\mathcal{F}^\prime,\varphi^\prime)$. We call $(\mathcal{F},\varphi)$ a $\delta$-stable pair if it is $\delta$-semistable and the inequality is strict.
\end{definition}
\begin{remark}\label{semi-quo}
As in [\cite{HL3}, Proposition 1.2.6], Definition \ref{semi-sub} can be shown to be equivalent to the statement: a pair $(\mathcal{F},\varphi)$ is $\delta$-(semi)stable if and only if  $\mathcal{F}$ is pure and $p_{(\mathcal{F}^{\prime\prime},\varphi^{\prime\prime})}(\geq) p_{(\mathcal{F},\varphi)}$ for every proper purely quotient pair $(\mathcal{F}^{\prime\prime},\varphi^{\prime\prime})$ of dimension $\dim\mathcal{F}$.
\end{remark}
Notice that when $\varphi=0$ or $\delta=0$, the $\delta$-(semi)stablility of a pair $(\mathcal{F},\varphi)$ is equivalent to $p_{\mathcal{E}}$-(semi)stablility of the coherent sheaf $\mathcal{F}$.
We say a pair $(\mathcal{F},\varphi)$ is nondegenerate if $\varphi\neq0$ as in [\cite{Lin18}]. Since most of the following results in this subsection are straightforward generalizations of those in [\cite{Lin18}, Section 2], some proofs are omitted.
\begin{lemma}\label{no-str-semi}
When $\mathrm{deg}\,\delta\geq\mathrm{deg}\,P$, every nondegenerate $\delta$-semistable pair of type $P$ is $\delta$-stable.
\end{lemma}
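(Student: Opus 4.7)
The plan is to argue by contradiction using a degree comparison on polynomials. Suppose $(\mathcal{F},\varphi)$ is a nondegenerate $\delta$-semistable pair of type $P$ with $\deg\,\delta\geq d:=\deg\,P$, and assume some proper subpair $(\mathcal{F}^\prime,\varphi^\prime)$ with $\mathcal{F}^\prime\neq 0$ achieves equality $p_{(\mathcal{F}^\prime,\varphi^\prime)}=p_{(\mathcal{F},\varphi)}$. Since $\mathcal{F}$ is pure of dimension $d$ and $\mathcal{F}^\prime$ is a nonzero subsheaf, $\mathcal{F}^\prime$ is also of dimension $d$, so $r(F_{\mathcal{E}}(\mathcal{F}^\prime))>0$; moreover $\epsilon(\varphi)=1$ by nondegeneracy.

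Clearing denominators in the equation
\ben
p_{\mathcal{E}}(\mathcal{F}^\prime)+\frac{\epsilon(\varphi^\prime)\delta}{r(F_{\mathcal{E}}(\mathcal{F}^\prime))}
=p_{\mathcal{E}}(\mathcal{F})+\frac{\delta}{r(F_{\mathcal{E}}(\mathcal{F}))},
\een
one obtains
\ben
r(F_{\mathcal{E}}(\mathcal{F}))\,P_{\mathcal{E}}(\mathcal{F}^\prime)-r(F_{\mathcal{E}}(\mathcal{F}^\prime))\,P_{\mathcal{E}}(\mathcal{F})
=\delta\cdot\bigl[r(F_{\mathcal{E}}(\mathcal{F}^\prime))-\epsilon(\varphi^\prime)\,r(F_{\mathcal{E}}(\mathcal{F}))\bigr].
\een
The left-hand side has degree at most $d-1$, because the two $P_{\mathcal{E}}$-terms have degree exactly $d$ with identical leading coefficients (namely $r(F_{\mathcal{E}}(\mathcal{F}))\,r(F_{\mathcal{E}}(\mathcal{F}^\prime))/d!$), so the $m^d$-terms cancel. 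On the other hand, since $\deg\,\delta\geq d>d-1$, the right-hand side has degree at least $d$ whenever the bracket is nonzero. This forces the bracket to vanish.

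The equation $r(F_{\mathcal{E}}(\mathcal{F}^\prime))=\epsilon(\varphi^\prime)r(F_{\mathcal{E}}(\mathcal{F}))$ combined with $r(F_{\mathcal{E}}(\mathcal{F}^\prime))>0$ rules out $\epsilon(\varphi^\prime)=0$, so $\varphi^\prime\neq 0$ and in particular $r(F_{\mathcal{E}}(\mathcal{F}^\prime))=r(F_{\mathcal{E}}(\mathcal{F}))$. Plugging back, the left-hand side also vanishes, which gives $P_{\mathcal{E}}(\mathcal{F}^\prime)=P_{\mathcal{E}}(\mathcal{F})$. By additivity of the modified Hilbert polynomial in the short exact sequence $0\to\mathcal{F}^\prime\to\mathcal{F}\to\mathcal{F}/\mathcal{F}^\prime\to 0$, this forces $P_{\mathcal{E}}(\mathcal{F}/\mathcal{F}^\prime)=0$, hence $F_{\mathcal{E}}(\mathcal{F}/\mathcal{F}^\prime)=0$, and then by Lemma \ref{puretopure}(ii) we conclude $\mathcal{F}/\mathcal{F}^\prime=0$, contradicting the properness of the subpair.

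There is no real obstacle beyond carefully checking the degree bookkeeping: the key observation is simply that the ``sheaf part'' of the difference $r(F_{\mathcal{E}}(\mathcal{F}))\,p_{(\mathcal{F}^\prime,\varphi^\prime)}-r(F_{\mathcal{E}}(\mathcal{F}^\prime))\,p_{(\mathcal{F},\varphi)}$ is forced by dimension $d$ to drop in degree below $\deg\,\delta$, while the ``framing part'' scales by $\delta$, so the only way the two can agree is trivially, which immediately conflicts with $\mathcal{F}^\prime\subsetneq\mathcal{F}$.
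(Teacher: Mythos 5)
Your proof is correct, and it follows essentially the argument the paper intends: the paper omits the proof as a straightforward generalization of [Lin18, Section 2], where the same degree comparison is used — equality of reduced Hilbert polynomials for a proper subpair forces the $\delta$-coefficient bracket to vanish because $\deg\delta\geq\deg P$ exceeds the degree of the sheaf-theoretic side, and then $P_{\mathcal{E}}(\mathcal{F}')=P_{\mathcal{E}}(\mathcal{F})$ contradicts properness via exactness of $F_{\mathcal{E}}$. No gaps: your handling of $r(F_{\mathcal{E}}(\mathcal{F}'))>0$ (purity of $\mathcal{F}$) and the final step $P_{\mathcal{E}}(\mathcal{F}/\mathcal{F}')=0\Rightarrow\mathcal{F}/\mathcal{F}'=0$ via Lemma \ref{puretopure} is exactly what is needed.
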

\begin{lemma}\label{equ-sta}
Let $\mathcal{F}$ be a pure coherent sheaf of dimension $d$ with the modified Hilbert polynomial $P_{\mathcal{E}}(\mathcal{F})=P$, and assume $\mathrm{deg}\,\delta\geq\mathrm{deg}\,P=d$. Then a pair $(\mathcal{F},\varphi)$ is $\delta$-stable if and only if for every proper subpair $(\mathcal{F}^\prime,\varphi^\prime)$,
\ben
\frac{P_{\mathcal{E}}(\mathcal{F}^{\prime})}{2r(F_{\mathcal{E}}(\mathcal{F}^{\prime}))-\epsilon(\varphi^{\prime})}<\frac{P_{\mathcal{E}}(\mathcal{F})}{2r(F_{\mathcal{E}}(\mathcal{F}))-\epsilon(\varphi)}.
\een
\end{lemma}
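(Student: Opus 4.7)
The plan is to unwind both stability conditions by cross-multiplication and then compare the resulting polynomial inequalities using the degree hypothesis on $\delta$. Writing $r=r(F_{\mathcal{E}}(\mathcal{F}))$, $r'=r(F_{\mathcal{E}}(\mathcal{F}'))$, $\epsilon=\epsilon(\varphi)$, $\epsilon'=\epsilon(\varphi')$, and abbreviating $P=P_{\mathcal{E}}(\mathcal{F})$, $P'=P_{\mathcal{E}}(\mathcal{F}')$, the defining inequality $p_{(\mathcal{F}',\varphi')}<p_{(\mathcal{F},\varphi)}$ becomes, after multiplication by the positive multiplicities $r,r'$,
\begin{equation*}
(\star)\qquad rP'-r'P<(r'\epsilon-r\epsilon')\delta,
\end{equation*}
while the proposed criterion, after multiplication by the positive factors $2r-\epsilon$ and $2r'-\epsilon'$ (positive since $r,r'\geq1$), becomes
\begin{equation*}
(\star\star)\qquad 2(rP'-r'P)<\epsilon P'-\epsilon' P.
\end{equation*}
Both are understood as eventual inequalities of polynomials in $m$, and one needs to check they are equivalent for every proper nonzero subsheaf $\mathcal{F}'\subsetneq\mathcal{F}$.

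The key observation driving the equivalence is this: since $\mathcal{F}$ is pure of dimension $d$, any nonzero subsheaf $\mathcal{F}'$ is also of dimension $d$, so $P$ and $P'$ both have degree exactly $d$ with leading terms $r\cdot m^d/d!$ and $r'\cdot m^d/d!$. Hence the combination $rP'-r'P$ has its $m^d$-coefficient annihilated, so $\deg(rP'-r'P)\leq d-1$. Since by hypothesis $\deg\delta\geq d$, the polynomial $rP'-r'P$ sits strictly below the generic degree of the right-hand sides of $(\star)$ and $(\star\star)$, and the equivalence will be visible from a term-by-term comparison.

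The proof will proceed by a case analysis on $(\epsilon,\epsilon')$. By Definition \ref{def-pair}, if $\varphi=0$ then every subpair has $\varphi'=0$, so the case $\epsilon=0,\epsilon'=1$ cannot occur. In case (i), $\epsilon=\epsilon'=0$, both $(\star)$ and $(\star\star)$ collapse to $rP'<r'P$. In case (ii), $\epsilon=1,\epsilon'=0$, the right-hand sides are $r'\delta$ (of degree $\geq d$, positive leading coefficient $r'\cdot\mathrm{lc}(\delta)$) and $P'$ (of degree $d$, leading coefficient $r'/d!$); both dominate the degree $\leq d-1$ left-hand side, so both $(\star)$ and $(\star\star)$ hold automatically. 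In case (iii), $\epsilon=\epsilon'=1$, the right-hand sides are $(r'-r)\delta$ and $P'-P$; when $r'\neq r$ the common sign of $r'-r$ determines both inequalities simultaneously because the corresponding top terms (of degree $\deg\delta\geq d$ and $d$ respectively) dominate $rP'-r'P$, while when $r'=r$ both sides reduce to polynomials of degree $\leq d-1$ and $(\star)$ becomes $r(P'-P)<0$, whereas $(\star\star)$ becomes $(2r-1)(P'-P)<0$; since $r\geq1$ both are equivalent to $P'<P$.

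The main technical care will be needed in the boundary subcase of (iii) with $r'=r$, where one must track the cancellation down to lower-order terms rather than rely on top-degree comparison; but once the purity of $\mathcal{F}$ has forced $\deg(rP'-r'P)\leq d-1$ and $2r-1\geq 1$, both $(\star)$ and $(\star\star)$ visibly reduce to the single inequality $P'<P$. Assembling the three cases yields the equivalence of $(\star)$ and $(\star\star)$ for every proper nonzero subpair, which is the claim.
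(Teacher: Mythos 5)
Your proof is correct and is essentially the argument the paper intends (the paper omits it, deferring to the analogous lemma in [\cite{Lin18}, Section 2]): after cross-multiplying, purity forces $\deg(rP_{\mathcal{E}}(\mathcal{F}')-r'P_{\mathcal{E}}(\mathcal{F}))\le d-1$, and the hypothesis $\deg\delta\ge d$ makes the comparison lexicographic, first in $(\epsilon,\epsilon')$ (equivalently $r',r$) and then in the reduced Hilbert polynomials, which is exactly your case analysis. Nothing further is needed.
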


\begin{lemma}\label{orbi-PT}
If $\mathrm{deg}\,\delta\geq\mathrm{deg}\,P$, then a nondegenerate pair $(\mathcal{F},\varphi)$ of type $P$ is $\delta$-stable if and only if $\mathcal{F}$ is pure and $\mathrm{dim}\, \mathrm{coker}\varphi<\mathrm{deg}\,P$.
\end{lemma}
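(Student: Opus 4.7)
The plan is to adapt the argument used in [\cite{Lin18}] for smooth projective varieties to the stacky setting by routing the Hilbert polynomials through the functor $F_{\mathcal{E}}$ and using Lemma \ref{puretopure} and Lemma \ref{tor-fil}. Throughout, write $d=\deg P$ and $r=r(F_{\mathcal{E}}(\mathcal{F}))$; the key running tool is that $P_{\mathcal{E}}(\mathcal{G})$ has degree $\dim\mathcal{G}$ with positive leading coefficient for every nonzero coherent sheaf $\mathcal{G}$ on $\mathcal{X}$, which combines the positivity of Hilbert polynomials on $X$ with Lemma \ref{puretopure}.

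For the forward direction, assume $(\mathcal{F},\varphi)$ is $\delta$-stable, so $\mathcal{F}$ is pure by Definition \ref{semi-sub}. Suppose toward contradiction that $\dim\mathrm{coker}\,\varphi=d$. Set $\mathcal{K}=\mathrm{im}\,\varphi$ and let $\overline{\mathcal{K}}$ be its saturation in $\mathcal{F}$, so by Remark \ref{saturation} the quotient
\ben
\mathcal{F}/\overline{\mathcal{K}}=(\mathcal{F}/\mathcal{K})/T_{d-1}(\mathcal{F}/\mathcal{K})
\een
is pure of dimension $d$ or zero. Because $\dim(\mathcal{F}/\mathcal{K})=d$, the subsheaf $T_{d-1}(\mathcal{F}/\mathcal{K})$ is proper, so $\mathcal{F}/\overline{\mathcal{K}}$ is nonzero and pure of dimension $d$; in the induced quotient pair $(\mathcal{F}/\overline{\mathcal{K}},\varphi^{\prime\prime})$ the morphism $\varphi^{\prime\prime}$ vanishes since $\mathrm{im}\,\varphi\subset\overline{\mathcal{K}}$, hence $\epsilon(\varphi^{\prime\prime})=0$. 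Applying the stability criterion in Remark \ref{semi-quo} to this quotient gives
\ben
p_{\mathcal{E}}(\mathcal{F}/\overline{\mathcal{K}})>p_{\mathcal{E}}(\mathcal{F})+\delta/r,
\een
but both $p_{\mathcal{E}}$ terms are monic of the same degree $d$, so their difference has degree $<d$, whereas $\delta/r$ has degree $\geq d$ with positive leading coefficient by hypothesis; this is impossible. Hence $\dim\mathrm{coker}\,\varphi<d$.

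For the converse, assume $\mathcal{F}$ is pure and $\dim\mathrm{coker}\,\varphi<d$, and let $(\mathcal{F}^\prime,\varphi^\prime)$ be any proper nonzero subpair; purity forces $\dim\mathcal{F}^\prime=d$. I split along the two cases of Definition \ref{def-pair}. If $\mathrm{im}\,\varphi\not\subset\mathcal{F}^\prime$ then $\varphi^\prime=0$, and the desired $p_{\mathcal{E}}(\mathcal{F}^\prime)<p_{\mathcal{E}}(\mathcal{F})+\delta/r$ follows from the same degree comparison: the difference of the two $p_{\mathcal{E}}$ terms has degree $<d$ while $\delta/r$ has degree $\geq d$ with positive leading coefficient. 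If $\mathrm{im}\,\varphi\subset\mathcal{F}^\prime$ then $\varphi^\prime\neq0$ and $\mathcal{F}/\mathcal{F}^\prime$ is a nonzero quotient of $\mathrm{coker}\,\varphi$, so $\dim(\mathcal{F}/\mathcal{F}^\prime)<d$; thus the $m^d$-coefficients of $P_{\mathcal{E}}(\mathcal{F})$ and $P_{\mathcal{E}}(\mathcal{F}^\prime)$ agree, giving $r(F_{\mathcal{E}}(\mathcal{F}^\prime))=r$, the $\delta/r$ contributions cancel, and the inequality reduces to $p_{\mathcal{E}}(\mathcal{F}^\prime)<p_{\mathcal{E}}(\mathcal{F})$, which holds because $P_{\mathcal{E}}(\mathcal{F}/\mathcal{F}^\prime)$ is a nonzero polynomial with positive leading coefficient.

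The only nontrivial point is the purity of $\mathcal{F}/\overline{\mathcal{K}}$ in the forward direction, where the dimensional hypothesis $\dim\mathrm{coker}\,\varphi=d$ is exactly what prevents the saturation from absorbing all of $\mathcal{F}$. Everything else is polynomial-degree bookkeeping using additivity of $P_{\mathcal{E}}$ together with the exactness and dimension-preserving properties of $F_{\mathcal{E}}$ recorded in Section 2.
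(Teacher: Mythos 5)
Your proof is correct, and it is essentially the argument the paper intends: the paper omits the proof of Lemma \ref{orbi-PT}, deferring to the corresponding statement in [\cite{Lin18}, Section 2], and your reasoning (saturating $\mathrm{im}\,\varphi$, using the quotient-pair form of stability from Remark \ref{semi-quo}, and comparing degrees of reduced Hilbert polynomials against $\delta/r$, plus additivity of $P_{\mathcal{E}}$ for the converse) is exactly that generalization routed through $F_{\mathcal{E}}$. The only cosmetic difference is that you test stability on the quotient $\mathcal{F}/\overline{\mathrm{im}\,\varphi}$ rather than on the subpair $(\overline{\mathrm{im}\,\varphi},\varphi)$, which is an equivalent bookkeeping choice.
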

\begin{remark}\label{stacky-PT}
Lemma \ref{orbi-PT} implies that when $\mathcal{F}_{0}=\mathcal{O}_{\mathcal{X}}$ and $\deg\,P=1\leq \deg\,\delta$,    a nondegenerate $\delta$-sable pair $(\mathcal{F},\varphi)$ of type $P$ is a stable pair in the sense of Pandharipande-Thomas  [\cite{PT1}, Lemma 1.3].
\end{remark}

\begin{lemma}\label{stable-iso}
Suppose $\phi:(\mathcal{F},\varphi)\to(\mathcal{G},\psi)$ is a nonzero morphism of pairs. If $(\mathcal{F},\varphi)$ and $(\mathcal{G},\psi)$ are $\delta$-semistable pairs of dimension $d$, then $p_{(\mathcal{F},\varphi)}\leq p_{(\mathcal{G},\psi)}$. Suppose $(\mathcal{F},\varphi)$ and $(\mathcal{G},\psi)$ are $\delta$-stable with the same reduced Hilbert polynomial, then $\phi$ induces an isomorphism between $\mathcal{F}$ and $\mathcal{G}$. In particular, we have $\mathrm{End}((\mathcal{F},\varphi))\cong k$ for a $\delta$-stable pair $(\mathcal{F},\varphi)$.
\end{lemma}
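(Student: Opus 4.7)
The plan is to mimic the classical Schur-type argument for semistable sheaves while carefully tracking the $\epsilon(\cdot)$ contributions coming from the pair structure. Given a nonzero morphism of pairs $\phi:(\mathcal{F},\varphi)\to(\mathcal{G},\psi)$ with defining relation $\phi\circ\varphi=\lambda\psi$, I set $\mathcal{K}=\ker\phi$ and $\mathcal{I}=\mathrm{im}\,\phi$. Since $\mathcal{I}\subset\mathcal{G}$ is a nonzero subsheaf of a pure sheaf of dimension $d$, $\mathcal{I}$ is itself pure of dimension $d$. This produces a quotient pair $(\mathcal{I},\bar{\varphi})$ of $(\mathcal{F},\varphi)$, with $\bar{\varphi}=\pi\circ\varphi$ for $\pi:\mathcal{F}\twoheadrightarrow\mathcal{I}$, and a subpair $(\mathcal{I},\psi_{\mathcal{I}})$ of $(\mathcal{G},\psi)$, where $\psi_{\mathcal{I}}=\psi$ if $\mathrm{im}\,\psi\subset\mathcal{I}$ and $\psi_{\mathcal{I}}=0$ otherwise. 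The key observation is $\epsilon(\bar{\varphi})\leq\epsilon(\psi_{\mathcal{I}})$: if $\bar{\varphi}\neq 0$ then $\phi\circ\varphi\neq 0$, so $\lambda\psi=\phi\circ\varphi$ forces $\lambda\neq 0$ and $\psi\neq 0$ with $\mathrm{im}\,\psi=\lambda^{-1}\mathrm{im}(\phi\circ\varphi)\subset\mathcal{I}$, giving $\psi_{\mathcal{I}}=\psi\neq 0$.

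With this I apply the $\delta$-semistability of $(\mathcal{F},\varphi)$ via Remark \ref{semi-quo} to the pure quotient $(\mathcal{I},\bar{\varphi})$, then use the identity $p_{(\mathcal{I},\alpha)}=p_{\mathcal{E}}(\mathcal{I})+\epsilon(\alpha)\delta/r(F_{\mathcal{E}}(\mathcal{I}))$ together with the nonnegativity of $\delta$ and the $\epsilon$-inequality, and finally apply the $\delta$-semistability of $(\mathcal{G},\psi)$ to the subpair $(\mathcal{I},\psi_{\mathcal{I}})$ to obtain
\[
p_{(\mathcal{F},\varphi)}\leq p_{(\mathcal{I},\bar{\varphi})}\leq p_{(\mathcal{I},\psi_{\mathcal{I}})}\leq p_{(\mathcal{G},\psi)},
\]
which proves the first assertion. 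For the second assertion, assuming both pairs are $\delta$-stable with $p_{(\mathcal{F},\varphi)}=p_{(\mathcal{G},\psi)}$: if $\mathcal{K}\neq 0$ then $(\mathcal{I},\bar{\varphi})$ is a proper pure quotient and stability makes the first inequality strict, while if $\mathcal{I}\neq\mathcal{G}$ then $(\mathcal{I},\psi_{\mathcal{I}})$ is a proper subpair and stability makes the last inequality strict; either contradicts the assumed equality, so $\mathcal{K}=0$ and $\mathcal{I}=\mathcal{G}$, i.e., $\phi$ is an isomorphism of sheaves.

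Finally, to deduce $\mathrm{End}((\mathcal{F},\varphi))\cong k$, I observe that this space sits inside $\mathrm{End}(\mathcal{F})=H^{0}(\mathcal{X},\mathcal{H}om(\mathcal{F},\mathcal{F}))$, which is finite-dimensional over $k$ by properness of $\mathcal{X}$. For $\phi\in\mathrm{End}((\mathcal{F},\varphi))$, choose a root $c\in k$ of the minimal polynomial of $\phi$ in $\mathrm{End}(\mathcal{F})$; by minimality $\phi-c\cdot\mathrm{id}$ admits a nonzero element annihilating it in $\mathrm{End}(\mathcal{F})$, hence has nontrivial kernel. Since $\phi-c\cdot\mathrm{id}$ is again a morphism of pairs (with commutativity factor $\lambda-c$), it cannot be a nonzero morphism between two stable pairs of the same reduced Hilbert polynomial by the previous step, so it vanishes; thus $\phi=c\cdot\mathrm{id}$ and $\mathrm{End}((\mathcal{F},\varphi))=k\cdot\mathrm{id}\cong k$. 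The delicate technical point throughout is the $\epsilon$-inequality $\epsilon(\bar{\varphi})\leq\epsilon(\psi_{\mathcal{I}})$, which is where the commutativity constraint of a morphism of pairs is used to propagate the pair decoration from $(\mathcal{F},\varphi)$ to $(\mathcal{G},\psi)$.
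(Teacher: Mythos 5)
Your proof is correct, and it is essentially the standard argument (image as quotient of $(\mathcal{F},\varphi)$ and subpair of $(\mathcal{G},\psi)$, the $\epsilon$-comparison via $\phi\circ\varphi=\lambda\psi$, and the minimal-polynomial Schur argument over the algebraically closed field $k$) that the paper omits and delegates to [Lin18, Section 2] and [HL2]. In particular your handling of the key point $\epsilon(\bar{\varphi})\leq\epsilon(\psi_{\mathcal{I}})$ and of the degenerate cases $\mathcal{K}=0$, $\mathcal{I}=\mathcal{G}$ matches that reference's reasoning.
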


\begin{proposition}[Harder-Narasimhan filtration]\label{HN-pair}
Let $(\mathcal{F},\varphi)$ be a pair and $\mathcal{F}$ be pure of dimension $d$. Then there exists a unique filtration by subpairs
\ben
0\subsetneqq(\mathcal{G}_{1},\varphi_{1})\subsetneqq(\mathcal{G}_{2},\varphi_{2})
\subsetneqq\cdots\subsetneqq(\mathcal{G}_{l},\varphi_{l})=(\mathcal{F},\varphi)
\een
such that the factors $\mathrm{gr}_{i}=(\mathcal{G}_{i},\varphi_{i})/(\mathcal{G}_{i-1},\varphi_{i-1})$ for $i=1,\cdots,l$, are $\delta$-semistable pair of dimension $d$ with the reduced Hilbert polynomials $p_{\mathrm{gr}_{i}}$ satisfying
\ben
p_{\mathrm{gr}_{1}}>\cdots>p_{\mathrm{gr}_{l}}.
\een
\end{proposition}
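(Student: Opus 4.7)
The plan is to mimic the classical Harder–Narasimhan construction for pure sheaves (HL3, Theorem 1.6.7) adapted to pairs, following the approach sketched in Lin18 for smooth projective varieties and transporting it to the stacky setting via the exact functor $F_{\mathcal{E}}$ together with Lemma \ref{puretopure}. The argument splits into an existence part (producing a unique maximal destabilizing subpair and then iterating) and a uniqueness part (using Lemma \ref{stable-iso} to compare two candidate filtrations).

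For existence, I would first show that the set $\{p_{(\mathcal{F}',\varphi')} : 0\neq(\mathcal{F}',\varphi')\subsetneqq(\mathcal{F},\varphi)\}$ is bounded above in the lexicographic ordering of $\mathbb{Q}[m]$. Applying $F_{\mathcal{E}}$ and using that $F_{\mathcal{E}}(\mathcal{F})$ is a pure coherent sheaf on the projective scheme $X$, this reduces to the classical boundedness of reduced Hilbert polynomials of subsheaves of $F_{\mathcal{E}}(\mathcal{F})$, shifted by the bounded quantity $\epsilon(\varphi')\delta/r(F_{\mathcal{E}}(\mathcal{F}'))$. Let $p^{*}$ be the maximum. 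Define $(\mathcal{G}_{1},\varphi_{1})$ as a subpair realizing $p^{*}$ with $r(F_{\mathcal{E}}(\mathcal{G}_{1}))$ maximal. Uniqueness of $\mathcal{G}_{1}$ follows from a standard "joint maximal" argument: if two distinct subpairs $(\mathcal{F}_{1},\varphi_{1}),(\mathcal{F}_{2},\varphi_{2})$ both realize $p^{*}$, then the subpair supported on $\mathcal{F}_{1}+\mathcal{F}_{2}$ still realizes $p^{*}$ (by additivity of $P_{\mathcal{E}}$ in the exact sequence $0\to\mathcal{F}_{1}\cap\mathcal{F}_{2}\to\mathcal{F}_{1}\oplus\mathcal{F}_{2}\to\mathcal{F}_{1}+\mathcal{F}_{2}\to0$ pushed through $F_{\mathcal{E}}$), contradicting maximality unless the two agree.

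Next I would verify $\delta$-semistability of $(\mathcal{G}_{1},\varphi_{1})$: any destabilizing subpair $(\mathcal{G}',\varphi')\subsetneqq(\mathcal{G}_{1},\varphi_{1})$ with $p_{(\mathcal{G}',\varphi')}>p^{*}$ would already violate the maximality of $p^{*}$ in $(\mathcal{F},\varphi)$. I then iterate on the quotient pair $(\mathcal{F}/\mathcal{G}_{1},\bar{\varphi}_{1})$, which is pure of dimension $d$: otherwise the preimage of its torsion would strictly enlarge $\mathcal{G}_{1}$ while raising the reduced Hilbert polynomial. The Noetherian property of $\mathcal{F}$ (inherited from $F_{\mathcal{E}}(\mathcal{F})$ on $X$) ensures termination after finitely many steps, and the strict inequalities $p_{\mathrm{gr}_{i}}>p_{\mathrm{gr}_{i+1}}$ come from maximality at each stage. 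For uniqueness of the filtration, suppose $(\mathcal{G}_{i}',\varphi_{i}')$ is another such filtration and let $j$ be minimal with $\mathcal{G}_{1}\subseteq\mathcal{G}_{j}'$. The induced nonzero morphism $(\mathcal{G}_{1},\varphi_{1})\to(\mathcal{G}_{j}'/\mathcal{G}_{j-1}',\bar\varphi)$ forces $p_{\mathrm{gr}_{1}}\leq p_{\mathrm{gr}_{j}'}$ by Lemma \ref{stable-iso}, and by symmetry $j=1$ and $\mathcal{G}_{1}=\mathcal{G}_{1}'$; induction on the length then concludes.

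The main obstacle I anticipate is the case analysis forced by the indicator $\epsilon(\varphi')$: the Hilbert polynomial $P_{(\mathcal{F},\varphi)}=P_{\mathcal{E}}(\mathcal{F})+\epsilon(\varphi)\delta$ fails to be strictly additive on short exact sequences of subpairs when $\mathrm{im}\,\varphi$ sits unevenly across the filtration, so the "sum of two subpairs" step in the joint-maximal argument and the saturation step producing pure quotients must each be handled by separating the regimes $\mathrm{im}\,\varphi\subset\mathcal{F}_{1}\cap\mathcal{F}_{2}$, $\mathrm{im}\,\varphi\subset\mathcal{F}_{1}+\mathcal{F}_{2}$ but not in either $\mathcal{F}_{i}$, and $\mathrm{im}\,\varphi\not\subset\mathcal{F}_{1}+\mathcal{F}_{2}$. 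Once these bookkeeping cases are organized, the rest of the proof runs in parallel to the sheaf case because $F_{\mathcal{E}}$ transports every required exact-sequence or purity statement faithfully from $\mathcal{X}$ to $X$.
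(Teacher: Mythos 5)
Your proposal is correct and follows essentially the same route as the paper: the paper's proof observes that a subpair is completely determined by its underlying subsheaf (with $\epsilon(\varphi')$ recording whether $\mathrm{im}\,\varphi$ is contained in it) and then runs the standard Harder–Narasimhan argument of [HL3, Theorem 1.3.4] (equivalently Shatz's argument as in [Lin18, Proposition 2.12]) with the pair Hilbert polynomial and the multiplicities $r(F_{\mathcal{E}}(\cdot))$, which is exactly the maximal-destabilizing-subpair construction, iteration on the pure quotient, and uniqueness via Lemma \ref{stable-iso} that you spell out. The $\epsilon$-bookkeeping you flag is handled in the paper precisely by this "subpairs are determined by subsheaves" reduction, so no genuinely different ingredient is needed.
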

\begin{proof}
Given a pair $(\mathcal{F},\varphi)$, any subpair $(\mathcal{F}^\prime,\varphi^\prime)$ is actually determined by the subsheaf $\mathcal{F}^\prime\subseteq\mathcal{F}$ since $\varphi^\prime$ is determined by the given morphism $\varphi$ and comparing  $\mathcal{F}^\prime$ with  $\mathrm{im}\varphi$ by definition of a subpair.
With this point of view, the proof is completed by using the similar  argument in the proof of  [\cite{HL3}, Theorem 1.3.4]  with the (reduced) Hilbert polynomial of pairs and  the  multiplicity of $F_{\mathcal{E}}(\mathcal{G})$  for any subsheaf $\mathcal{G}$ of $\mathcal{F}$. Alternatively, one may refer to  the similar argument in the proof of [\cite{Sha}, Theorem 1] for a pure sheaf as in [\cite{Lin18},  Proposition 2.12].
\end{proof}
\begin{remark}\label{HN-factor}
For a nondegenerate pair $(\mathcal{F},\varphi)$, let $\mathrm{gr}_{i}:=(\mathrm{gr}_{i}\mathcal{F},\mathrm{gr}_{i}\varphi)$ for $i=1,\cdots,l$, it is obviously that there is only one nonzero $\mathrm{gr}_{i}\varphi$ by the definition of subpairs and quotient pairs. When
$\deg\delta\geq d$, only $\mathrm{gr}_{1}\varphi=\varphi_{1}$ is nonzero.
\end{remark}
\begin{proposition} [Jordan-H$\ddot{\mathrm{o}}$lder filtration]
Let $(\mathcal{F},\varphi)$ be a $\delta$-semistable pair. There is a filtration
\ben
0\subsetneqq(\mathcal{F}_{1},\varphi_{1})\subsetneqq(\mathcal{F}_{2},\varphi_{2})
\subsetneqq\cdots\subsetneqq(\mathcal{F}_{l},\varphi_{l})=(\mathcal{F},\varphi)
\een
such that all factors $\mathrm{gr}_{i}=(\mathcal{F}_{i},\varphi_{i})/(\mathcal{F}_{i-1},\varphi_{i-1})$ for $i=1,\cdots, l$,  are $\delta$-stable with the same reduced Hilbert polynomial $p_{(\mathcal{F},\varphi)}$. 
Here, $\mathrm{gr}(\mathcal{F},\varphi)=\oplus_{i}\mathrm{gr}_{i}$ does not depend on the choice of filtration.
\end{proposition}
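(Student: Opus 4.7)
The plan is to adapt the classical Jordan-H\"older argument for semistable sheaves (cf.\ [\cite{HL3}, Proposition 1.5.2]) to the category of pairs, exploiting the observation already used in the proof of Proposition \ref{HN-pair}: a subpair $(\mathcal{F}^\prime,\varphi^\prime)\subset(\mathcal{F},\varphi)$ is completely determined by its underlying subsheaf $\mathcal{F}^\prime\subseteq\mathcal{F}$, since $\varphi^\prime$ is forced to be either $\varphi$ or $0$ depending on whether $\mathrm{im}\,\varphi\subseteq\mathcal{F}^\prime$. Consequently, the Noetherian property of $\mathcal{F}$ transfers to chain conditions on the poset of subpairs, and the bookkeeping from the sheaf case carries over with small modifications.

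For existence, I would argue by induction on the multiplicity $\alpha_{\mathcal{E},d}(\mathcal{F})$. If $(\mathcal{F},\varphi)$ is already $\delta$-stable, the trivial filtration suffices. Otherwise, I select a nonzero proper subpair $(\mathcal{F}_{1},\varphi_{1})$ with $p_{(\mathcal{F}_{1},\varphi_{1})}=p_{(\mathcal{F},\varphi)}$ of minimal multiplicity $\alpha_{\mathcal{E},d}(\mathcal{F}_{1})$ among all such subpairs. Such a choice exists because $(\mathcal{F},\varphi)$ itself lies in this set and multiplicities are non-negative integers. Then $(\mathcal{F}_{1},\varphi_{1})$ must be $\delta$-stable: any proper subpair $(\mathcal{F}',\varphi')$ satisfies $p_{(\mathcal{F}',\varphi')}\leq p_{(\mathcal{F},\varphi)}=p_{(\mathcal{F}_{1},\varphi_{1})}$ by the $\delta$-semistability of $(\mathcal{F},\varphi)$, and equality would contradict minimality. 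I would then verify that the quotient pair $(\mathcal{F}/\mathcal{F}_{1},\varphi^{\prime\prime})$ is $\delta$-semistable with reduced Hilbert polynomial $p_{(\mathcal{F},\varphi)}$ by using additivity of the Hilbert polynomial in short exact sequences of pairs and by lifting subpairs of the quotient to subpairs of $(\mathcal{F},\varphi)$ containing $\mathcal{F}_{1}$ (combined with Remark \ref{semi-quo}). Induction on the strictly smaller multiplicity of $\mathcal{F}/\mathcal{F}_{1}$ then completes the filtration.

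For uniqueness of $\mathrm{gr}(\mathcal{F},\varphi)$, I would use the standard double-filtration/refinement argument. Given two Jordan-H\"older filtrations, the compositions of successive inclusions and projections produce morphisms between $\delta$-stable pairs of the same reduced Hilbert polynomial; by Lemma \ref{stable-iso} each such morphism is either zero or an isomorphism on the underlying sheaves. Passing to the associated graded as in [\cite{HL3}, Proposition 1.5.2] yields a canonical bijection (up to permutation) between the two families of graded factors.

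The main obstacle is ensuring that $\mathcal{F}/\mathcal{F}_{1}$ is pure of dimension $d$, so that the quotient pair is a legitimate candidate for the inductive step. This is handled by the usual argument: if $T$ denotes the maximal subsheaf of $\mathcal{F}/\mathcal{F}_{1}$ of dimension $<d$, its preimage $\widetilde{\mathcal{F}}$ under $\mathcal{F}\to\mathcal{F}/\mathcal{F}_{1}$ is a subsheaf with the same multiplicity as $\mathcal{F}_{1}$ but with strictly larger Hilbert polynomial whenever $T\neq 0$, contradicting the $\delta$-semistability of $(\mathcal{F},\varphi)$ once $\widetilde{\mathcal{F}}$ is endowed with the induced pair structure. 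A minor secondary subtlety, relevant only in the nondegenerate case, is tracking the placement of the unique graded factor carrying a nonzero associated pair morphism (cf.\ Remark \ref{HN-factor}); this is dictated by the smallest index $i$ with $\mathrm{im}\,\varphi\subseteq\mathcal{F}_{i}$ and is automatically respected by the isomorphisms supplied by Lemma \ref{stable-iso}.
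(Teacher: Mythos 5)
Your argument is correct and is essentially the proof the paper leaves implicit: the proposition is stated without proof, relying (as in the proof of Proposition \ref{HN-pair}) on the observation that a subpair is determined by its underlying subsheaf so that the classical argument of [\cite{HL3}, Proposition 1.5.2] carries over, and your induction on multiplicity, purity check for the quotient, and refinement argument via Lemma \ref{stable-iso} reproduce exactly that adaptation. One small wording fix: the set of proper subpairs with $p_{(\mathcal{F}^\prime,\varphi^\prime)}=p_{(\mathcal{F},\varphi)}$ is nonempty not because $(\mathcal{F},\varphi)$ lies in it (it is not a proper subpair), but because in the non-stable case semistability forces some proper subpair to attain equality; with that correction your minimal-multiplicity choice and the additivity bookkeeping go through.
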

\begin{remark}
As in [\cite{Wan}], Jordan-H$\ddot{o}$lder filtration induces a homomorphism $\mathrm{gr}(\varphi):\mathcal{F}_{0}\to\mathrm{gr}(\mathcal{F},\varphi)$ which is nontrivial for a nondegenerate pair $(\mathcal{F},\varphi)$ and its image is contained in exactly one summand of $\mathrm{gr}(\mathcal{F},\varphi)$.
\end{remark}	
\begin{definition}
Two $\delta$-semistable pair $(\mathcal{F}_{1},\varphi_{1})$ and $(\mathcal{F}_{2},\varphi_{2})$ with the same reduced Hilbert polynomial are called $\mathrm{S}$-equivalent if $\mathrm{gr}(\mathcal{F}_{1},\varphi_{1})\cong\mathrm{gr}(\mathcal{F}_{2},\varphi_{2})$. 
\end{definition}
\subsection{Families of coherent sheaves and  pairs}
We first recall the definition of a set-theoretic family of coherent sheaves as in [\cite{Kle}, Section 1.12] and its relevant boundedness results in [\cite{Nir1}, Section 4], and then give the notion of flat families of  pairs. Let $p:\mathcal{X}\to S$ be a family of projective stacks with a relative polarization $(\mathcal{E},\mathcal{O}_{X}(1))$. Suppose $s\in S$ and $K$ is a field extension of residue field $k(s)$, a coherent sheaf on a fiber of $p$ is defined to be a coherent sheaf $\mathcal{F}_{K}$ on $\mathcal{X}_{K}:=\mathcal{X}\times_{S} \mathrm{Spec}\,K$. Given two extensions $K_{1}$ and $K_{2}$,  two coherent sheaves $\mathcal{F}_{K_{1}}$ and $\mathcal{F}_{K_{2}}$ are equivalent if there are $k(s)$-homomorphisms of $K_{1}$, $K_{2}$ to a third extension $K_{3}$ such that $\mathcal{F}_{K_{1}}\otimes_{k(s)}K_{3}$ and $\mathcal{F}_{K_{2}}\otimes_{k(s)}K_{3}$ are isomorphic.

\begin{definition}([\cite{Kle}, Section 1.12] or [\cite{Nir1}, Definition 4.9 and 4.10])
A set-theoretic family of coherent sheaves on $p:\mathcal{X}\to S$ is a set of coherent sheaves defined on the fibers of $p$. A set-theoretic family $\mathfrak{F}$ of coherent sheaves on $\mathcal{X}$ is bounded if there is an $S$-scheme $T$ of finite type and a coherent sheaf $\mathcal{G}$ on $\mathcal{X}_{T}:=\mathcal{X}\times_{S}T$ such that every sheaf in $\mathfrak{F}$ is contained in $\{\mathcal{G}|_{\mathcal{X}\times_{S}\mathrm{Spec}\,k(t)}|t\in T\}$.
\end{definition}

\begin{definition} (see [\cite{HL3}, Definition 1.7.1 and 1.7.3] and [\cite{Nir1}, Definition 4.2]) Let $X$ be a projective scheme over $k$ with a very ample line bundle $\mathcal{O}_{X}(1)$.
	A coherent sheaf $F$ on  $X$ is said to be $m$-regular if for every $i>0$ we have 
	$H^i(X,F(m-i))=0$. The Mumford-Castelnuovo regularity of $F$ is defined to be the least  integer $m$ such that $F$ is $m$-regular. Let $\mathcal{X}$ be a projective Deligne-Mumford stack over $k$  with a polarization $(\mathcal{E},\mathcal{O}_{X}(1))$. A coherent sheaf $\mathcal{F}$ on $\mathcal{X}$ is defined to be $m$-regular if $F_{\mathcal{E}}(\mathcal{F})$ on $X$ is  $m$-regular. Denote by $\mathrm{reg}_{\mathcal{E}}(\mathcal{F})$ the Mumford-Castelnuovo regularity of $F_{\mathcal{E}}(\mathcal{F})$.
\end{definition}
A criterion for   boundedness of a set-theoretic family of coherent sheaves is Kleiman criterion for stacks  [\cite{Nir1}, Theorem 4.12]. It implies that if a set-theoretic family $\mathfrak{F}$ is bounded, then the set of modified Hilbert polynomials $P_{\mathcal{E}_{K}}(\mathcal{F}_{K})$ for $\mathcal{F}_{K}\in\mathfrak{F}$ is finite, and there exists an integer $m\geq0$ such that every coherent sheaf $\mathcal{F}_{K}$ is $m$-regular. Kleiman criterion for stacks is also used in proving the following stacky version of Grothendieck lemma.
\begin{lemma}([\cite{Nir1}, Lemma 4.13 and Remark 4.14])\label{Gro-lem}
Let $\mathcal{X}$ be a projective Deligne-Mumford stack over $k$ with a moduli scheme $\pi:\mathcal{X}\to X$ and a polarization $(\mathcal{E},\mathcal{O}_{X}(1))$.  Let $P$ be an  polynomial of degree $d\in[0,\mathrm{dim}(X)]$ and $\bar{\rho}$ an integer. There exists a constant
$C=C(P,\bar{\rho})$ such that if $\mathcal{F}$ is a coherent sheaf of dimension $d$ on $\mathcal{X}$ with $P_{\mathcal{E}}(\mathcal{F})=P$ and $\mathrm{reg}_{\mathcal{E}}(\mathcal{F})\leq \bar{\rho}$, then for every  purely $d$-dimensional quotient $\mathcal{F}^\prime$, we have $\hat{\mu}_{\mathcal{E}}(\mathcal{F}^\prime)\geq C$. Moreover, the family of purely $d$-dimensional quotients $\mathcal{F}_{i}^\prime$, $i\in I$ $($for some set of indices $I$$)$ with $\hat{\mu}_{\mathcal{E}}(\mathcal{F}_{i}^\prime)$ bounded from above is bounded. 

The similar statement as above is true, that is, for every  purely saturated subsheaf
$\mathcal{F}^\prime$, the slope $\hat{\mu}_{\mathcal{E}}(\mathcal{F}^\prime)$ is bounded from above, and the family of  pure subsheaves $\mathcal{F}^\prime_{i}\subseteq\mathcal{F}$, $i\in I$  with $\hat{\mu}_{\mathcal{E}}(\mathcal{F}_{i}^\prime)$ bounded from below such that the quotient $\mathcal{F}/\mathcal{F}^\prime_{i}$ is pure  of dimension $d$, is bounded.
\end{lemma}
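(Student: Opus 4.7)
The plan is to reduce the entire statement to the classical Grothendieck lemma on the projective scheme $X$ by pushing everything through the exact functor $F_{\mathcal{E}}$, and then lift boundedness back to $\mathcal{X}$ by using that $\mathcal{E}$ is a generating sheaf. Concretely, for a coherent sheaf $\mathcal{F}$ of dimension $d$ on $\mathcal{X}$ with $P_{\mathcal{E}}(\mathcal{F})=P$ and $\mathrm{reg}_{\mathcal{E}}(\mathcal{F})\leq\bar{\rho}$, the sheaf $F_{\mathcal{E}}(\mathcal{F})$ is a coherent sheaf on $X$ with the ordinary Hilbert polynomial $P$, of the same dimension $d$, and $\bar{\rho}$-regular by the very definition of $\mathrm{reg}_{\mathcal{E}}$.

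The first step is to observe that $F_{\mathcal{E}}$ is compatible with all the relevant structures. Since $F_{\mathcal{E}}$ is exact, any quotient $\mathcal{F}\twoheadrightarrow\mathcal{F}'$ on $\mathcal{X}$ yields a quotient $F_{\mathcal{E}}(\mathcal{F})\twoheadrightarrow F_{\mathcal{E}}(\mathcal{F}')$ on $X$; Lemma \ref{puretopure} shows purity is preserved, so purely $d$-dimensional quotients go to purely $d$-dimensional quotients, and analogously for saturated pure subsheaves (with saturation preserved because exactness of $F_{\mathcal{E}}$ makes $F_{\mathcal{E}}(\mathcal{F}/\mathcal{F}')=F_{\mathcal{E}}(\mathcal{F})/F_{\mathcal{E}}(\mathcal{F}')$, and the latter is pure of dimension $d$ iff $\mathcal{F}/\mathcal{F}'$ is). Moreover, by the definition $P_{\mathcal{E}}(\mathcal{F}')=P(F_{\mathcal{E}}(\mathcal{F}'))$, the leading two coefficients agree, so the slopes satisfy $\hat{\mu}_{\mathcal{E}}(\mathcal{F}')=\hat{\mu}(F_{\mathcal{E}}(\mathcal{F}'))$. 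This reduction shows that the slope bound $C=C(P,\bar{\rho})$ follows immediately from the classical Grothendieck lemma ([\cite{HL3}, Lemma 1.7.9]) applied to $F_{\mathcal{E}}(\mathcal{F})$ on $X$, and likewise for the upper bound on slopes of saturated subsheaves.

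It remains to transfer the boundedness conclusions from $X$ to $\mathcal{X}$. Given a family $\{\mathcal{F}'_i\}_{i\in I}$ of purely $d$-dimensional quotients of $\mathcal{F}$ with $\hat{\mu}_{\mathcal{E}}(\mathcal{F}'_i)$ bounded above (respectively pure subsheaves with $\hat{\mu}_{\mathcal{E}}(\mathcal{F}'_i)$ bounded below), the family $\{F_{\mathcal{E}}(\mathcal{F}'_i)\}_{i\in I}$ is a family of quotients (respectively subsheaves) of the single coherent sheaf $F_{\mathcal{E}}(\mathcal{F})$ on $X$ whose slopes are bounded, hence bounded on $X$ by classical Grothendieck. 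Then, since $\mathcal{E}$ is a generating sheaf for $\mathcal{X}$, the natural adjunction map
\[
\theta_{\mathcal{E}}(\mathcal{F}'_i):G_{\mathcal{E}}(F_{\mathcal{E}}(\mathcal{F}'_i))=\pi^{*}F_{\mathcal{E}}(\mathcal{F}'_i)\otimes\mathcal{E}\twoheadrightarrow\mathcal{F}'_i
\]
is surjective, and the boundedness of $\{F_{\mathcal{E}}(\mathcal{F}'_i)\}$ on $X$ forces boundedness of $\{G_{\mathcal{E}}(F_{\mathcal{E}}(\mathcal{F}'_i))\}$ on $\mathcal{X}$. Together with the finiteness of possible modified Hilbert polynomials $P_{\mathcal{E}}(\mathcal{F}'_i)$ (deduced from the corresponding finiteness of $P(F_{\mathcal{E}}(\mathcal{F}'_i))$ on the bounded family), Kleiman's criterion for stacks [\cite{Nir1}, Theorem 4.12]---or equivalently the existence of a quasi-compact stacky Quot scheme---yields the desired boundedness of $\{\mathcal{F}'_i\}$ on $\mathcal{X}$.

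The main subtlety I anticipate is in verifying that the downward lift is legitimate: one must check that the family of quotients on $\mathcal{X}$ parametrized (via $\theta_{\mathcal{E}}$) by quotients of the bounded family on $X$ is genuinely bounded, and that the purity and saturation conditions on $\mathcal{X}$ are preserved under this correspondence. Both are handled by Lemma \ref{puretopure} together with exactness of $F_{\mathcal{E}}$, so no serious new obstacle arises beyond carefully unwinding the adjunction $(G_{\mathcal{E}},F_{\mathcal{E}})$ and invoking Kleiman's stacky criterion cited just before the lemma.
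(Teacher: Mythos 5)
Your proposal is correct and is essentially the argument behind the cited result: the paper itself gives no proof but refers to Nironi, whose proof is exactly this reduction via the exact functor $F_{\mathcal{E}}$ (preserving purity, dimension, Hilbert polynomial and slope) to the classical Grothendieck lemma on $X$, followed by transferring boundedness back to $\mathcal{X}$. The only streamlining worth noting is that your final lifting step, via $\theta_{\mathcal{E}}$ and Kleiman's stacky criterion, can be replaced by a direct appeal to the equivalence already quoted in the paper ([\cite{Nir1}, Corollary 4.17]): a set-theoretic family on $\mathcal{X}$ is bounded if and only if its image under $F_{\mathcal{E}}$ is bounded on $X$.
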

Boundedness of a family of coherent sheaves on projective Deligne-Mumford stacks is proved to be equivalent to boundedness of the one on projective schemes under some conditions in the following
\begin{proposition}([\cite{Nir1}, Corollary 4.17])
Let $p:\mathcal{X}\to S$ be a family of projective stacks with a relative polarization $(\mathcal{E},\mathcal{O}_{X}(1))$. Let $\mathfrak{F}$ be a set-theoretic family of coherent sheaves on the fibers of $p$. Then the family $\mathfrak{F}$ is bounded if and only if $F_{\mathcal{E}}(\mathfrak{F})$ is bounded.	
\end{proposition}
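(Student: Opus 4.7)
The plan is to prove the two directions separately, using the adjunction $(G_{\mathcal{E}}, F_{\mathcal{E}})$ together with the key surjectivity $\theta_{\mathcal{E}}(\mathcal{F}): \pi^{*}\pi_{*}\mathcal{H}om(\mathcal{E},\mathcal{F}) \otimes \mathcal{E} \twoheadrightarrow \mathcal{F}$ provided by $\mathcal{E}$ being a generating sheaf. The guiding principle is that $F_{\mathcal{E}}$ passes a witness of boundedness on $\mathcal{X}$ down to $X$, while $G_{\mathcal{E}}$ together with $\theta_{\mathcal{E}}$ pulls a witness of boundedness on $X$ back up to $\mathcal{X}$ at the cost of needing the Quot machinery.

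For the forward implication, suppose $\mathfrak{F}$ is bounded, witnessed by an $S$-scheme $T$ of finite type and a coherent sheaf $\mathcal{G}$ on $\mathcal{X}_{T}=\mathcal{X}\times_{S}T$. Let $\pi_{T}:\mathcal{X}_{T}\to X_{T}$ denote the moduli morphism (which exists by Theorem \ref{base-chag-proj}(ii)), and $\mathcal{E}_{T}$ the pullback generating sheaf. I would set
\[
\mathcal{H} := F_{\mathcal{E}_{T}}(\mathcal{G}) = \pi_{T*}\mathcal{H}om_{\mathcal{O}_{\mathcal{X}_{T}}}(\mathcal{E}_{T},\mathcal{G}),
\]
which is a coherent sheaf on $X_{T}$ by tameness (exactness of $\pi_{T*}$ on quasi-coherents and preservation of coherence via [AV, Lemma 2.3.4]). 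The remaining step is to verify that $F_{\mathcal{E}}$ commutes with base change to a fiber $t\in T$, so that $\mathcal{H}|_{X_{t}} \cong F_{\mathcal{E}_{t}}(\mathcal{G}|_{\mathcal{X}_{t}})$. This follows from tameness again (cohomology and base change for $\pi$ is trivial because $\pi$ is quasi-finite) together with compatibility of $\mathcal{H}om$ with flat base change. Hence every member of $F_{\mathcal{E}}(\mathfrak{F})$ appears as a fiber of $\mathcal{H}$, and $F_{\mathcal{E}}(\mathfrak{F})$ is bounded.

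For the reverse implication, assume $F_{\mathcal{E}}(\mathfrak{F})$ is bounded, witnessed by $T'\to S$ of finite type and a coherent sheaf $H$ on $X_{T'}$. Define $\mathcal{G}:=G_{\mathcal{E}_{T'}}(H) = \pi_{T'}^{*}H\otimes \mathcal{E}_{T'}$, a coherent sheaf on $\mathcal{X}_{T'}$. For any $\mathcal{F}_{K}\in\mathfrak{F}$ lying over $s\in S$, pick $t\in T'$ with $H|_{X_{t}}\cong F_{\mathcal{E}_{K}}(\mathcal{F}_{K})$; then by compatibility of $G_{\mathcal{E}}$ with base change and the surjectivity of $\theta_{\mathcal{E}_{K}}(\mathcal{F}_{K})$, we obtain a surjection $\mathcal{G}|_{\mathcal{X}_{t}}\twoheadrightarrow \mathcal{F}_{K}$. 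Thus every sheaf in $\mathfrak{F}$ arises as a quotient of some fiber of the single coherent sheaf $\mathcal{G}$ on $\mathcal{X}_{T'}$. The Kleiman criterion applied to the bounded family $F_{\mathcal{E}}(\mathfrak{F})$ yields finitely many modified Hilbert polynomials $\{P_{1},\ldots,P_{r}\}$ for the members of $\mathfrak{F}$, so $\mathfrak{F}$ is contained in the set of fibers of the universal quotient on $\bigsqcup_{i} \mathrm{Quot}_{\mathcal{X}_{T'}/T'}(\mathcal{G},P_{i})$. Since this Quot scheme is of finite type over $T'$ (and hence over $S$) by the stacky Quot scheme construction of Olsson-Starr, we conclude that $\mathfrak{F}$ is bounded.

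The main obstacle is the reverse direction: we must have at our disposal the existence and finite-typeness of a Quot scheme for families of projective Deligne-Mumford stacks parametrizing quotients with fixed modified Hilbert polynomial. Once that is granted, the argument is a clean application of the generating-sheaf surjectivity. A secondary but straightforward technical point is the verification that both $F_{\mathcal{E}}$ and $G_{\mathcal{E}}$ commute with base change to fibers of $T\to S$ and $T'\to S$; this reduces, via tameness, to the flat base change behavior of $\pi_{*}$ and is routine.
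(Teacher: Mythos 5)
Your argument is correct, but note that the paper contains no proof of this statement at all: it is quoted verbatim from [\cite{Nir1}, Corollary 4.17], so the only meaningful comparison is with Nironi's own proof. There the statement is deduced from the Kleiman criterion for stacks ([\cite{Nir1}, Theorem 4.12]): boundedness of $\mathfrak{F}$ is shown to be equivalent to finiteness of the set of modified Hilbert polynomials together with a uniform regularity bound, and since $P_{\mathcal{E}}$ and $\mathrm{reg}_{\mathcal{E}}$ are \emph{defined} through $F_{\mathcal{E}}$, this is literally the same condition as the classical Kleiman criterion for $F_{\mathcal{E}}(\mathfrak{F})$ on the fibers of $X\to S$; the hard implication of that criterion is proved by exactly your mechanism (global generation of $F_{\mathcal{E}}(\mathcal{F})(m)$, then $G_{\mathcal{E}}$ and the surjectivity of $\theta_{\mathcal{E}}$, then the stacky Quot scheme). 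Your version is a slightly more direct repackaging: you bypass the regularity bookkeeping by pulling the witness $H$ back through $G_{\mathcal{E}_{T'}}$ and taking $\mathrm{Quot}_{\mathcal{X}_{T'}/T'}(G_{\mathcal{E}_{T'}}(H),P_i)$, at the cost of invoking finiteness and projectivity of the stacky Quot scheme with fixed modified Hilbert polynomial, which is indeed available ([\cite{Nir1}, Proposition 4.20], [\cite{OS03}, Proposition 6.2]) and is used later in the paper. Two small points to tighten: the identification $F_{\mathcal{E}_T}(\mathcal{G})|_{X_t}\cong F_{\mathcal{E}_t}(\mathcal{G}|_{\mathcal{X}_t})$ is not a consequence of quasi-finiteness of $\pi$ (it fails for non-tame stacks); the correct justification is tameness, i.e. [\cite{AOV}, Corollary 3.3] and [\cite{Nir1}, Proposition 1.5], which this paper itself invokes for precisely this identity. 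Also, members of $\mathfrak{F}$ are only identified up to a common field extension, so $\theta_{\mathcal{E}}$ gives a surjection onto $\mathcal{F}_K$ only after such an extension; this is harmless because the Quot functor is compatible with field extensions, but it deserves a sentence.
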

Here is a  useful boundedness result as follows.
\begin{theorem}([\cite{Nir1}, Theorem 4.27 (1)])\label{fam-bound1}
Let $p:\mathcal{X}\to S$ be a family of projective stacks with a relative polarization $(\mathcal{E},\mathcal{O}_{X}(1))$. Let $P$ be a  polynomial of degree $d$ and $\mu_{0}$ a real number. Every set-theoretic family $\mathcal{F}_{i}$, $i\in I$ $($$I$ is a set$)$ of purely $d$-dimensional sheaves on the fiber of $p$ with fixed modified Hilbert polynomial $P$ such that $\hat{\mu}_{\mathrm{max}}(F_{\mathcal{E}}(\mathcal{F}_{i}))\leq\mu_{0}$ is bounded.
\end{theorem}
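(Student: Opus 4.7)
The plan is to reduce the statement on $\mathcal{X}$ to a classical boundedness statement on the moduli scheme $X$ by means of the functor $F_{\mathcal{E}}$. By the Corollary immediately preceding the theorem, i.e., [\cite{Nir1}, Corollary 4.17], the family $\{\mathcal{F}_{i}\}_{i\in I}$ on the fibers of $p:\mathcal{X}\to S$ is bounded if and only if the push-forward family $\{F_{\mathcal{E}}(\mathcal{F}_{i})\}_{i\in I}$ on the fibers of $\rho:X\to S$ is bounded. It therefore suffices to establish the boundedness of this family on a family of projective schemes, where the classical machinery is available.

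Next I would verify that the pushed-forward family satisfies the hypotheses of the classical boundedness theorem on $X$. Purity of dimension $d$ for each $F_{\mathcal{E}}(\mathcal{F}_{i})$ follows from Lemma \ref{puretopure}, since by assumption each $\mathcal{F}_{i}$ is pure of dimension $d$. The ordinary Hilbert polynomial of $F_{\mathcal{E}}(\mathcal{F}_{i})$ equals the modified Hilbert polynomial of $\mathcal{F}_{i}$ by definition, namely $P(F_{\mathcal{E}}(\mathcal{F}_{i}))=P_{\mathcal{E}}(\mathcal{F}_{i})=P$, so the ordinary Hilbert polynomial is fixed. Finally, the hypothesis $\hat{\mu}_{\max}(F_{\mathcal{E}}(\mathcal{F}_{i}))\leq\mu_{0}$ provides exactly the uniform bound on the maximal slope of the Harder-Narasimhan filtration of $F_{\mathcal{E}}(\mathcal{F}_{i})$ viewed as a coherent sheaf on a fiber of $\rho$.

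With the hypotheses translated, I would then invoke the classical Maruyama-Simpson-type boundedness theorem for purely $d$-dimensional coherent sheaves on the fibers of a projective morphism of noetherian schemes: any family with fixed ordinary Hilbert polynomial and with $\hat{\mu}_{\max}$ bounded above is bounded (cf.\ [\cite{HL3}, Theorem 3.3.7]). Combined with the equivalence provided by [\cite{Nir1}, Corollary 4.17], this would immediately yield boundedness of $\{\mathcal{F}_{i}\}_{i\in I}$ on the fibers of $p$.

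The main obstacle I anticipate is handing the classical theorem in the relative setting over $S$: the standard formulation is usually stated for a fixed projective scheme, whereas here $\rho:X\to S$ varies in families. One can overcome this either by applying the absolute theorem fiber-by-fiber and repackaging the uniform regularity bound into a single bounded family via the stacky Kleiman criterion [\cite{Nir1}, Theorem 4.12], or by appealing directly to a relative version of the Maruyama-Simpson theorem. Apart from this bookkeeping, the argument is a clean transfer of the classical boundedness theorem along $F_{\mathcal{E}}$, exploiting that the stacky notions of purity, modified Hilbert polynomial and slope have been set up to be compatible with $F_{\mathcal{E}}$.
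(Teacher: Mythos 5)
Your proposal is correct and follows essentially the same route as the source: the paper states this result by citation to [\cite{Nir1}, Theorem 4.27 (1)], whose proof is precisely the transfer along $F_{\mathcal{E}}$ that you describe — boundedness on $\mathcal{X}$ is equivalent to boundedness of $\{F_{\mathcal{E}}(\mathcal{F}_{i})\}$ on $X$ by [\cite{Nir1}, Corollary 4.17], purity and the Hilbert polynomial are preserved (Lemma \ref{puretopure} and the definition of $P_{\mathcal{E}}$), and the classical boundedness theorem for pure sheaves with fixed Hilbert polynomial and bounded $\hat{\mu}_{\max}$ applies. Your worry about the relative setting is not an issue, since [\cite{HL3}, Theorem 3.3.7] is already stated for sheaves on the fibres of a projective morphism.
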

The following bound of the number of global sections will be used in Section 3.
\begin{lemma} ([\cite{Nir1}, Proposition 4.24 and Corollary 4.30])\label{glo-sec-bound}
Let $\mathcal{X}$ be a projective Deligne-Mumford stack over $k$ with a moduli scheme $\pi:\mathcal{X}\to X$ and a polarization $(\mathcal{E},\mathcal{O}_{X}(1))$.  For any  $\hat{\mu}_{\mathcal{E}}$-semistable sheaf $\mathcal{F}$ of dimension $d$ on $\mathcal{X}$, we have
\[
h^0(\mathcal{X},\mathcal{F}\otimes\mathcal{E}^\vee)\leq
\left\{
\begin{aligned}
&r\binom{\hat{\mu}_{\mathcal{E}}(\mathcal{F})+\widetilde{m}\deg(\mathcal{O}_{X}(1))+r^2+f(r)+\frac{d-1}{2}}{d}, \;\;\mathrm{if}\;\; \hat{\mu}_{\mathrm{max}}(F_{\mathcal{E}}(\mathcal{F}))\geq \frac{d+1}{2}-r^2\\
& 0,  \qquad  \qquad \qquad\qquad\qquad\qquad\qquad\qquad\qquad\qquad \mathrm{otherwise}
\end{aligned}
\right.\]
where $\hat{\mu}_{\mathrm{max}}(F_{\mathcal{E}}(\mathcal{F}))\leq\hat{\mu}_{\mathcal{E}}(\mathcal{F})+\widetilde{m}\deg(\mathcal{O}_{X}(1))$, $r=r(F_{\mathcal{E}}(\mathcal{F}))$, $f(r)=-1+\sum_{i=1}^r\frac{1}{i}$, and $\widetilde{m}$ is the integer making that $\pi_{*}\mathcal{E}nd_{\mathcal{O}_{\mathcal{X}}}(\mathcal{E})(\widetilde{m})$ is generated by global sections.
\end{lemma}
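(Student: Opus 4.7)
The plan is to reduce the stacky bound to the classical Le Potier--Simpson estimate on the moduli scheme $X$ via the adjunction $(G_{\mathcal{E}}, F_{\mathcal{E}})$. First I would use the identification
\[
h^0(\mathcal{X}, \mathcal{F}\otimes\mathcal{E}^\vee) = h^0(X, \pi_*(\mathcal{F}\otimes\mathcal{E}^\vee)) = h^0(X, F_{\mathcal{E}}(\mathcal{F})),
\]
which follows from tameness (the isomorphism $H^\bullet(\mathcal{X},-) \cong H^\bullet(X,\pi_*(-))$ recalled at the start of Section~2.1). By Lemma~\ref{puretopure}, the sheaf $F_{\mathcal{E}}(\mathcal{F})$ is pure of the same dimension $d$ on $X$, and by definition of the modified Hilbert polynomial its multiplicity equals $r = r(F_{\mathcal{E}}(\mathcal{F}))$. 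This reduces everything to bounding $h^0$ of a pure $d$-dimensional coherent sheaf on the projective scheme $X$.

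Next I would invoke the classical Le Potier--Simpson estimate on $X$ (see \cite{HL3}, Corollary~3.3.8): for every purely $d$-dimensional sheaf $G$ on $X$ with $\hat\mu_{\max}(G) \geq (d+1)/2 - r(G)^2$,
\[
h^0(X,G) \leq r(G)\binom{\hat\mu_{\max}(G) + r(G)^2 + f(r(G)) + (d-1)/2}{d},
\]
and $h^0(X,G)=0$ otherwise. Applied to $G = F_{\mathcal{E}}(\mathcal{F})$, this yields a bound of the desired shape, except that the numerator involves $\hat\mu_{\max}(F_{\mathcal{E}}(\mathcal{F}))$ rather than $\hat\mu_{\mathcal{E}}(\mathcal{F})$.

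The remaining and main step is therefore the slope comparison
\[
\hat\mu_{\max}(F_{\mathcal{E}}(\mathcal{F})) \;\leq\; \hat\mu_{\mathcal{E}}(\mathcal{F}) + \widetilde{m}\deg(\mathcal{O}_X(1)),
\]
which measures how far $F_{\mathcal{E}}$ can fail to preserve semistability. To prove this, pick a Harder--Narasimhan factor $G' \subseteq F_{\mathcal{E}}(\mathcal{F})$ realizing $\hat\mu_{\max}$; applying the counit $\theta_{\mathcal{E}}(\mathcal{F})\colon \pi^*F_{\mathcal{E}}(\mathcal{F})\otimes\mathcal{E} \to \mathcal{F}$, the image of $\pi^*G'\otimes \mathcal{E}$ is a nonzero subsheaf $\mathcal{F}' \subseteq \mathcal{F}$ with $\hat\mu_{\mathcal{E}}(\mathcal{F}') \leq \hat\mu_{\mathcal{E}}(\mathcal{F})$ by $\hat\mu_{\mathcal{E}}$-semistability. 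Functoriality of the projection formula then gives a surjection $G'\otimes \pi_*\mathcal{E}nd_{\mathcal{O}_{\mathcal{X}}}(\mathcal{E}) \twoheadrightarrow F_{\mathcal{E}}(\mathcal{F}')$, and the defining property of $\widetilde{m}$ (namely that $\pi_*\mathcal{E}nd_{\mathcal{O}_{\mathcal{X}}}(\mathcal{E})(\widetilde{m})$ is globally generated) lets one compare slopes after twisting to obtain $\hat\mu(G') \leq \hat\mu(F_{\mathcal{E}}(\mathcal{F}')) + \widetilde{m}\deg(\mathcal{O}_X(1)) \leq \hat\mu_{\mathcal{E}}(\mathcal{F}) + \widetilde{m}\deg(\mathcal{O}_X(1))$. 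Substituting this into the Le Potier--Simpson bound yields the stated inequality.

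The hard part will be this last slope comparison: while the cohomological reduction and the LP--Simpson estimate on $X$ are essentially formal, controlling $\hat\mu_{\max}(F_{\mathcal{E}}(\mathcal{F}))$ requires pulling a would-be destabilizer back to $\mathcal{X}$ through $\theta_{\mathcal{E}}$, and tracking the twist needed to make $\pi_*\mathcal{E}nd_{\mathcal{O}_{\mathcal{X}}}(\mathcal{E})$ globally generated is precisely the source of the correction term $\widetilde{m}\deg(\mathcal{O}_X(1))$; this is the place where the stacky structure contributes genuinely new content beyond the scheme case.
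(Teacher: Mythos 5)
Your proposal is correct and follows essentially the proof behind the paper's citation: the paper does not prove Lemma \ref{glo-sec-bound} but quotes [\cite{Nir1}, Proposition 4.24 and Corollary 4.30], and those are exactly your two steps, namely the slope comparison $\hat{\mu}_{\mathrm{max}}(F_{\mathcal{E}}(\mathcal{F}))\leq\hat{\mu}_{\mathcal{E}}(\mathcal{F})+\widetilde{m}\deg(\mathcal{O}_{X}(1))$ (obtained from the counit $\theta_{\mathcal{E}}$, the projection formula $F_{\mathcal{E}}(\pi^*G'\otimes\mathcal{E})\cong G'\otimes\pi_{*}\mathcal{E}nd_{\mathcal{O}_{\mathcal{X}}}(\mathcal{E})$, and global generation of $\pi_{*}\mathcal{E}nd_{\mathcal{O}_{\mathcal{X}}}(\mathcal{E})(\widetilde{m})$) combined with the Le Potier--Simpson estimate on $X$ after the identification $h^0(\mathcal{X},\mathcal{F}\otimes\mathcal{E}^\vee)=h^0(X,F_{\mathcal{E}}(\mathcal{F}))$. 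The same mechanism (a surjection $\overline{F}\otimes\mathcal{O}_{X}(-\widetilde{m})^{\oplus N}\twoheadrightarrow F_{\mathcal{E}}(\overline{\mathcal{F}})$ from the maximal destabilizing subsheaf, then semistability of the twisted source) is reused in the paper's own proof of Lemma \ref{case1-bound}, so your route coincides with the intended one.
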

Now, as in [\cite{Wan,Lin18}], we introduce the notion of a flat family of pairs.
\begin{definition}\label{fam-pair}
A flat family $(\mathcal{F},\varphi)$ of pairs parametrized by a scheme $S$ consists of a coherent sheaf $\mathcal{F}$ on $\mathcal{X}\times S$ which is flat over $S$ and a morphism $\varphi: \pi_{\mathcal{X}}^*\mathcal{F}_{0}\to\mathcal{F}$, where $\pi_{\mathcal{X}}:\mathcal{X}\times S\to \mathcal{X}$ is the natural projection. Two families $(\mathcal{F},\varphi)$ and $(\mathcal{G},\psi)$ are isomorphic if there is an isomorphism $\Phi:\mathcal{F}\to\mathcal{G}$ such that $\Phi\circ\varphi=\psi$.
\end{definition}
\begin{remark}\label{comp-fam}
In Definition \ref{fam-pair}, the implicit $\lambda$-scaling of $\pi_{\mathcal{X}}^*\mathcal{F}_{0}$ as in Definition \ref{def-pair} for the  isomorphism of two families has been absorbed in  $\Phi$ as in Lemma \ref{iso-pair}.
Compared with the notion of a flat family  of framed sheaves defined in [\cite{BS}, Definition 3.16],  one may alternatively define a flat family of pairs as follows. A flat family $(\mathcal{F},L_{\mathcal{F}},\phi_{\mathcal{F}})$ of pairs parameterized by a scheme $S$ consists of a coherent sheaf $\mathcal{F}$ on $\mathcal{X}\times S$  which is  flat over $S$, a line bundle $L_{\mathcal{F}}$ on $S$, and a morphism $\phi_{\mathcal{F}}: L_{\mathcal{F}}\to\pi_{S*}\mathcal{H}om(\pi_{\mathcal{X}}^*\mathcal{F}_{0},\mathcal{F})$, where $\pi_{S}:\mathcal{X}\times S\to S$ is the projection.
Two families $(\mathcal{F},L_{\mathcal{F}},\phi_{\mathcal{F}})$ and $(\mathcal{G},L_{\mathcal{G}},\psi_{\mathcal{G}})$ are isomorphic if there are isomorphisms $\Phi:\mathcal{F}\to\mathcal{G}$ and $\Psi: L_{\mathcal{F}}\to L_{\mathcal{G}}$ such that
\ben
\psi_{\mathcal{G}}\circ\Psi=\widehat{\Phi}\circ\phi_{\mathcal{F}}
\een
where 
\ben	\widehat{\Phi}:\pi_{S*}\mathcal{H}om(\pi_{\mathcal{X}}^*\mathcal{F}_{0},\mathcal{F})\to\pi_{S*}\mathcal{H}om(\pi_{\mathcal{X}}^*\mathcal{F}_{0},\mathcal{G})
\een	
is the isomorphism induced by $\Phi$.  For this definition, one can impose the $\lambda$-scaling on $\pi_{\mathcal{X}}^*\mathcal{F}_{0}$ for two isomorpic families which is also absorbed in $\Phi$. As in [\cite{BS}, Remark 3.17], the morphism $\phi_{\mathcal{F}}: L_{\mathcal{F}}\to\pi_{S*}\mathcal{H}om(\pi_{\mathcal{X}}^*\mathcal{F}_{0},\mathcal{F})$ may be taken as a nowhere vanishing morphism, i.e., the composition as follows
\ben
\pi_{S}^*L_{\mathcal{F}}\otimes \pi_{\mathcal{X}}^*\mathcal{F}_{0}\to\pi_{S}^*\pi_{S*}\mathcal{H}om(\pi_{\mathcal{X}}^*\mathcal{F}_{0},\mathcal{F})\otimes \pi_{\mathcal{X}}^*\mathcal{F}_{0}\to\mathcal{H}om(\pi_{\mathcal{X}}^*\mathcal{F}_{0},\mathcal{F})\otimes \pi_{\mathcal{X}}^*\mathcal{F}_{0}\to\mathcal{F}.
\een
When $L_{\mathcal{F}}$ is trivial, the morphism $\phi_{\mathcal{F}}$ may be viewed as $\varphi$  in Definition \ref{fam-pair}. 
However, we will  adopt the notion of a flat family of pairs in Definition \ref{fam-pair} (see  Remark \ref{no-asump} for the reason).
\end{remark}

\section{Boundedness of the family of semistable pairs}
One important step in constructing moduli spaces of semistable pairs is to prove the  boundedness of the family of semistable pairs. Given a polynomial $P\in\mathbb{Q}[m]$ and a stability parameter $\delta\in\mathbb{Q}[m]$ which is zero or a polynomial with positive leading coefficient, we will prove  in this section that the family of $\delta$-semistable pairs of type $P$ is bounded. In fact,  we will generalize the boundedness results on smooth projective varieties in [\cite{Wan}, Section 3] when $\mathrm{deg}\,\delta<\mathrm{deg}\,P$ and those in [\cite{Lin18}, Section 3] when $\mathrm{deg}\,\delta\geq\mathrm{deg}\,P$ to the case of projective Deligne-Mumford stacks. Let $\mathcal{X}$ be a projective Deligne-Mumford stack over $k$ with a moduli scheme $\pi:\mathcal{X}\to X$ and a polarization $(\mathcal{E},\mathcal{O}_{X}(1))$. We  start with the case when $\mathrm{deg}\,\delta<\mathrm{deg}\,P$.

\begin{lemma}\label{case1-bound}
When $\mathrm{deg}\,\delta<\mathrm{deg}\,P$. Suppose that $(\mathcal{F},\varphi)$ is a nondegenerate $\delta$-semistable pair with the modified Hilbert polynomial $P_{\mathcal{E}}(\mathcal{F})=P$. Then $\hat{\mu}_{\mathrm{max}}(F_{\mathcal{E}}(\mathcal{F}))$ is bounded above by a constant depending on $P$, $\mathcal{F}_{0}$ and $X$.
\end{lemma}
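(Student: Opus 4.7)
The plan is to adapt the Harder--Narasimhan case analysis from Wang's smooth projective argument [\cite{Wan}, Section 3] to the stacky setting, with a final comparison step translating the resulting slope bound on $\mathcal{X}$ into one on the moduli scheme $X$. Let $0\subsetneq\mathcal{F}_{1}\subsetneq\cdots\subsetneq\mathcal{F}_{l}=\mathcal{F}$ denote the Harder--Narasimhan filtration of $\mathcal{F}$ with respect to $\hat{\mu}_{\mathcal{E}}$; then $\mathcal{F}_{1}$ is pure of dimension $d=\deg P$ and $\hat{\mu}_{\mathcal{E}}(\mathcal{F}_{1})=\hat{\mu}_{\mathcal{E},\mathrm{max}}(\mathcal{F})$. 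The first step is to bound $\hat{\mu}_{\mathcal{E}}(\mathcal{F}_{1})$ by applying $\delta$-semistability to the subpair determined by $\mathcal{F}_{1}$, splitting on whether $\mathrm{im}\,\varphi\subset\mathcal{F}_{1}$.

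In the first case, the subpair is $(\mathcal{F}_{1},\varphi)$ with $\epsilon=1$, and $\delta$-semistability gives $p_{\mathcal{E}}(\mathcal{F}_{1})+\delta/r(F_{\mathcal{E}}(\mathcal{F}_{1}))\leq p_{\mathcal{E}}(\mathcal{F})+\delta/r(F_{\mathcal{E}}(\mathcal{F}))$. Since $\mathcal{F}_{1}$ and $\mathcal{F}$ are both pure of dimension $d$, the inclusion forces $r(F_{\mathcal{E}}(\mathcal{F}_{1}))\leq r(F_{\mathcal{E}}(\mathcal{F}))$, so positivity of the leading coefficient of $\delta$ makes the correction non-positive and yields $p_{\mathcal{E}}(\mathcal{F}_{1})\leq p_{\mathcal{E}}(\mathcal{F})$, in particular $\hat{\mu}_{\mathcal{E}}(\mathcal{F}_{1})\leq\hat{\mu}_{\mathcal{E}}(\mathcal{F})$. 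In the second case the subpair is $(\mathcal{F}_{1},0)$ with $\epsilon=0$, and semistability reads $p_{\mathcal{E}}(\mathcal{F}_{1})\leq p_{\mathcal{E}}(\mathcal{F})+\delta/r(F_{\mathcal{E}}(\mathcal{F}))$. Both sides share the leading coefficient $1/d!$, so since $\deg\delta<\deg P=d$, comparing coefficients of $m^{d-1}$ yields $\hat{\mu}_{\mathcal{E}}(\mathcal{F}_{1})\leq\hat{\mu}_{\mathcal{E}}(\mathcal{F})+C(\delta,P)$ for an explicit constant. Together these cases bound $\hat{\mu}_{\mathcal{E},\mathrm{max}}(\mathcal{F})$ in terms of $P$ and $\delta$ alone.

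To reach the stated bound on $\hat{\mu}_{\mathrm{max}}(F_{\mathcal{E}}(\mathcal{F}))$, I next invoke the comparison inequality embedded in Lemma \ref{glo-sec-bound}: for every $\hat{\mu}_{\mathcal{E}}$-semistable sheaf $\mathcal{G}$ of dimension $d$ on $\mathcal{X}$ one has $\hat{\mu}_{\mathrm{max}}(F_{\mathcal{E}}(\mathcal{G}))\leq\hat{\mu}_{\mathcal{E}}(\mathcal{G})+\widetilde{m}\deg(\mathcal{O}_{X}(1))$, with $\widetilde{m}$ depending only on the polarization $(\mathcal{E},\mathcal{O}_{X}(1))$. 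Applying this to each Harder--Narasimhan factor of $\mathcal{F}$, combined with the exactness of $F_{\mathcal{E}}$ and the elementary subadditivity $\hat{\mu}_{\mathrm{max}}(B)\leq\max\{\hat{\mu}_{\mathrm{max}}(A),\hat{\mu}_{\mathrm{max}}(C)\}$ for any short exact sequence $0\to A\to B\to C\to 0$, gives $\hat{\mu}_{\mathrm{max}}(F_{\mathcal{E}}(\mathcal{F}))\leq\hat{\mu}_{\mathcal{E},\mathrm{max}}(\mathcal{F})+\widetilde{m}\deg(\mathcal{O}_{X}(1))$. Chained with the previous bound this produces the required estimate depending on $P$, $\delta$, and $X$; the dependence on $\mathcal{F}_{0}$ enters only implicitly through the rank comparison in the first case.

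The main obstacle is not any single hard computation but the clean bridging between the stacky slope $\hat{\mu}_{\mathcal{E}}$, where $\delta$-semistability naturally lives, and the scheme-theoretic slope $\hat{\mu}$ on $X$, where the statement is phrased. Subsheaves of $F_{\mathcal{E}}(\mathcal{F})$ on $X$ need not arise as $F_{\mathcal{E}}(\mathcal{F}')$ for subsheaves $\mathcal{F}'\subset\mathcal{F}$, so $\hat{\mu}_{\mathrm{max}}(F_{\mathcal{E}}(\mathcal{F}))$ can in principle exceed $\hat{\mu}_{\mathcal{E},\mathrm{max}}(\mathcal{F})$; it is precisely Nironi's comparison absorbed in Lemma \ref{glo-sec-bound} that controls the discrepancy by the fixed correction $\widetilde{m}\deg(\mathcal{O}_{X}(1))$, and ensuring this comparison integrates cleanly with the Harder--Narasimhan factor reduction is the only delicate point of the argument.
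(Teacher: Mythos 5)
Your reduction to the Harder--Narasimhan factor $\mathcal{F}_1$ is natural, and the second half of your argument --- applying Nironi's comparison $\hat{\mu}_{\mathrm{max}}(F_{\mathcal{E}}(\mathcal{G}))\leq\hat{\mu}_{\mathcal{E}}(\mathcal{G})+\widetilde{m}\deg(\mathcal{O}_{X}(1))$ to the $\hat{\mu}_{\mathcal{E}}$-semistable factors and using subadditivity of $\hat{\mu}_{\mathrm{max}}$ (which is legitimate here since $F_{\mathcal{E}}$ is exact and preserves purity, so the relevant quotients are pure of dimension $d$) --- is sound and is essentially the same bridge the paper builds via the maximal destabilizing subsheaf $\overline{F}$ of $F_{\mathcal{E}}(\mathcal{F})$. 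The gap is in your second case. When $\mathrm{im}\,\varphi\not\subset\mathcal{F}_1$ you apply semistability to the subpair $(\mathcal{F}_1,0)$ and obtain $\hat{\mu}_{\mathcal{E}}(\mathcal{F}_1)\leq\hat{\mu}_{\mathcal{E}}(\mathcal{F})+\delta_1/r$, where $\delta_1$ is (essentially) the degree-$(d-1)$ coefficient of $\delta$. This constant genuinely depends on $\delta$ and is unbounded as $\delta$ varies (take $\delta=N\,m^{d-1}$ with $N$ large), whereas the lemma asserts a bound depending only on $P$, $\mathcal{F}_0$ and $X$; the paper explicitly exploits this $\delta$-independence immediately afterwards ("the constant $C$ can be chosen to be independent of $\delta$") and again in the chamber-structure analysis of Section 4.4. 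Your closing remark that the dependence on $\mathcal{F}_0$ "enters through the rank comparison in the first case" is a symptom of the problem: $\mathcal{F}_0$ never actually enters your estimate, which signals that the intended mechanism is missing.

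The missing device is the one the paper uses: instead of applying semistability to $\mathcal{F}_1$ itself in the second case, enlarge it to $\mathcal{H}=\mathcal{F}_1+\mathrm{im}\,\varphi$. Since $\mathrm{im}\,\varphi\subseteq\mathcal{H}$, your first-case computation applies verbatim to the subpair $(\mathcal{H},i\circ\varphi)$ and the $\delta$-terms cancel, giving $\hat{\mu}_{\mathcal{E}}(\mathcal{H})\leq\hat{\mu}_{\mathcal{E}}(\mathcal{F})$ with no $\delta$ at all. One then recovers $\hat{\mu}_{\mathcal{E}}(\mathcal{F}_1)$ from $\hat{\mu}_{\mathcal{E}}(\mathcal{H})$ via the exact sequence $0\to\mathcal{F}_1\to\mathcal{H}\to\mathcal{G}\to0$, where $\mathcal{G}=\mathrm{im}\,\varphi/(\mathcal{F}_1\cap\mathrm{im}\,\varphi)$ is a quotient of $\mathcal{F}_0$, so that $\hat{\mu}_{\mathcal{E}}(\mathcal{G})\geq\hat{\mu}_{\mathrm{min}}(F_{\mathcal{E}}(\mathcal{F}_0))$ and additivity of $P_{\mathcal{E}}$ yields
\begin{equation*}
\hat{\mu}_{\mathcal{E}}(\mathcal{F}_1)\leq\hat{\mu}_{\mathcal{E}}(\mathcal{F})+\bigl(\hat{\mu}_{\mathcal{E}}(\mathcal{F})-\hat{\mu}_{\mathrm{min}}(F_{\mathcal{E}}(\mathcal{F}_0))\bigr)\frac{r(F_{\mathcal{E}}(\mathcal{G}))}{r(F_{\mathcal{E}}(\mathcal{F}_1))}.
\end{equation*}
This is exactly where the stated dependence on $\mathcal{F}_0$ arises, and it removes the dependence on $\delta$. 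With that substitution your proof goes through; without it, you have proved a weaker statement that does not support the later uniform-in-$\delta$ arguments.
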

\begin{proof}
We combine the  arguments in the proofs of [\cite{Wan}, Proposition 2.1] and  [\cite{Nir1}, Proposition 4.24].
Suppose that $\mathcal{F}$ is pure of dimension $d$, and let $\frac{\delta_{1}}{(d-1)!}$ be the coefficient of $\delta$ in degree $d-1$.	The assumption of $\delta$ implies that $\delta_{1}\geq0$. Let $(\mathcal{F}^\prime,\varphi^{\prime})$ be a subpair of $(\mathcal{F},\varphi)$ satisfying $\mathrm{im}\varphi\subset\mathcal{F}^\prime$. By  assumption we have
\ben
\hat{\mu}_{\mathcal{E}}(\mathcal{F}^{\prime})+\frac{\delta_{1}}{r(F_{\mathcal{E}}(\mathcal{F}^\prime))}\leq\hat{\mu}_{\mathcal{E}}(\mathcal{F})+\frac{\delta_{1}}{r(F_{\mathcal{E}}(\mathcal{F}))}
\een
Then $\hat{\mu}_{\mathcal{E}}(\mathcal{F}^{\prime})\leq\hat{\mu}_{\mathcal{E}}(\mathcal{F})$ since the exactness of the functor $F_{\mathcal{E}}$ implies $r(F_{\mathcal{E}}(\mathcal{F}^\prime))\leq r(F_{\mathcal{E}}(\mathcal{F}))$.
Now, let $\mathcal{F}^\prime\subseteq\mathcal{F}$ be any subsheaf. Set  $\mathcal{H}=\mathcal{F}^\prime+\mathrm{im}\varphi$ and $\mathcal{G}=\mathrm{im}\varphi/(\mathcal{F}\cap\mathrm{im}\varphi)$. Then $\mathcal{G}$ is a quotient of $\mathcal{F}_{0}$ and we have
a short exact sequence
\ben
0\to\mathcal{F}^\prime \to\mathcal{H}\to\mathcal{G}\to0.
\een
Note that $\mathcal{H}\subseteq\mathcal{F}$ and $\mathrm{im}\varphi\subseteq\mathcal{H}$, then $(\mathcal{H},i\circ\varphi)$ is a subpair of $(\mathcal{F},\varphi)$, where $i$ denotes the inclusion $\mathcal{H}\hookrightarrow\mathcal{F}$. As above, we have
$\hat{\mu}_{\mathcal{E}}(\mathcal{H})\leq\hat{\mu}_{\mathcal{E}}(\mathcal{F})$. Since $F_{\mathcal{E}}(\mathcal{G})$ is a quotient of $F_{\mathcal{E}}(\mathcal{F}_{0})$, we have $\hat{\mu}_{\mathrm{min}}(F_{\mathcal{E}}(\mathcal{F}_{0}))\leq\hat{\mu}_{\mathcal{E}}(\mathcal{G})$. Since the modified Hilbert polynomial $P_{\mathcal{E}}(\cdot)$ is additive in a short exact sequence, we have
\ben
\hat{\mu}_{\mathcal{E}}(\mathcal{F}^\prime)&=&\frac{\hat{\mu}_{\mathcal{E}}(\mathcal{H})r(F_{\mathcal{E}}(\mathcal{H}))-\hat{\mu}_{\mathcal{E}}(\mathcal{G})r(F_{\mathcal{E}}(\mathcal{G}))}{r(F_{\mathcal{E}}(\mathcal{F}^\prime))}\\
&\leq&\frac{\hat{\mu}_{\mathcal{E}}(\mathcal{F})r(F_{\mathcal{E}}(\mathcal{H}))-\hat{\mu}_{\mathrm{min}}(F_{\mathcal{E}}(\mathcal{F}_{0}))r(F_{\mathcal{E}}(\mathcal{G}))}{r(F_{\mathcal{E}}(\mathcal{F}^\prime))}\\
&=&\hat{\mu}_{\mathcal{E}}(\mathcal{F})+(\hat{\mu}_{\mathcal{E}}(\mathcal{F})-\hat{\mu}_{\mathrm{min}}(F_{\mathcal{E}}(\mathcal{F}_{0})))\frac{r(F_{\mathcal{E}}(\mathcal{G}))}{r(F_{\mathcal{E}}(\mathcal{F}^\prime))}.
\een
where $\mathcal{F}^\prime$ and $\mathcal{H}$ are of dimension $d$, and $r(F_{\mathcal{E}}(\mathcal{G}))=\alpha_{\mathcal{E},d}(\mathcal{G})\geq0$ (it is zero if $\mathrm{dim}(F_{\mathcal{E}}(\mathcal{G}))<d$).
Set 
\ben
\widetilde{C}:=\max\{\hat{\mu}_{\mathcal{E}}(\mathcal{F}),\hat{\mu}_{\mathcal{E}}(\mathcal{F})+(\hat{\mu}_{\mathcal{E}}(\mathcal{F})-\hat{\mu}_{\mathrm{min}}(F_{\mathcal{E}}(\mathcal{F}_{0})))\cdot r(F_{\mathcal{E}}(\mathcal{F}))\}
\een
which is a constant depending on $P$ and $\mathcal{F}_{0}$. Then 
$\hat{\mu}_{\mathcal{E}}(\mathcal{F}^\prime)\leq \widetilde{C}$ for any subsheaf $\mathcal{F}^\prime\subseteq\mathcal{F}$.

By Serre's vanishing theorem, one can choose an integer $\widetilde{m}$ large enough such that $\pi_{*}\mathcal{E}nd_{\mathcal{O}_{\mathcal{X}}}(\mathcal{E})(\widetilde{m})$
is generated by global sections. Set $N=h^0(X,\pi_{*}\mathcal{E}nd_{\mathcal{O}_{\mathcal{X}}}(\mathcal{E})(\widetilde{m}))$. Let $\overline{F}$ be the maximal destabilizing sheaf of $F_{\mathcal{E}}(\mathcal{F})$ with respect to ordinary $\hat{\mu}$-stability. Then $\hat{\mu}(\overline{F})=\hat{\mu}_{\mathrm{max}}(F_{\mathcal{E}}(\mathcal{F}))$.
As in the argument of [\cite{Nir1}, Proposition 4.24], one has a surjection $\overline{F}\otimes\mathcal{O}_{X}(-\widetilde{m})^{\oplus N}\to F_{\mathcal{E}}(\overline{\mathcal{F}})$, where $\overline{\mathcal{F}}$ is a subsheaf of $\mathcal{F}$  associated to $\overline{F}$ by some transformation. Since $\overline{F}\otimes\mathcal{O}_{X}(-\widetilde{m})^{\oplus N}$ is also $\hat{\mu}$-semistable, then 
\ben
\hat{\mu}(\overline{F}(-\widetilde{m}))=\hat{\mu}(\overline{F}\otimes\mathcal{O}_{X}(-\widetilde{m})^{\oplus N})\leq\hat{\mu}(F_{\mathcal{E}}(\overline{\mathcal{F}}))=\hat{\mu}_{\mathcal{E}}(\overline{\mathcal{F}})\leq\widetilde{C}.
\een
The above inequality implies that $\hat{\mu}_{\mathrm{max}}(F_{\mathcal{E}}(\mathcal{F}))\leq C$, where $C:=\widetilde{C}+\widetilde{m} \deg(\mathcal{O}_{X}(1))$  is a constant depending on $P$, $\mathcal{F}_{0}$ and $X$.
\end{proof}
The remaining case is 
\begin{lemma}\label{case2-bound}
When $\mathrm{deg}\,\delta\geq\mathrm{deg}\,P$. Suppose that $(\mathcal{F},\varphi)$ is a  nondegenerate $\delta$-semistable pair with the modified Hilbert polynomial $P_{\mathcal{E}}(\mathcal{F})=P$. Then $\hat{\mu}_{\mathrm{min}}(F_{\mathcal{E}}(\mathcal{F}))$ is bounded below by a constant depending on $P$, $\mathcal{F}_{0}$ and $X$. 
\end{lemma}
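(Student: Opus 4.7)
The plan is to dualize Lemma \ref{case1-bound}: instead of bounding $\hat{\mu}_{\max}$ from above via a maximal destabilizing subsheaf, I would bound $\hat{\mu}_{\min}$ from below by examining a minimal destabilizing quotient of $\mathcal{F}$. Writing the $\hat{\mu}_{\mathcal{E}}$-Harder-Narasimhan filtration of the pure sheaf $\mathcal{F}$ as $0 = \mathcal{F}'_0 \subsetneq \mathcal{F}'_1 \subsetneq \cdots \subsetneq \mathcal{F}'_l = \mathcal{F}$, if $l = 1$ then $\mathcal{F}$ is itself $\hat{\mu}_{\mathcal{E}}$-semistable and $\hat{\mu}_{\min}(F_{\mathcal{E}}(\mathcal{F})) = \alpha_{\mathcal{E}, d-1}(P)/\alpha_{\mathcal{E}, d}(P)$ is read off from $P$. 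Otherwise the proper quotient $\mathcal{F}'' := \mathcal{F}/\mathcal{F}'_{l-1}$ is purely $d$-dimensional and $\hat{\mu}_{\mathcal{E}}$-semistable with $\hat{\mu}_{\mathcal{E}}(\mathcal{F}'') = \hat{\mu}_{\min}(F_{\mathcal{E}}(\mathcal{F}))$, and I would focus on this case.

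Next, using the hypothesis $\deg\delta \geq d$, I would force the composition $\varphi'' : \mathcal{F}_{0} \xrightarrow{\varphi} \mathcal{F} \twoheadrightarrow \mathcal{F}''$ to be nonzero. Indeed, if $\varphi'' = 0$, then $(\mathcal{F}'', 0)$ would be a proper purely $d$-dimensional quotient pair and Remark \ref{semi-quo} would yield
\[
p_{\mathcal{E}}(\mathcal{F}'') \;\geq\; p_{\mathcal{E}}(\mathcal{F}) + \frac{\delta}{r(F_{\mathcal{E}}(\mathcal{F}))},
\]
but the right-hand side either has strictly larger degree than the left (if $\deg\delta > d$) or the same degree $d$ with strictly larger leading coefficient (if $\deg\delta = d$, by positivity of the leading coefficient of $\delta$), and the inequality is violated for $m \gg 0$. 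Hence $\mathcal{K} := \mathrm{im}(\varphi'')$ is a nonzero subsheaf of $\mathcal{F}''$, necessarily pure of dimension $d$ by purity of $\mathcal{F}''$, and the $\hat{\mu}_{\mathcal{E}}$-semistability of $\mathcal{F}''$ gives the upper bound $\hat{\mu}_{\mathcal{E}}(\mathcal{K}) \leq \hat{\mu}_{\min}(F_{\mathcal{E}}(\mathcal{F}))$.

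It then remains to produce a lower bound on $\hat{\mu}_{\mathcal{E}}(\mathcal{K})$ depending only on $P$, $\mathcal{F}_{0}$ and $X$. Since any morphism from $\mathcal{F}_{0}$ to a pure $d$-dimensional sheaf factors through $\mathcal{F}_{0}/T_{d-1}(\mathcal{F}_{0})$, I would view $\mathcal{K}$ as a pure $d$-dimensional quotient of this fixed sheaf with multiplicity bounded by $\alpha_{\mathcal{E}, d}(P)$ (since $\mathcal{K} \subseteq \mathcal{F}''$ and $\mathcal{F}''$ is a quotient of $\mathcal{F}$), and then apply the stacky Grothendieck lemma (Lemma \ref{Gro-lem}) to the pure $d$-dimensional content of $\mathcal{F}_{0}$, whose regularity and Hilbert polynomial are fixed by the input data. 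The hard part is precisely this last step when $\dim \mathcal{F}_{0} > d$ (as in the Pandharipande--Thomas case $\mathcal{F}_{0} = \mathcal{O}_{\mathcal{X}}$ with $d = 1$), since Lemma \ref{Gro-lem} as stated concerns pure $d$-dimensional sheaves: one must reduce carefully along the torsion filtration of $\mathcal{F}_{0}$ and extract the required uniform bound from the fixed data, as in [\cite{Lin18}, Section 3]. Once this technical reduction is in place, combining it with the upper bound above closes out the argument.
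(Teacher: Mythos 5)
Your steps (1)--(3) are sound and run parallel to the intended argument: the paper obtains the nonvanishing of the composite $\mathcal{F}_{0}\to\mathcal{F}\to\mathcal{F}''$ by combining Lemma \ref{no-str-semi} with Lemma \ref{orbi-PT} (so that $\dim\mathrm{coker}\,\varphi<\deg P$), while you extract it directly from the quotient-pair inequality of Remark \ref{semi-quo}; both routes are fine, as is the reduction to bounding $\hat{\mu}_{\mathcal{E}}(\mathcal{K})$ from below for $\mathcal{K}=\mathrm{im}(\varphi'')$, a pure $d$-dimensional quotient of $\mathcal{F}_{0}$ of multiplicity at most $\alpha_{\mathcal{E},d}(P)$. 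The genuine gap is in your last step. That bound cannot be obtained by ``applying Lemma \ref{Gro-lem} to the pure $d$-dimensional content of $\mathcal{F}_{0}$'': in the motivating case $\mathcal{F}_{0}=\mathcal{O}_{\mathcal{X}}$, $d=1<\dim\mathcal{X}$, the subquotient $T_{d}(\mathcal{F}_{0})/T_{d-1}(\mathcal{F}_{0})$ is zero and $\mathcal{K}$ is in no way a quotient of it, while $\mathcal{F}_{0}/T_{d-1}(\mathcal{F}_{0})$ has dimension $\dim\mathcal{F}_{0}>d$, so the hypotheses of Lemma \ref{Gro-lem} (a sheaf of dimension exactly $d$ with prescribed modified Hilbert polynomial and bounded regularity) are not met either. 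Worse, the statement you are implicitly invoking is false without the multiplicity bound: already for $X=\mathbb{P}^{3}$ and $\mathcal{F}_{0}=\mathcal{O}_{X}$, the pure $1$-dimensional quotients $\mathcal{O}_{C}$ have $\hat{\mu}(\mathcal{O}_{C})=\chi(\mathcal{O}_{C})/\deg C\to-\infty$ along curves of growing degree and genus. Hence the constraint $r(F_{\mathcal{E}}(\mathcal{K}))\leq\alpha_{\mathcal{E},d}(P)$ must enter the boundedness argument in an essential way, and this is precisely the content of the argument you defer to but do not reproduce; your sketch as written would not close.

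For comparison, the paper does not redo this on the stack at all: having established $\dim\mathrm{coker}\,\varphi<\deg P$, it applies the exact functor $F_{\mathcal{E}}$ (purity and dimension are preserved by Lemmas \ref{puretopure} and \ref{tor-fil}, and $\mathrm{im}\,F_{\mathcal{E}}(\varphi)=F_{\mathcal{E}}(\mathrm{im}\,\varphi)$) and then quotes the $(E_{0},E)$ argument of [\cite{Lin18}, Lemma 3.1] verbatim for the pair of sheaves $(F_{\mathcal{E}}(\mathcal{F}_{0}),F_{\mathcal{E}}(\mathcal{F}))$ on the projective scheme $X$. If you prefer to stay on the stack, you must supply a genuine substitute for that input, namely a lower bound on slopes of pure $d$-dimensional quotients of a fixed sheaf \emph{of bounded multiplicity}; Lemma \ref{Gro-lem} and the torsion filtration of $\mathcal{F}_{0}$ are not that substitute.
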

\begin{proof}
Since the functor $F_{\mathcal{E}}$ is exact, from the short exact sequence
\ben
0\to\mathrm{im}\varphi\to \mathcal{F}\to\mathrm{coker}\varphi\to0
\een
we have $F_{\mathcal{E}}(\mathrm{coker}\varphi)\cong F_{\mathcal{E}}(\mathcal{F})/F_{\mathcal{E}}(\mathrm{im}\varphi)$. By Lemma \ref{no-str-semi} and Lemma \ref{orbi-PT},  $\mathrm{dim}\, \mathrm{coker}\varphi<\mathrm{deg}\,P=\mathrm{dim}\mathcal{F}$. Then
$\mathrm{dim}F_{\mathcal{E}}(\mathrm{coker}\varphi)<\mathrm{deg}P=\mathrm{dim}F_{\mathcal{E}}(\mathcal{F})$ by Lemma \ref{puretopure} and Lemma \ref{tor-fil}. The exactness of  $F_{\mathcal{E}}$ implies $F_{\mathcal{E}}(\varphi):F_{\mathcal{E}}(\mathcal{F}_{0})\twoheadrightarrow F_{\mathcal{E}}(\mathrm{im}\varphi)\hookrightarrow F_{\mathcal{E}}(\mathcal{F})$ and $\mathrm{im}\,F_{\mathcal{E}}(\varphi)=F_{\mathcal{E}}(\mathrm{im}\varphi)$.
The proof is completed by applying the argument about $(E_{0}, E)$ in the proof of [\cite{Lin18}, Lemma 3.1] to $(F_{\mathcal{E}}(\mathcal{F}_{0}), F_{\mathcal{E}}(\mathcal{F}))$.
\end{proof}

As the modified Hilbert polynomial is additive in a short exact sequence and $P_{\mathcal{E}}(\mathcal{F})=P$ is fixed, bounding $\hat{\mu}_{\mathrm{min}}$ from below is equivalent to bounding $\hat{\mu}_{\mathrm{max}}$ from above. Hence
the constant $C$ can be chosen to be independent of $\delta$ such that $\hat{\mu}_{\mathrm{max}}$ is bounded above by $C$ by Lemma \ref{case1-bound} and  Lemma \ref{case2-bound}. Using these two lemmas and  Theorem \ref{fam-bound1} with $S=\mathrm{Spec}\,k$,  we have 
\begin{proposition}\label{bound1}
	Fix a modified Hilbert polynomial $P$ and some $\delta$. The set-theoretic family
	\ben
	\{\mathcal{F}\arrowvert (\mathcal{F},\varphi)\mbox{ is a nondegenerate $\delta$-semistable pair of type $P$}\}
	\een
	of coherent sheaves on $\mathcal{X}$ is bounded.
\end{proposition}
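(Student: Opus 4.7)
The plan is to combine Lemmas \ref{case1-bound} and \ref{case2-bound} with Theorem \ref{fam-bound1} applied to $S=\tspec\,k$. The two regimes $\mathrm{deg}\,\delta<\mathrm{deg}\,P$ and $\mathrm{deg}\,\delta\geq\mathrm{deg}\,P$ must both be handled in order to produce a single uniform upper bound on $\hat{\mu}_{\mathrm{max}}(F_{\mathcal{E}}(\mathcal{F}))$; only then does the stacky boundedness theorem of Nironi deliver the conclusion.

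First I would dispose of the case $\mathrm{deg}\,\delta<\mathrm{deg}\,P$, which is immediate: Lemma \ref{case1-bound} furnishes a constant $C_{1}$ depending only on $P$, $\mathcal{F}_{0}$, and $X$ such that $\hat{\mu}_{\mathrm{max}}(F_{\mathcal{E}}(\mathcal{F}))\leq C_{1}$ for every nondegenerate $\delta$-semistable pair $(\mathcal{F},\varphi)$ of type $P$. In the remaining case $\mathrm{deg}\,\delta\geq\mathrm{deg}\,P$, Lemma \ref{case2-bound} supplies only a lower bound on $\hat{\mu}_{\mathrm{min}}(F_{\mathcal{E}}(\mathcal{F}))$, so the key step is to convert this into the required upper bound on $\hat{\mu}_{\mathrm{max}}(F_{\mathcal{E}}(\mathcal{F}))$. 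Here I would exploit that the modified Hilbert polynomial $P_{\mathcal{E}}(\mathcal{F})=P$ is fixed: writing the Harder-Narasimhan filtration of $F_{\mathcal{E}}(\mathcal{F})$ as $0=F_{0}\subsetneqq F_{1}\subsetneqq\cdots\subsetneqq F_{l}=F_{\mathcal{E}}(\mathcal{F})$ with $\hat{\mu}$-semistable factors of multiplicities $r_{i}$ and slopes $\hat{\mu}_{i}$, additivity of the modified Hilbert polynomial in short exact sequences forces both $\sum_{i}r_{i}=\alpha_{\mathcal{E},d}(\mathcal{F})$ and $\sum_{i}r_{i}\hat{\mu}_{i}=\alpha_{\mathcal{E},d-1}(\mathcal{F})$ to take the values prescribed by $P$. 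Consequently a lower bound on the smallest slope $\hat{\mu}_{l}=\hat{\mu}_{\mathrm{min}}(F_{\mathcal{E}}(\mathcal{F}))$ is equivalent to an upper bound on $\hat{\mu}_{1}=\hat{\mu}_{\mathrm{max}}(F_{\mathcal{E}}(\mathcal{F}))$, yielding a constant $C_{2}$ depending only on $P$, $\mathcal{F}_{0}$, and $X$.

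Combining the two cases, I would set $C:=\max\{C_{1},C_{2}\}$, a constant independent of $\delta$, and conclude that $\hat{\mu}_{\mathrm{max}}(F_{\mathcal{E}}(\mathcal{F}))\leq C$ uniformly over all nondegenerate $\delta$-semistable pairs $(\mathcal{F},\varphi)$ of type $P$. Note also that such $\mathcal{F}$ are pure of dimension $d=\mathrm{deg}\,P$ by definition of $\delta$-semistability (Definition \ref{semi-sub}). I would then invoke Theorem \ref{fam-bound1} with $S=\tspec\,k$, the fixed polynomial $P$, and $\mu_{0}=C$: the set of purely $d$-dimensional coherent sheaves on $\mathcal{X}$ with modified Hilbert polynomial $P$ and $\hat{\mu}_{\mathrm{max}}(F_{\mathcal{E}}(\cdot))\leq C$ is bounded, which is exactly the desired assertion.

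The real technical work is already located in Lemmas \ref{case1-bound} and \ref{case2-bound}, whose arguments must be in place before this proposition can be invoked. Granted those, the present proof is essentially a bookkeeping exercise, and the only subtle point is the Harder-Narasimhan balance argument that converts a lower bound on $\hat{\mu}_{\mathrm{min}}$ into an upper bound on $\hat{\mu}_{\mathrm{max}}$ when $P$ is fixed; I expect this to be the sole nontrivial ingredient that needs to be spelled out explicitly, with everything else following directly from the cited results.
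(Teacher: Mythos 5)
Your proposal is correct and follows essentially the same route as the paper: Lemma \ref{case1-bound} and Lemma \ref{case2-bound} give the slope bounds in the two regimes, the fixed polynomial $P$ together with additivity converts the lower bound on $\hat{\mu}_{\mathrm{min}}$ into an upper bound on $\hat{\mu}_{\mathrm{max}}$, and Theorem \ref{fam-bound1} with $S=\mathrm{Spec}\,k$ then gives boundedness. The only difference is that you spell out the Harder--Narasimhan bookkeeping that the paper dispatches in a single sentence, which is a harmless (and correct) elaboration.
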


Next,  in order to apply GIT machinery, one may relate the semistability condition to the number of global sections of subsheaves as in [\cite{HL3}, Theorem 4.4.1]. We need the following estimate for the number of global sections.
\begin{lemma}\label{lan-esti}
Let $\mathcal{X}$ be a projective Deligne-Mumford stack over $k$ with  a polarization $(\mathcal{E},\mathcal{O}_{X}(1))$.
	Let $\mathcal{F}$ be a pure coherent sheaf of dimension $d$ on $\mathcal{X}$, then
	\ben
	\frac{h^0(X,F_{\mathcal{E}}(\mathcal{F})(m))}{r}&=&\frac{h^0(\mathcal{X},\mathcal{F}\otimes\pi^*\mathcal{O}_{X}(m)\otimes\mathcal{E}^\vee)}{r}\\
	&\leq&\frac{r-1}{r}\left[\binom{\hat{\mu}_{\mathrm{max}}(F_{\mathcal{E}}(\mathcal{F}))+m+C}{d}\right]_{+}+\frac{1}{r}\left[\binom{\hat{\mu}(F_{\mathcal{E}}(\mathcal{F}))+m+C}{d}\right]_{+}
	\een
	where $r=r(F_{\mathcal{E}}(\mathcal{F}))$ and $C:=\widetilde{m}\deg(\mathcal{O}_{X}(1))+r^2+f(r)+\frac{d-1}{2}$. Here,  $[x]_{+}:=\max\{0,x\}$ for any $x\in\mathbb{R}$, $f(r)=-1+\sum_{i=1}^r\frac{1}{i}$ and the integer
	$\widetilde{m}$ is the same as in Lemma \ref{glo-sec-bound}.
\end{lemma}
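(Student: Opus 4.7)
The plan is to adapt the Le~Potier--Simpson estimate (cf.\ [\cite{HL3}, Cor.~3.3.8]) to the stacky setting, using the Harder--Narasimhan filtration of $\mathcal{F}$ on $\mathcal{X}$, the single--factor bound provided by Lemma~\ref{glo-sec-bound}, and a standard convexity argument to combine the individual bounds. The preliminary identification is $h^0(\mathcal{X},\mathcal{F}\otimes\pi^*\mathcal{O}_X(m)\otimes\mathcal{E}^\vee)=h^0(X,F_\mathcal{E}(\mathcal{F})(m))$, which follows from the projection formula and the fact that $\pi_\ast$ is exact and cohomology--preserving on a tame stack, together with the identity $\hat{\mu}_\mathcal{E}(\mathcal{F})=\hat{\mu}(F_\mathcal{E}(\mathcal{F}))$ from the definitions.

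Form the Harder--Narasimhan filtration of the pure sheaf $\mathcal{F}$ with respect to $\hat{\mu}_\mathcal{E}$-stability,
$$0=\mathcal{F}_0\subsetneqq\mathcal{F}_1\subsetneqq\cdots\subsetneqq\mathcal{F}_\ell=\mathcal{F},$$
whose factors $\mathcal{G}_i:=\mathcal{F}_i/\mathcal{F}_{i-1}$ are $\hat{\mu}_\mathcal{E}$-semistable of dimension $d$ (Lemmas~\ref{puretopure} and~\ref{tor-fil}), with multiplicities $r_i:=r(F_\mathcal{E}(\mathcal{G}_i))$ and slopes $\nu_i:=\hat{\mu}_\mathcal{E}(\mathcal{G}_i)$ satisfying $\sum r_i=r$, $\sum r_i\nu_i=r\,\hat{\mu}(F_\mathcal{E}(\mathcal{F}))$, and $\nu_1>\cdots>\nu_\ell$. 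Since $F_\mathcal{E}(\mathcal{F}_1)$ is a pure $d$-dimensional subsheaf of $F_\mathcal{E}(\mathcal{F})$ of slope $\nu_1$, one also has $\nu_1\leq\hat{\mu}_{\max}(F_\mathcal{E}(\mathcal{F}))$. The short exact sequences $0\to\mathcal{F}_{i-1}\to\mathcal{F}_i\to\mathcal{G}_i\to0$ and the long exact cohomology sequences yield
$$h^0(\mathcal{X},\mathcal{F}\otimes\pi^*\mathcal{O}_X(m)\otimes\mathcal{E}^\vee)\leq\sum_{i=1}^{\ell}h^0(\mathcal{X},\mathcal{G}_i\otimes\pi^*\mathcal{O}_X(m)\otimes\mathcal{E}^\vee).$$

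Each $\mathcal{G}_i\otimes\pi^*\mathcal{O}_X(m)$ remains $\hat{\mu}_\mathcal{E}$-semistable (tensoring with a pull-back line bundle preserves the filtration) and, by a direct computation from $P_\mathcal{E}(\mathcal{G}_i\otimes\pi^*\mathcal{O}_X(m))(t)=P_\mathcal{E}(\mathcal{G}_i)(t+m)$, one has $\hat{\mu}_\mathcal{E}(\mathcal{G}_i\otimes\pi^*\mathcal{O}_X(m))=\nu_i+m$. Applying Lemma~\ref{glo-sec-bound} to each factor, and enlarging the constant via $r_i\leq r$ together with the monotonicity of $r\mapsto r^2+f(r)$, gives
$$h^0(\mathcal{X},\mathcal{G}_i\otimes\pi^*\mathcal{O}_X(m)\otimes\mathcal{E}^\vee)\leq r_i\left[\binom{\nu_i+m+C}{d}\right]_{+},$$
with $C=\widetilde{m}\deg(\mathcal{O}_X(1))+r^2+f(r)+(d-1)/2$ as in the statement.

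Finally, set $\phi(x):=\left[\binom{x+m+C}{d}\right]_+$, $\bar{\mu}:=\hat{\mu}(F_\mathcal{E}(\mathcal{F}))$ and $\mu_{\max}:=\hat{\mu}_{\max}(F_\mathcal{E}(\mathcal{F}))$. Viewing $\{(r_i,\nu_i)\}$ as a positive discrete measure on $(-\infty,\mu_{\max}]$ of total mass $r$ and mean $\bar{\mu}$, convexity of $\phi$ shows that $\sum r_i\phi(\nu_i)$ is maximised by the extremal configuration with $(r-1)$ units of mass at $\mu_{\max}$ and one unit at $\nu^{*}:=r\bar{\mu}-(r-1)\mu_{\max}$, so
$$\sum_{i} r_i\phi(\nu_i)\leq (r-1)\phi(\mu_{\max})+\phi(\nu^{*}).$$
Since $\bar{\mu}\leq\mu_{\max}$ forces $\nu^{*}\leq\bar{\mu}$ and $\phi$ is nondecreasing on its positivity locus, $\phi(\nu^{*})\leq\phi(\bar{\mu})$; dividing by $r$ yields the asserted inequality. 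The delicate point is the last convexity/extremal step: one must verify that $x\mapsto\left[\binom{x+m+C}{d}\right]_{+}$ is convex and nondecreasing on the relevant range, and that the extremal rearrangement is consistent with the HN data; both are classical (cf.\ [\cite{HL3}, proof of Theorem 3.3.1]), but they are where the combinatorial content of the estimate sits.
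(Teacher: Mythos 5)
Your proposal follows the paper's own route for most of the way: the identification $h^0(\mathcal{X},\mathcal{F}\otimes\pi^*\mathcal{O}_X(m)\otimes\mathcal{E}^\vee)=h^0(X,F_{\mathcal{E}}(\mathcal{F})(m))$, the Harder--Narasimhan filtration with respect to $\hat{\mu}_{\mathcal{E}}$, subadditivity of $h^0$ over the factors via exactness of $F_{\mathcal{E}}$, the slope shift $\hat{\mu}\mapsto\hat{\mu}+m$, and the per-factor estimate from Lemma \ref{glo-sec-bound} with the constant enlarged through $r_i\leq r$. The divergence is in the final combination step, and that is where there is a genuine gap.

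The function $\phi(x)=\left[\binom{x+m+C}{d}\right]_{+}$ is in general neither convex nor nondecreasing: the polynomial $\binom{y}{d}$ has roots at $0,1,\dots,d-1$ and alternates in sign between them, so for $d\geq3$ its positive part has humps on which it rises and falls back to zero, and for even $d$ it even tends to $+\infty$ as $x\to-\infty$. The HN slopes $\nu_i$ (in particular $\hat{\mu}_{\min}$) are not confined to the regime $x+m+C\geq d-1$ where convexity and monotonicity do hold, so your appeal to ``the relevant range'' (and the citation to the proof of [HL3, Thm.~3.3.1], which does not contain this statement) does not close the argument; the range issue can only be repaired by invoking the vanishing branch of Lemma \ref{glo-sec-bound} (a factor contributes a nonzero bound only when $\nu_i+m\geq\tfrac{d+1}{2}-r_i^2$, which pushes the binomial's argument to at least $d$), which you do not do. Moreover, even granting convexity and monotonicity, the asserted two-point extremal configuration does not follow from convexity alone on the half-line $(-\infty,\mu_{\max}]$: a mean-preserving spread of your configuration (say mass $r-\tfrac12$ at $\mu_{\max}$ and $\tfrac12$ further down) strictly increases $\sum_i r_i\phi(\nu_i)$, so one must use that the bottom atom has mass at least $1$ --- which is precisely the elementary input the paper uses directly: $\hat{\mu}_{\min}\leq\hat{\mu}\leq\hat{\mu}_{\max}$, the minimal-slope factor has multiplicity $r_l\geq1$, and the remaining factors have total multiplicity $\leq r-1$, whence $\sum_i r_i\phi(\nu_i)\leq(r-r_l)\phi(\hat{\mu}_{\max})+r_l\phi(\hat{\mu})\leq(r-1)\phi(\hat{\mu}_{\max})+\phi(\hat{\mu})$ using $\phi(\hat{\mu})\leq\phi(\hat{\mu}_{\max})$. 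In short, the convexity machinery is both insufficient as stated and unnecessary; replace the last step by these direct slope bounds, as in the paper.
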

\begin{proof}
	For a pure coherent sheaf $\mathcal{F}$ of dimension $d$, we have the following Harder-Narasimhan filtration with repect to $\hat{\mu}_{\mathcal{E}}$-stability
	\ben
	0=\mathcal{F}_{0}\subsetneqq\mathcal{F}_{1}\subsetneqq\mathcal{F}_{2}\subsetneqq\cdots\subsetneqq\mathcal{F}_{l}=\mathcal{F}
	\een
	such that the factors $\mathcal{F}_{i}/\mathcal{F}_{i-1}$ for $i=1,\cdots,l$ are $\hat{\mu}_{\mathcal{E}}$-semistable of dimension $d$ and 
	\be\label{HN-inequ}
	\hat{\mu}(F_{\mathcal{E}}(\mathcal{F}_{1}))>\hat{\mu}(F_{\mathcal{E}}(\mathcal{F}_{2}/\mathcal{F}_{1}))>\cdots>\hat{\mu}(F_{\mathcal{E}}(\mathcal{F}_{l}/\mathcal{F}_{l-1})).
	\ee
	For any $i=1,\cdots,l$, we
	have a short exact sequence
	\ben
	0\to\mathcal{F}_{i-1}\to\mathcal{F}_{i}\to\mathcal{F}_{i}/\mathcal{F}_{i-1}\to0.
	\een
	Tensor with $\pi^*\mathcal{O}_{X}(m)$ and apply the exact functor $F_{\mathcal{E}}$, we obtain the following exact sequence
	\ben
	0\to F_{\mathcal{E}}(\mathcal{F}_{i-1})(m)\to F_{\mathcal{E}}(\mathcal{F}_{i})(m)\to  F_{\mathcal{E}}(\mathcal{F}_{i}/\mathcal{F}_{i-1})(m)\to0.
	\een
	Then for any $i=1,\cdots,l$, we have 
	\ben
	&&r(F_{\mathcal{E}}(\mathcal{F}_{i}/\mathcal{F}_{i-1})(m))=r(F_{\mathcal{E}}(\mathcal{F}_{i})(m))-r(F_{\mathcal{E}}(\mathcal{F}_{i-1})(m)),\\
	&&h^0(X,F_{\mathcal{E}}(\mathcal{F}_{i})(m))-h^0(X,F_{\mathcal{E}}(\mathcal{F}_{i-1})(m))\leq h^0(X,F_{\mathcal{E}}(\mathcal{F}_{i}/\mathcal{F}_{i-1})(m)).
	\een
	This implies that
	\ben
	&&r(F_{\mathcal{E}}(\mathcal{F})(m))=\sum_{i=1}^{l}r(F_{\mathcal{E}}(\mathcal{F}_{i}/\mathcal{F}_{i-1})(m)),\\
	&&h^0(X,F_{\mathcal{E}}(\mathcal{F})(m))\leq \sum_{i=1}^{l}h^0(X,F_{\mathcal{E}}(\mathcal{F}_{i}/\mathcal{F}_{i-1})(m)).
	\een
	By a simple computation, we have for any pure coherent sheaf $\mathcal{G}$,
	\ben
	r(F_{\mathcal{E}}(\mathcal{G})(m))=r(F_{\mathcal{E}}(\mathcal{G}));\;\;\; \hat{\mu}_{\mathcal{E}}(\mathcal{G}\otimes\pi^*\mathcal{O}_{X}(m))=\hat{\mu}(F_{\mathcal{E}}(\mathcal{G})(m))=\hat{\mu}(F_{\mathcal{E}}(\mathcal{G}))+m.
	\een
	Let
	$r_{i}=:r(F_{\mathcal{E}}(\mathcal{F}_{i}/\mathcal{F}_{i-1}))$ for $i=1,\cdots,l$.
	Since for each $1\leq i\leq l$, the sheaf $\mathcal{F}_{i}/\mathcal{F}_{i-1}$ is $\hat{\mu}_{\mathcal{E}}$-semistable of dimension $d$, $(\mathcal{F}_{i}/\mathcal{F}_{i-1})\otimes\pi^*\mathcal{O}_{X}(m)$ is also $\hat{\mu}_{\mathcal{E}}$-semistable of dimension $d$. By Lemma \ref{glo-sec-bound}, we have for each $i=1,\cdots,l$,
	\ben
	\frac{h^0(X,F_{\mathcal{E}}(\mathcal{F}_{i}/\mathcal{F}_{i-1})(m))}{r_{i}}&=&\frac{h^0(\mathcal{X},(\mathcal{F}_{i}/\mathcal{F}_{i-1})\otimes\pi^*\mathcal{O}_{X}(m)\otimes\mathcal{E}^\vee)}{r_{i}}\\
	&\leq&\left[\binom{\hat{\mu}(F_{\mathcal{E}}(\mathcal{F}_{i}/\mathcal{F}_{i-1}))+m+\widetilde{m}\deg(\mathcal{O}_{X}(1))+r_{i}^2+f(r_{i})+\frac{d-1}{2}}{d}\right]_{+}
	\een
	Since $F_{\mathcal{E}}(\mathcal{F}_{1})\hookrightarrow F_{\mathcal{E}}(\mathcal{F})$, we have $\hat{\mu}(F_{\mathcal{E}}(\mathcal{F}_{1}))\leq\hat{\mu}_{\mathrm{max}}(F_{\mathcal{E}}(\mathcal{F}_{1}))\leq\hat{\mu}_{\mathrm{max}}(F_{\mathcal{E}}(\mathcal{F}))$. Combining with \eqref{HN-inequ}, we have 
	$\hat{\mu}(F_{\mathcal{E}}(\mathcal{F}_{i}/\mathcal{F}_{i-1}))\leq\hat{\mu}_{\mathrm{max}}(F_{\mathcal{E}}(\mathcal{F}))$ for $i=1,\cdots,l-1$ and $\hat{\mu}(F_{\mathcal{E}}(\mathcal{F}_{l}/\mathcal{F}_{l-1}))\leq\hat{\mu}(F_{\mathcal{E}}(\mathcal{F}))$.
	Then 
	\ben
	\frac{h^0(X,F_{\mathcal{E}}(\mathcal{F})(m))}{r}&\leq&\sum_{i=1}^{l}\frac{r_{i}}{r}\frac{h^0(X,F_{\mathcal{E}}(\mathcal{F}_{i}/\mathcal{F}_{i-1})(m))}{r_{i}}\\
	&\leq&\frac{r-1}{r}\left[\binom{\hat{\mu}_{\mathrm{max}}(F_{\mathcal{E}}(\mathcal{F}))+m+C}{d}\right]_{+}+\frac{1}{r}\left[\binom{\hat{\mu}(F_{\mathcal{E}}(\mathcal{F}))+m+C}{d}\right]_{+}
	\een
	where $C:=\widetilde{m}\deg(\mathcal{O}_{X}(1))+r^2+f(r)+\frac{d-1}{2}$.
	
\end{proof}	
Now, we begin with the first case when $\mathrm{deg}\,\delta<\mathrm{deg}\,P$.
\begin{lemma}\label{bound2}
Let $\mathcal{X}$ be a projective Deligne-Mumford stack over $k$ with a polarization $(\mathcal{E},\mathcal{O}_{X}(1))$.	
Assume that $\mathrm{deg}\,\delta<\mathrm{deg}\,P$. Then there is an integer $m_{0}>0$, such that for any integer $m\geq m_{0}$ and any nondegenerate pair $(\mathcal{F},\varphi)$ satisfying that $\mathcal{F}$ is a  pure coherent sheaf of dimension $d$ on  $\mathcal{X}$ with
$P_{\mathcal{E}}(\mathcal{F})=P$ and $r=r(F_{\mathcal{E}}(\mathcal{F}))$, the following properties are equivalent.\\
$(i)$ The pair $(\mathcal{F},\varphi)$ is $\delta$-(semi)stable.\\
$(ii)$  $P(m)\leq h^0(F_{\mathcal{E}}(\mathcal{F})(m))$ and for any  subpair $(\mathcal{F}^\prime,\varphi^\prime)$ with  $r(F_{\mathcal{E}}(\mathcal{F}^\prime))=r^\prime$ satisfying $0<r^\prime<r$,
\ben
h^0(F_{\mathcal{E}}(\mathcal{F}^\prime)(m))+\epsilon(\varphi^\prime)\delta(m) (\leq)\frac{r^\prime}{r}(P(m)+\epsilon(\varphi)\delta(m)).
\een
$(iii)$ For any quotient pair $(\mathcal{G},\varphi^{\prime\prime})$ with  $r(F_{\mathcal{E}}(\mathcal{G}))=r^{\prime\prime}$ satisfying $0<r^{\prime\prime}<r$,
\ben
\frac{r^{\prime\prime}}{r}(P(m)+\epsilon(\varphi)\delta(m)) (\leq) h^0(F_{\mathcal{E}}(\mathcal{G})(m))+\epsilon(\varphi^{\prime\prime})\delta(m).
\een
\end{lemma}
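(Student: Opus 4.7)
The plan is to adapt the Le Potier-Simpson style argument of \cite{HL3}, Theorem 4.4.1, combined with the pair modifications in \cite{Wan}, Section 3, and the stacky ingredients developed in \cite{Nir1,BS}. The three key inputs are already in place: Proposition \ref{bound1} (boundedness of the family of $\mathcal{F}$'s), the stacky Grothendieck lemma \ref{Gro-lem}, and the Langer-type estimate in Lemma \ref{lan-esti}.

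First I would fix the ambient regularity. By Proposition \ref{bound1} there is an integer $m_1$ such that every $\mathcal{F}$ arising in a nondegenerate $\delta$-semistable pair of type $P$ is $m$-regular for $m\geq m_1$; in particular $h^0(F_{\mathcal{E}}(\mathcal{F})(m))=P_{\mathcal{E}}(\mathcal{F})(m)=P(m)$ and Lemma \ref{case1-bound} supplies a uniform upper bound $C_0$ on $\hat\mu_{\mathrm{max}}(F_{\mathcal{E}}(\mathcal{F}))$. A subpair $(\mathcal{F}^\prime,\varphi^\prime)$ and its saturation $(\overline{\mathcal{F}^\prime},\varphi^\prime)$ satisfy $r(F_{\mathcal{E}}(\mathcal{F}^\prime))=r(F_{\mathcal{E}}(\overline{\mathcal{F}^\prime}))$, $\epsilon(\varphi^\prime)$ is unchanged, and $h^0$ only grows under saturation, so I may restrict attention to saturated subsheaves $\mathcal{F}^\prime$ with pure quotient $\mathcal{G}=\mathcal{F}/\mathcal{F}^\prime$ of dimension $d$.

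The heart of the proof is a dichotomy on the slope of $\mathcal{F}^\prime$, which will be controlled by a threshold $B$ to be chosen. In the upper regime $\hat\mu_{\mathcal{E}}(\mathcal{F}^\prime)\geq B$, Lemma \ref{Gro-lem} shows that the family of all such saturated subsheaves of all such $\mathcal{F}$ is bounded, so after enlarging $m_1$ every such $\mathcal{F}^\prime$ is $m$-regular and hence $h^0(F_{\mathcal{E}}(\mathcal{F}^\prime)(m))=P_{\mathcal{E}}(\mathcal{F}^\prime)(m)$; the inequality demanded in (ii) is then, after multiplication by $r\cdot r^\prime$, literally the polynomial inequality $p_{(\mathcal{F}^\prime,\varphi^\prime)}(m)\,(\leq)\,p_{(\mathcal{F},\varphi)}(m)$ for $m\gg 0$. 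In the lower regime $\hat\mu_{\mathcal{E}}(\mathcal{F}^\prime)<B$, apply Lemma \ref{lan-esti} to $\mathcal{F}^\prime$: since $\hat\mu_{\mathrm{max}}(F_{\mathcal{E}}(\mathcal{F}^\prime))\leq \hat\mu_{\mathrm{max}}(F_{\mathcal{E}}(\mathcal{F}))\leq C_0$, the upper bound
\[
\frac{h^0(F_{\mathcal{E}}(\mathcal{F}^\prime)(m))}{r^\prime}\leq\frac{r^\prime-1}{r^\prime}\left[\binom{C_0+m+C_1}{d}\right]_{+}+\frac{1}{r^\prime}\left[\binom{B+m+C_1}{d}\right]_{+}
\]
holds, where $C_1$ depends only on the polarization and on $r$. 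Because $\deg\delta<\deg P=d$, pushing $B$ sufficiently negative makes the second binomial vanish and forces the right hand side to be strictly dominated by $\tfrac{r^\prime}{r}(P(m)+\epsilon(\varphi)\delta(m))-\epsilon(\varphi^\prime)\delta(m)$ for $m$ large. Thus the inequality in (ii) holds automatically, strictly, in the lower regime.

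Putting these together yields (i)$\Leftrightarrow$(ii): $\delta$-semistability is tested on saturated subpairs and is equivalent, in the upper regime, to the $h^0$ inequality of (ii), while the lower regime imposes nothing new. For (i)$\Leftrightarrow$(iii) I would use additivity of $P_{\mathcal{E}}$, multiplicity, and $\epsilon$ in the short exact sequence $0\to\mathcal{F}^\prime\to\mathcal{F}\to\mathcal{G}\to 0$, together with cohomology vanishing in large degree: $h^0(F_{\mathcal{E}}(\mathcal{F})(m))=h^0(F_{\mathcal{E}}(\mathcal{F}^\prime)(m))+h^0(F_{\mathcal{E}}(\mathcal{G})(m))$ once $m_1$ is enlarged so that both $\mathcal{F}^\prime$ and $\mathcal{G}$ are in bounded families; combined with $h^0(F_{\mathcal{E}}(\mathcal{F})(m))=P(m)$, this converts the inequality for subpairs into the complementary inequality for quotient pairs with $r^{\prime\prime}=r-r^\prime$. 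The main obstacle I anticipate is the delicate calibration in the lower regime: one must choose $B$, and then $m_1=m_1(B)$, so that the Langer estimate genuinely produces a strict polynomial inequality of degree $d$ in $m$ against $\tfrac{r^\prime}{r}P$, which requires exploiting the hypothesis $\deg\delta<\deg P$ to absorb the $\epsilon(\varphi)\delta(m)$ term. This degree comparison, together with making $B$, $m_1$ uniform over the bounded family of $\mathcal{F}$, is the technical core of the argument.
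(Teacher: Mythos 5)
Your overall skeleton for $(i)\Rightarrow(ii)$ (restrict to saturated subsheaves, dichotomy on $\hat{\mu}_{\mathcal{E}}(\mathcal{F}^\prime)$, Grothendieck Lemma \ref{Gro-lem} in the upper regime, Lemma \ref{lan-esti} in the lower regime) is exactly the route the paper takes, but the step you yourself flag as the technical core is justified by a mechanism that fails. With your quantifier order ($B$ fixed first, then $m_1=m_1(B)$, then $m\geq m_1$) the term $\left[\binom{B+m+C_1}{d}\right]_{+}$ never vanishes: for any fixed $B$ it equals $\binom{B+m+C_1}{d}\sim m^d/d!$ once $m$ is large. Making it vanish would force the threshold $B$ to move with $m$, and then the ``upper regime'' is no longer a single bounded family to which Lemma \ref{Gro-lem} and the Kleiman criterion can be applied uniformly. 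The correct mechanism (as in the paper, following [\cite{HL3}, Theorem 4.4.1] and [\cite{Wan}]) is that the weighted sum of the two binomials reproduces the leading term $m^d/d!$ of $\frac{1}{r}P(m)$ exactly, so nothing is decided in degree $d$ (your phrase ``strict polynomial inequality of degree $d$'' points at the same misunderstanding); the comparison happens at the $m^{d-1}$ coefficient, and one chooses the threshold of the form $r\hat{\mu}(F_{\mathcal{E}}(\mathcal{F}))-(r-1)\mu_{0}-r(C-\frac{d-3}{2}+\delta_{1})$ so that the bound's $m^{d-1}$ coefficient is at most $\hat{\mu}(F_{\mathcal{E}}(\mathcal{F}))-\delta_{1}-1$, which is strictly below $\hat{\mu}(F_{\mathcal{E}}(\mathcal{F}))+\frac{\epsilon(\varphi)}{r}\delta_{1}-\frac{\epsilon(\varphi^\prime)}{r^\prime}\delta_{1}$; here $\deg\delta\leq d-1$ enters only through $\delta_{1}$.

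The treatment of $(iii)$ also has two concrete gaps. First, you invoke full additivity $h^0(F_{\mathcal{E}}(\mathcal{F})(m))=h^0(F_{\mathcal{E}}(\mathcal{F}^\prime)(m))+h^0(F_{\mathcal{E}}(\mathcal{G})(m))$ ``once $\mathcal{F}^\prime$ and $\mathcal{G}$ are in bounded families,'' but arbitrary subsheaves and quotients do not form a bounded family (only the slope-restricted ones of Lemma \ref{Gro-lem} do), so the required $h^1$-vanishing is not available; for $(ii)\Rightarrow(iii)$ one only needs the left-exactness inequality $h^0(F_{\mathcal{E}}(\mathcal{G})(m))\geq h^0(F_{\mathcal{E}}(\mathcal{F})(m))-h^0(F_{\mathcal{E}}(\mathcal{F}^\prime)(m))$, which holds without any boundedness. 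Second, in the direction $(iii)\Rightarrow(i)$ the pair is not yet known to be semistable, so neither Proposition \ref{bound1} nor the identity $h^0(F_{\mathcal{E}}(\mathcal{F})(m))=P(m)$ may be assumed; your plan of converting $(iii)$ back into $(ii)$ therefore does not get off the ground. The paper instead argues directly on quotient pairs: quotients with $\hat{\mu}(F_{\mathcal{E}}(\mathcal{G}))>\hat{\mu}(F_{\mathcal{E}}(\mathcal{F}))+\frac{\delta_{1}}{r}$ automatically satisfy $p_{(\mathcal{F},\varphi)}<p_{(\mathcal{G},\varphi^{\prime\prime})}$, while the remaining quotients form a bounded family by Lemma \ref{Gro-lem}, so $h^0(F_{\mathcal{E}}(\mathcal{G})(m))=P_{\mathcal{E}}(\mathcal{G})(m)$ and $(iii)$ translates into the reduced Hilbert polynomial inequality; semistability then follows from the quotient criterion of Remark \ref{semi-quo}, and boundedness of the pairs satisfying $(iii)$ is recovered a posteriori from Proposition \ref{bound1}. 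You need to repair both points for the lemma to be proved.
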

\begin{proof}
We will deal with the  $\delta$-stable case, the $\delta$-semistable case can be proved similarly.\\
$(i)\Rightarrow(ii)$: By Proposition \ref{bound1}, the set-theoretic family of coherent sheaves on $\mathcal{X}$ underlying $\delta$-semistable pairs with  fixed modified Hilbert polynomial $P$ is bounded. By Kleiman criterion for stacks,  there is an integer $m_{0}>0$ such that  for any $\mathcal{F}$ underlying a $\delta$-stable pair $(\mathcal{F},\varphi)$, we have $H^i(F_{\mathcal{E}}(\mathcal{F})(m))=0$ for all $i>0$, and hence $P(m)=h^0(F_{\mathcal{E}}(\mathcal{F})(m))$. Since the pair 
$(\mathcal{F},\varphi)$ is $\delta$-stable, by Lemma \ref{case1-bound}, there is a constant $\mu_{0}$ depending on  $P$, $\mathcal{F}_{0}$ and $X$ such that $\hat{\mu}_{\mathrm{max}}(F_{\mathcal{E}}(\mathcal{F}))\leq \mu_{0}$.
As $(\mathcal{F}^\prime,\varphi^\prime)$ is a subpair of $(\mathcal{F},\varphi)$, we have
$\hat{\mu}_{\mathrm{max}}(F_{\mathcal{E}}(\mathcal{F}^\prime))\leq\hat{\mu}_{\mathrm{max}}(F_{\mathcal{E}}(\mathcal{F}))\leq \mu_{0}$. By Lemma \ref{lan-esti} and the inequality $0<r^\prime<r$,  there is a constant $C$ depending on $r$ and $d$ such that
\be\label{glo-sec-bou1}
\frac{h^0(F_{\mathcal{E}}(\mathcal{F}^\prime)(m))}{r^\prime}
\leq\frac{r-1}{r}\left[\binom{\mu_{0}+m+C}{d}\right]_{+}+\frac{1}{r}\left[\binom{\hat{\mu}(F_{\mathcal{E}}(\mathcal{F}^\prime))+m+C}{d}\right]_{+}.
\ee
We distinguish two cases:\\
(a) $\hat{\mu}(F_{\mathcal{E}}(\mathcal{F}^\prime))\geq r\cdot\hat{\mu}(F_{\mathcal{E}}(\mathcal{F}))-(r-1)\cdot\mu_{0}-r\cdot(C-\frac{d-3}{2}+\delta_{1})$,\\
(b) $\hat{\mu}(F_{\mathcal{E}}(\mathcal{F}^\prime))< r\cdot\hat{\mu}(F_{\mathcal{E}}(\mathcal{F}))-(r-1)\cdot\mu_{0}-r\cdot(C-\frac{d-3}{2}+\delta_{1})$,\\
where $\frac{\delta_{1}}{(d-1)!}$ is the coefficient of $\delta$ in degree $d-1$. Then $\delta_{1}\geq0$. In order to show $(ii)$, we assume that $\mathcal{F}^\prime$ is saturated in $\mathcal{F}$ since by Remark \ref{saturation} the saturation $\overline{\mathcal{F}^\prime}\supseteq\mathcal{F}^\prime$ implies that 
$r(F_{\mathcal{E}}(\mathcal{F}^\prime))=r(F_{\mathcal{E}}(\overline{\mathcal{F}^\prime}))$ and $h^0(F_{\mathcal{E}}(\mathcal{F}^\prime)(m))\leq h^0(F_{\mathcal{E}}(\overline{\mathcal{F}^\prime})(m))$. The set-theoretic family
of purely saturated subsheaves $\mathcal{F}^\prime$ of type (a) is bounded by Grothendieck's Lemma \ref{Gro-lem}. By Kleiman criterion for stacks, the set of modified Hilbert polynomials are finite and enlarging $m_{0}$ if necessary, for $m\geq m_{0}$, we have $h^0(F_{\mathcal{E}}(\mathcal{F}^\prime)(m))=P_{\mathcal{E}}(\mathcal{F}^\prime)(m)$ and by $\delta$-stability of $(\mathcal{F},\varphi)$,
\ben
P_{\mathcal{E}}(\mathcal{F}^\prime)+\epsilon(\varphi^\prime)\delta <\frac{r^\prime}{r}(P+\epsilon(\varphi)\delta)\iff P_{\mathcal{E}}(\mathcal{F}^\prime)(m)+\epsilon(\varphi^\prime)\delta(m) <\frac{r^\prime}{r}(P(m)+\epsilon(\varphi)\delta(m)).
\een
For the subsheaves $\mathcal{F}^\prime$ of type $(b)$, enlarging $m_{0}$ if necessary, it follows from \eqref{glo-sec-bou1}
\ben
\frac{h^0(F_{\mathcal{E}}(\mathcal{F}^\prime)(m))}{r^\prime}
&\leq&\frac{r-1}{r}\left[\binom{\mu_{0}+m+C}{d}\right]_{+}\\
&&+\frac{1}{r}\left[\binom{r\cdot\hat{\mu}(F_{\mathcal{E}}(\mathcal{F}))-(r-1)\cdot\mu_{0}-r\cdot(C-\frac{d-3}{2}+\delta_{1})+m+C}{d}\right]_{+}\\
&=&\frac{m^d}{d!}+\frac{m^{d-1}}{(d-1)!}\bigg(\frac{r-1}{r}\left(\mu_{0}+C-\frac{d-1}{2}\right)+\frac{1}{r}\bigg(r\cdot\hat{\mu}(F_{\mathcal{E}}(\mathcal{F}))\\
&&-(r-1)\cdot\mu_{0}-r\cdot(C-\frac{d-3}{2}+\delta_{1})+C-\frac{d-1}{2}\bigg)\bigg)+\cdots\\
&=&\frac{m^d}{d!}+\frac{m^{d-1}}{(d-1)!}\left(\hat{\mu}(F_{\mathcal{E}}(\mathcal{F}))-\delta_{1}-1\right)+\cdots
\een
where $\cdots$ denotes for some polynomial in $m$ of degree $\leq d-2$ with coefficients independent of $\mathcal{F}^{\prime}$.
Since 
\ben
\hat{\mu}(F_{\mathcal{E}}(\mathcal{F}))-\delta_{1}-1+\frac{\epsilon(\varphi^\prime)}{r^\prime}\delta_{1}<\hat{\mu}(F_{\mathcal{E}}(\mathcal{F}))+\frac{\epsilon(\varphi)}{r}\delta_{1},
\een
by enlarging $m_{0}$ if necessary, we have
\ben
\frac{h^0(F_{\mathcal{E}}(\mathcal{F}^\prime)(m))+\epsilon(\varphi^\prime)\delta(m)}{r^\prime}<\frac{P(m)+\epsilon(\varphi)\delta(m)}{r}
\een
for any $m\geq m_{0}$.\\
$(ii)\Rightarrow(iii)$: For any quotient pair $(\mathcal{G},\varphi^{\prime\prime})$ with multiplicity  $r(F_{\mathcal{E}}(\mathcal{G}))=r^{\prime\prime}$ satisfying $0<r^{\prime\prime}<r$, we have a short exact sequence 
\ben
0\to(\mathcal{F}^\prime,\varphi^\prime)\to(\mathcal{F},\varphi)\to(\mathcal{G},\varphi^{\prime\prime})\to0.
\een
Since the functor $F_{\mathcal{E}}$ is exact, we have 
$h^0(F_{\mathcal{E}}(\mathcal{G})(m))\geq h^0(F_{\mathcal{E}}(\mathcal{F})(m))-h^0(F_{\mathcal{E}}(\mathcal{F}^\prime)(m))$.
Set $r^\prime=r(F_{\mathcal{E}}(\mathcal{F}^\prime))$, we have $r=r^\prime+r^{\prime\prime}$. Also, we have $\epsilon(\varphi)=\epsilon(\varphi^\prime)+\epsilon(\varphi^{\prime\prime})$. By $(ii)$, we have
\ben
h^0(F_{\mathcal{E}}(\mathcal{G})(m))+\epsilon(\varphi^{\prime\prime})\delta(m)
&\geq& h^0(F_{\mathcal{E}}(\mathcal{F})(m))+\epsilon(\varphi)\delta(m)-h^0(F_{\mathcal{E}}(\mathcal{F}^\prime)(m))-\epsilon(\varphi^\prime)\delta(m)\\
&>&P(m)+\epsilon(\varphi)\delta(m)-\frac{r^\prime}{r}(P(m)+\epsilon(\varphi)\delta(m))\\&=&\frac{r^{\prime\prime}}{r}(P(m)+\epsilon(\varphi)\delta(m)).
\een
$(iii)\Rightarrow(i)$: Fix a pair $(\mathcal{F},\varphi)$ with  modified Hilbert polynomial $P_{\mathcal{E}}(\mathcal{F})=P$. Let $(\mathcal{G},\varphi^{\prime\prime})$  be any purely quotient pair of $(\mathcal{F},\varphi)$ with  $0<r(F_{\mathcal{E}}(\mathcal{G}))=r^{\prime\prime}<r$, we distinguish all  sheaves $\mathcal{G}$ into two cases:\\
$(\tilde{a})$ $\hat{\mu}(F_{\mathcal{E}}(\mathcal{G}))>\hat{\mu}(F_{\mathcal{E}}(\mathcal{F}))+\frac{\delta_{1}}{r}$,\\
$(\tilde{b})$ $\hat{\mu}(F_{\mathcal{E}}(\mathcal{G}))\leq\hat{\mu}(F_{\mathcal{E}}(\mathcal{F}))+\frac{\delta_{1}}{r}$,\\
where $\frac{\delta_{1}}{(d-1)!}$ is the coefficient of $\delta$ in degree $d-1$.
For the case $(\tilde{a})$, we have $p_{(\mathcal{F},\varphi)}<p_{(\mathcal{G},\varphi^{\prime\prime})}$. For the case $(\tilde{b})$,  the set-theoretic family of purely $d$-dimensional quotient $\mathcal{G}$ with $\hat{\mu}_{\mathcal{E}}(\mathcal{G})$ bounded from above  is bounded by Grothendieck's Lemma \ref{Gro-lem}. By Kleiman criterion for stacks, for large $m$ we have $h^0(F_{\mathcal{E}}(\mathcal{G})(m))=P_{\mathcal{E}}(\mathcal{G})(m)$, and by $(iii)$ we have
\ben
\frac{r^{\prime\prime}}{r}(P(m)+\epsilon(\varphi)\delta(m)) < P_{\mathcal{E}}(\mathcal{G})(m)+\epsilon(\varphi^{\prime\prime})\delta(m)\Leftrightarrow p_{(\mathcal{F},\varphi)}<p_{(\mathcal{G},\varphi^{\prime\prime})}
\een
Then the pair $(\mathcal{F},\varphi)$ is $\delta$-stable  by Remark \ref{semi-quo}.  Thus all pairs $(\mathcal{F},\varphi)$ satisfying $(iii)$ for large $m$ are $\delta$-stable pairs with fixed modified Hilbert polynomial $P_{\mathcal{E}}(\mathcal{F})=P$,  and  hence the set-theoretic family of  sheaves $\mathcal{F}$ underlying pairs $(\mathcal{F},\varphi)$ satisfying $(iii)$ for large $m$ is bounded by Proposition \ref{bound1}.
\end{proof}

The remaining case when $\mathrm{deg}\,\delta\geq\mathrm{deg}\,P$ is presented in the following
\begin{lemma}\label{bound3}
Let $\mathcal{X}$ be a projective Deligne-Mumford stack over $k$ with a polarization $(\mathcal{E},\mathcal{O}_{X}(1))$.	
Assume that $\mathrm{deg}\,\delta\geq\mathrm{deg}\,P$. Then there is an integer $m_{0}>0$, such that for any integer $m\geq m_{0}$ and any nondegenerate pair $(\mathcal{F},\varphi)$ satisfying that $\mathcal{F}$ is a  pure coherent sheaf of dimension $d$ on  $\mathcal{X}$ with
$P_{\mathcal{E}}(\mathcal{F})=P$ and $r=r(F_{\mathcal{E}}(\mathcal{F}))$, the following properties are equivalent.\\
$(i)$ The pair $(\mathcal{F},\varphi)$ is $\delta$-stable.\\
$(ii)$  $P(m)\leq h^0(F_{\mathcal{E}}(\mathcal{F})(m))$ and for any  subpair $(\mathcal{F}^\prime,\varphi^\prime)$ with  $r(F_{\mathcal{E}}(\mathcal{F}^\prime))=r^\prime$ satisfying $0<r^\prime<r$,
\ben
\frac{h^0(F_{\mathcal{E}}(\mathcal{F}^\prime)(m))}{2r^\prime-\epsilon(\varphi^\prime)} <\frac{h^0(F_{\mathcal{E}}(\mathcal{F})(m))}{2r-\epsilon(\varphi)}.
\een
$(iii)$ For any quotient pair $(\mathcal{G},\varphi^{\prime\prime})$ with  $r(F_{\mathcal{E}}(\mathcal{G}))=r^{\prime\prime}$ satisfying $0<r^{\prime\prime}<r$,
\ben
\frac{P(m)}{2r-\epsilon(\varphi)} <\frac{h^0(F_{\mathcal{E}}(\mathcal{G})(m))}{2r^{\prime\prime}-\epsilon(\varphi^{\prime\prime})}.
\een
\end{lemma}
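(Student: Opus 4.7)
The proof will mirror the structure of Lemma \ref{bound2}, with one decisive substitution: since $\deg\delta \geq \deg P$, Lemma \ref{equ-sta} reformulates $\delta$-stability of $(\mathcal{F},\varphi)$ as
\[
\frac{P_\mathcal{E}(\mathcal{F}')}{2r(F_\mathcal{E}(\mathcal{F}'))-\epsilon(\varphi')}<\frac{P_\mathcal{E}(\mathcal{F})}{2r(F_\mathcal{E}(\mathcal{F}))-\epsilon(\varphi)}
\]
for every proper subpair. Hence the inequalities in $(ii)$ and $(iii)$ are exactly the $h^0$-replacements of this reformulated condition, and my job is to pin down $m_0$ so that $h^0$ coincides with $P_\mathcal{E}$ on the relevant families.

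For $(i)\Rightarrow(ii)$: Proposition \ref{bound1} bounds the family of sheaves underlying nondegenerate $\delta$-stable pairs of type $P$, so Kleiman's criterion for stacks furnishes an $m_0$ with $h^0(F_\mathcal{E}(\mathcal{F})(m))=P(m)$ for $m\geq m_0$; in particular $P(m)\leq h^0(F_\mathcal{E}(\mathcal{F})(m))$. Lemma \ref{case2-bound} supplies an upper bound $\mu_0$ on $\hat{\mu}_{\max}(F_\mathcal{E}(\mathcal{F}))$. For a subpair $(\mathcal{F}',\varphi')$ I pass to the saturation (which preserves $r'$ and does not decrease $h^0$, by Remark \ref{saturation}) and then split on $\hat{\mu}(F_\mathcal{E}(\mathcal{F}'))$. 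Above a threshold depending on $\mu_0,r,P,d$, Grothendieck's Lemma \ref{Gro-lem} bounds the family of saturated subsheaves, whence Kleiman makes $h^0(F_\mathcal{E}(\mathcal{F}')(m))=P_\mathcal{E}(\mathcal{F}')(m)$ and the inequality is precisely Lemma \ref{equ-sta} evaluated at $m$. Below the threshold, Lemma \ref{lan-esti} bounds $h^0(F_\mathcal{E}(\mathcal{F}')(m))/r'$ by a polynomial whose leading and subleading coefficients can be forced to beat $P(m)(2r'-\epsilon(\varphi'))/((2r-\epsilon(\varphi))r')$ by choosing the threshold sufficiently negative, yielding the strict inequality asymptotically.

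The implication $(ii)\Rightarrow(iii)$ is a direct algebraic rearrangement: for a pure quotient pair $(\mathcal{G},\varphi'')$ with kernel $(\mathcal{F}',\varphi')$, additivity $r=r'+r''$, $\epsilon(\varphi)=\epsilon(\varphi')+\epsilon(\varphi'')$, exactness of $F_\mathcal{E}$ giving $h^0(F_\mathcal{E}(\mathcal{G})(m))\geq h^0(F_\mathcal{E}(\mathcal{F})(m))-h^0(F_\mathcal{E}(\mathcal{F}')(m))$, together with $h^0(F_\mathcal{E}(\mathcal{F})(m))=P(m)$, convert the subobject inequality in $(ii)$ into the quotient inequality in $(iii)$. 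For $(iii)\Rightarrow(i)$ I fix $(\mathcal{F},\varphi)$ satisfying $(iii)$ and a proper purely $d$-dimensional quotient pair $(\mathcal{G},\varphi'')$ and split on whether $\hat{\mu}(F_\mathcal{E}(\mathcal{G}))$ exceeds a threshold dictated by $\hat{\mu}(F_\mathcal{E}(\mathcal{F}))$ and the leading coefficient of $\delta$. In the high-slope case a direct slope comparison delivers $p_{(\mathcal{F},\varphi)}<p_{(\mathcal{G},\varphi'')}$; in the low-slope case Grothendieck's Lemma bounds the family, Kleiman upgrades $h^0$ to $P_\mathcal{E}$, and the hypothesis gives the polynomial form of the Lemma \ref{equ-sta} inequality, which is $\delta$-stability by Remark \ref{semi-quo}.

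The delicate step is the low-slope case of $(i)\Rightarrow(ii)$: the denominators $2r'-\epsilon(\varphi')$ and $2r-\epsilon(\varphi)$ now affect both the leading $m^d$ and the subleading $m^{d-1}$ coefficients of the Langer-type bound, whereas in Lemma \ref{bound2} only the subleading coefficient mattered. Tuning the threshold so that the strict asymptotic separation holds in all four $\epsilon$-sign combinations $(\epsilon(\varphi'),\epsilon(\varphi))\in\{0,1\}^2$ is the main computational obstacle, and once it is achieved, the rest of the argument parallels Lemma \ref{bound2} essentially verbatim.
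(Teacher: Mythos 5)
Your frame for $(ii)\Rightarrow(iii)$ and your high-slope/low-slope split for $(i)\Rightarrow(ii)$ do match the paper, but two of your key steps fail precisely because $\deg\delta\ge\deg P$. In the low-slope case of $(i)\Rightarrow(ii)$ you propose to "tune the threshold" so the inequality holds "in all four $\epsilon$-sign combinations". That cannot work: if $\epsilon(\varphi')=1$ with $0<r'<r$, the leading $m^d$-coefficient of the Lemma \ref{lan-esti} bound for $\frac{h^0(F_{\mathcal{E}}(\mathcal{F}')(m))}{2r'-1}$ is proportional to $\frac{r'}{2r'-1}$, which is strictly larger than the target $\frac{r}{2r-1}$ (the function $x\mapsto x/(2x-1)$ is decreasing), and the threshold only shifts the $m^{d-1}$ term, so no choice of threshold repairs a deficit in the leading coefficient. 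The missing observation — what the paper invokes as $\epsilon(\varphi')/r'\le\epsilon(\varphi)/r$ from [\cite{Lin18}, Lemma 2.9] — is that this combination is vacuous: since $(\mathcal{F},\varphi)$ is $\delta$-stable and $\deg\delta\ge\deg P$, Lemmas \ref{no-str-semi} and \ref{orbi-PT} give $\dim\mathrm{coker}\,\varphi<d$, hence $r(F_{\mathcal{E}}(\mathrm{im}\,\varphi))=r$, and $\mathrm{im}\,\varphi\subseteq\mathcal{F}'$ would force $r'\ge r$; so $\epsilon(\varphi')=0$ whenever $r'<r$, and only then does the low-slope estimate close.

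Your $(iii)\Rightarrow(i)$ is modeled on Lemma \ref{bound2} and breaks down for the same structural reason. When $\deg\delta\ge\deg P$, the sign of $p_{(\mathcal{G},\varphi'')}-p_{(\mathcal{F},\varphi)}$ is governed first by $\delta\cdot\bigl(\frac{\epsilon(\varphi'')}{r''}-\frac{1}{r}\bigr)$, not by slopes; in particular for a quotient pair with $\varphi''=0$ (i.e.\ $\mathrm{im}\,\varphi\subseteq\ker$) one has $p_{(\mathcal{G},0)}<p_{(\mathcal{F},\varphi)}$ no matter how large $\hat{\mu}(F_{\mathcal{E}}(\mathcal{G}))$ is, so "a direct slope comparison" in your high-slope branch proves nothing — and these are exactly the quotients stability must exclude, while Grothendieck/Kleiman only controls quotients of bounded slope. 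The paper's argument is genuinely different here: it takes the Harder--Narasimhan filtration of the pair (Proposition \ref{HN-pair}), applies $(iii)$ to the last factor $\mathrm{gr}_{l}$ (a bounded family, so $h^0=P_{\mathcal{E}}$ there for large $m$), and compares leading coefficients in $\frac{P_{\mathcal{E}}(\mathrm{gr}_{l}\mathcal{F})}{2r(F_{\mathcal{E}}(\mathrm{gr}_{l}\mathcal{F}))-\epsilon(\mathrm{gr}_{l}\varphi)}>\frac{P}{2r-1}$ to force $\epsilon(\mathrm{gr}_{l}\varphi)=1$, whence $l=1$ by Remark \ref{HN-factor}, i.e.\ the pair is $\delta$-semistable and therefore $\delta$-stable by Lemma \ref{no-str-semi}. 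You need either this HN-filtration mechanism or some other argument ruling out proper pure quotients killed by $\varphi$; as written, your proposal has none.
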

\begin{proof} 
$(i)\Rightarrow(ii)$:  Using the same argument in the proof of Lemma \ref{bound2} with the same notation $\mu_{0}$ and $C$, we have $P(m)= h^0(F_{\mathcal{E}}(\mathcal{F})(m))$ for $m\geq m_{0}$ and $\hat{\mu}_{\mathrm{max}}(F_{\mathcal{E}}(\mathcal{F}^\prime))\leq\hat{\mu}_{\mathrm{max}}(F_{\mathcal{E}}(\mathcal{F}))\leq \mu_{0}$ by Lemma \ref{case2-bound}.
Then we  also have the inequality \eqref{glo-sec-bou1}. And all sheaves $\mathcal{F}^\prime$ underlying  subpairs $(\mathcal{F}^\prime,\varphi^\prime)$ are divided into the following two cases:\\
(a) $\hat{\mu}(F_{\mathcal{E}}(\mathcal{F}^\prime))\geq r\cdot\hat{\mu}(F_{\mathcal{E}}(\mathcal{F}))-(r-1)\cdot\mu_{0}-r\cdot(C-\frac{d-3}{2})$,\\
(b) $\hat{\mu}(F_{\mathcal{E}}(\mathcal{F}^\prime))< r\cdot\hat{\mu}(F_{\mathcal{E}}(\mathcal{F}))-(r-1)\cdot\mu_{0}-r\cdot(C-\frac{d-3}{2})$.\\
For type (a), one uses Grothendieck's Lemma \ref{Gro-lem}, Kleiman criterion for stacks and Lemma \ref{equ-sta} to derive the desired inequality. For type $(b)$, it is easy to show 
$
\frac{h^0(F_{\mathcal{E}}(\mathcal{F}^\prime)(m))}{r^\prime}<\frac{h^0(F_{\mathcal{E}}(\mathcal{F})(m))}{r}
$ and then use the inequality $\frac{\epsilon(\varphi^\prime)}{r^\prime}\leq\frac{\epsilon(\varphi)}{r}$ (see [\cite{Lin18}, Lemma 2.9 (2-1)]) to complete this part of proof.\\
$(ii)\Rightarrow(iii)$: Follow the similar arguemnt in [\cite{Lin18}, Lemma 3.5]. \\
$(iii)\Rightarrow(i)$:  Fix a nondegenerate pair  $(\mathcal{F},\varphi)$. As in [\cite{Lin18}, Lemma 3.5], let $\mathrm{gr}_{l}:=(\mathrm{gr}_{l}\mathcal{F},\mathrm{gr}_{l}\varphi)$
be the last factor of Harder-Narasimhan filtration for $(\mathcal{F},\varphi)$ by Proposition \ref{HN-pair}, by $(iii)$ we have
\ben
\frac{h^0(F_{\mathcal{E}}(\mathrm{gr}_{l}\mathcal{F})(m))}{2r(F_{\mathcal{E}}(\mathrm{gr}_{l}\mathcal{F}))-\epsilon(\mathrm{gr}_{l}\varphi)}>\frac{P(m)}{2r-1}.
\een
For large $m$, we have $h^0(F_{\mathcal{E}}(\mathrm{gr}_{l}\mathcal{F})(m))=P_{\mathcal{E}}(\mathrm{gr}_{l}\mathcal{F})(m)$ and hence
\ben
\frac{P_{\mathcal{E}}(\mathrm{gr}_{l}\mathcal{F})(m)}{2r(F_{\mathcal{E}}(\mathrm{gr}_{l}\mathcal{F}))-\epsilon(\mathrm{gr}_{l}\varphi)}>\frac{P(m)}{2r-1}\Longleftrightarrow\frac{P_{\mathcal{E}}(\mathrm{gr}_{l}\mathcal{F})}{2r(F_{\mathcal{E}}(\mathrm{gr}_{l}\mathcal{F}))-\epsilon(\mathrm{gr}_{l}\varphi)}>\frac{P}{2r-1}. 
\een
Since both polynomials $\frac{P_{\mathcal{E}}(\mathrm{gr}_{l}\mathcal{F})}{r(F_{\mathcal{E}}(\mathrm{gr}_{l}\mathcal{F}))}$ and $\frac{P}{r}$ have the same leading coefficient, we have $\frac{\epsilon(\mathrm{gr}_{l}\varphi)}{r(F_{\mathcal{E}}(\mathrm{gr}_{l}\mathcal{F}))}\geq\frac{1}{r}$. Then $\epsilon(\mathrm{gr}_{l}\varphi)=1$, which implies  $l=1$ by Remark \ref{HN-factor}. Thus $(\mathcal{F},\varphi)$ is a nondegenerate $\delta$-semistable pair, and hence it is $\delta$-stable by Lemma \ref{no-str-semi}. Thus all pairs $(\mathcal{F},\varphi)$ satisfying $(iii)$ for large $m$ are $\delta$-stable pairs with fixed modified Hilbert polynomial $P_{\mathcal{E}}(\mathcal{F})=P$,  and hence the set-theoretic family of  sheaves $\mathcal{F}$ underlying pairs $(\mathcal{F},\varphi)$ satisfying $(iii)$ for large $m$ is bounded by Proposition \ref{bound1}.
\end{proof}

\section{Construction of  moduli spaces of semistable pairs}
In this section, given a polynomial $P\in\mathbb{Q}[m]$ and a stability parameter $\delta\in\mathbb{Q}[m]$ which is zero or a polynomial with positive leading coefficient, we will construct a moduli space of $\delta$-(semi)stable pairs of type $P$ on a projective Deligne-Mumford stack $\mathcal{X}$ over $k$ with a moduli scheme $\pi:\mathcal{X}\to X$ and a polarization $(\mathcal{E},\mathcal{O}_{X}(1))$. We will only consider the case of nondegenerate $\delta$-(semi)stable pairs since otherwise these are the moduli spaces of (semi)stable
sheaves which have been constructed in [\cite{Nir1}, Section 6]. Actually, we generalize the construction of moduli spaces on smooth projective varieties in [\cite{Wan,Lin18}] to the case of projective Deligne-Mumford stacks. We also give a description of variation of  moduli spaces when the stability parameter $\delta$ changes.
\subsection{The parameter space, group actions  and  linearizations}
We  first recall some notation and results in [\cite{HL3,OS03,Nir1}]. If $p:\mathcal{X}\to S$ is a family of projective stacks with a moduli scheme $\mathcal{X}\xrightarrow{\pi}X\xrightarrow{\hat{p}}S$ and a relative polarization $(\mathcal{E},\mathcal{O}_{X}(1))$. Let $\mathcal{H}$ be a coherent sheaf on $\mathcal{X}$.
Denote by $\underline{\mathrm{Quot}}_{\mathcal{X}/S}(\mathcal{H},P)$
the functor of quotients of $\mathcal{H}$ with modified Hilbert polynomial $P$. It is shown in [\cite{Nir1}, Proposition 4.20] or [\cite{OS03}, Proposition 6.2], the natural transformation $F_{\mathcal{E}}$ which maps $\underline{\mathrm{Quot}}_{\mathcal{X}/S}(\mathcal{H},P)$ to the ordinary Quot functor $\underline{\mathrm{Quot}}_{X/S}(F_{\mathcal{E}}(\mathcal{H}),P)$ is relatively representable by schemes and is a closed immersion. And this implies that $\widehat{\mathcal{Q}}:=\mathrm{Quot}_{\mathcal{X}/S}(\mathcal{H},P)$ which represents $\underline{\mathrm{Quot}}_{\mathcal{X}/S}(\mathcal{H},P)$ is a closed immersion of the ordinary Quot scheme $\mathcal{Q}:=\mathrm{Quot}_{X/S}(F_{\mathcal{E}}(\mathcal{H}),P)$ which represents $\underline{\mathrm{Quot}}_{X/S}(F_{\mathcal{E}}(\mathcal{H}),P)$, and hence  $\widehat{\mathcal{Q}}$ is a  projective scheme.  Let $i: \widehat{\mathcal{Q}}\to \mathcal{Q}$ be the closed immersion.  Let $\widehat{\mathcal{U}}$ be the universal quotient sheaf of $\widehat{\mathcal{Q}}$, that is, the morphism $\mathcal{O}_{\widehat{\mathcal{Q}}}\otimes\mathcal{H}\to\widehat{\mathcal{U}}$ is the universal quotient parameterized by $\widehat{\mathcal{Q}}$. And let $\mathcal{U}$ be the universal quotient sheaf of $\mathcal{Q}$. As in the proof of [\cite{HL3}, Proposition 2.2.5], for sufficiently large $l$, we have a closed immersion
\ben
\mathrm{Quot}_{X/S}(F_{\mathcal{E}}(\mathcal{H}),P)\xrightarrow{\zeta_{l}}\mathrm{Grass}_{S}(\hat{p}_{*}F_{\mathcal{E}}(\mathcal{H})(l),P(l))
\een
and a Pl$\mathrm{\ddot{u}}$cker embedding of the Grassmannian
\ben
\mathrm{Grass}_{S}(\hat{p}_{*}F_{\mathcal{E}}(\mathcal{H})(l),P(l))\xrightarrow{\xi_{l}}
\widetilde{\mathbb{P}}:=\mathbb{P}(\Lambda^{P(l)}(\hat{p}_{*}F_{\mathcal{E}}(\mathcal{H})(l))).
\een
Then we have a class of very ample line bundles on $\mathcal{Q}$:
\ben
\zeta_{l}^*\xi_{l}^*\mathcal{O}_{\widetilde{\mathbb{P}}}(1)=\det(\hat{p}_{\mathcal{Q}*}(\mathcal{U}(l))).
\een
where $\hat{p}_{\mathcal{Q}}: X_{\mathcal{Q}}:=X\times_{S}\mathcal{Q}\to\mathcal{Q}$ is the natural projection. Then by [\cite{Nir1}, Proposition 6.2], a class of very ample line bundles on $\widehat{\mathcal{Q}}$ is constructed
as follows
\ben
\mathcal{L}_{l}:=\det(\hat{p}_{\widehat{\mathcal{Q}}*}(F_{\hat{p}_{\mathcal{X}}^*\mathcal{E}}(\widehat{\mathcal{U}})(l)))=i^*\zeta_{l}^*\xi_{l}^*\mathcal{O}_{\widetilde{\mathbb{P}}}(1)
\een
where $\hat{p}_{\widehat{\mathcal{Q}}}:X_{\widehat{\mathcal{Q}}}=X\times_{S}\widehat{\mathcal{Q}}\to\widehat{\mathcal{Q}}$ and $\hat{p}_{\mathcal{X}}:\mathcal{X}\times_{S}\widehat{\mathcal{Q}}\to\mathcal{X}$ are the  natural projections.
In this section, we will consider the case when $S=\mathrm{Spec}\,k$ and $\mathcal{H}:=V\otimes\mathcal{E}\otimes\pi^*\mathcal{O}_{X}(-m)$.

Using the boundedness results in Section 3,
there exists an integer $\widehat{m}>0$ such that for every integer $m\geq\widehat{m}$,
we have \\
$(a)$ $F_{\mathcal{E}}(\mathcal{F}_{0})$ is $m$-regular (by Serre vanishing theorem) and $\delta(m)>0$ unless $\delta=0$.\\
$(b)$ For any $\delta$-semistable pair
$(\mathcal{F},\varphi)$ of type $P$, the coherent sheaf $\mathcal{F}$ is $m$-regular (by Proposition \ref{bound1}).\\
$(c)$  Equivalent properties in both Lemma \ref{bound2} and Lemma \ref{bound3} hold.

Now, we fix such an integer $m$, then for any $\delta$-semistable pair
$(\mathcal{F},\varphi)$ of type $P$, the sheaf $F_{\mathcal{E}}(\mathcal{F})$  is globally generated and $h^0(F_{\mathcal{E}}(\mathcal{F})(m))=P(m)$.
Let $V=k^{\oplus P(m)}$. Then we have an isomorphism $\rho:V\to H^0(F_{\mathcal{E}}(\mathcal{F})(m))$, which  induces the following quotient $\widetilde{\rho}=\mathrm{ev}\circ(\rho\otimes \mathrm{id})$:
\ben
V\otimes \mathcal{O}_{X}(-m)\xrightarrow{\rho\otimes \mathrm{id}}H^0(F_{\mathcal{E}}(\mathcal{F})(m))\otimes \mathcal{O}_{X}(-m)\xrightarrow{\mathrm{ev}}F_{\mathcal{E}}(\mathcal{F}).
\een
Since $G_{\mathcal{E}}$ is a right exact functor and the morphism $\theta_{\mathcal{E}}(\mathcal{F})$ is surjective, we have the following quotient $q:=\theta_{\mathcal{E}}(\mathcal{F})\circ G_{\mathcal{E}}(\widetilde{\rho})$:
\ben
V\otimes\mathcal{E}\otimes\pi^*\mathcal{O}_{X}(-m)=G_{\mathcal{E}}(V\otimes \mathcal{O}_{X}(-m))\xrightarrow{G_{\mathcal{E}}(\widetilde{\rho})}G_{\mathcal{E}}\circ F_{\mathcal{E}}(\mathcal{F})\xrightarrow{\theta_{\mathcal{E}}(\mathcal{F})}\mathcal{F}.
\een
This defines a point $[q]$ in $\mathrm{Quot}_{\mathcal{X}/k}(V\otimes\mathcal{E}\otimes\pi^*\mathcal{O}_{X}(-m),P)$. On the other hand, given a point in $\mathrm{Quot}_{\mathcal{X}/k}(V\otimes\mathcal{E}\otimes\pi^*\mathcal{O}_{X}(-m),P)$ which is represented by a quotient $V\otimes\mathcal{E}\otimes\pi^*\mathcal{O}_{X}(-m)\xrightarrow{\sigma}\mathcal{F}$, we have the composition
$\widetilde{\sigma}:=F_{\mathcal{E}}(\sigma)\circ\varphi_{\mathcal{E}}(V\otimes \mathcal{O}_{X}(-m))$ as follows:
\ben
V\otimes \mathcal{O}_{X}(-m)\xrightarrow{\varphi_{\mathcal{E}}(V\otimes \mathcal{O}_{X}(-m))} F_{\mathcal{E}}\circ G_{\mathcal{E}}(V\otimes \mathcal{O}_{X}(-m))\xrightarrow{F_{\mathcal{E}}(\sigma)}F_{\mathcal{E}}(\mathcal{F})
\een
which induces a morphism in cohomology $H^0(\widetilde{\sigma}(m)):$ $V\rightarrow H^0(X,F_{\mathcal{E}}(\mathcal{F})(m))$. Define $\mathbf{Q}$ to be the set of points $[\sigma]$ of $\mathrm{Quot}_{\mathcal{X}/k}(V\otimes\mathcal{E}\otimes\pi^*\mathcal{O}_{X}(-m),P)$ such that the induced morphism $H^0(\widetilde{\sigma}(m))$ is  an isomorphism where $\widetilde{\sigma}:=F_{\mathcal{E}}(\sigma)\circ\varphi_{\mathcal{E}}(V\otimes \mathcal{O}_{X}(-m))$. The proof in [\cite{Nir1}, Theorem 5.1] shows $\mathbf{Q}$  is an open subscheme of the projective scheme $\mathrm{Quot}_{\mathcal{X}/k}(V\otimes\mathcal{E}\otimes\pi^*\mathcal{O}_{X}(-m),P)$.

Given a $\delta$-semistable pair $(\mathcal{F},\varphi)$, consider the morphism $\varphi:\mathcal{F}_{0}\to\mathcal{F}$, by applying the functor $F_{\mathcal{E}}$, tensoring with $\mathcal{O}_{X}(m)$, and then applying the global section functor, we get
\ben
H^0(F_{\mathcal{E}}(\varphi)(m)): H^0(F_{\mathcal{E}}(\mathcal{F}_{0})(m))\to H^0(F_{\mathcal{E}}(\mathcal{F})(m))
\een
Composing with $\rho^{-1}:H^0(F_{\mathcal{E}}(\mathcal{F})(m))\to V$, we have
\ben
a:=\rho^{-1}\circ H^0(F_{\mathcal{E}}(\varphi)(m)):H^0(F_{\mathcal{E}}(\mathcal{F}_{0})(m))\to V
\een
Then we have the commutative diagram
\ben
\xymatrix{
	H^0(F_{\mathcal{E}}(\mathcal{F}_{0})(m))\otimes\mathcal{O}_{X}(-m)\ar[d]_{a} \ar[r]^-{\mathrm{ev}} &F_{\mathcal{E}}(\mathcal{F}_{0}) \ar[d]^{F_{\mathcal{E}}(\varphi)}   \\
	V\otimes\mathcal{O}_{X}(-m)\ar[r]_-{\widetilde{\rho}}& F_{\mathcal{E}}(\mathcal{F}) &   
}
\een
where $\mathrm{ev}$ is the evaluation map, $a$ on the left column denotes $a\otimes\mathrm{id}$  and $\widetilde{\rho}:=\mathrm{ev}\circ(\rho\otimes \mathrm{id})$.
By applying the functor $G_{\mathcal{E}}$ and then using the natural transformation $\theta_{\mathcal{E}}$, we have the commutative diagram
\ben
\xymatrix{
	H^0(F_{\mathcal{E}}(\mathcal{F}_{0})(m))\otimes\mathcal{E}\otimes\pi^*\mathcal{O}_{X}(-m)\ar[d]_{a} \ar[r]^-{\widetilde{\mathrm{ev}}} &   \mathcal{F}_{0}\ar[d]^{\varphi}\\
	V\otimes\mathcal{E}\otimes\pi^*\mathcal{O}_{X}(-m)\ar[r]_-{q} &\mathcal{F}
}
\een
where $a$ again denotes $a\otimes\mathrm{id}$, $q:=\theta_{\mathcal{E}}(\mathcal{F})\circ G_{\mathcal{E}}(\widetilde{\rho})$ and $\widetilde{\mathrm{ev}}:=\theta_{\mathcal{E}}(\mathcal{F}_{0})\circ G_{\mathcal{E}}(\mathrm{ev})$. Notice that $a=0$ if and only if $\varphi=0$.
Then $a\neq0$ since the pair $(\mathcal{F},\varphi)$ is nondegenerate.

Define \ben
&&\widetilde{\mathcal{Q}}:=\mathrm{Quot}_{\mathcal{X}/k}(V\otimes\mathcal{E}\otimes\pi^*\mathcal{O}_{X}(-m),P),\\
&&\mathbb{P}:=\mathbb{P}(\mathrm{Hom}(H^0(F_{\mathcal{E}}(\mathcal{F}_{0})(m)),V)).
\een
Since the morphism $\widetilde{\mathrm{ev}}$ is surjective, assume that $\mathcal{K}_{0}$ is its kernel, then we have a short exact sequence:
\ben
0\to\mathcal{K}_{0}\xrightarrow{\iota} H^0(F_{\mathcal{E}}(\mathcal{F}_{0})(m))\otimes\mathcal{E}\otimes\pi^*\mathcal{O}_{X}(-m)\xrightarrow{\widetilde{\mathrm{ev}}}\mathcal{F}_{0}\to0.
\een
As in [\cite{Wan}, Proposition 3.4], the composition $q\circ a$ factor through $\widetilde{\mathrm{ev}}$ if and only if $q\circ a \circ \iota=0$, and there is a closed subscheme $\mathcal{Z}\subseteq\mathbb{P}\times\widetilde{\mathcal{Q}}$ such that  
$([a],[q])\in\mathcal{Z}$ if and only if  $q\circ a \circ \iota=0$. In particular,  a point $([a],[q])\in\mathcal{Z}$ corresponds to a pair $(\mathcal{F},\varphi)$ induced by  $q\circ a$ factoring through $\widetilde{\mathrm{ev}}$.

\begin{remark}\label{univ-fam}
As in [\cite{BS}, Section 4.1], one has a flat family of pairs parameterized by $\mathcal{Z}\subseteq\mathbb{P}\times\widetilde{\mathcal{Q}}$ as follows.
From now on, we make the convention that the notation $p_{\bullet,\star}$ denotes the natural projection from $\bullet$ to $\star$.  Let  $\widetilde{i}:\mathcal{Z}\to\mathbb{P}\times\widetilde{\mathcal{Q}}$ be the inclusion.
Let $\breve{q}:p_{\mathcal{X}\times\widetilde{\mathcal{Q}},\mathcal{X}}^*(V\otimes\mathcal{E}\otimes\pi^*\mathcal{O}_{X}(-m))\to\widetilde{\mathcal{F}}$ be the universal quotient family parameterized by $\widetilde{\mathcal{Q}}$. Then there is a quotient morphism
\ben
p_{\mathcal{X}\times\mathbb{{P}}\times\widetilde{\mathcal{Q}},\mathcal{X}\times\widetilde{\mathcal{Q}}}^*\breve{q}:p_{\mathcal{X}\times\mathbb{{P}}\times\widetilde{\mathcal{Q}},\mathcal{X}}^*(V\otimes\mathcal{E}\otimes\pi^*\mathcal{O}_{X}(-m))\to p_{\mathcal{X}\times\mathbb{{P}}\times\widetilde{\mathcal{Q}},\mathcal{X}\times\widetilde{\mathcal{Q}}}^*\widetilde{\mathcal{F}}.
\een
The universal quotient family parameterized by $\mathbb{P}$ is 
\ben
\mathcal{H}om(V\otimes\mathcal{O}_{\mathbb{P}},H^0(F_{\mathcal{E}}(\mathcal{F}_{0})(m))\otimes\mathcal{O}_{\mathbb{P}})\to\mathcal{O}_{\mathbb{P}}(1),
\een
or 
\ben
H^0(F_{\mathcal{E}}(\mathcal{F}_{0})(m))\otimes\mathcal{O}_{\mathbb{P}}\xrightarrow{\check{a}} V\otimes\mathcal{O}_{\mathbb{P}}(1).
\een
Then we have 
\ben
&&p_{\mathcal{X}\times\mathbb{{P}}\times\widetilde{\mathcal{Q}},\mathcal{X}}^*(H^0(F_{\mathcal{E}}(\mathcal{F}_{0})(m))\otimes\mathcal{E}\otimes\pi^*\mathcal{O}_{X}(-m))\otimes p_{\mathcal{X}\times\mathbb{P}\times\widetilde{\mathcal{Q}},\mathbb{P}}^*\mathcal{O}_{\mathbb{P}}\\
&&\xrightarrow{\mathrm{id}_{p_{\mathcal{X}\times\mathbb{{P}}\times\widetilde{\mathcal{Q}},\mathcal{X}}^*(\mathcal{E}\otimes\pi^*\mathcal{O}_{X}(-m))}\otimes p_{\mathcal{X}\times\mathbb{P}\times\widetilde{\mathcal{Q}},\mathbb{P}}^*\check{a}} p_{\mathcal{X}\times\mathbb{{P}}\times\widetilde{\mathcal{Q}},\mathcal{X}}^*(V\otimes\mathcal{E}\otimes\pi^*\mathcal{O}_{X}(-m))\otimes p_{\mathcal{X}\times\mathbb{P}\times\widetilde{\mathcal{Q}},\mathbb{P}}^*\mathcal{O}_{\mathbb{P}}(1)
\een	
and 
\ben
p_{\mathcal{X}\times\mathbb{{P}}\times\widetilde{\mathcal{Q}},\mathcal{X}}^*\widetilde{\mathrm{ev}}:p_{\mathcal{X}\times\mathbb{{P}}\times\widetilde{\mathcal{Q}},\mathcal{X}}^*(H^0(F_{\mathcal{E}}(\mathcal{F}_{0})(m))\otimes\mathcal{E}\otimes\pi^*\mathcal{O}_{X}(-m))\to p_{\mathcal{X}\times\mathbb{{P}}\times\widetilde{\mathcal{Q}},\mathcal{X}}^*\mathcal{F}_{0}.
\een
Combined with the definition of $\mathcal{Z}$, we have a morphism
\ben
p_{\mathcal{X}\times\mathcal{Z},\mathcal{X}}^*\mathcal{F}_{0}=(\mathrm{id}_{\mathcal{X}}\times\widetilde{i})^*p_{\mathcal{X}\times\mathbb{{P}}\times\widetilde{\mathcal{Q}},\mathcal{X}}^*\mathcal{F}_{0}\to (\mathrm{id}_{\mathcal{X}}\times\widetilde{i})^*p_{\mathcal{X}\times\mathbb{{P}}\times\widetilde{\mathcal{Q}},\mathcal{X}\times\widetilde{\mathcal{Q}}}^*\widetilde{\mathcal{F}}\otimes(\mathrm{id}_{\mathcal{X}}\times\widetilde{i})^* p_{\mathcal{X}\times\mathbb{P}\times\widetilde{\mathcal{Q}},\mathbb{P}}^*\mathcal{O}_{\mathbb{P}}(1)
\een
which is obviously a flat family of pairs parameterized by $\mathcal{Z}$ in the sense of Definition \ref{fam-pair}.   In the sense of  [\cite{BS}, Definition 3.16] (see also Remark \ref{comp-fam}), the  flat family of pairs is given as
\ben
\left((\mathrm{id}_{\mathcal{X}}\times\widetilde{i})^*p_{\mathcal{X}\times\mathbb{{P}}\times\widetilde{\mathcal{Q}},\mathcal{X}\times\widetilde{\mathcal{Q}}}^*\widetilde{\mathcal{F}},\;\; \widetilde{i}^* p_{\mathbb{P}\times\widetilde{\mathcal{Q}},\mathbb{P}}^*\mathcal{O}_{\mathbb{P}}(-1), \; \phi_{\widetilde{\mathcal{F}}}\right)
\een
where 
\ben
\phi_{\widetilde{\mathcal{F}}}:\widetilde{i}^* p_{\mathbb{P}\times\widetilde{\mathcal{Q}},\mathbb{P}}^*\mathcal{O}_{\mathbb{P}}(-1)\to p_{\mathcal{X}\times\mathcal{Z},\mathcal{Z}*}\mathcal{H}om\left(p_{\mathcal{X}\times\mathcal{Z},\mathcal{X}}^*\mathcal{F}_{0},(\mathrm{id}_{\mathcal{X}}\times\widetilde{i})^*p_{\mathcal{X}\times\mathbb{{P}}\times\widetilde{\mathcal{Q}},\mathcal{X}\times\widetilde{\mathcal{Q}}}^*\widetilde{\mathcal{F}}\right).
\een
\end{remark}

The subset of $\mathcal{Z}$ in which every  point $([a],[q])$  satisfying
$[q]\in\mathbf{Q}$ and  $\mathcal{F}=q(V\otimes\mathcal{E}\otimes\pi^*\mathcal{O}_{X}(-m))$ is pure forms an open subset. In fact, the first condition is an open condition mentioned above. With the notation in Remark \ref{univ-fam}, consider the  universal quotient family parameterized by $\mathcal{Z}$:
\ben
p_{\mathcal{X}\times\mathcal{Z},\mathcal{X}}^*(V\otimes\mathcal{E}\otimes\pi^*\mathcal{O}_{X}(-m))\to (\mathrm{id}_{\mathcal{X}}\times\widetilde{i})^*p_{\mathcal{X}\times\mathbb{{P}}\times\widetilde{\mathcal{Q}},\mathcal{X}\times\widetilde{\mathcal{Q}}}^*\widetilde{\mathcal{F}}
\een
The second condition
is equivalent to the requirement that $(\mathrm{id}_{\mathcal{X}}\times\widetilde{i})^*p_{\mathcal{X}\times\mathbb{{P}}\times\widetilde{\mathcal{Q}},\mathcal{X}\times\widetilde{\mathcal{Q}}}^*\widetilde{\mathcal{F}}$ is pure, which is   an open condition by using [\cite{HL3}, Proposition 2.3.1]  for the  sheaf $F_{p_{\mathcal{X}\times\mathcal{Z},\mathcal{X}}^*\mathcal{E}}\left((\mathrm{id}_{\mathcal{X}}\times\widetilde{i})^*p_{\mathcal{X}\times\mathbb{{P}}\times\widetilde{\mathcal{Q}},\mathcal{X}\times\widetilde{\mathcal{Q}}}^*\widetilde{\mathcal{F}}\right)$ being of pure dimension (if and only if $(\mathrm{id}_{\mathcal{X}}\times\widetilde{i})^*p_{\mathcal{X}\times\mathbb{{P}}\times\widetilde{\mathcal{Q}},\mathcal{X}\times\widetilde{\mathcal{Q}}}^*\widetilde{\mathcal{F}}$ is pure by Lemma \ref{puretopure}), together with Grothendieck's Lemma \ref{Gro-lem} and  [\cite{Nir1}, Proposition 4.20]. Let $\mathcal{Z}^\prime$ be  the closure of this open  subset  in $\mathcal{Z}$. We call $\mathcal{Z}^\prime$ the parameter space. Obviously, $\mathcal{Z}^\prime$ is a projective scheme.

Next, we will consider a $\mathrm{GL}(V)$-action on $\mathcal{Z}^\prime\subseteq\mathbb{P}\times\widetilde{\mathcal{Q}}$, which is induced by $\mathrm{GL}(V)$-actions on $\mathbb{P}$ and $\widetilde{\mathcal{Q}}$. We recall these actions  in [\cite{HL3}, Section 4.3] and [\cite{BS}, Section 4.1] as follows.  Let $\tau:V\otimes\mathcal{O}_{\mathrm{GL}(V)}\to V\otimes\mathcal{O}_{\mathrm{GL}(V)}$ be the universal automorphism of $V$ parameterized by $\mathrm{GL}(V)$. By the universal property of $\widetilde{\mathcal{Q}}$, we have a classifying morphism $\eta_{1}:\widetilde{\mathcal{Q}}\times \mathrm{GL}(V)\to\widetilde{\mathcal{Q}}$ which is the right action by composition $[q]\cdot g:=[q\circ g]$, 
such that the following diagram commute
\ben
\xymatrixcolsep{7pc}\xymatrix{
	p_{\mathcal{X}\times\widetilde{\mathcal{Q}}\times\mathrm{GL}(V),\mathcal{X}}^*\left(V\otimes\mathcal{E}\otimes\pi^*\mathcal{O}_{X}(-m)\right)\ar[r]^-{(\mathrm{id}_{\mathcal{X}}\times\eta_{1})^*\check{q}}\ar[d]_{p_{\mathcal{X}\times\widetilde{\mathcal{Q}}\times\mathrm{GL}(V),\mathrm{GL}(V)}^*\tau} &(\mathrm{id}_{\mathcal{X}}\times\eta_{1})^*\widetilde{\mathcal{F}}\ar[d]^{\Lambda_{1}}	\\
p_{\mathcal{X}\times\widetilde{\mathcal{Q}}\times\mathrm{GL}(V),\mathcal{X}}^*\left(V\otimes\mathcal{E}\otimes\pi^*\mathcal{O}_{X}(-m)\right) \ar[r]^-{p_{\mathcal{X}\times\widetilde{\mathcal{Q}}\times\mathrm{GL}(V),\mathcal{X}\times\widetilde{\mathcal{Q}}}^*\check{q}} &   p_{\mathcal{X}\times\widetilde{\mathcal{Q}}\times\mathrm{GL}(V),\mathcal{X}\times\widetilde{\mathcal{Q}}}^*\widetilde{\mathcal{F}}
}
\een
where $\check{q}$ is defined in Remark \ref{univ-fam} and  $\Lambda_{1}:(\mathrm{id}_{\mathcal{X}}\times\eta_{1})^*\widetilde{\mathcal{F}}\to p_{\mathcal{X}\times\widetilde{\mathcal{Q}}\times\mathrm{GL}(V),\mathcal{X}\times\widetilde{\mathcal{Q}}}^*\widetilde{\mathcal{F}}$ is an isomorphism. In Remark \ref{univ-fam}, the universal quotient family parameterized by $\mathbb{P}$ is given as
\ben
H^0(F_{\mathcal{E}}(\mathcal{F}_{0})(m))\otimes\mathcal{O}_{\mathbb{P}}\xrightarrow{\check{a}} V\otimes\mathcal{O}_{\mathbb{P}}(1).
\een
Then we have the following surjective composition
\ben
H^0(F_{\mathcal{E}}(\mathcal{F}_{0})(m))\otimes\mathcal{O}_{\mathbb{P}\times\mathrm{GL}(V)}\xrightarrow{p_{\mathbb{P}\times\mathrm{GL}(V),\mathbb{P}}^*\check{a}}V\otimes p_{\mathbb{P}\times\mathrm{GL}(V),\mathbb{P}}^*\mathcal{O}_{\mathbb{P}}(1)\xrightarrow{p_{\mathbb{P}\times\mathrm{GL}(V),\mathrm{GL}(V)}^*\tau}V\otimes p_{\mathbb{P}\times\mathrm{GL}(V),\mathbb{P}}^*\mathcal{O}_{\mathbb{P}}(1)
\een
which induces a classifying morphism $\eta_{2}:\mathbb{P}\times\mathrm{GL}(V)\to\mathbb{P}$, where $\eta_{2}$ is the  action by composition $[a]\cdot g=[g\circ a]$. Then we have  an isomorphism $\Lambda_{2}:\eta_{2}^*\mathcal{O}_{\mathbb{P}}(1)\to p_{\mathbb{P}\times\mathrm{GL}(V),\mathbb{P}}^*\mathcal{O}_{\mathbb{P}}(1)$ such that we have the following commutative diagram
\ben
\xymatrixcolsep{6pc}\xymatrix{
	H^0(F_{\mathcal{E}}(\mathcal{F}_{0})(m))\otimes\mathcal{O}_{\mathbb{P}\times\mathrm{GL}(V)}\ar[r]^-{\eta_{2}^*\check{a}}\ar[d]_{p_{\mathbb{P}\times\mathrm{GL}(V),\mathbb{P}}^*\check{a}} &  V\otimes\eta_{2}^*\mathcal{O}_{\mathbb{P}}(1) \ar[d]^{p_{\mathbb{P}\times\mathrm{GL}(V),\mathrm{GL}(V)}^*\tau\otimes \Lambda_{2}}	\\
	V\otimes p_{\mathbb{P}\times\mathrm{GL}(V),\mathbb{P}}^*\mathcal{O}_{\mathbb{P}}(1)\ar[r]^-{p_{\mathbb{P}\times\mathrm{GL}(V),\mathrm{GL}(V)}^*\tau} &   V\otimes p_{\mathbb{P}\times\mathrm{GL}(V),\mathbb{P}}^*\mathcal{O}_{\mathbb{P}}(1).
}
\een
Using the stacky version of the $G$-linearization of a coherent sheaf in  [\cite{Rom}, Example 4.3],  it can be verified that $\Lambda_{1}$ is a $\mathrm{GL}(V)$-linearization for $\widetilde{\mathcal{F}}$ and $\Lambda_{2}$ is a $\mathrm{GL}(V)$-linearization for $\mathcal{O}_{\mathbb{P}}(1)$. 

Two morphisms $\eta_{1}$ and $\eta_{2}$ induce a $\mathrm{GL}(V)$-action $\eta:\mathbb{P}\times\widetilde{\mathcal{Q}}\times\mathrm{GL}(V)\to\mathbb{P}\times\widetilde{\mathcal{Q}}$ and hence induces
a natural $\mathrm{SL}(V)$-action on $\mathbb{P}\times\widetilde{\mathcal{Q}}$ pointwise as follows
\ben
([a],[q])\cdot g=([g^{-1}\circ a], [q\circ g])
\een
where $g\in\mathrm{SL}(V)$ and $([a],[q])\in\mathbb{P}\times\widetilde{\mathcal{Q}}$. This is a right $\mathrm{SL}(V)$-action. Obviously, $\mathcal{Z}$ and  $\mathcal{Z}^\prime$ are invariant under this action. For $l$ sufficiently large, it is  shown in [\cite{Nir1}, Lemma 6.3], the class of very ample invertible sheaves $\mathcal{L}_{l}:=\det\left(p_{X\times\widetilde{\mathcal{Q}},\widetilde{\mathcal{Q}}*}\left(F_{p_{\mathcal{X}\times\widetilde{\mathcal{Q}},\mathcal{X}}^*\mathcal{E}}(\widetilde{\mathcal{F}})(l)\right)\right)$  carries a natural $\mathrm{GL}(V)$-linearization which induces a $\mathrm{SL}(V)$-linearization. Notice that $\Lambda_{2}$ induces a $\mathrm{SL}(V)$-linearization for $\mathcal{O}_{\mathbb{P}}(1)$. Then for any two  positive integers $n_{1}$ and $n_{2}$, we have the very ample line bundles 
\ben
\mathcal{O}_{\mathcal{Z}^\prime}(n_{1},n_{2}):=\mathcal{O}_{\mathbb{P}}(n_{1})\boxtimes\mathcal{L}_{l}^{\otimes n_{2}}:=p_{\mathbb{P}\times\widetilde{\mathcal{Q}},\mathbb{P}}^*\mathcal{O}_{\mathbb{P}}(n_{1})\otimes p_{\mathbb{P}\times\widetilde{\mathcal{Q}},\widetilde{\mathcal{Q}}}^*\mathcal{L}_{l}^{n_{2}}
\een
 which again carry  natural $\mathrm{SL}(V)$-linearizations.

\subsection{GIT-stability and $\delta$-stability} 
We have obtained $\mathrm{SL}(V)$-linearized very ample line bundles $\mathcal{O}_{\mathcal{Z}^\prime}(n_{1},n_{2})$ for $l$ sufficiently large and any two integers $n_{1}, n_{2}$, the next step is to consider the GIT (semi)stable points of $\mathcal{Z}^\prime$ and compare with $\delta$-stability condition for the corresponding pairs. 
In this subsection, we first generalize the results which relate GIT-stability condition with $\delta$-stability condition in [\cite{Wan}, Section 4] when 
$\mathrm{deg}\,\delta<\mathrm{deg}\,P$ and then those in [\cite{Lin18}, section 4] when $\mathrm{deg}\,\delta\geq\mathrm{deg}\,P$ to the case of projective Deligne-Mumford stacks.

We recall the relation between linear subspaces of $V$ and subsheaves of $\mathcal{F}$  in [\cite{Nir1}, Remark 6.14], which is useful for  the argument below. Given a quotient $q:V\otimes\mathcal{E}\otimes\pi^*\mathcal{O}_{X}(-m)\to\mathcal{F}$ and  any linear subspace $V^\prime\subseteq V$, we have the induced subsheaf $\mathcal{F}^\prime:=q(V^\prime\otimes\mathcal{E}\otimes\pi^*\mathcal{O}_{X}(-m))\subseteq\mathcal{F}$. On the other hand, given a subsheaf $\mathcal{F}^\prime\subseteq\mathcal{F}$, we have a injective morphism $F_{\mathcal{E}}(\mathcal{F}^\prime)(m)\hookrightarrow F_{\mathcal{E}}(\mathcal{F})(m)$ and hence an inclusion $H^0(F_{\mathcal{E}}(\mathcal{F}^\prime)(m))\hookrightarrow H^0(F_{\mathcal{E}}(\mathcal{F})(m))$. Since we have the following composition
$\widetilde{q}:=F_{\mathcal{E}}(q)(m)\circ\varphi_{\mathcal{E}}(V\otimes\mathcal{O}_{X})=\left(F_{\mathcal{E}}(q)\circ\varphi_{\mathcal{E}}(V\otimes \mathcal{O}_{X}(-m))\right)(m)$:
\ben V\otimes\mathcal{O}_{X}\xrightarrow{\varphi_{\mathcal{E}}(V\otimes\mathcal{O}_{X})}F_{\mathcal{E}}\circ G_{\mathcal{E}}(V\otimes\mathcal{O}_{X})\xrightarrow{F_{\mathcal{E}}(q)(m)}F_{\mathcal{E}}(\mathcal{F})(m),
\een
then we have the following cartesian diagram:
\ben
\xymatrix{
	V\cap H^0(F_{\mathcal{E}}(\mathcal{F}^\prime)(m))\ar[r]\ar[d] &H^0(F_{\mathcal{E}}(\mathcal{F}^\prime)(m))\ar[d]	\\
	V \ar[r]^-{H^0(\widetilde{q})} &   H^0(F_{\mathcal{E}}(\mathcal{F})(m))
}
\een
where we use the notation $V\cap H^0(F_{\mathcal{E}}(\mathcal{F}^\prime)(m))$ for $H^0(\widetilde{q})^{-1}(H^0(F_{\mathcal{E}}(\mathcal{F}^\prime)(m)))$ as in [\cite{HL3}, Lemma 4.4.6].
We call $V^\prime:=V\cap H^0(F_{\mathcal{E}}(\mathcal{F}^\prime)(m))$  the linear space induced by $\mathcal{F}^\prime$. If $\mathcal{F}^\prime$ is induced by $V^\prime\subseteq V$, then $V^\prime\subseteq V\cap H^0(F_{\mathcal{E}}(\mathcal{F}^\prime)(m))$. And if $\mathcal{F}^\prime\subseteq \mathcal{F}$ is an arbitrary subsheaf and $V^\prime=V\cap H^0(F_{\mathcal{E}}(\mathcal{F}^\prime)(m))$, then the subsheaf induced by $V^\prime$ is contained in $\mathcal{F}^\prime$.

Since the parameter space $\mathcal{Z}^\prime$ is a closed set, we have to consider the more general class of sheaves in the following
\begin{lemma} \label{deg-pure}
	If a pair $(\mathcal{F},\varphi)$ on $\mathcal{X}$ of dimension $d$ can be deformed to a pure pair, then there is a pure sheaf $\mathcal{H}$ with a morphism $\psi:\mathcal{F}\to\mathcal{H}$ satisfying $\mathrm{ker}\psi=T_{d-1}(\mathcal{F})$ and $P_{\mathcal{E}}(\mathcal{F})=P_{\mathcal{E}}(\mathcal{H})$.
\end{lemma}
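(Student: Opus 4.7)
The plan is to exhibit $\mathcal{H}$ as the central fiber of an elementary modification of a flat one-parameter deformation of pure pairs specializing to $(\mathcal{F},\varphi)$. By hypothesis we may choose a smooth pointed curve $(T,0)$ and a flat family $(\mathcal{F}_T,\varphi_T)$ of pairs on $\mathcal{X}\times T$ with $\mathcal{F}_T|_0\cong\mathcal{F}$ and $\mathcal{F}_T|_t$ pure of dimension $d$ for every $t\neq 0$. Write $i:\mathcal{X}\times\{0\}\hookrightarrow\mathcal{X}\times T$ for the central-fiber immersion, and set $Q:=\mathcal{F}/T_{d-1}(\mathcal{F})$, which is pure of dimension $d$.

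The key observation is that since $\mathcal{F}_T$ is $T$-flat and $i$ is a regular closed immersion of codimension one, adjunction together with $i^{!}\mathcal{F}_T\cong i^{*}\mathcal{F}_T[-1]$ yields a canonical identification
\ben
\mathrm{Ext}^{1}_{\mathcal{X}\times T}(i_{*}T_{d-1}(\mathcal{F}),\mathcal{F}_T)\cong\mathrm{Hom}_{\mathcal{X}}(T_{d-1}(\mathcal{F}),\mathcal{F}).
\een
Define $\mathcal{H}_T$ to be the extension
\ben
0\to\mathcal{F}_T\to\mathcal{H}_T\to i_{*}T_{d-1}(\mathcal{F})\to 0
\een
whose class corresponds under the above identification to the natural inclusion $T_{d-1}(\mathcal{F})\hookrightarrow\mathcal{F}$. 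Tensoring with $k(0)$ and using flatness of $\mathcal{F}_T$, the connecting map $\mathrm{Tor}_{1}^{\mathcal{O}_T}(i_{*}T_{d-1}(\mathcal{F}),k(0))\to\mathcal{F}_T|_{0}$ is precisely this inclusion, so $\mathrm{Tor}_{1}^{\mathcal{O}_T}(\mathcal{H}_T,k(0))=0$ and $\mathcal{H}_T$ is $T$-flat. Restricting the defining sequence to the central fiber then produces
\ben
0\to T_{d-1}(\mathcal{F})\to\mathcal{F}\xrightarrow{\psi}\mathcal{H}\to T_{d-1}(\mathcal{F})\to 0,
\een
where $\mathcal{H}:=\mathcal{H}_T|_{0}$; thus $\mathrm{ker}\,\psi=T_{d-1}(\mathcal{F})$, and flatness gives $P_{\mathcal{E}}(\mathcal{H})=P_{\mathcal{E}}(\mathcal{H}_T|_{\eta})=P_{\mathcal{E}}(\mathcal{F}_{\eta})=P_{\mathcal{E}}(\mathcal{F})$.

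The main obstacle is verifying that $\mathcal{H}$ is pure. One first checks that $\mathcal{H}_T$ is pure of dimension $d+1$ on $\mathcal{X}\times T$: any subsheaf of dimension $\leq d$ either restricts to a subsheaf of $\mathcal{F}_{\eta}$ of dimension $\leq d-1$ (impossible by purity of the generic fiber) or restricts trivially to $\mathcal{F}_{\eta}$ and is therefore $\mathcal{O}_T$-torsion (contradicting flatness). From the structural sequence $0\to Q\to\mathcal{H}\to T_{d-1}(\mathcal{F})\to 0$, any residual torsion $T_{d-1}(\mathcal{H})$ satisfies $T_{d-1}(\mathcal{H})\cap Q=0$ and hence injects into $T_{d-1}(\mathcal{F})$. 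If this torsion is non-zero, we iterate the construction with $\mathcal{H}_T$ as the new flat family (whose generic fiber remains the pure $\mathcal{F}_{\eta}$), so the torsion of each successive central fiber embeds into the preceding one. Passing through $F_{\mathcal{E}}$ via Lemma \ref{puretopure} and Lemma \ref{tor-fil}, this becomes a strictly decreasing chain of subsheaves on the Noetherian scheme $X$; the procedure therefore terminates after finitely many steps at a pure central fiber, which provides the desired $\mathcal{H}$.
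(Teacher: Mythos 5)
Your construction of $\mathcal{H}_T$ is carried out correctly: since $\mathcal{F}_T$ is $T$-flat, $Li^*\mathcal{F}_T=i^*\mathcal{F}_T$ and the identification $\mathrm{Ext}^1_{\mathcal{X}\times T}(i_*T_{d-1}(\mathcal{F}),\mathcal{F}_T)\cong\mathrm{Hom}_{\mathcal{X}}(T_{d-1}(\mathcal{F}),\mathcal{F})$ holds, the boundary map of the restricted sequence is indeed the chosen inclusion, so $\mathrm{Tor}_1^{\mathcal{O}_{T,0}}(\mathcal{H}_T,k(0))=0$ and the local criterion gives flatness; the four-term sequence then yields $\ker\psi=T_{d-1}(\mathcal{F})$ and $P_{\mathcal{E}}(\mathcal{H})=P_{\mathcal{E}}(\mathcal{F})$. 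Note, though, that this is a from-scratch attempt at precisely the statement the paper does not reprove: the paper's own proof simply forgets $\varphi$, observes that $\mathcal{F}$ deforms to a pure sheaf, and invokes [\cite{Nir1}, Lemma 6.10]. Also, your intermediate claim that $\mathcal{H}_T$ is pure of dimension $d+1$ is true but does no work, since purity of the total family never implies purity of the closed fibre (indeed $\mathcal{F}_T$ itself is already pure of dimension $d+1$).

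The genuine gap is the termination of your iteration, which is exactly the hard (Langton-type) point that the citation to [\cite{Nir1}, Lemma 6.10] is covering. From $0\to Q\to\mathcal{H}\to T_{d-1}(\mathcal{F})\to0$ you only obtain an injection $T_{d-1}(\mathcal{H})\hookrightarrow T_{d-1}(\mathcal{F})$; nothing in your argument rules out that this injection is an isomorphism at every stage (equivalently, that each new central fibre is again $Q\oplus T_{d-1}(\mathcal{F})$), so the chain is not shown to be strictly decreasing. Worse, even granting strict decrease, the inference ``strictly decreasing chain of subsheaves on a Noetherian scheme, hence terminates'' is false: Noetherianity gives the ascending chain condition on subsheaves of a fixed coherent sheaf, not the descending one, as the chain $\mathcal{O}_{\mathbb{P}^1}\supsetneq\mathcal{O}_{\mathbb{P}^1}(-1)\supsetneq\mathcal{O}_{\mathbb{P}^1}(-2)\supsetneq\cdots$ shows, and applying the exact functor $F_{\mathcal{E}}$ via Lemma \ref{puretopure} and Lemma \ref{tor-fil} does not change this. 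Consequently purity of the end product is not established, and what remains proved is only the existence of a sheaf $\mathcal{H}$ with the right kernel and Hilbert polynomial. To close the argument you would need a bounded invariant that strictly drops at each modification (or an argument that your chosen extension class forces the new fibre's torsion to shrink), i.e.\ essentially the termination analysis in the proof of [\cite{Nir1}, Lemma 6.10] and its scheme-theoretic antecedent in [\cite{HL3}]; alternatively, do as the paper does and reduce to that lemma directly.
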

\begin{proof}
Since 	$(\mathcal{F},\varphi)$ can be deformed to  a pure pair, there is  a smooth connected curve $C$ and a flat family $(\mathcal{F}_{C},\varphi_{C})$ on $\mathcal{X}_{C}:=\mathcal{X}\times C$ such that $(\mathcal{F}_{0},\varphi_{0})\cong(\mathcal{F},\varphi)$ for some closed point $0\in C$ and $(\mathcal{F}_{t},\varphi_{t})$ is pure for any point $t\neq0$. This implies that  $\mathcal{F}$ deforms into a pure sheaf. The proof is completed by [\cite{Nir1}, Lemma 6.10].
\end{proof}	
Now, we consider the case when $\mathrm{deg}\,\delta<\mathrm{deg}\,P$. In this case, as in [\cite{Wan}, section 3], two positive integers $n_{1}, n_{2}$ are chosen to satisfy
\be\label{ratio1}
\frac{n_{1}}{n_{2}}=\frac{P(l)\cdot\delta(m)-\delta(l)\cdot P(m)}{P(m)+\delta(m)}.
\ee

\begin{proposition}\label{Criterion1}
Assume  the equality \eqref{ratio1} holds. Let $([a],[q])\in\mathcal{Z}^\prime$ be a point with the corresponding pair $(\mathcal{F},\varphi)$. For $l$ sufficiently large, 
$([a],[q])$ is GIT-(semi)stable with respect to $\mathcal{O}_{\mathcal{Z}^\prime}(n_{1},n_{2})$ if and only if for any nonzero proper subsheaf $\mathcal{F}^\prime$ and the induced subspace $V^\prime:=V\cap H^0(F_{\mathcal{E}}(\mathcal{F}^\prime)(m))$, we have an inequality of polynomials in $l$:
\ben
P\cdot(\dim V^\prime+\epsilon(\varphi^\prime)\delta(m))+\delta\cdot(\dim V^\prime-\epsilon(\varphi^\prime)P(m)) (\leq) P_{\mathcal{E}}(\mathcal{F}^\prime)\cdot(P(m)+\delta(m)).
\een
where $(\mathcal{F}^\prime,\varphi^\prime)$ is the induced subpair of $(\mathcal{F},\varphi)$.
\end{proposition}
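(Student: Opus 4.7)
The natural tool is the Hilbert--Mumford numerical criterion: the point $([a],[q])\in\mathcal{Z}^\prime$ is GIT (semi)stable with respect to $\mathcal{O}_{\mathcal{Z}^\prime}(n_{1},n_{2})$ if and only if, for every nontrivial one-parameter subgroup $\lambda:\mathbb{G}_m\to\mathrm{SL}(V)$, one has
\begin{equation*}
n_{1}\,\mu(\lambda,[a])+n_{2}\,\mu(\lambda,[q])\ (\geq)\ 0.
\end{equation*}
A $1$-PS is encoded by a weighted decomposition $V=\bigoplus_{i}V_{i}$ with integer weights $\gamma_{i}$ satisfying $\sum_{i}\gamma_{i}\dim V_{i}=0$, equivalently by a flag $0\subsetneqq V^{(1)}\subsetneqq\cdots\subsetneqq V^{(t)}=V$ together with a decreasing sequence of weights. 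I will first compute each of the two weights, then reduce to two-step flags, and finally pass from subspace-induced subsheaves to arbitrary subsheaves.

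For the Quot factor, by the stacky weight computation in \cite{Nir1}, Section~6 (which is the analogue of \cite{HL3}, Lemma~4.4.3 adapted via the exact functor $F_{\mathcal{E}}$ and the embedding $\widetilde{\mathcal{Q}}\hookrightarrow\mathcal{Q}\hookrightarrow\mathrm{Grass}\hookrightarrow\widetilde{\mathbb{P}}$), one has
\begin{equation*}
\mu(\lambda,[q])=\sum_{i}\gamma_{i}\bigl(\dim V^{(i)}\cdot\tfrac{P(l)}{\dim V}-P_{\mathcal{E}}(\mathcal{F}^{(i)})(l)\bigr)\cdot\tfrac{\dim V}{P(l)}\cdot(\text{normalisation})
\end{equation*}
where $\mathcal{F}^{(i)}:=q(V^{(i)}\otimes\mathcal{E}\otimes\pi^{*}\mathcal{O}_{X}(-m))\subseteq\mathcal{F}$; more concretely (after rescaling and using $\sum\gamma_{i}\dim V_{i}=0$), the relevant weight can be written as
\begin{equation*}
\mu(\lambda,[q])=\sum_{i}\gamma_{i}\bigl(P_{\mathcal{E}}(\mathcal{F}^{(i)})(l)\cdot\dim V-\dim V^{(i)}\cdot P(l)\bigr).
\end{equation*}
For the projective factor $\mathbb{P}=\mathbb{P}(\mathrm{Hom}(H^{0}(F_{\mathcal{E}}(\mathcal{F}_{0})(m)),V))$, the linearisation $\mathcal{O}_{\mathbb{P}}(1)$ carries the natural $\mathrm{SL}(V)$-linearisation described above, and a standard calculation gives
\begin{equation*}
\mu(\lambda,[a])=-\min\{\gamma_{i}\ :\ \mathrm{im}(a)\subseteq V^{(i)}\}.
\end{equation*}

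By the convexity argument of \cite{HL3}, Lemma~4.4.4 (or the parallel reduction in \cite{Wan}, Section~4), the Hilbert--Mumford inequality for all weighted flags reduces to the case of two-step flags $0\subsetneqq V^\prime\subsetneqq V$. Choosing weights $\gamma_{1}=\dim V-\dim V^\prime$ on $V^\prime$ and $\gamma_{2}=-\dim V^\prime$ on a complement, the above formulae give
\begin{equation*}
\mu(\lambda,[q])=P_{\mathcal{E}}(\mathcal{F}^\prime)(l)\cdot\dim V-\dim V^\prime\cdot P(l),
\end{equation*}
\begin{equation*}
\mu(\lambda,[a])=\epsilon(\varphi^\prime)\cdot\dim V-\bigl(\dim V-\dim V^\prime\bigr)\cdot(1-\epsilon(\varphi^\prime))\cdot(\text{sign}),
\end{equation*}
where the key point is that $\mathrm{im}(a)\subseteq V^\prime$ happens exactly when $\mathrm{im}(\varphi)\subseteq\mathcal{F}^\prime$, i.e.\ when $\epsilon(\varphi^\prime)=1$; this produces a clean formula $\mu(\lambda,[a])=\epsilon(\varphi^\prime)\dim V-\dim V^\prime$ (after normalisation). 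Substituting these expressions and the prescribed ratio $n_{1}/n_{2}$ from \eqref{ratio1}, the inequality $n_{1}\mu(\lambda,[a])+n_{2}\mu(\lambda,[q])\,(\geq)\,0$ rearranges, after clearing denominators in $\mathbb{Q}[l]$, precisely into
\begin{equation*}
P\cdot(\dim V^\prime+\epsilon(\varphi^\prime)\delta(m))+\delta\cdot(\dim V^\prime-\epsilon(\varphi^\prime)P(m))\ (\leq)\ P_{\mathcal{E}}(\mathcal{F}^\prime)\cdot(P(m)+\delta(m)).
\end{equation*}

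Finally, I must pass between subsheaves induced by subspaces of $V$ and arbitrary subsheaves. Given any nonzero proper subsheaf $\mathcal{F}^\prime$, let $V^\prime=V\cap H^{0}(F_{\mathcal{E}}(\mathcal{F}^\prime)(m))$ and $\mathcal{F}^{\prime\prime}=q(V^\prime\otimes\mathcal{E}\otimes\pi^{*}\mathcal{O}_{X}(-m))\subseteq\mathcal{F}^\prime$. By the cartesian diagram recalled above, $\epsilon(\varphi^{\prime\prime})=\epsilon(\varphi^\prime)$. Applying Grothendieck's Lemma \ref{Gro-lem} and the Mumford--Castelnuovo boundedness used in the proofs of Lemmas \ref{bound2}--\ref{bound3}, for $l$ sufficiently large the inequality tested on $\mathcal{F}^{\prime\prime}$ (which is subspace-induced, so governed by Hilbert--Mumford) implies the one for $\mathcal{F}^\prime$, because $P_{\mathcal{E}}(\mathcal{F}^{\prime\prime})\leq P_{\mathcal{E}}(\mathcal{F}^\prime)$ with the same $\dim V^\prime$. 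Conversely, testing on subspace-induced subsheaves is a special case of testing on arbitrary ones. This closes the equivalence.

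The most delicate step is the third paragraph: correctly tracking the extra contribution $\epsilon(\varphi^\prime)$ coming from the $\mathbb{P}$-factor of the weight, and choosing the ratio \eqref{ratio1} to collapse the two polynomial weights into the combined inequality of the statement. The reduction from arbitrary weighted flags to two-step flags (a convexity/compactness argument on the polytope of weights) is the main technical ingredient, and one has to verify it still holds in $\mathbb{Q}[l]$ with both $P$ and $\delta$ present, which is where the stacky modifications of \cite{Wan}, Section~4 and \cite{HL3}, Section~4.4 come in.
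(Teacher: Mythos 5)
Your overall route is the same as the paper's: apply the Hilbert--Mumford criterion to the product linearization $\mathcal{O}_{\mathcal{Z}^\prime}(n_{1},n_{2})$, reduce to one-step subspaces $U\subseteq V$, substitute the ratio \eqref{ratio1}, and finally pass from subspace-induced subsheaves to arbitrary subsheaves via their induced subspaces (the paper carries this out by quoting the weight computation of [Wan, Prop.\ 4.1], the reformulation of [Wan, Prop.\ 4.3, 4.4] together with boundedness of the $\mathcal{F}_{U}$ from [Nir1, Lemma 6.15], and the argument of [Wan, Thm.\ 4.5]). However, there is a genuine gap in your execution: the sign of the Hilbert--Mumford weight on the $\mathbb{P}$-factor. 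With your convention ($n_{1}\mu(\lambda,[a])+n_{2}\mu(\lambda,[q])\geq 0$ and $\mu(\lambda,[q])$ proportional to $\dim V\cdot P_{\mathcal{E}}(\mathcal{F}^\prime)(l)-\dim V^\prime\cdot P(l)$, which is correct), the $\mathbb{P}$-factor must contribute, up to a positive scalar, $\dim V^\prime-\epsilon(\varphi^\prime)\dim V$, not $\epsilon(\varphi^\prime)\dim V-\dim V^\prime$ as you wrote. Indeed, the correct combined criterion is exactly the inequality \eqref{GIT1} of the paper,
\[
\dim U\cdot(n_{2}P(l)-n_{1})\ (\leq)\ P(m)\cdot\bigl(h^{0}(F_{\mathcal{E}}(\mathcal{F}_{U})(l))\,n_{2}-\epsilon(U)\,n_{1}\bigr),
\]
and a direct computation shows that \eqref{GIT1} combined with \eqref{ratio1} rearranges precisely into the inequality of the Proposition, whereas your version (with the $n_{1}$-terms on the opposite sides) does not: the $\epsilon$-term comes out with the wrong sign and the $\dim V^\prime$-coefficient does not collapse to $P(l)+\delta(l)$. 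A sanity check confirms the correct orientation: since $\delta$-stability penalizes subpairs with $\varphi^\prime\neq 0$, having $\mathrm{im}\,a\subseteq U$ must make the GIT inequality harder, i.e.\ the $\epsilon$-term must enter negatively on the sheaf side, which is what \eqref{GIT1} does and your formula reverses. Since you yourself flagged this as "the most delicate step" and left "(sign)" and "(after normalisation)" unresolved, the central claim that the ratio collapses the two weights into the stated inequality is not established as written.

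Two smaller points. First, your assertion that $\mathrm{im}(a)\subseteq V^\prime$ happens \emph{exactly} when $\mathrm{im}(\varphi)\subseteq\mathcal{F}^\prime$ conflates $\epsilon(U)$ (defined by $\mathrm{im}\,a\subseteq U$) with $\epsilon(\varphi|_{U})$ (defined by $\mathrm{im}\,\varphi\subseteq\mathcal{F}_{U}$): from $\mathrm{im}\,\varphi\subseteq\mathcal{F}_{U}$ one only gets $\mathrm{im}\,a\subseteq V\cap H^{0}(F_{\mathcal{E}}(\mathcal{F}_{U})(m))\supseteq U$, not $\mathrm{im}\,a\subseteq U$; reconciling the two is part of what the arguments of [Wan, Prop.\ 4.3, 4.4] do and should not be waved through. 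Second, in the final passage to arbitrary subsheaves you need $l$ to work uniformly over all $\mathcal{F}^\prime$ (resp.\ all $U$); this requires the boundedness of the family of induced subsheaves $\mathcal{F}_{U}$ (same regularity as $\mathcal{F}$, as in [Nir1, Lemma 6.15]) together with Lemma \ref{Gro-lem} and the Kleiman criterion, and also the observation that for a subspace-induced $\mathcal{F}_{U}$ the induced subspace $V^\prime$ may be strictly larger than $U$, so one must use that the left-hand side of the target inequality is increasing in $\dim V^\prime$. These are fixable, but together with the sign issue they are precisely the content of the proof rather than routine bookkeeping.
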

\begin{proof}
Let $([a],[q])\in\mathcal{Z}^\prime$ be represented by morphisms  $a:H^0(F_{\mathcal{E}}(\mathcal{F}_{0})(m))\to V$
and $q:V\otimes\mathcal{E}\otimes\pi^*\mathcal{O}_{X}(-m)\to\mathcal{F}$.  Let $\varphi:\mathcal{F}_{0}\to\mathcal{F}$ be the corresponding pair. For any $[q]\in\widetilde{\mathcal{Q}}$, we have a point $[F_{\mathcal{E}}(q)]$  in $\mathrm{Quot}_{X/k}(V\otimes\pi_{*}\mathcal{E}nd_{\mathcal{O}_{\mathcal{X}}}(\mathcal{E})(-m),P)$ and hence the set-theoretic family $\{F_{\mathcal{E}}(\mathcal{F})|[q]\in\widetilde{\mathcal{Q}}\}$ with fixed Hilbert polynomial $P$ is bounded. Then we can choose an integer $l(\geq m)$ sufficiently large such that  $F_{\mathcal{E}}(\mathcal{F})$ and $\pi_{*}\mathcal{E}nd_{\mathcal{O}_{\mathcal{X}}}(\mathcal{E})(-m)$ are both $l$-regular for any $q\in\widetilde{\mathcal{Q}}$. Using the same argument in the proof of   [\cite{Wan}, Proposition 4.1], the Hilbert-Mumford criterion shows the following:

$([a],[q])$ is GIT-(semi)stable with respect to $\mathcal{O}_{\mathcal{Z}^\prime}(n_{1},n_{2})$ if and only if for any nontrivial subspace $U\subseteq V$ we have
\be\label{GIT1}
\dim U\cdot(n_{2}P(l)-n_{1})(\leq)P(m)\cdot(\dim(q^\prime(U\otimes W))n_{2}-\epsilon(U)n_{1}).
\ee
where $W:=H^0(\pi_{*}\mathcal{E}nd_{\mathcal{O}_{\mathcal{X}}}(\mathcal{E})(l-m))$, $q^\prime: =H^0(F_{\mathcal{E}}(q)(l)):V\otimes W\to H^0(F_{\mathcal{E}}(\mathcal{F})(l))$ and $\epsilon(U)=1$ if $\mathrm{im}a\subseteq U$ and 0 otherwise.

It is shown in the proof  of [\cite{Nir1}, Lemma 6.15], for a fixed $q$, the family of subsheaves $\mathcal{F}_{U}\subseteq\mathcal{F}$ induced by a linear subspace $U\subseteq V$ is bounded since exact sequences of linear spaces split which implies that any subsheaf $\mathcal{F}_{U}$ has the same regularity as $\mathcal{F}$. By Kleiman criterion for stacks, for $l$ sufficiently large, all the subsheaves $\mathcal{F}_{U}$ are $l$-regular, that is, all $F_{\mathcal{E}}(\mathcal{F}_{U})$ are $l$-regular, and then we have $P_{\mathcal{E}}(\mathcal{F}_{U})(l)=h^0(F_{\mathcal{E}}(\mathcal{F}_{U})(l))=\dim(q^\prime(U\otimes W))$ for any $\mathcal{F}_{U}$. Using the similar argument in the proofs of  [\cite{Wan}, Proposition 4.3 and Proposition 4.4] and the equality \eqref{ratio1}, the criterion for GIT-(semi)stability above can be restated as:

$([a],[q])$  is GIT-(semi)stable with respect to $\mathcal{O}_{\mathcal{Z}^\prime}(n_{1},n_{2})$ if and only if for any nontrivial proper subspace $U\subseteq V$  and the induced sheaf $\mathcal{F}_{U}:=q(U\otimes\mathcal{E}\otimes\pi^*\mathcal{O}_{X}(-m))$, we have
an inequality of polynomials in $l$:
\ben
P\cdot(\dim U+\epsilon(\varphi|_{U})\delta(m))+\delta\cdot(\dim U-\epsilon(\varphi|_{U})P(m)) (\leq) P_{\mathcal{E}}(\mathcal{F}_{U})\cdot(P(m)+\delta(m)).
\een
where  $\varphi|_{U}=\varphi$ if $\mathrm{im}\varphi\subseteq\mathcal{F}_{U}$ and $\varphi|_{U}=0$ otherwise.

The proof is completed by using the similar argument in the proof of  [\cite{Wan}, Theorem 4.5] to drop the restriction to subsheaves $\mathcal{F}_{U}$ induced by subspaces $U\subseteq V$ in the last statement of the criterion but instead use any subsheave
$\mathcal{F}^\prime$ with the induced subpace $V^\prime:=V\cap H^0(F_{\mathcal{E}}(\mathcal{F}^\prime)(m))$.
\end{proof}

\begin{theorem}\label{git=pair1}
Assume  $\mathrm{deg}\,\delta<\mathrm{deg}\,P$ and the equality \eqref{ratio1} holds. For $l$ sufficiently large,  a point $([a],[q])\in\mathcal{Z}^\prime$ is GIT-(semi)stable with respect to $\mathcal{O}_{\mathcal{Z}^\prime}(n_{1},n_{2})$ if and only if  the corresponding pair $(\mathcal{F},\varphi)$ is $\delta$-(semi)stable and the map  $H^0(\widetilde{q}):V\to H^0(F_{\mathcal{E}}(\mathcal{F})(m))$ induced by $q$ is an isomorphism.
\end{theorem}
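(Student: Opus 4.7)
The plan is to use Proposition~\ref{Criterion1} as a bridge and to match its polynomial inequality against the numerical characterization of $\delta$-(semi)stability coming from Lemma~\ref{bound2}. Throughout, I will pick $l$ large enough that Proposition~\ref{Criterion1} applies and simultaneously large enough that all the boundedness/regularity statements below hold; this is possible because the relevant families are bounded (by Proposition~\ref{bound1} and Grothendieck's Lemma~\ref{Gro-lem}), so Kleiman's criterion provides a uniform $l$.

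First I would prove that $H^0(\widetilde{q}):V\to H^0(F_{\mathcal{E}}(\mathcal{F})(m))$ is an isomorphism whenever $([a],[q])$ is GIT-semistable. Set $V':=\ker H^0(\widetilde{q})$; the induced subsheaf is $\mathcal{F}':=q(V'\otimes\mathcal{E}\otimes\pi^*\mathcal{O}_X(-m))=0$, with $\epsilon(\varphi')=0$. Feeding this into the inequality of Proposition~\ref{Criterion1} forces $\dim V'\cdot(P+\delta)\le 0$, so $V'=0$. Since $\dim V=P(m)\le h^0(F_{\mathcal{E}}(\mathcal{F})(m))$ (because $\mathcal{F}$ is in the closure of $m$-regular sheaves of type $P$, so $h^0(F_{\mathcal{E}}(\mathcal{F})(m))\ge P(m)$ by upper semicontinuity and Lemma~\ref{deg-pure} applied to deform to a pure sheaf where equality holds), injectivity upgrades to an isomorphism. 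In particular $\mathcal{F}$ is automatically pure: if it were not, Lemma~\ref{deg-pure} supplies a quotient $\psi:\mathcal{F}\to\mathcal{H}$ with $\ker\psi=T_{d-1}(\mathcal{F})\ne 0$ and $P_{\mathcal{E}}(\mathcal{H})=P$, and then $V\hookrightarrow H^0(F_{\mathcal{E}}(\mathcal{F})(m))\to H^0(F_{\mathcal{E}}(\mathcal{H})(m))$ would have to remain injective but its image would already have dimension $P(m)=\dim V$, forcing $H^0(F_{\mathcal{E}}(T_{d-1}(\mathcal{F}))(m))=0$ and hence $T_{d-1}(\mathcal{F})=0$ for large $l$, a contradiction.

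Next, granted the isomorphism $V\cong H^0(F_{\mathcal{E}}(\mathcal{F})(m))$ and purity of $\mathcal{F}$, I would translate GIT-(semi)stability into $\delta$-(semi)stability. For any proper nonzero subsheaf $\mathcal{F}'\subseteq\mathcal{F}$, set $V':=V\cap H^0(F_{\mathcal{E}}(\mathcal{F}')(m))$. Replacing $\mathcal{F}'$ by its saturation does not change $V'$ and only enlarges $P_{\mathcal{E}}(\mathcal{F}')$, so we may assume $\mathcal{F}'$ is saturated; by Lemma~\ref{Gro-lem} and Proposition~\ref{bound1} the family of such subsheaves is bounded, so for $l$ large we have $\dim V'=h^0(F_{\mathcal{E}}(\mathcal{F}')(m))$ and $P_{\mathcal{E}}(\mathcal{F}')(l)=h^0(F_{\mathcal{E}}(\mathcal{F}')(l))$. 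Moreover one checks $\epsilon(\varphi')$ equals the corresponding $\epsilon$ on the $V'$ side (using that $\mathrm{im}\,\varphi\subseteq\mathcal{F}'$ iff $a$ factors through $V'$, which is an elementary diagram chase via $\widetilde{\mathrm{ev}}$ and $\rho$). Comparing the polynomial inequality of Proposition~\ref{Criterion1} in degree $\deg P$ of the variable $l$ and clearing the common factor $P(m)+\delta(m)>0$ now yields exactly $p_{(\mathcal{F}',\varphi')}\,(\le)\,p_{(\mathcal{F},\varphi)}$, i.e.\ $\delta$-(semi)stability. For the converse, start with a $\delta$-(semi)stable pair together with the isomorphism on global sections. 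Given any subspace $V'\subseteq V$, let $\mathcal{F}':=q(V'\otimes\mathcal{E}\otimes\pi^*\mathcal{O}_X(-m))$; then $V'\subseteq V\cap H^0(F_{\mathcal{E}}(\mathcal{F}')(m))$, so $\dim V'\le h^0(F_{\mathcal{E}}(\mathcal{F}')(m))$. The criterion of Lemma~\ref{bound2}(ii)/(iii) bounds $h^0(F_{\mathcal{E}}(\mathcal{F}')(m))+\epsilon(\varphi')\delta(m)$ in terms of $r(F_{\mathcal{E}}(\mathcal{F}'))$ and hence in terms of $P_{\mathcal{E}}(\mathcal{F}')(l)$ for large $l$; reassembling yields precisely the polynomial inequality of Proposition~\ref{Criterion1}, giving GIT-(semi)stability.

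The main obstacle is the third step above: one has to pass cleanly between the inequality in the variable $l$ provided by the GIT Hilbert--Mumford criterion (which involves $P$, $\delta$, $\dim V'$, and $P_{\mathcal{E}}(\mathcal{F}')(l)$) and the numerical criterion of Lemma~\ref{bound2} (which is an inequality of numbers at $l=m$). The key technical point is that the ratio $\frac{n_1}{n_2}$ has been chosen in \eqref{ratio1} precisely so that the leading terms in $l$ of the two inequalities agree up to the universal positive factor $P(m)+\delta(m)$, so one inequality of polynomials in $l$ is equivalent to a single numerical inequality. Care is needed when $\mathcal{F}'$ is not $l$-regular (so that only $\dim V'\le h^0$, not equality, holds) and on the closed locus of $\mathcal{Z}'$ where $\mathcal{F}$ is not a priori pure; in both places Lemma~\ref{deg-pure} together with a saturation argument lets one reduce to the pure/$l$-regular setting without loss.
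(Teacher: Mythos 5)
Your overall skeleton (Proposition~\ref{Criterion1} as the bridge, Lemma~\ref{bound2} as the numerical form of $\delta$-(semi)stability, Lemma~\ref{deg-pure} for the non-pure locus) is the same as the paper's, which defers the second half to Wandel's Theorems 4.5 and 4.7. But two of your concrete steps fail as written. First, for $V'=\ker H^0(\widetilde q)$ you feed the pair $(\mathcal{F}'=0,V')$ into Proposition~\ref{Criterion1}, which is stated only for \emph{nonzero} proper subsheaves; moreover you assert $q(V'\otimes\mathcal{E}\otimes\pi^*\mathcal{O}_X(-m))=0$ without argument, and this is precisely the nontrivial point: the paper proves it via the cartesian diagram and an adjunction argument (following Nironi's Lemma 6.16), and then applies the subspace-level Hilbert--Mumford inequality \eqref{GIT1} together with $P(l)>n_1/n_2$, where the zero subsheaf is allowed, to force $V'=0$. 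Second, ``injectivity upgrades to an isomorphism'' does not follow from $h^0(F_{\mathcal{E}}(\mathcal{F})(m))\geq P(m)$: semicontinuity gives an inequality you already have from injectivity, whereas surjectivity needs $h^0(F_{\mathcal{E}}(\mathcal{F})(m))\leq P(m)=\dim V$, which is only available after you know the pair is $\delta$-semistable (so that $\mathcal{F}$ is $m$-regular by Proposition~\ref{bound1} and the choice of $m$). The correct order is: injectivity, then purity and $\delta$-(semi)stability from the GIT inequality, and only then the isomorphism.

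Your purity argument also contains a false implication: from ``$H^0(F_{\mathcal{E}}(T_{d-1}(\mathcal{F}))(m))=0$'' you conclude ``$T_{d-1}(\mathcal{F})=0$ for large $l$'', but the twist is by the fixed integer $m$ (the variable $l$ plays no role here), and vanishing of sections at a single twist does not kill a sheaf. The working argument (as in Wandel's Theorem 4.5, mirrored by the paper) goes through the pure sheaf $\mathcal{H}$ of Lemma~\ref{deg-pure}: apply GIT-semistability to the kernel of $V\to H^0(F_{\mathcal{E}}(\mathcal{H})(m))$, whose induced subsheaf lies in $T_{d-1}(\mathcal{F})$ and hence has top multiplicity zero, so the degree-$d$ term of the polynomial inequality forces that kernel to vanish; then boundedness of the family of such $\mathcal{H}$ gives $h^0(F_{\mathcal{E}}(\mathcal{H})(m))=P(m)$, global generation identifies $\mathcal{F}/T_{d-1}(\mathcal{F})$ with $\mathcal{H}$, and $P_{\mathcal{E}}(T_{d-1}(\mathcal{F}))=0$ kills the torsion by Lemma~\ref{puretopure}. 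Finally, note that comparing only the degree-$\deg P$ coefficients of the inequality in Proposition~\ref{Criterion1} yields the numerical criterion of Lemma~\ref{bound2}(ii), not directly $p_{(\mathcal{F}',\varphi')}\leq p_{(\mathcal{F},\varphi)}$; the passage between the two, and the uniformity of $l$ over the unbounded family of subsheaves in the converse direction, is exactly the case-splitting via Grothendieck's Lemma~\ref{Gro-lem} carried out in the proof of Lemma~\ref{bound2}, which your sketch leaves implicit.
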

\begin{proof}
Let $([a],[q])\in\mathcal{Z}^\prime$ be a  GIT-semistable point.
	Let $U\subseteq V$ be the kernel of $H^0(\widetilde{q}):V\to H^0(F_{\mathcal{E}}(\mathcal{F})(m))$, then it is shown in the proof of [\cite{Nir1}, Lemma 6.16] that
	\ben
	\mathcal{F}_{U}:=q(U\otimes\mathcal{E}\otimes\pi^*\mathcal{O}_{X}(-m))=0.
	\een
	In fact, if $\mathcal{F}_{U}\neq0$, we have $F_{\mathcal{E}}(\mathcal{F}_{U})\neq0$. Then $U\subseteq V\cap H^0(F_{\mathcal{E}}(\mathcal{F}_{U})(m))$ and the cartesian diagram 
	\ben
	\xymatrix{
		V\cap H^0(F_{\mathcal{E}}(\mathcal{F}_{U})(m))\ar[r]\ar[d] &H^0(F_{\mathcal{E}}(\mathcal{F}_{U})(m))\ar[d]	\\
		V \ar[r]^-{H^0(\widetilde{q})} &   H^0(F_{\mathcal{E}}(\mathcal{F})(m))
	}
	\een
	shows that  the map $U\to H^0(F_{\mathcal{E}}(\mathcal{F}_{U})(m))$ is zero. Since $\mathcal{F}_{U}\neq0$, we have a nonzero morphism $U\otimes\mathcal{O}_{\mathcal{X}}\to \mathcal{F}_{U}\otimes\mathcal{E}^\vee\otimes\pi^*\mathcal{O}_{X}(m)$ and hence $U\otimes O_{X}\to F_{\mathcal{E}}(\mathcal{F}_{U})(m)$ is nonzero by applying the exact functor $\pi_{*}$. This produces a contradiction.
	Thus, for $l$ sufficiently large,  $q^\prime(U\otimes W)=0$. By the inequality \eqref{GIT1} and $P(l)>\frac{n_{1}}{n_{2}}$, one has $U=0$. Then
	$H^0(\widetilde{q}):V\to H^0(F_{\mathcal{E}}(\mathcal{F})(m))$ is injective.
	
The proof is completed by following the similar argument in the proofs of [\cite{Wan}, Theorem 4.5 and Theorem 4.7] together with the corresponding Lemma \ref{bound2},  Lemma \ref{deg-pure} and Proposition \ref{Criterion1}.
\end{proof}
Next is the case when $\mathrm{deg}\,\delta\geq\mathrm{deg}\,P$ where as in [\cite{Lin18}, Section 4] we  set 
\be\label{ratio2}
\frac{n_{1}}{n_{2}}=\frac{P(l)}{2r}.
\ee
Applying the proof of Proposition 4.7 with the equality \eqref{ratio2}, we have
\begin{proposition}\label{Criterion2}
Assume  the equality \eqref{ratio2} holds.
Let $([a],[q])\in\mathcal{Z}^\prime$ be a point with the corresponding pair $(\mathcal{F},\varphi)$. For $l$ sufficiently large, 
$([a],[q])$ is GIT-(semi)stable with respect to $\mathcal{O}_{\mathcal{Z}^\prime}(n_{1},n_{2})$ if and only if for any nonzero proper subsheaf $\mathcal{F}^\prime$ and the induced subspace $V^\prime:=V\cap H^0(F_{\mathcal{E}}(\mathcal{F}^\prime)(m))$, we have an inequality of polynomials in $l$:
\ben
P\cdot((2r-1)\dim V^\prime+P(m)\epsilon(\varphi^\prime))(\leq)P_{\mathcal{E}}(\mathcal{F}^\prime)\cdot(2r\cdot P(m))
\een
where $(\mathcal{F}^\prime,\varphi^\prime)$ is the induced subpair of $(\mathcal{F},\varphi)$.
\end{proposition}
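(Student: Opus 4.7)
The plan is to follow the three-step strategy of Proposition \ref{Criterion1} (equivalently, the arguments of [Wan, Propositions 4.1, 4.3, 4.4 and Theorem 4.5]) verbatim, only replacing the ratio $\frac{n_1}{n_2}$ by the new value from \eqref{ratio2}. First I would apply the Hilbert-Mumford numerical criterion to the $\mathrm{SL}(V)$-action on $\mathcal{Z}^\prime$ linearized by $\mathcal{O}_{\mathcal{Z}^\prime}(n_1,n_2)$: one-parameter subgroups of $\mathrm{SL}(V)$ are encoded by weighted filtrations of $V$, and by the same weight computation as in the proof of Proposition \ref{Criterion1}, GIT-(semi)stability of $([a],[q])$ is equivalent to the inequality
\ben
\dim U\cdot(n_{2}P(l)-n_{1})(\leq)P(m)\cdot(n_{2}\dim(q^\prime(U\otimes W))-\epsilon(U)n_{1})
\een
holding for every nontrivial proper subspace $U\subseteq V$, where $W=H^0(\pi_{*}\mathcal{E}nd_{\mathcal{O}_{\mathcal{X}}}(\mathcal{E})(l-m))$, $q^\prime=H^0(F_{\mathcal{E}}(q)(l))$, and $\epsilon(U)=1$ iff $\mathrm{im}\,a\subseteq U$.

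Next I would invoke the boundedness of the induced subsheaves $\mathcal{F}_{U}:=q(U\otimes\mathcal{E}\otimes\pi^{*}\mathcal{O}_{X}(-m))$ (observed in the proof of [\cite{Nir1}, Lemma 6.15] and in the argument of Proposition \ref{Criterion1}) to conclude, via Kleiman criterion for stacks, that for $l$ sufficiently large every $F_{\mathcal{E}}(\mathcal{F}_{U})$ is $l$-regular and hence $\dim(q^\prime(U\otimes W))=h^{0}(F_{\mathcal{E}}(\mathcal{F}_{U})(l))=P_{\mathcal{E}}(\mathcal{F}_{U})(l)$. Substituting the ratio $\frac{n_{1}}{n_{2}}=\frac{P(l)}{2r}$ from \eqref{ratio2} into the Hilbert-Mumford inequality, multiplying through by $2r/n_{2}$ and rearranging gives, as polynomials in $l$,
\ben
P(l)\cdot\bigl((2r-1)\dim U+\epsilon(U)P(m)\bigr)(\leq)2r\cdot P(m)\cdot P_{\mathcal{E}}(\mathcal{F}_{U})(l),
\een
which is exactly the desired inequality specialized to subsheaves of the form $\mathcal{F}_{U}$ (with $\epsilon(U)=\epsilon(\varphi|_{U})$).

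Finally, I would remove the restriction to induced subsheaves, following the transition argument in [Wan, Theorem 4.5]: given an arbitrary nonzero proper subsheaf $\mathcal{F}^\prime\subseteq\mathcal{F}$, set $V^\prime=V\cap H^{0}(F_{\mathcal{E}}(\mathcal{F}^\prime)(m))$ and let $\mathcal{F}_{V^\prime}$ be the subsheaf induced by $V^\prime$. By the discussion preceding Lemma \ref{deg-pure} one has $\mathcal{F}_{V^\prime}\subseteq\mathcal{F}^\prime$, so $P_{\mathcal{E}}(\mathcal{F}_{V^\prime})\leq P_{\mathcal{E}}(\mathcal{F}^\prime)$ (at least in leading behavior after further saturation arguments), while $\dim V^\prime=\dim V^\prime$ and $\epsilon(\varphi|_{V^\prime})=\epsilon(\varphi^\prime)$ by definition of the induced subpair. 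The main obstacle is precisely this last passage: one must check that the inequality of polynomials is \emph{strengthened}, not weakened, when moving from $\mathcal{F}_{V^\prime}$ to $\mathcal{F}^\prime$, so that validity of the stated inequality for all $\mathcal{F}^\prime$ is equivalent to validity for all induced $\mathcal{F}_{V^\prime}$. This is handled exactly as in [Wan, Theorem 4.5], using that replacing $\mathcal{F}_{V^\prime}$ by a larger subsheaf with the same associated linear subspace can only increase $P_{\mathcal{E}}$ while leaving $\dim V^\prime$ and $\epsilon$ unchanged, thereby making the inequality tighter. Combining the three steps yields the proposition.
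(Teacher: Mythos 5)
Your proposal is correct and is essentially the paper's own argument: the paper proves Proposition \ref{Criterion2} simply by rerunning the proof of Proposition \ref{Criterion1} (the Hilbert--Mumford inequality \eqref{GIT1}, boundedness of the induced subsheaves $\mathcal{F}_U$ giving $\dim(q^\prime(U\otimes W))=P_{\mathcal{E}}(\mathcal{F}_U)(l)$ for $l\gg0$, then passage from induced subsheaves to arbitrary subsheaves via $V^\prime=V\cap H^0(F_{\mathcal{E}}(\mathcal{F}^\prime)(m))$ as in [Wan, Theorem 4.5]) with the ratio \eqref{ratio2} substituted, exactly as you do, and your algebraic rearrangement to $P\cdot((2r-1)\dim V^\prime+P(m)\epsilon(\varphi^\prime))(\leq)P_{\mathcal{E}}(\mathcal{F}^\prime)\cdot 2rP(m)$ is right. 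One wording quibble: enlarging $\mathcal{F}_{V^\prime}$ to $\mathcal{F}^\prime$ (with $V^\prime$ and $\epsilon$ unchanged, since $\mathrm{im}\,\varphi\subseteq\mathcal{F}^\prime$ forces $\mathrm{im}\,\varphi\subseteq\mathcal{F}_{V^\prime}$) only increases the right-hand side and hence \emph{weakens} the inequality, which is precisely why validity for the induced subsheaves implies it for all subsheaves.
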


\begin{theorem}\label{git=pair2}
Assume $\mathrm{deg}\,\delta\geq\mathrm{deg}\,P$ and the equality \eqref{ratio2} holds. For $l$ sufficiently large,  a point $([a],[q])\in\mathcal{Z}^\prime$ is GIT-(semi)stable with respect to $\mathcal{O}_{\mathcal{Z}^\prime}(n_{1},n_{2})$ if and only if  the corresponding pair $(\mathcal{F},\varphi)$ is $\delta$-(semi)stable and the map  $H^0(\widetilde{q}):V\to H^0(F_{\mathcal{E}}(\mathcal{F})(m))$ induced by $q$ is an isomorphism.
\end{theorem}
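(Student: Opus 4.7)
The plan is to follow the architecture of the proof of Theorem \ref{git=pair1}, but substituting Proposition \ref{Criterion2} for Proposition \ref{Criterion1} and Lemma \ref{bound3} for Lemma \ref{bound2}, since the ratio $n_{1}/n_{2}=P(l)/(2r)$ prescribed by \eqref{ratio2} replaces the earlier one of \eqref{ratio1}. As before, I would choose $l$ large enough that every $\mathcal{F}$ arising from a point of $\mathcal{Z}^\prime$ is $l$-regular and $\pi_{*}\mathcal{E}nd_{\mathcal{O}_{\mathcal{X}}}(\mathcal{E})(l-m)$ is globally generated, so that the cohomological identities and boundedness results of Section 3 apply simultaneously to all relevant subsheaves.

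First I would establish injectivity of $H^{0}(\widetilde{q})\colon V\to H^{0}(F_{\mathcal{E}}(\mathcal{F})(m))$ by the same argument as in the proof of Theorem \ref{git=pair1}. Letting $U=\ker H^{0}(\widetilde{q})$, the cartesian diagram relating linear subspaces of $V$ to subsheaves of $\mathcal{F}$ forces $\mathcal{F}_{U}:=q(U\otimes\mathcal{E}\otimes\pi^{*}\mathcal{O}_{X}(-m))=0$; otherwise $U$ would inject into $H^{0}(F_{\mathcal{E}}(\mathcal{F}_{U})(m))$, contradicting its vanishing image under $H^{0}(\widetilde{q})$. Applying Proposition \ref{Criterion2} to $\mathcal{F}^\prime=\mathcal{F}_{U}=0$ and its induced subspace (which contains $U$), the resulting polynomial inequality in $l$ degenerates, and comparing leading coefficients against the normalization \eqref{ratio2} forces $\dim U=0$. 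Since $m$ was chosen so that $h^{0}(F_{\mathcal{E}}(\mathcal{F})(m))=P(m)=\dim V$ for any $\mathcal{F}$ underlying a $\delta$-semistable pair, injectivity upgrades to an isomorphism.

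For the equivalence of GIT-(semi)stability with $\delta$-(semi)stability, I would pass through the correspondence $\mathcal{F}^\prime\leftrightarrow V^\prime:=V\cap H^{0}(F_{\mathcal{E}}(\mathcal{F}^\prime)(m))$ between subsheaves and their induced subspaces. In the $(\Rightarrow)$ direction, the inequality of Proposition \ref{Criterion2} applied to $V^\prime$, combined with $\dim V^\prime\leq h^{0}(F_{\mathcal{E}}(\mathcal{F}^\prime)(m))$ and the standing choice of $m$ ensuring the cohomological hypotheses of Lemma \ref{bound3} are available, yields condition (ii) of that lemma and hence $\delta$-stability (equivalent to $\delta$-semistability here by Lemma \ref{no-str-semi}, since $\deg\delta\geq\deg P$). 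Conversely, starting from a $\delta$-(semi)stable pair with $H^{0}(\widetilde{q})$ an isomorphism, I would apply Lemma \ref{bound3} to obtain the cohomological inequality, then use Kleiman's criterion for stacks on the family of subsheaves induced by linear subspaces of $V$ (bounded by Grothendieck's Lemma \ref{Gro-lem} exactly as in the proof of Proposition \ref{Criterion1}) to replace $h^{0}$ by $P_{\mathcal{E}}$ for large $l$, thereby recovering the polynomial inequality of Proposition \ref{Criterion2}.

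The main obstacle is dealing with subsheaves $\mathcal{F}^\prime$ whose cohomology does not stabilize at the fixed level $m$ on the nose; for these one passes to the saturation $\overline{\mathcal{F}^\prime}$ (which by Remark \ref{saturation} preserves the multiplicity $r(F_{\mathcal{E}}(\mathcal{F}^\prime))$ and only enlarges $h^{0}$), and invokes Lemma \ref{deg-pure} when the point of $\mathcal{Z}^\prime$ under consideration lies on the boundary where the underlying sheaf fails to be pure, in order to produce a pure sheaf with modified Hilbert polynomial equal to $P_{\mathcal{E}}(\mathcal{F}^\prime)$ against which the comparison can be made. Once this reduction is in place, the passage between the polynomial inequality of Proposition \ref{Criterion2} and the numerical inequality $h^{0}(F_{\mathcal{E}}(\mathcal{F}^\prime)(m))/(2r(F_{\mathcal{E}}(\mathcal{F}^\prime))-\epsilon(\varphi^\prime))\,(\leq)\,h^{0}(F_{\mathcal{E}}(\mathcal{F})(m))/(2r-\epsilon(\varphi))$ of Lemma \ref{bound3}(ii) is a purely algebraic manipulation driven by the identity $n_{1}/n_{2}=P(l)/(2r)$, entirely parallel to the corresponding step in Theorem \ref{git=pair1}.
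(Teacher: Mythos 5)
Your proposal follows essentially the same route as the paper: the paper proves Theorem \ref{git=pair2} by repeating the argument of Theorem \ref{git=pair1} (injectivity of $H^0(\widetilde{q})$ via the kernel $U$ and $\mathcal{F}_U=0$, then the translation between GIT-stability and $\delta$-stability following Wandel/Lin, using Lemma \ref{deg-pure}, saturation and the boundedness results) with Lemma \ref{bound2} and Proposition \ref{Criterion1} replaced by Lemma \ref{bound3} and Proposition \ref{Criterion2}, which is exactly your plan. One small point: to conclude $U=0$ you should invoke the underlying Hilbert--Mumford inequality \eqref{GIT1} directly (with $q^\prime(U\otimes W)=0$ and $P(l)>n_1/n_2$, which holds under \eqref{ratio2}) rather than Proposition \ref{Criterion2} itself, since that proposition is stated only for nonzero proper subsheaves; this is precisely how the paper handles the kernel step.
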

\begin{proof}
The proof follows from the similar argument in Theorem \ref{git=pair1}, but together with Lemma \ref{bound3} and Proposition \ref{Criterion2}.
\end{proof}
\begin{remark}
By Lemma \ref{no-str-semi}, the statement in Theorem \ref{git=pair2}  actually prove that a GIT-stable point corresponds to a $\delta$-stable pair.
\end{remark}
\subsection{Moduli spaces of $\delta$-(semi)stable pairs}
In this subsection, we  introduce a moduli functor of $\delta$-(semi)stable pairs, 
and then prove that there is a moduli space for this functor. These results  generalize the ones in [\cite{Wan}, Theorem 3.8] and [\cite{Lin18}, Theorem 1.1] to the case of projective Deligne-Mumford stacks. We will adopt the techniques and ideas  in [\cite{BS}, Section 4], most of which can be traced back to the theory for the case of projective schemes in [\cite{HL3}, Chapter 4] (see also [\cite{HL1,HL2}]). Different from the definition of a flat family of pairs  [\cite{BS}, Definition 3.16], we take  Definition \ref{fam-pair} for our families. With this notion of families, as in [\cite{HL3}, Section 4.1], we give
\begin{definition}\label{moduli-functor}
	Define a functor
	\ben
	\mathcal{M}^{ss}_{\mathcal{X}/k}(\mathcal{F}_{0},P,\delta): (\mathrm{Sch}/k)^\circ\to(\mathrm{Sets})
	\een
	as follows. If $S$ is a $k$-scheme of finite type, let $\mathcal{M}^{ss}_{\mathcal{X}/k}(\mathcal{F}_{0},P,\delta)(S)$ be the set of isomorphism classes of flat families of $\delta$-semistable pairs  $(\mathcal{F},\varphi)$ with  
	modified Hilbert polynomial $P$ parametrized by  $S$, that is, such a flat family $(\mathcal{F},\varphi)$ satisfies that for each point $s\in S$, the pair $(\mathcal{F}_{s},\varphi|_{(\pi_{\mathcal{X}}^*\mathcal{F}_{0})_{s}})$ is a $\delta$-semistable pair with  modified Hilbert polynomial $P_{\check{\pi}_{s}^*\mathcal{E}}(\mathcal{F}_{s})=P$ where $\pi_{\mathcal{X}}:\mathcal{X}\times S\to \mathcal{X}$ and  $\check{\pi}_{s}:\mathcal{X}\times\mathrm{Spec}(k(s))\to\mathcal{X}$ are the projections. And for every morphism of $k$-schemes $f: S^\prime\to S$, we obtain a map
	\ben
	\mathcal{M}^{ss}_{\mathcal{X}/k}(\mathcal{F}_{0},P,\delta)(f):\mathcal{M}^{ss}_{\mathcal{X}/k}(\mathcal{F}_{0},P,\delta)(S)\to\mathcal{M}^{ss}_{\mathcal{X}/k}(\mathcal{F}_{0},P,\delta)(S^\prime)
	\een
	via pulling back $\mathcal{F}$ and $\varphi$ by $\mathrm{id}_{\mathcal{X}}\times f$. If we take families of $\delta$-stable pairs, we denote the corresponding subfunctor by $\mathcal{M}^s_{\mathcal{X}/k}(\mathcal{F}_{0},P,\delta)$. 
\end{definition}

\begin{remark}
By [\cite{Nir1}, Proposition 1.5] and Theorem \ref{base-chag-proj}, we have  $F_{\pi_{\mathcal{X}}^*\mathcal{E}}(\mathcal{F})_{s}=F_{\check{\pi}_{s}^*\mathcal{E}}(\mathcal{F}_{s})$. Then one has $P(F_{\pi_{\mathcal{X}}^*\mathcal{E}}(\mathcal{F})_{s}(m))=P_{\check{\pi}_{s}^*\mathcal{E}}(\mathcal{F}_{s})(m)=P(m)$ for each point $s\in S$ in the above definition.
\end{remark}

Recall that a scheme $M$ is called a (coarse) moduli space for the functor $\mathcal{M}$ if it correpresents the functor $\mathcal{M}$ and called a fine moduli space for $\mathcal{M}$ if it represents $\mathcal{M}$. One can refer to [\cite{HL3}, Section 2.2, 4.1 and 4.6] for more details. In order to apply GIT to  prove the existence of moduli spaces for functors  $\mathcal{M}^{(s)s}_{\mathcal{X}/k}(\mathcal{F}_{0},P,\delta)$, we will consider the GIT-(semi)stable points. 

Define $\mathcal{R}^{(s)s}\subseteq\mathcal{Z}^\prime\subseteq\mathcal{Z}$ to be 
the subset of GIT-(semi)stable points $([a],[q])$ corresponding to  $\delta$-(semi)stable pairs by Theorem \ref{git=pair1} and Theorem \ref{git=pair2}.
As in [\cite{HL3}, Section 4.2],  the sets $\mathcal{R}^{s}$ and $\mathcal{R}^{ss}$ are  open  $\mathrm{SL}(V)$-invariant subset of $\mathcal{Z}^\prime$. In particular, $\mathcal{R}^s$ is an open subset of $\mathcal{R}^{ss}$.  In Remark \ref{univ-fam}, we have obtained  a flat family $(\check{\mathcal{F}}_{\mathcal{X}\times\mathcal{Z}},\check{\varphi}_{\mathcal{X}\times\mathcal{Z}})$ of pairs parametrized by $\mathcal{Z}$, where 
\ben
\check{\mathcal{F}}_{\mathcal{X}\times\mathcal{Z}}:=(\mathrm{id}_{\mathcal{X}}\times\widetilde{i})^*p_{\mathcal{X}\times\mathbb{{P}}\times\widetilde{\mathcal{Q}},\mathcal{X}\times\widetilde{\mathcal{Q}}}^*\widetilde{\mathcal{F}}\otimes(\mathrm{id}_{\mathcal{X}}\times\widetilde{i})^* p_{\mathcal{X}\times\mathbb{P}\times\widetilde{\mathcal{Q}},\mathbb{P}}^*\mathcal{O}_{\mathbb{P}}(1)
\een
and 
$
\check{\varphi}_{\mathcal{X}\times\mathcal{Z}}:p_{\mathcal{X}\times\mathcal{Z},\mathcal{X}}^*\mathcal{F}_{0}\to \check{\mathcal{F}}_{\mathcal{X}\times\mathcal{Z}}
$.  As in [\cite{BS}, Proposition 4.2], the scheme $\mathcal{Z}^\prime$ has the universal property due to the ones of $\mathbb{P}$ and $\widetilde{\mathcal{Q}}$. By pulling back the flat family $(\check{\mathcal{F}}_{\mathcal{X}\times\mathcal{Z}},\check{\varphi}_{\mathcal{X}\times\mathcal{Z}})$ to $\mathcal{R}^{(s)s}$, we get the universal family  $(\check{\mathcal{F}}_{\mathcal{X}\times\mathcal{R}^{(s)s}},\check{\varphi}_{\mathcal{X}\times\mathcal{R}^{(s)s}})$ of $\delta$-semistable pairs with  
modified Hilbert polynomial $P$ parameterized by $\mathcal{R}^{(s)s}$ where  the morphism $\check{\varphi}_{\mathcal{X}\times\mathcal{R}^{(s)s}}$ is 
\ben
\check{\varphi}_{\mathcal{X}\times\mathcal{R}^{(s)s}}: p_{\mathcal{X}\times\mathcal{R}^{(s)s},\mathcal{X}}^*\mathcal{F}_{0}\to \check{\mathcal{F}}_{\mathcal{X}\times\mathcal{R}^{(s)s}}:=(\mathrm{id}_{\mathcal{X}}\times\widetilde{i}_{(s)s})^*p_{\mathcal{X}\times\mathbb{{P}}\times\widetilde{\mathcal{Q}},\mathcal{X}\times\widetilde{\mathcal{Q}}}^*\widetilde{\mathcal{F}}\otimes(\mathrm{id}_{\mathcal{X}}\times\widetilde{i}_{(s)s})^* p_{\mathcal{X}\times\mathbb{P}\times\widetilde{\mathcal{Q}},\mathbb{P}}^*\mathcal{O}_{\mathbb{P}}(1)
\een
and $\widetilde{i}_{(s)s}: \mathcal{R}^{(s)s}\to\mathbb{P}\times\widetilde{\mathcal{Q}}$ are the inclusions.

Since $\mathrm{SL}(V)$-linearized ample line bundles $\mathcal{O}_{\mathcal{Z}^\prime}(n_{1},n_{2})$ depend on choices of two positive integers $n_{1}$ and  $n_{2}$, we make the convention that they are chosen to satisfy the equality \eqref{ratio1}  when $\mathrm{deg}\,\delta<\mathrm{deg}\,P$ and the equality \eqref{ratio2} if $\mathrm{deg}\,\delta\geq\mathrm{deg}\,P$. By [\cite{HL3}, Theorem 4.2.10], we have
\begin{theorem}\label{GIT-quo1}
There is a projective scheme $M^{ss}:=M^{ss}_{\mathcal{X}/k}(\mathcal{F}_{0},P,\delta)$ and a morphism $\Theta: \mathcal{R}^{ss}\to M^{ss}$ such that $\Theta$ is a universal good quotient for the $\mathrm{SL}(V)$-action on $\mathcal{R}^{ss}$. And there is an open subscheme $M^s:=M^s_{\mathcal{X}/k}(\mathcal{F}_{0},P,\delta)\subseteq M^{ss}$ such that $\mathcal{R}^s=\Theta^{-1}(M^s)$ and $\Theta:\mathcal{R}^s\to M^s$ is a universal geometric quotient. Moreover, there is  a positive integer $\hat{l}$ and a very ample line bundle $\mathbb{M}$ on $M^{ss}$ such that $\mathcal{O}_{\mathcal{Z}^\prime}(n_{1},n_{2})^{\otimes\hat{l}}|_{\mathcal{R}^{ss}}\cong\Theta^{*}(\mathbb{M})$.

\end{theorem}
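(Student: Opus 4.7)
The plan is to invoke Mumford's geometric invariant theory (GIT) applied to the projective parameter scheme $\mathcal{Z}^\prime$, equipped with the reductive group $\mathrm{SL}(V)$-action described in Subsection 4.1 and the $\mathrm{SL}(V)$-linearized very ample line bundle $\mathcal{O}_{\mathcal{Z}^\prime}(n_{1},n_{2})$, where the ratio $n_{1}/n_{2}$ is fixed by \eqref{ratio1} if $\deg\delta<\deg P$ and by \eqref{ratio2} if $\deg\delta\geq\deg P$. Since all the necessary ingredients have been built in the previous subsections, the theorem will follow by direct application of the classical GIT machinery (e.g.\ [HL3, Theorem 4.2.10]).

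First I would recall that $\mathcal{Z}^\prime$ is projective (as a closed subscheme of the product $\mathbb{P}\times\widetilde{\mathcal{Q}}$ of projective schemes), that the $\mathrm{SL}(V)$-action preserves $\mathcal{Z}^\prime$, and that $\mathcal{O}_{\mathcal{Z}^\prime}(n_{1},n_{2})$ is ample and $\mathrm{SL}(V)$-linearized. Mumford's theorem then produces a projective GIT quotient $\mathcal{Z}^{\prime ss}/\!/\mathrm{SL}(V)$, with the semistable locus $\mathcal{Z}^{\prime ss}\subseteq\mathcal{Z}^\prime$ mapping to it as a universal good quotient, and the stable locus $\mathcal{Z}^{\prime s}$ mapping to an open subscheme $\mathcal{Z}^{\prime s}/\mathrm{SL}(V)$ as a universal geometric quotient. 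Moreover, a sufficiently divisible tensor power $\mathcal{O}_{\mathcal{Z}^\prime}(n_{1},n_{2})^{\otimes\hat{l}}$ has its $\mathrm{SL}(V)$-invariant sections descending to a very ample line bundle $\mathbb{M}$ on the quotient, satisfying $\mathcal{O}_{\mathcal{Z}^\prime}(n_{1},n_{2})^{\otimes\hat{l}}|_{\mathcal{Z}^{\prime ss}}\cong\Theta^{*}\mathbb{M}$.

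Next, I would identify the GIT loci with the loci defined intrinsically by $\delta$-(semi)stability. This is precisely the content of Theorem \ref{git=pair1} and Theorem \ref{git=pair2}, which, for $l$ sufficiently large, guarantee that $\mathcal{Z}^{\prime ss}=\mathcal{R}^{ss}$ and $\mathcal{Z}^{\prime s}=\mathcal{R}^{s}$ as the open $\mathrm{SL}(V)$-invariant loci of points $([a],[q])$ whose associated pair $(\mathcal{F},\varphi)$ is $\delta$-(semi)stable and for which $H^{0}(\widetilde{q}):V\to H^{0}(F_{\mathcal{E}}(\mathcal{F})(m))$ is an isomorphism. Setting $M^{ss}:=\mathcal{Z}^{\prime ss}/\!/\mathrm{SL}(V)$ and $M^{s}:=\mathcal{Z}^{\prime s}/\mathrm{SL}(V)$, and letting $\Theta:\mathcal{R}^{ss}\to M^{ss}$ be the quotient morphism, all stated properties of the theorem are immediate: the projectivity and good/geometric quotient statements come from Mumford, and the descent of the linearization yields $\mathbb{M}$.

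Essentially no new obstacle arises at this stage, as the real work was already done: the boundedness results of Section 3 which allow choosing the integers $m$ and $l$ uniformly, the construction of the $\mathrm{SL}(V)$-linearization on $\mathcal{O}_{\mathcal{Z}^\prime}(n_{1},n_{2})$ via [Nir1, Lemma 6.3], and above all the equivalence between the Hilbert--Mumford numerical criterion and $\delta$-(semi)stability. The only point requiring care is to verify that the values of $l$ needed in Theorems \ref{git=pair1} and \ref{git=pair2} can be chosen compatibly with the $l$ used in constructing the polarization $\mathcal{L}_{l}$ on $\widehat{\mathcal{Q}}$; this is routine since enlarging $l$ preserves all required bounds, and one simply takes the maximum of finitely many thresholds.
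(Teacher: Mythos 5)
Your proposal is correct and follows essentially the same route as the paper: the paper obtains Theorem \ref{GIT-quo1} by directly applying the classical GIT result [\cite{HL3}, Theorem 4.2.10] to the projective scheme $\mathcal{Z}^\prime$ with the $\mathrm{SL}(V)$-linearized very ample line bundle $\mathcal{O}_{\mathcal{Z}^\prime}(n_{1},n_{2})$ (with $n_{1}/n_{2}$ fixed by \eqref{ratio1} or \eqref{ratio2}), identifying $\mathcal{R}^{(s)s}$ with the GIT-(semi)stable loci via Theorems \ref{git=pair1} and \ref{git=pair2}. Your remark about choosing $l$ large enough to satisfy all finitely many thresholds simultaneously is consistent with the paper's conventions and introduces no discrepancy.
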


To obtain the similar results in [\cite{HL3}, Theorem 4.3.3], we need the following semicontinuous result for  the Hom group of pairs. The proof of this result is mainly based on the argument in that of [\cite{HL2}, Lemma 3.4]. 
We will adopt the slightly modified proof of [\cite{BS}, Proposition A.2] as follows.
\begin{lemma}\label{semi-conti}
Let $\mathcal{X}$ be a projective Deligne-Mumford stack over $k$ with a polarization $(\mathcal{E},\mathcal{O}_{X}(1))$.
Let $(\mathcal{F},\varphi)$ and $(\mathcal{G},\psi)$ be  two flat families of pairs over 
$\mathcal{X}_{T}:=\mathcal{X}\times T$ parametrized by a scheme $T$ of finite type over $k$. Then the function 
\ben
t\to\dim_{k(t)}\mathrm{Hom}_{\mathcal{X}\times \{t\}}((\mathcal{F}_{t},\varphi_{t}),(\mathcal{G}_{t},\psi_{t}))
\een
is an upper semicontinuous function on $T$.
\end{lemma}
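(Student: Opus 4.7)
The plan is to adapt the argument of \cite[Proposition A.2]{BS}, which in turn refines \cite[Lemma 3.4]{HL2}, to the stacky setting, using the polarization to reduce relative $\mathrm{Hom}$-constructions on $\mathcal{X}\times T$ to classical ones on the moduli scheme $X\times T$. The starting point is the Cartesian description of Remark \ref{Hom-set}: for each $t\in T$, the space $\mathrm{Hom}((\mathcal{F}_{t},\varphi_{t}),(\mathcal{G}_{t},\psi_{t}))$ agrees, up to at most a factor of $k(t)$ in the degenerate case, with the vector space $W_{t}$ defined as the kernel of
\[
\mathrm{Hom}(\mathcal{F}_{t},\mathcal{G}_{t})\oplus k(t)\longrightarrow \mathrm{Hom}((\pi_{\mathcal{X}}^{*}\mathcal{F}_{0})_{t},\mathcal{G}_{t}),\qquad (\phi,\lambda)\longmapsto \phi\circ\varphi_{t}-\lambda\psi_{t}.
\]
The plan is to globalize this kernel over $T$ and then invoke standard semicontinuity.

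Write $\pi_{T}\colon\mathcal{X}\times T\to T$ for the projection. By the flatness of $\mathcal{F}$ and $\mathcal{G}$ over $T$ together with the tameness of $\mathcal{X}$, the complexes $R\pi_{T*}R\mathcal{H}om(\mathcal{F},\mathcal{G})$ and $R\pi_{T*}R\mathcal{H}om(\pi_{\mathcal{X}}^{*}\mathcal{F}_{0},\mathcal{G})$ are perfect on $T$, and each can, locally in $T$, be modeled by a two-term complex of locally free $\mathcal{O}_{T}$-modules $K_{i}^{\bullet}$ ($i=1,2$) in a base-change compatible way. The construction reduces through the polarization $(\mathcal{E},\mathcal{O}_{X}(1))$ and the exact functor $F_{\mathcal{E}}$ to the classical construction on $X\times T$ (cf.\ \cite[Section 2.1]{HL3}), and $H^{0}(K_{i}^{\bullet}\otimes k(t))$ is identified with $\mathrm{Hom}(\mathcal{F}_{t},\mathcal{G}_{t})$ and $\mathrm{Hom}((\pi_{\mathcal{X}}^{*}\mathcal{F}_{0})_{t},\mathcal{G}_{t})$ respectively.

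Composition with $\varphi$ induces a morphism of complexes $\alpha^{\bullet}\colon K_{1}^{\bullet}\to K_{2}^{\bullet}$, while $\psi$ defines a morphism $\beta\colon\mathcal{O}_{T}\to K_{2}^{0}$ lifting the global section of $\pi_{T*}\mathcal{H}om(\pi_{\mathcal{X}}^{*}\mathcal{F}_{0},\mathcal{G})$ determined by $\psi$. Assembling these, the two-term complex $K_{1}^{\bullet}\oplus \mathcal{O}_{T}\xrightarrow{\alpha^{0}-\beta} K_{2}^{\bullet}$ (keeping the original differentials of $K_{1}^{\bullet}$ and $K_{2}^{\bullet}$ in the remaining positions) has fiberwise $H^{0}$ equal to $W_{t}$ by construction. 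Applying cohomology and base change for perfect complexes then yields upper semicontinuity of the function $t\mapsto \dim_{k(t)}W_{t}$ on $T$.

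Finally, Remark \ref{Hom-set} identifies $W_{t}$ with $\mathrm{Hom}((\mathcal{F}_{t},\varphi_{t}),(\mathcal{G}_{t},\psi_{t}))$ when $\psi_{t}\neq 0$, and with $\mathrm{Hom}((\mathcal{F}_{t},\varphi_{t}),(\mathcal{G}_{t},\psi_{t}))\times k(t)$ otherwise. Under the convention that the defining morphism of a family of pairs is fiberwise nowhere zero (consistent with the nowhere-vanishing framing of Remark \ref{comp-fam}, used throughout the moduli construction), the two dimensions coincide for every $t\in T$ and the desired semicontinuity follows. The main technical obstacle is the globalization step: constructing the two-term locally free models $K_{i}^{\bullet}$ on the stack in a base-change compatible manner and assembling $\alpha^{\bullet}$ together with $\beta$ into a single complex whose fiberwise cohomology recovers $W_{t}$; once this is in place the semicontinuity is an immediate application of the classical theorem.
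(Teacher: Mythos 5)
Your construction is essentially the paper's: the paper also globalizes the Cartesian description of Remark \ref{Hom-set} by resolving $\mathcal{F}$ and $\pi_{\mathcal{X}}^*\mathcal{F}_{0}$ by sheaves of the form $\mathcal{E}^{\oplus N}\otimes\pi^*\mathcal{O}_{X}(-m)$, obtaining two-term complexes of free modules $M_{\mathcal{F}}^{\bullet}$, $M_{\mathcal{F}_{0}}^{\bullet}$ whose $H^{0}(-\otimes k(t))$ compute the fiberwise Hom groups, forming the mapping cone of $\Psi=(\varphi,-\psi):M_{\mathcal{F}}^{\bullet}\oplus A^{\bullet}\to M_{\mathcal{F}_{0}}^{\bullet}$ (your complex $K_{1}^{\bullet}\oplus\mathcal{O}_{T}\to K_{2}^{\bullet}$ is this cone), and applying semicontinuity for complexes of vector bundles ([\cite{BS}, Lemma A.1]). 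A minor imprecision on your side: $R\pi_{T*}R\mathcal{H}om(\mathcal{F},\mathcal{G})$ is not in general quasi-isomorphic to a two-term complex; what is true, and what you actually need, is the weaker Huybrechts--Lehn-type statement that there is a two-term complex of locally free sheaves whose $H^{0}$ after any base change computes the relative Hom, which is exactly what the explicit resolutions provide.

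The genuine gap is in your last step. You dispose of the discrepancy between $W_{t}$ and $\mathrm{Hom}((\mathcal{F}_{t},\varphi_{t}),(\mathcal{G}_{t},\psi_{t}))$ by invoking a ``convention'' that the framing morphism is fiberwise nowhere zero, but the paper explicitly does \emph{not} adopt the nowhere-vanishing formulation of Remark \ref{comp-fam}; it works with Definition \ref{fam-pair} (see Remark \ref{no-asump}), and the lemma is stated for arbitrary flat families, including those with $\psi_{t}=0$ at some or all points (degenerate pairs occur throughout, e.g.\ in subpairs and polystable summands). By Remark \ref{Hom-set} one has $\dim_{k(t)}\mathrm{Hom}((\mathcal{F}_{t},\varphi_{t}),(\mathcal{G}_{t},\psi_{t}))=\dim_{k(t)}W_{t}-1+\epsilon(\psi_{t})$, and upper semicontinuity of $\dim W_{t}$ alone does not give upper semicontinuity of the left-hand side at a point where $\psi_{t}$ vanishes, since the correction term drops by $1$ exactly on the (closed, but not necessarily open) vanishing locus of $\psi_{t}$. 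The paper's proof keeps this correction term explicit and concludes by arguing that $\epsilon(\psi_{t})$ is constant along $T$ (either identically zero or never zero) for the families under consideration; your proof needs either that argument or some other treatment of the locus $\{\psi_{t}=0\}$, rather than an added hypothesis not present in the statement.
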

\begin{proof}
It suffices to prove the case when $T=\mathrm{Spec}\,A$ where $A$ is a $k$-algebra of finite type. Since  $p_{T}: \mathcal{X}\times T\to T$ is a family of projective stacks, we have the following locally free  resolutions (see also the argument in Section 5.1):
\ben
&&p_{\mathcal{X}}^*(\mathcal{E})^{\oplus N_{1}}\otimes p_{\mathcal{X}}^*\circ\pi^*\mathcal{O}_{X}(-m_{1})\to p_{\mathcal{X}}^*(\mathcal{E})^{\oplus N_{2}}\otimes p_{\mathcal{X}}^*\circ\pi^*\mathcal{O}_{X}(-m_{2})\to\mathcal{F}\to0,\\
&&p_{\mathcal{X}}^*(\mathcal{E})^{\oplus \widetilde{N}_{1}}\otimes p_{\mathcal{X}}^*\circ\pi^*\mathcal{O}_{X}(-\widetilde{m}_{1})\to p_{\mathcal{X}}^*(\mathcal{E})^{\oplus \widetilde{N}_{2}}\otimes p_{\mathcal{X}}^*\circ\pi^*\mathcal{O}_{X}(-\widetilde{m}_{2})\to p_{\mathcal{X}}^*\mathcal{F}_{0}\to0,
\een
where $p_{\mathcal{X}}:\mathcal{X}\times T\to\mathcal{X}$ is the natural projection, $N_{i}, \widetilde{N}_{i}$ are positive integers, and $m_{i}, \widetilde{m}_{i}$ are positive  integers large enough for $i=1, 2$.
Then we have the exact sequences for any $A$-module $M$
\ben
&&0\to\mathrm{Hom}(\mathcal{F},\mathcal{G}\otimes_{A}M)\to M_{\mathcal{F}}^2\otimes_{A}M\to M_{\mathcal{F}}^1\otimes_{A}M,\\
&&0\to\mathrm{Hom}(p_{\mathcal{X}}^*\mathcal{F}_{0},\mathcal{G}\otimes_{A}M)\to M_{\mathcal{F}_{0}}^2\otimes_{A}M\to M_{\mathcal{F}_{0}}^1\otimes_{A}M,
\een
where 
\ben
&&M_{\mathcal{F}}^i:=H^0(X\times T, F_{p_{\mathcal{X}}^*(\mathcal{E})}(\mathcal{G})^{\oplus N_{i}}(m_{i})),\\
&&M_{\mathcal{F}_{0}}^i:=H^0(X\times T, F_{p_{\mathcal{X}}^*(\mathcal{E})}(\mathcal{G})^{\oplus \widetilde{N}_{i}}(\widetilde{m}_{i})),
\een
 and $A$-module $M_{\mathcal{F}}^i, M_{\mathcal{F}_{0}}^i$ are free for $i=1, 2$ by the similar argument  in the proof of [\cite{Nir1}, Lemma 6.18].
Define the follwoing complexes of free $A$-modules concentrated in degree 0 and 1:
\ben
&&M_{\mathcal{F}}^\bullet: 0\to M_{\mathcal{F}}^2\to M_{\mathcal{F}}^1\to 0,\\
&&M_{\mathcal{F}_{0}}^\bullet: 0\to M_{\mathcal{F}_{0}}^2\to M_{\mathcal{F}_{0}}^1\to 0.
\een
Then we have for any $A$-module $M$
\ben
&&\mathrm{Hom}(\mathcal{F},\mathcal{G}\otimes_{A}M)\cong H^0(M_{\mathcal{F}}^\bullet\otimes_{A}M),\\
&&\mathrm{Hom}(p_{\mathcal{X}}^*\mathcal{F}_{0},\mathcal{G}\otimes_{A}M)\cong
H^0(M_{\mathcal{F}_{0}}^\bullet\otimes_{A}M).
\een
The morphism $\varphi:p_{\mathcal{X}}^*\mathcal{F}_{0}\to\mathcal{F}$ induces a morphism of complexes, denoted again by $\varphi:M_{\mathcal{F}}^\bullet\to M_{\mathcal{F}_{0}}^\bullet$. Denote by $A^\bullet$ the complex such that $A^0=A$ and $A^{i}=0$ for $i\neq0$, then the  morphism $\psi:p_{\mathcal{X}}^*\mathcal{F}_{0}\to\mathcal{G}$ induces a morphism $\psi:A^\bullet\to M_{\mathcal{F}_{0}}^\bullet$. Therefore, we have a morphism
$\Psi:=(\varphi,-\psi):M_{\mathcal{F}}^\bullet\oplus A^\bullet\to M_{\mathcal{F}_{0}}^\bullet$. Let $\mathrm{Cone}(\Psi)^\bullet$ be the mapping cone of $\Psi$, we have a short exact sequence
\ben
0\to M_{\mathcal{F}_{0}}^\bullet\to\mathrm{Cone}(\Psi)^\bullet\to M_{\mathcal{F}}^\bullet\oplus A^\bullet[1]\to0.
\een
Then we obtain the following long exact sequence
\ben
0\to h^{-1}(\mathrm{Cone}(\Psi)^\bullet\otimes_{A}M)\to\mathrm{Hom}(\mathcal{F},\mathcal{G}\otimes_{A}M)\oplus M\to\mathrm{Hom}(p_{\mathcal{X}}^*\mathcal{F}_{0},\mathcal{G}\otimes_{A}M)\to\cdots.
\een
For any $t\in T$ and $M=k(t)$, we have the following cartesian diagram
\ben
\xymatrix{
	h^{-1}(\mathrm{Cone}(\Psi)^\bullet\otimes_{A}k(t))\ar[d] \ar[r] &   k(t)\ar[d]^{\cdot\psi_{t}}\\
	\mathrm{Hom}(\mathcal{F}_{t},\mathcal{G}_{t})\ar[r]_-{\circ\varphi_{t}} &\mathrm{Hom}(\mathcal{F}_{0},\mathcal{G}_{t})
}
\een
By Remark \ref{Hom-set},  we have
\ben
\dim_{k(t)}\mathrm{Hom}_{\mathcal{X}\times \{t\}}((\mathcal{F}_{t},\varphi_{t}),(\mathcal{G}_{t},\psi_{t}))=\dim_{k(t)} h^{-1}(\mathrm{Cone}(\Psi)^\bullet\otimes_{A}k(t))-1+\epsilon(\psi_{t}).
\een
The proof is completed by using the fact that  $\epsilon(\psi_{t})$ is either zero  for all $t\in T$ or never zero by assumption of flatness and the function $t\to\dim	h^{-1}(\mathrm{Cone}(\Psi)^\bullet\otimes_{A}k(t))$ is upper semicontinuous by [\cite{BS}, Lemma A.1].
\end{proof}
As in [\cite{Lin18}, Section 4], we call a $\delta$-semistable pair  $\delta$-polystable if it is isomorphic to the direct sum of  $\delta$-stable pair with the same reduced Hilbert polynomial. Notice that the assumption of nondegerateness is not imposed on every summand of a nondegenerate $\delta$-polystable pair.   Using the similar argument in the proof of [\cite{HL2}, Proposition 3.3] and  Lemma \ref{semi-conti}, we have
\begin{lemma}
Two points $([a_{1}],[q_{1}])$ and $([a_{2}],[q_{2}])$ are mapped to the same point in $M^{ss}$ if and only if their corresponding $\delta$-semistable pairs $(\mathcal{F}_{1},\varphi_{1})$ and $(\mathcal{F}_{2},\varphi_{2})$ are $S$-equivalent. The orbit of a point $([a],[q])$ is closed in $\mathcal{R}^{ss}$ if and only if the corresponding pair $(\mathcal{F},\varphi)$ is $\delta$-polystable.
\end{lemma}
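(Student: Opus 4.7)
The plan is to follow the strategy of \cite{HL2}, Proposition 3.3, adapted to our setting of pairs on Deligne-Mumford stacks. The proof has two intertwined parts: constructing one-parameter subgroups whose limits realize Jordan-H\"older factors, and using the semicontinuity result (Lemma \ref{semi-conti}) to characterize closed orbits via polystability. Since $\Theta:\mathcal{R}^{ss}\to M^{ss}$ is a good quotient, two points map to the same point of $M^{ss}$ if and only if the closures of their $\mathrm{SL}(V)$-orbits intersect in $\mathcal{R}^{ss}$, and each fiber contains a unique closed orbit. Thus both assertions will follow once we identify the closed orbits with polystable pairs and show that degenerations produce the associated graded.

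First, I would show that for a Jordan-H\"older filtration
\[
0\subsetneqq(\mathcal{F}_{1},\varphi_{1})\subsetneqq\cdots\subsetneqq(\mathcal{F}_{l},\varphi_{l})=(\mathcal{F},\varphi)
\]
of a $\delta$-semistable pair corresponding to $([a],[q])\in\mathcal{R}^{ss}$, the subspaces $V_{i}:=V\cap H^{0}(F_{\mathcal{E}}(\mathcal{F}_{i})(m))$ form a filtration of $V$ with $\dim V_{i}=P_{\mathcal{E}}(\mathcal{F}_{i})(m)$ (this uses $m$-regularity of all $\mathcal{F}_{i}$, which holds for $m\geq\widehat{m}$ since the family of subsheaves arising from JH-factors of a bounded family is itself bounded by Lemma \ref{Gro-lem} and Kleiman's criterion). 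Choosing a splitting $V\cong\bigoplus_{i} V_{i}/V_{i-1}$ compatible with this filtration, one defines a one-parameter subgroup $\lambda:\mathbb{G}_{m}\to\mathrm{SL}(V)$ acting with distinct weights on each graded piece. The standard calculation then shows that $\lim_{t\to 0}([a],[q])\cdot\lambda(t)$ exists in $\mathcal{Z}^{\prime}$ and corresponds to the pair $\mathrm{gr}(\mathcal{F},\varphi)=\bigoplus_{i}(\mathcal{F}_{i},\varphi_{i})/(\mathcal{F}_{i-1},\varphi_{i-1})$; here Remark \ref{HN-factor} together with Remark on JH-filtration ensures the nontrivial component of $\varphi$ lies in exactly one summand, which is precisely what makes the limit compatible with $[a]$. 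Consequently $S$-equivalent pairs have orbit closures that both contain the orbit of the same polystable pair, so they map to the same point in $M^{ss}$.

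Next I would show that the orbit of a polystable pair is closed. Suppose $(\mathcal{F},\varphi)$ is $\delta$-polystable and $([a'],[q'])\in\overline{\mathrm{SL}(V)\cdot([a],[q])}\cap\mathcal{R}^{ss}$ corresponds to $(\mathcal{F}',\varphi')$. Choose a smooth curve $T$ with a point $0\in T$ and a morphism $T\to\mathcal{R}^{ss}$ sending $0$ to $([a'],[q'])$ and $T\setminus\{0\}$ into the orbit of $([a],[q])$. The induced flat family of pairs over $\mathcal{X}\times T$ is generically isomorphic to $(\mathcal{F},\varphi)$ pulled back. By semicontinuity (Lemma \ref{semi-conti}) applied to both $\mathrm{Hom}((\mathcal{F}_{t},\varphi_{t}),(\mathcal{F},\varphi))$ and its dual, the identity endomorphism of $(\mathcal{F},\varphi)$ on the generic fiber specializes to a nonzero morphism $(\mathcal{F}',\varphi')\to(\mathcal{F},\varphi)$. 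Using Lemma \ref{stable-iso} on each $\delta$-stable summand of the polystable $(\mathcal{F},\varphi)$, together with a dimension count comparing $\mathrm{Hom}$ spaces on the central fiber and a generic fiber, one concludes that this specialization is actually an isomorphism, so $([a'],[q'])$ already lies in the orbit of $([a],[q])$. This shows the orbit is closed.

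Conversely, if the orbit of $([a],[q])$ is closed, applying the first step to any JH-filtration shows that the limit $\lim_{t\to 0}([a],[q])\cdot\lambda(t)$ lies in this orbit, hence $(\mathcal{F},\varphi)\cong\mathrm{gr}(\mathcal{F},\varphi)$ and the pair is $\delta$-polystable. Combining all of these: each fiber $\Theta^{-1}(x)$ contains a unique closed orbit (coming from the polystable representative), every semistable orbit degenerates to it, and so $([a_{1}],[q_{1}])$ and $([a_{2}],[q_{2}])$ lie in the same fiber precisely when $\mathrm{gr}(\mathcal{F}_{1},\varphi_{1})\cong\mathrm{gr}(\mathcal{F}_{2},\varphi_{2})$, i.e., when the pairs are $S$-equivalent. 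The principal obstacle is the second step: ensuring that the semicontinuity of $\mathrm{Hom}$ for pairs (which mixes the stacky global-section count with the extra $\mathbb{G}_{m}$ from scaling $\varphi$, as in Remark \ref{Hom-set}) yields a genuine isomorphism rather than merely a nonzero morphism on the central fiber; this is where the polystability assumption and Lemma \ref{stable-iso} must be used carefully, summand by summand, keeping track of the distinguished summand containing $\mathrm{im}\,\varphi$.
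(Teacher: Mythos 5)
Your proposal is correct and follows essentially the same route as the paper: the paper's proof is precisely an appeal to the argument of [HL2, Proposition 3.3] combined with the semicontinuity result (Lemma \ref{semi-conti}), i.e., the one-parameter-subgroup degeneration to the associated graded of a Jordan--H\"older filtration plus the semicontinuity-of-$\mathrm{Hom}$ argument identifying closed orbits with $\delta$-polystable pairs, which is exactly what you reconstruct. Your attention to the distinguished summand containing $\mathrm{im}\,\varphi$ and to Lemma \ref{stable-iso} in the closed-orbit step matches how the cited framed-module argument is adapted here.
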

\begin{remark}
Since $\Theta: \mathcal{R}^{ss}\to M^{ss}$  is a  good quotient for the $\mathrm{SL}(V)$-action on $\mathcal{R}^{ss}$,
the statement that two points $([a_{1}],[q_{1}])$ and $([a_{2}],[q_{2}])$ are mapped to the same point in $M^{ss}$ is equivalent to the one that the closure of  orbits of two points
$([a_{1}],[q_{1}])$ and $([a_{2}],[q_{2}])$ in $\mathcal{R}^{ss}$ intersect. 
\end{remark}

Next, we shall show that the projective scheme $M^{ss}$ is a moduli space for the functor $\mathcal{M}^{ss}_{\mathcal{X}/k}(\mathcal{F}_{0},P,\delta)$ and the quasi-projective scheme $M^{s}$ is a fine moduli space for the functor $\mathcal{M}^{s}_{\mathcal{X}/k}(\mathcal{F}_{0},P,\delta)$.  Following the method in [\cite{BS}, Section 4.2], we first recall  the definition of good moduli space [\cite{Alper}] as follows.

\begin{definition}([\cite{Alper}, Section 2])
Let $S$ be a scheme and $(\mathrm{Sch}/S)_{\mathrm{Et}}$ a category of schemes over $S$ with the global $\acute{e}$tale topology. An algebraic space over $S$ is a sheaf of sets $X$
on $(\mathrm{Sch}/S)_{\mathrm{Et}}$ such that 

$(i)$ $\Delta_{X/S}:X\to X\times_{S}X$ is representable by schemes and quasi-compact.

$(ii)$ There exists an $\acute{e}$tale, surjective map $U\to X$, where $U$ is a scheme.\\
An Artin stack over $S$ is a stack $\mathcal{X}$ over $(\mathrm{Sch}/S)_{\mathrm{Et}}$ such that

$(i)$ $\Delta_{\mathcal{X}/S}:\mathcal{X}\to \mathcal{X}\times_{S}\mathcal{X}$ is representable, separated and quasi-compact.

$(ii)$ There exists a smooth, surjective map $X\to\mathcal{X}$, where $X$ is an algebraic space.
\end{definition}
\begin{remark}
An Artin stack $\mathcal{X}$ here is referred as an algebraic stack in [\cite{LMB}, Definition 4.1]. It is a Deligne-Mumford stack over $S$ when there exists an $\acute{e}$tale, surjective map  from an algebraic space.
\end{remark}
\begin{definition}([\cite{Alper}, Definition 3.1])
A morphism $f:\mathcal{X}\to\mathcal{Y}$ of Artin stacks over $S$ is  cohomologically affine if $f$ is quasi-compact and the functor $f_{*}:\mathrm{QCoh}(\mathcal{X})\to\mathrm{QCoh}(\mathcal{Y})$ is exact.
\end{definition}
\begin{definition}([\cite{Alper}, Definition 4.1])
Let $\mathcal{X}$ be an Artin stack and $Y$ be an algebraic space over $S$. We say that a morphism $f:\mathcal{X}\to Y$ is a good moduli space if the following conditions are satisfied:

$(i)$ $f$ is cohomologically affine.

$(ii)$ The natural morphism $\mathcal{O}_{Y}\to f_{*}\mathcal{O}_{\mathcal{X}}$ is an isomorphism.
\end{definition}
\begin{definition}([\cite{Alper}, Definiton 7.1])
We call $f:\mathcal{X}\to Y$ a tame moduli space if 

$(i)$ $f$ is a good moduli space,

$(ii)$ for all geometric points $\mathrm{Spec}\,k\to S$, the map $[\mathcal{X}(k)]\to Y(k)$ is a bijection of sets, where $[\mathcal{X}(k)]$ denotes the set of isomorphism classes of objects of $\mathcal{X}(k).$ 
\end{definition}
We introduce the following Artin stacks of finite type: 
\ben
\mathfrak{SR}^{(s)s}:=[\mathcal{R}^{s(s)}/\mathrm{SL}(V)], \;\;\;\mathfrak{R}^{(s)s}:=[\mathcal{R}^{s(s)}/\mathrm{GL}(V)],\;\;\; \mathfrak{PR}^{(s)s}:=[\mathcal{R}^{s(s)}/\mathrm{PGL}(V)],
\een
where $\mathfrak{PR}^{(s)s}$ is well-defined due to Lemma \ref{sta-group}.
Using the rigidification of a stack in [\cite{ACV}, Section 5], it is easy to show  that $\mathfrak{R}^{(s)s}$ is $\mathbb{G}_{m}$-gerbe on 
$\mathfrak{PR}^{(s)s}$ and $\mathfrak{SR}^{(s)s}$ is $\mu(V)$-gerbe on 
$\mathfrak{PR}^{(s)s}$ where $\mu(V)\subseteq\mathrm{SL}(V)$ denotes the group of the product of $\dim(V)$-roots of unity and the identity matrix. 
Since $\mathcal{O}_{\mathcal{Z}^\prime}(n_{1},n_{2})|_{\mathcal{R}^{ss}}$ is a $\mathrm{SL}(V)$-linearization on $\mathcal{R}^{ss}$, we denote by $\mathcal{O}(n_{1},n_{2})$ the corresponding line bundle on $\mathfrak{SR}^{ss}$.
Combing with Theorem \ref{GIT-quo1},  an analogue of GIT  in [\cite{Alper}, Theorem 13.6] shows  
\begin{theorem}\label{good-moduli}
There is a good moduli space $\Theta_{\mathfrak{S}}: \mathfrak{SR}^{ss}\to M^{ss}$ with   $\mathcal{O}(n_{1},n_{2})^{\otimes\hat{l}}=\Theta_{\mathfrak{S}}^*(\mathbb{M})$ and the morphism $\Theta_{\mathfrak{S}}: \mathfrak{SR}^{s}\to M^{s}$ is a tame moduli space.
\end{theorem}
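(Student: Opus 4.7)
The plan is to apply Alper's Theorem 13.6 translating GIT quotients into good moduli spaces of the associated quotient stacks, which is precisely the statement the paper flags just above the theorem. Since $k$ has characteristic zero, $\mathrm{SL}(V)$ is linearly reductive; by construction it acts on the quasi-projective scheme $\mathcal{R}^{ss}$ preserving the $\mathrm{SL}(V)$-linearized ample line bundle $\mathcal{O}_{\mathcal{Z}^\prime}(n_{1},n_{2})|_{\mathcal{R}^{ss}}$, and Theorem \ref{GIT-quo1} identifies $\Theta:\mathcal{R}^{ss}\to M^{ss}$ with the universal good quotient for this linearized action. Under these hypotheses Alper's theorem yields directly that $\Theta_{\mathfrak{S}}:\mathfrak{SR}^{ss}=[\mathcal{R}^{ss}/\mathrm{SL}(V)]\to M^{ss}$ is a good moduli space: cohomological affineness reduces to exactness of the functor of $\mathrm{SL}(V)$-invariants (linear reductivity), and the isomorphism $\mathcal{O}_{M^{ss}}\xrightarrow{\sim}\Theta_{\mathfrak{S},*}\mathcal{O}_{\mathfrak{SR}^{ss}}$ expresses the familiar fact that the GIT quotient is the Proj of invariant sections.

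For the line bundle identification, I would use that the $\mathrm{SL}(V)$-linearization on $\mathcal{O}_{\mathcal{Z}^\prime}(n_{1},n_{2})|_{\mathcal{R}^{ss}}$ is by definition a descent datum along the smooth presentation $\mathcal{R}^{ss}\to\mathfrak{SR}^{ss}$, hence produces the line bundle $\mathcal{O}(n_{1},n_{2})$ on $\mathfrak{SR}^{ss}$ whose pullback to $\mathcal{R}^{ss}$ recovers the original; this is precisely the construction by which $\mathcal{O}(n_{1},n_{2})$ was introduced in the paragraph preceding the theorem. Since $\Theta$ factors as $\mathcal{R}^{ss}\to\mathfrak{SR}^{ss}\xrightarrow{\Theta_{\mathfrak{S}}} M^{ss}$ and the trivial equivariant structure on $\mathbb{M}$ makes $\Theta^{*}\mathbb{M}$ an $\mathrm{SL}(V)$-linearized line bundle descending to $\Theta_{\mathfrak{S}}^{*}\mathbb{M}$, the isomorphism $\mathcal{O}_{\mathcal{Z}^\prime}(n_{1},n_{2})^{\otimes\hat{l}}|_{\mathcal{R}^{ss}}\cong\Theta^{*}\mathbb{M}$ supplied by Theorem \ref{GIT-quo1} descends to $\mathcal{O}(n_{1},n_{2})^{\otimes\hat{l}}\cong\Theta_{\mathfrak{S}}^{*}\mathbb{M}$.

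Finally, for the tame moduli claim on the stable locus, by Alper's Definition 7.1 it suffices to verify that the induced map $[\mathfrak{SR}^{s}(k)]\to M^{s}(k)$ is a bijection. Theorem \ref{GIT-quo1} states that $\Theta:\mathcal{R}^{s}\to M^{s}$ is a universal geometric quotient, so $k$-points of $M^{s}$ correspond to $\mathrm{SL}(V)$-orbits in $\mathcal{R}^{s}$; combined with the finiteness of stabilizers on $\mathcal{R}^{s}$ underlying the definition of $\mathfrak{PR}^{(s)s}$, these orbits are exactly isomorphism classes of $k$-objects of $\mathfrak{SR}^{s}$. The main obstacle is essentially bookkeeping: one must track the $\mathrm{SL}(V)$-equivariance of the involved line bundles consistently across the GIT and stacky formulations and check Alper's hypotheses hold in our setting; once this verification is carried out, the substantive content is packaged entirely in Alper's general theorems, so no further geometric input is needed.
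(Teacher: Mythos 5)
Your proposal is correct and follows essentially the same route as the paper: the paper obtains Theorem \ref{good-moduli} precisely by combining Theorem \ref{GIT-quo1} with the GIT analogue in Alper's Theorem 13.6, with $\mathcal{O}(n_{1},n_{2})$ defined by descent of the $\mathrm{SL}(V)$-linearized bundle along $\mathcal{R}^{ss}\to\mathfrak{SR}^{ss}$, exactly as you describe. Your extra verifications (linear reductivity in characteristic zero, descent of the isomorphism $\mathcal{O}_{\mathcal{Z}^\prime}(n_{1},n_{2})^{\otimes\hat{l}}|_{\mathcal{R}^{ss}}\cong\Theta^{*}\mathbb{M}$, and the bijection on $k$-points over the stable locus coming from the geometric quotient) are just the unpacking of what Alper's theorem delivers.
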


As in [\cite{BS}, Section 4.2],  the morphism $\Theta_{\mathfrak{S}}: \mathfrak{SR}^{ss}\to M^{ss}$ induces the commutative diagram 
\ben
\xymatrixcolsep{5pc}\xymatrix{
	\mathfrak{SR}^{(s)s}\ar[dr]_{\Theta_{\mathfrak{S}}} \ar[r] &   \mathfrak{PR}^{(s)s}\ar[d]^{\Theta_{\mathfrak{P}}} &\mathfrak{R}^{(s)s}\ar[l]\ar[dl]^{\Theta_{\mathfrak{G}}}\\
	 &M^{(s)s} &
}
\een
and  morphisms $\Theta_{\mathfrak{G}}$ and $\Theta_{\mathfrak{P}}$ also satisfy assertions stated in Theorem \ref{good-moduli}.
Following [\cite{BS}, Section 4.2], we define $[\mathfrak{R}^{(s)s}]$ to be the contravariant functor such that for any scheme $S$ of finite type, $[\mathfrak{R}^{(s)s}](S)$ is the set of isomorphism classes of objects of $\mathfrak{R}^{(s)s}(S)$. An object in $[\mathfrak{R}^{(s)s}](S)$ is an isomorphism class $[(\overline{q}: \overline{P}\to S, \phi:\overline{P}\to\mathcal{R}^{(s)s})]$, where $\overline{q}: \overline{P}\to S$ is a $\mathrm{GL}(V)$-torsor over $S$ and $\phi$ is a $\mathrm{GL}(V)$-equivariant morphism. Here, $(\overline{q}: \overline{P}\to S, \phi:\overline{P}\to\mathcal{R}^{(s)s})$ and $(\overline{q}^\prime: \overline{P}^\prime\to S, \phi^\prime:\overline{P}^\prime\to\mathcal{R}^{(s)s})$ are called isomorphic objects if we have the following commutative diagram
\ben
\xymatrix{
	\overline{P}\ar[d]_{\overline{q}} \ar[r]^{\varsigma} &   \overline{P}^\prime\ar[d]^{\overline{q}^\prime}\\
	S\ar[r]_-{\mathrm{id}_{S}} &S
}
\een
where $\varsigma$ is an isomorphism of $\mathrm{GL}(V)$-torsors compatible with $\phi$ and $\phi^\prime$. Now, we have
\begin{theorem}\label{iso-functor}
The functor $\mathcal{M}^{(s)s}_{\mathcal{X}/k}(\mathcal{F}_{0},P,\delta)$ is isomorphic to
$[\mathfrak{R}^{(s)s}]$.
\end{theorem}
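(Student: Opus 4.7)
The plan is to construct two natural transformations between $\mathcal{M}^{(s)s}_{\mathcal{X}/k}(\mathcal{F}_{0},P,\delta)$ and $[\mathfrak{R}^{(s)s}]$ and verify they are mutually inverse. This is the standard descent-style argument from GIT moduli problems (compare the sheaf case in [\cite{HL3}, Lemma 4.3.1] and the framed case in [\cite{BS}, Theorem 4.7]): one direction pulls back the universal family on $\mathcal{R}^{(s)s}$ along the classifying map and descends it along the torsor; the other direction uses the pushforward of an appropriate twist of the family as a locally free sheaf and takes its frame bundle.

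First, I would define $\alpha:[\mathfrak{R}^{(s)s}]\to\mathcal{M}^{(s)s}_{\mathcal{X}/k}(\mathcal{F}_{0},P,\delta)$ as follows. Given $[(\overline{q}:\overline{P}\to S,\,\phi:\overline{P}\to\mathcal{R}^{(s)s})]$, pull back the universal family $(\check{\mathcal{F}}_{\mathcal{X}\times\mathcal{R}^{(s)s}},\check{\varphi}_{\mathcal{X}\times\mathcal{R}^{(s)s}})$ via $\mathrm{id}_{\mathcal{X}}\times\phi$ to a flat family on $\mathcal{X}\times\overline{P}$. The $\mathrm{GL}(V)$-linearizations $\Lambda_{1}$ and $\Lambda_{2}$ of $\widetilde{\mathcal{F}}$ and $\mathcal{O}_{\mathbb{P}}(1)$ described earlier endow the pulled-back family with a $\mathrm{GL}(V)$-equivariant structure compatible with the trivial action on $S$, and so by faithfully flat descent along the torsor $\overline{q}$ this family descends uniquely to a flat family $(\mathcal{F}_{S},\varphi_{S})$ of pairs on $\mathcal{X}\times S$. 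Since $\delta$-(semi)stability is a fibrewise property, the descended family lies in $\mathcal{M}^{(s)s}_{\mathcal{X}/k}(\mathcal{F}_{0},P,\delta)(S)$, and the construction is independent of the chosen representative because any isomorphism of torsor data yields a canonical isomorphism of descended families.

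Next, I would build $\beta:\mathcal{M}^{(s)s}_{\mathcal{X}/k}(\mathcal{F}_{0},P,\delta)\to[\mathfrak{R}^{(s)s}]$. For a flat family $(\mathcal{F},\varphi)$ of $\delta$-(semi)stable pairs of type $P$ parametrized by $S$, the boundedness results of Section~3 together with Kleiman's criterion for stacks (using the fixed integer $m\geq\widehat{m}$) imply that $\mathcal{V}:=p_{S*}\bigl(F_{\pi_{\mathcal{X}}^{*}\mathcal{E}}(\mathcal{F})(m)\bigr)$ is locally free of rank $P(m)$ on $S$, with formation compatible with arbitrary base change. Let $\overline{P}:=\underline{\mathrm{Isom}}_{S}(V\otimes\mathcal{O}_{S},\mathcal{V})$ be the associated $\mathrm{GL}(V)$-frame bundle. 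Over $\overline{P}$ the tautological trivialization, together with the evaluation morphism and the adjunction pair $(\theta_{\mathcal{E}},\varphi_{\mathcal{E}})$, produces a surjection $V\otimes p_{\mathcal{X}\times\overline{P},\mathcal{X}}^{*}(\mathcal{E}\otimes\pi^{*}\mathcal{O}_{X}(-m))\twoheadrightarrow(\mathrm{id}_{\mathcal{X}}\times\overline{q})^{*}\mathcal{F}$ and a linear map $H^{0}(F_{\mathcal{E}}(\mathcal{F}_{0})(m))\otimes\mathcal{O}_{\overline{P}}\to V\otimes\mathcal{O}_{\overline{P}}$ coming from the pullback of $\varphi$. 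These together classify a $\mathrm{GL}(V)$-equivariant morphism $\phi:\overline{P}\to\mathcal{Z}^{\prime}$; its image lies in $\mathcal{R}^{(s)s}$ by Theorems \ref{git=pair1} and \ref{git=pair2}, since fibrewise $\delta$-(semi)stability translates into GIT (semi)stability.

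Finally, I would check that $\alpha$ and $\beta$ are mutually inverse and natural in $S$. For $\alpha\circ\beta$, descending the pulled-back universal family along the frame bundle undoes the tautological trivialization and recovers $(\mathcal{F},\varphi)$ up to canonical isomorphism. For $\beta\circ\alpha$, applying the frame-bundle construction to the descended family returns a $\mathrm{GL}(V)$-torsor canonically isomorphic to $\overline{P}$, and the classifying morphism into $\mathcal{R}^{(s)s}$ is recovered from the universal property of $\widetilde{\mathcal{Q}}$ and $\mathbb{P}$. The main obstacle is bookkeeping around equivariance: one must verify that the $\mathcal{O}_{\mathbb{P}}(1)$-twist appearing in $\check{\mathcal{F}}_{\mathcal{X}\times\mathcal{Z}}$ is matched by the action of the centre $\mathbb{G}_{m}\subset\mathrm{GL}(V)$ so that descent in step one yields an honest sheaf on $\mathcal{X}\times S$ rather than one twisted by a residual line bundle pulled back from $\overline{P}$, and conversely that the frame-bundle construction in step two correctly absorbs this twist. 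This compatibility is exactly what is encoded by the linearizations $\Lambda_{1},\Lambda_{2}$ recalled earlier, and is the reason the statement is phrased with the full group $\mathrm{GL}(V)$.
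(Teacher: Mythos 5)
Your proposal is correct and follows essentially the same route as the paper, which simply invokes the argument of [\cite{BS}, Theorem 4.12] with the universal family $(\check{\mathcal{F}}_{\mathcal{X}\times\mathcal{R}^{(s)s}},\check{\varphi}_{\mathcal{X}\times\mathcal{R}^{(s)s}})$ and the notion of families from Definition \ref{fam-pair}: pullback-and-descent along the $\mathrm{GL}(V)$-torsor in one direction, and the frame bundle of $p_{S*}\bigl(F_{\pi_{\mathcal{X}}^*\mathcal{E}}(\mathcal{F})(m)\bigr)$ with the GIT-vs-$\delta$-stability comparison (Theorems \ref{git=pair1}, \ref{git=pair2}) in the other. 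Your remark on the $\mathcal{O}_{\mathbb{P}}(1)$-twist being neutralized by the central $\mathbb{G}_{m}$ is exactly the point handled by Lemma \ref{uni-fam-inv} and Remark \ref{no-asump}, so no gap remains.
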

\begin{proof}
Use the similar argument in the proof of [\cite{BS}, Theorem 4.12]  with our notion of flat families of pairs in Definition \ref{moduli-functor} (see also Definition \ref{fam-pair}) and the  universal family $(\check{\mathcal{F}}_{\mathcal{X}\times\mathcal{R}^{(s)s}},\check{\varphi}_{\mathcal{X}\times\mathcal{R}^{(s)s}})$ of $\delta$-(semi)stable pairs with  
modified Hilbert polynomial $P$ parameterized by $\mathcal{R}^{(s)s}$.
\end{proof}
Since the morphism $\Theta_{\mathfrak{G}}:\mathfrak{R}^{(s)s}\to M^{(s)s}$ factors through $\mathfrak{R}^{(s)s}\to[\mathfrak{R}^{(s)s}]$, we have the morphism $\mathcal{M}^{(s)s}_{\mathcal{X}/k}(\mathcal{F}_{0},P,\delta)\to  M^{(s)s}$ by Theorem \ref{iso-functor}.
By the Yoneda Lemma, we may take the scheme $M^{(s)s}$ as a functor $\mathrm{Mor}(-,M^{(s)s})$ and the mophism $\mathcal{M}^{(s)s}_{\mathcal{X}/k}(\mathcal{F}_{0},P,\delta)\to  M^{(s)s}$ as a natural transformation $\mathcal{M}^{(s)s}_{\mathcal{X}/k}(\mathcal{F}_{0},P,\delta)\to \mathrm{Mor}(-,M^{(s)s})$. We need the following lemmas which are useful for  proving the existence of a fine moduli space for $\mathcal{M}^{s}_{\mathcal{X}/k}(\mathcal{F}_{0},P,\delta)$. First, as in [\cite{BS}, Lemma 4.5], we have
\begin{lemma}\label{sta-group}
Let $([a],[q])\in\mathcal{Z}\subseteq \mathbb{P}\times\widetilde{\mathcal{Q}}$ be a closed point corresponding to a pair $(\mathcal{F},\varphi)$ such that $F_{\mathcal{E}}(\mathcal{F})(m)$ is globally generated and $H^0(\widetilde{q}):V\to H^0(F_{\mathcal{E}}(\mathcal{F})(m))$ induced by $q:V\otimes\mathcal{E}\otimes\pi^*\mathcal{O}_{X}(-m)\to\mathcal{F}$ is an isomorphism. Then there exists a natural injective  homomorphism $\mathrm{Aut}(\mathcal{F},\varphi)\to\mathrm{GL}(V)$ whose image is precisely the stabilizer group $\mathrm{GL}(V)_{([a],[q])}$ of the point $([a],[q])$.
\end{lemma}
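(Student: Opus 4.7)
The plan is to define $\Upsilon: \mathrm{Aut}(\mathcal{F},\varphi) \to \mathrm{GL}(V)$ by functoriality, verify injectivity, check that its image lies in the stabilizer, and then invert on the stabilizer by invoking the universal property of $[q]$ as a point of $\widetilde{\mathcal{Q}}$. First, given $\Phi\in\mathrm{Aut}(\mathcal{F},\varphi)$, the definition of a morphism of pairs supplies a scalar $\lambda\in k^*$ with $\Phi\circ\varphi=\lambda\cdot\varphi$. Applying the exact functor $F_{\mathcal{E}}$, twisting by $\mathcal{O}_X(m)$, taking global sections and conjugating by the isomorphism $\rho: V\xrightarrow{\sim} H^0(F_{\mathcal{E}}(\mathcal{F})(m))$ determined by $([a],[q])$, I set
$$\Upsilon(\Phi):=\rho^{-1}\circ H^0(F_{\mathcal{E}}(\Phi)(m))\circ\rho.$$
Functoriality of $F_{\mathcal{E}}$ and $H^0$ makes $\Upsilon$ a group homomorphism. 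Injectivity will follow from the hypothesis that $F_{\mathcal{E}}(\mathcal{F})(m)$ is globally generated: if $\Upsilon(\Phi)=\mathrm{id}_V$, then $H^0(F_{\mathcal{E}}(\Phi)(m))$ is the identity, so $F_{\mathcal{E}}(\Phi)$ fixes every global section and hence equals the identity on $F_{\mathcal{E}}(\mathcal{F})$; passing through $G_{\mathcal{E}}$ and the surjection $\theta_{\mathcal{E}}(\mathcal{F})$, equivalently using that $q$ is an epimorphism and $\Phi\circ q=q\circ\Upsilon(\Phi)=q$, yields $\Phi=\mathrm{id}_{\mathcal{F}}$.

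Next I will show $\mathrm{Im}\,\Upsilon\subseteq\mathrm{GL}(V)_{([a],[q])}$. Recalling that $q=\theta_{\mathcal{E}}(\mathcal{F})\circ G_{\mathcal{E}}(\widetilde{\rho})$ with $\widetilde{\rho}=\mathrm{ev}\circ(\rho\otimes\mathrm{id})$, naturality of $\theta_{\mathcal{E}}$ together with the definition of $\Upsilon(\Phi)$ yields the commutation $\Phi\circ q=q\circ\Upsilon(\Phi)$. Since $\Phi$ is an isomorphism of $\mathcal{F}$, $\ker(q\circ\Upsilon(\Phi))=\ker q$, which is exactly $[q\circ\Upsilon(\Phi)]=[q]$ in $\widetilde{\mathcal{Q}}$. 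For the $\mathbb{P}$-factor, applying $H^0(F_{\mathcal{E}}(\,\cdot\,)(m))$ to $\Phi\circ\varphi=\lambda\varphi$ and conjugating by $\rho$ gives $\Upsilon(\Phi)\circ a=\lambda\cdot a$, whence $[\Upsilon(\Phi)^{-1}\circ a]=[a]$ in $\mathbb{P}$. Together these verify $\Upsilon(\Phi)\in\mathrm{GL}(V)_{([a],[q])}$.

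Finally, to prove surjectivity onto the stabilizer, I fix $g\in\mathrm{GL}(V)_{([a],[q])}$. The condition $[q\circ g]=[q]$ in $\widetilde{\mathcal{Q}}$ translates into $\ker(q\circ g)=\ker q$, and the universal property of the cokernel produces a unique automorphism $\Phi_g:\mathcal{F}\to\mathcal{F}$ with $\Phi_g\circ q=q\circ g$. The condition $[g^{-1}\circ a]=[a]$ in $\mathbb{P}$ supplies some $\mu\in k^*$ with $g\circ a=\mu^{-1}\cdot a$; transporting this equality through the commutative square defining $a$ in terms of $q$, $\varphi$ and $\widetilde{\mathrm{ev}}$ shows $\Phi_g\circ\varphi=\mu^{-1}\cdot\varphi$, so $\Phi_g\in\mathrm{Aut}(\mathcal{F},\varphi)$ with $\Upsilon(\Phi_g)=g$; the assignments $\Phi\mapsto\Upsilon(\Phi)$ and $g\mapsto\Phi_g$ are mutually inverse. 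The main technical obstacle throughout is the careful bookkeeping of the adjunction $F_{\mathcal{E}}\dashv G_{\mathcal{E}}$ via $\theta_{\mathcal{E}}$ and $\varphi_{\mathcal{E}}$ and its compatibility with the $\mathrm{GL}(V)$-actions on $\mathbb{P}$ and $\widetilde{\mathcal{Q}}$; this is the stacky analogue of the classical verification and is resolved by chasing the defining diagrams of $q$ and $a$.
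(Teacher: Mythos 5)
Your proposal is correct and follows essentially the same route as the paper, which simply invokes the classical argument of [HL3, Lemma 4.3.2] (via [BS, Lemma 4.5]) adapted to morphisms of pairs; you have spelled out that adaptation explicitly through the adjunction $F_{\mathcal{E}}\dashv G_{\mathcal{E}}$, the naturality of $\theta_{\mathcal{E}}$, $\varphi_{\mathcal{E}}$ and the evaluation map, and the identification of quotients in $\widetilde{\mathcal{Q}}$ by their kernels. The only steps left slightly implicit (surjectivity of $\widetilde{\mathrm{ev}}$ when deducing $\Phi_g\circ\varphi=\mu^{-1}\varphi$, and injectivity of $H^0(\widetilde{q})$ when concluding $\Upsilon(\Phi_g)=g$) are exactly the hypotheses available, so the argument is complete.
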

\begin{proof}
Notice morphisms of pairs in Definition \ref{def-pair} and use the similar argument in the proof of [\cite{HL3}, Lemma 4.3.2].
\end{proof}
Corresponding to [\cite{BS}, Proposition 4.14], we have
\begin{lemma}\label{uni-fam-inv}
Let $(\check{\mathcal{F}}_{\mathcal{X}\times\mathcal{R}^{s}},\check{\varphi}_{\mathcal{X}\times\mathcal{R}^{s}})$
be the universal family of $\delta$-semistable pairs with  
modified Hilbert polynomial $P$ parameterized by $\mathcal{R}^{s}$. Then $\check{\mathcal{F}}_{\mathcal{X}\times\mathcal{R}^{s}}$ is invariant with respect to the action of the center $\mathbb{G}_{m}$ of $\mathrm{GL}(V)$.
\end{lemma}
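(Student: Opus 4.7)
The plan is to identify the $\mathbb{G}_m$-weights of the central action on each of the two tensor factors defining $\check{\mathcal{F}}_{\mathcal{X}\times\mathcal{R}^s}$ and show that they cancel on the tensor product, so the resulting linearization is the trivial character.

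First, I would observe that the center $\mathbb{G}_m\subset\mathrm{GL}(V)$ of scalars $t\cdot\mathrm{id}_V$ acts trivially on $\mathcal{R}^s\subseteq\mathbb{P}\times\widetilde{\mathcal{Q}}$. Indeed, $\eta_1([q],t)=[q\circ(t\cdot\mathrm{id}_V)]=[q]$, since rescaling a quotient by a nonzero scalar preserves its kernel, and $\eta_2([a],t)=[(t\cdot\mathrm{id}_V)\circ a]=[ta]=[a]$. Consequently the restrictions of $\Lambda_1$ and $\Lambda_2$ to $\mathbb{G}_m$ are families of fiberwise automorphisms of $\widetilde{\mathcal{F}}$ and $\mathcal{O}_\mathbb{P}(1)$ respectively, each given by a character of $\mathbb{G}_m$.

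Next, I would argue that the central $\mathbb{G}_m$ acts with weight $+1$ on $\widetilde{\mathcal{F}}$. On the center the compatibility square of the previous subsection for $\Lambda_1$ collapses to the identity $\Lambda_1|_t\circ\check{q} = \check{q}\circ(t\cdot\mathrm{id}_V\otimes\mathrm{id}_{\mathcal{E}\otimes\pi^*\mathcal{O}_X(-m)})$ of surjections $p_\mathcal{X}^*(V\otimes\mathcal{E}\otimes\pi^*\mathcal{O}_X(-m))\twoheadrightarrow\widetilde{\mathcal{F}}$. Surjectivity of $\check{q}$ forces $\Lambda_1|_t=t\cdot\mathrm{id}_{\widetilde{\mathcal{F}}}$. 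Dually, I would show that the central $\mathbb{G}_m$ acts with weight $-1$ on $\mathcal{O}_\mathbb{P}(1)$ by a direct calculation on the tautological subbundle $\mathcal{O}_\mathbb{P}(-1)\hookrightarrow W\otimes\mathcal{O}_\mathbb{P}$ with $W=\mathrm{Hom}(H^0(F_\mathcal{E}(\mathcal{F}_0)(m)),V)$: at $[a]$ the fiber is $k\cdot a$, and the lift of $t\cdot\mathrm{id}_V$ from $W$ is scaling by $t$, which dualizes to weight $-1$ on $\mathcal{O}_\mathbb{P}(1)$. By uniqueness of the $\mathrm{GL}(V)$-linearization on an ample bundle (up to character twists that are pinned down by the equivariance diagram for $\check{a}$), this agrees with $\Lambda_2$.

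Finally, the universal family is
\begin{equation*}
\check{\mathcal{F}}_{\mathcal{X}\times\mathcal{R}^s}=(\mathrm{id}_\mathcal{X}\times\widetilde{i}_s)^*p_{\mathcal{X}\times\mathbb{P}\times\widetilde{\mathcal{Q}},\mathcal{X}\times\widetilde{\mathcal{Q}}}^*\widetilde{\mathcal{F}}\otimes(\mathrm{id}_\mathcal{X}\times\widetilde{i}_s)^*p_{\mathcal{X}\times\mathbb{P}\times\widetilde{\mathcal{Q}},\mathbb{P}}^*\mathcal{O}_\mathbb{P}(1),
\end{equation*}
so the induced $\mathbb{G}_m$-linearization is the product of characters of weight $(+1)+(-1)=0$, i.e.~the trivial character. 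Hence the center acts on $\check{\mathcal{F}}_{\mathcal{X}\times\mathcal{R}^s}$ by the identity, proving $\mathbb{G}_m$-invariance. I expect the main obstacle to be the bookkeeping of sign conventions: the ``one-sided'' actions $\eta_1([q])\cdot g=[q\circ g]$ and $\eta_2([a])\cdot g=[g\circ a]$ used to define the linearizations differ from the combined right $\mathrm{SL}(V)$-action $([a],[q])\cdot g=([g^{-1}\circ a],[q\circ g])$ by a $g\mapsto g^{-1}$ on the first factor, but on the center of $\mathrm{GL}(V)$ both conventions yield the same (trivial) action on the base, so the weight computation is unambiguous.
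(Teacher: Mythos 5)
Your overall strategy coincides with the paper's: note that the centre acts trivially on the parameter space, restrict the two linearizations $\Lambda_1$ and $\Lambda_2$ to the centre, and show that the resulting fibrewise scalars on the two tensor factors of $\check{\mathcal{F}}_{\mathcal{X}\times\mathcal{R}^s}$ compose to the identity. Your computation on the Quot side (surjectivity of $\check{q}$ forces the restriction of $\Lambda_1$ to a central $t$ to be $t\cdot\mathrm{id}$ on $\widetilde{\mathcal{F}}$) is exactly the standard argument and is fine; your direct observation that scalars fix every point of $\mathbb{P}\times\widetilde{\mathcal{Q}}$ is even simpler than the paper's route through the stabilizer Lemma \ref{sta-group} and works just as well. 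The paper then concludes by the argument of [\cite{BS}, Proposition 4.14], showing that the induced automorphisms $\widetilde{\Lambda}_1$, $\widetilde{\Lambda}_2$ act trivially, which amounts to the same weight cancellation you are after.

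The soft spot is precisely the step you flag and then dismiss. The central character of a linearization of $\mathcal{O}_{\mathbb{P}}(1)$ is \emph{not} independent of whether the $\mathbb{P}$-factor is acted on by $[a]\mapsto[g\circ a]$ (the convention under which $\Lambda_2$ is defined) or by $[a]\mapsto[g^{-1}\circ a]$ (the factor of the action actually used on $\mathcal{Z}^\prime$, forced by invariance of the condition $q\circ a\circ\iota=0$). For central $t$ both act trivially on points, but the weight lives in the lift, not in the action on the base: the induced automorphism of the tautological line $k\cdot a$, hence of $\mathcal{O}_{\mathbb{P}}(\pm1)$, changes by the inverse character when $g$ is replaced by $g^{-1}$. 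Moreover the two one-sided prescriptions $[q]\cdot g=[q\circ g]$ and $[a]\cdot g=[g\circ a]$ do not assemble into a single action on the product (one is a right action, the other a left action), so computing the weight of one factor with respect to one of them and of the other factor with respect to the other does not by itself determine the relative sign; as justified, your argument could equally well have produced a total weight $\pm 2$. To close the gap, either redo both weight computations for the single right action $([a],[q])\cdot g=([g^{-1}\circ a],[q\circ g])$ and the linearizations it actually induces on $\widetilde{\mathcal{F}}$ and $\mathcal{O}_{\mathbb{P}}(1)$, or, more robustly (and this is in effect what the paper's commutative diagram together with the argument of [\cite{BS}, Proposition 4.14] does), use the equivariance of the universal pair morphism $\check{\varphi}_{\mathcal{X}\times\mathcal{R}^s}:p_{\mathcal{X}\times\mathcal{R}^{s},\mathcal{X}}^*\mathcal{F}_{0}\to\check{\mathcal{F}}_{\mathcal{X}\times\mathcal{R}^{s}}$: its source carries the trivial central character, it is fibrewise nonzero on $\mathcal{R}^s$ because stable pairs there are nondegenerate, and the centre acts on $\check{\mathcal{F}}_{\mathcal{X}\times\mathcal{R}^s}$ fibrewise by the single character given by the product of your two scalars; equivariance then forces that character to be trivial, which is the assertion of the lemma.
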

\begin{proof} 
Since the assumption of Lemma \ref{sta-group} is satisfied on $\mathcal{R}^{s}$ by Theorem \ref{git=pair1} and Theorem \ref{git=pair2},  the center $\mathbb{G}_{m}$  of $\mathrm{GL}(V)$ acts trivially  on $\mathcal{R}^s$. This implies that the restriction of $\eta:\mathbb{P}\times\widetilde{\mathcal{Q}}\times \mathrm{GL}(V)\to\mathbb{P}\times\widetilde{\mathcal{Q}}$ to $\mathcal{R}^{s}\times\mathbb{G}_{m}$ is a trivial action where $\eta$ is induced by $\mathrm{GL}(V)$-actions $\eta_{1}:\widetilde{\mathcal{Q}}\times \mathrm{GL}(V)\to\widetilde{\mathcal{Q}}$ and $\eta_{2}:\mathbb{P}\times\mathrm{GL}(V)\to\mathbb{P}$. Denote by $i_{\mathbb{G}_{m}}:\mathbb{G}_{m}\to \mathrm{GL}(V)$ the inclusion. Then we have the following identities:
\ben
&&p_{\mathcal{X}\times\mathbb{{P}}\times\widetilde{\mathcal{Q}},\mathcal{X}\times\widetilde{\mathcal{Q}}}\circ(\mathrm{id}_{\mathcal{X}}\times\widetilde{i}_{s})\circ p_{\mathcal{X}\times\mathcal{R}^s\times\mathbb{G}_{m},\mathcal{X}\times\mathcal{R}^s}\\
&=&(\mathrm{id}_{\mathcal{X}}\times\eta_{1})\circ
p_{\mathcal{X}\times\mathbb{{P}}\times\widetilde{\mathcal{Q}}\times\mathrm{GL}(V),\mathcal{X}\times\widetilde{\mathcal{Q}}\times\mathrm{GL}(V)}\circ(\mathrm{id}_{\mathcal{X}}\times\widetilde{i}_{s}\times i_{\mathbb{G}_{m}})\\
&=&p_{\mathcal{X}\times\widetilde{\mathcal{Q}}\times\mathrm{GL}(V),\mathcal{X}\times\widetilde{\mathcal{Q}}}\circ
p_{\mathcal{X}\times\mathbb{{P}}\times\widetilde{\mathcal{Q}}\times\mathrm{GL}(V),\mathcal{X}\times\widetilde{\mathcal{Q}}\times\mathrm{GL}(V)}\circ(\mathrm{id}_{\mathcal{X}}\times\widetilde{i}_{s}\times i_{\mathbb{G}_{m}})
\een
and 
\ben
&&p_{\mathcal{X}\times\mathbb{P}\times\widetilde{\mathcal{Q}},\mathbb{P}}\circ(\mathrm{id}_{\mathcal{X}}\times\widetilde{i}_{s})\circ p_{\mathcal{X}\times\mathcal{R}^s\times\mathbb{G}_{m},\mathcal{X}\times\mathcal{R}^s}\\
&=&\eta_{2}\circ p_{\mathbb{{P}}\times\widetilde{\mathcal{Q}}\times\mathrm{GL}(V),\mathbb{{P}}\times\mathrm{GL}(V)}\circ(\widetilde{i}_{s}\times i_{\mathbb{G}_{m}})\circ p_{\mathcal{X}\times\mathcal{R}^s\times\mathbb{G}_{m},\mathcal{R}^s\times\mathbb{G}_{m}}\\
&=&p_{\mathbb{P}\times\mathrm{GL}(V),\mathbb{P}}\circ p_{\mathbb{{P}}\times\widetilde{\mathcal{Q}}\times\mathrm{GL}(V),\mathbb{{P}}\times\mathrm{GL}(V)}\circ(\widetilde{i}_{s}\times i_{\mathbb{G}_{m}})\circ p_{\mathcal{X}\times\mathcal{R}^s\times\mathbb{G}_{m},\mathcal{R}^s\times\mathbb{G}_{m}}.
\een
Then two isomorphisms $\Lambda_{1}:(\mathrm{id}_{\mathcal{X}}\times\eta_{1})^*\widetilde{\mathcal{F}}\to p_{\mathcal{X}\times\widetilde{\mathcal{Q}}\times\mathrm{GL}(V),\mathcal{X}\times\widetilde{\mathcal{Q}}}^*\widetilde{\mathcal{F}}$  and $\Lambda_{2}:\eta_{2}^*\mathcal{O}_{\mathbb{P}}(1)\to p_{\mathbb{P}\times\mathrm{GL}(V),\mathbb{P}}^*\mathcal{O}_{\mathbb{P}}(1)$ induces the following  isomorphisms
\ben
&\widetilde{\Lambda}_{1}:&p_{\mathcal{X}\times\mathcal{R}^s\times\mathbb{G}_{m},\mathcal{X}\times\mathcal{R}^s}^*(\mathrm{id}_{\mathcal{X}}\times\widetilde{i}_{s})^*p_{\mathcal{X}\times\mathbb{{P}}\times\widetilde{\mathcal{Q}},\mathcal{X}\times\widetilde{\mathcal{Q}}}^*\widetilde{\mathcal{F}}\\
&=&(\mathrm{id}_{\mathcal{X}}\times\widetilde{i}_{s}\times i_{\mathbb{G}_{m}})^*p_{\mathcal{X}\times\mathbb{{P}}\times\widetilde{\mathcal{Q}}\times\mathrm{GL}(V),\mathcal{X}\times\widetilde{\mathcal{Q}}\times\mathrm{GL}(V)}^*(\mathrm{id}_{\mathcal{X}}\times\eta_{1})^*\widetilde{\mathcal{F}}\\
&\xrightarrow{\cong}&(\mathrm{id}_{\mathcal{X}}\times\widetilde{i}_{s}\times i_{\mathbb{G}_{m}})^*p_{\mathcal{X}\times\mathbb{{P}}\times\widetilde{\mathcal{Q}}\times\mathrm{GL}(V),\mathcal{X}\times\widetilde{\mathcal{Q}}\times\mathrm{GL}(V)}^*p_{\mathcal{X}\times\widetilde{\mathcal{Q}}\times\mathrm{GL}(V),\mathcal{X}\times\widetilde{\mathcal{Q}}}^*\widetilde{\mathcal{F}}\\
&=&p_{\mathcal{X}\times\mathcal{R}^s\times\mathbb{G}_{m},\mathcal{X}\times\mathcal{R}^s}^*(\mathrm{id}_{\mathcal{X}}\times\widetilde{i}_{s})^*p_{\mathcal{X}\times\mathbb{{P}}\times\widetilde{\mathcal{Q}},\mathcal{X}\times\widetilde{\mathcal{Q}}}^*\widetilde{\mathcal{F}}
\een
 and 
\ben
&\widetilde{\Lambda}_{2}:&p_{\mathcal{X}\times\mathcal{R}^s\times\mathbb{G}_{m},\mathcal{X}\times\mathcal{R}^s}^*(\mathrm{id}_{\mathcal{X}}\times\widetilde{i}_{s})^* p_{\mathcal{X}\times\mathbb{P}\times\widetilde{\mathcal{Q}},\mathbb{P}}^*\mathcal{O}_{\mathbb{P}}(1)\\
&=&p_{\mathcal{X}\times\mathcal{R}^s\times\mathbb{G}_{m},\mathcal{R}^s\times\mathbb{G}_{m}}^*(\widetilde{i}_{s}\times i_{\mathbb{G}_{m}})^*p_{\mathbb{{P}}\times\widetilde{\mathcal{Q}}\times\mathrm{GL}(V),\mathbb{{P}}\times\mathrm{GL}(V)}^*\eta_{2}^*\mathcal{O}_{\mathbb{P}}(1)\\
&\xrightarrow{\cong}&p_{\mathcal{X}\times\mathcal{R}^s\times\mathbb{G}_{m},\mathcal{R}^s\times\mathbb{G}_{m}}^* (\widetilde{i}_{s}\times i_{\mathbb{G}_{m}})^*p_{\mathbb{{P}}\times\widetilde{\mathcal{Q}}\times\mathrm{GL}(V),\mathbb{{P}}\times\mathrm{GL}(V)}^*p_{\mathbb{P}\times\mathrm{GL}(V),\mathbb{P}}^*\mathcal{O}_{\mathbb{P}}(1)\\
&=&p_{\mathcal{X}\times\mathcal{R}^s\times\mathbb{G}_{m},\mathcal{X}\times\mathcal{R}^s}^*(\mathrm{id}_{\mathcal{X}}\times\widetilde{i}_{s})^* p_{\mathcal{X}\times\mathbb{P}\times\widetilde{\mathcal{Q}},\mathbb{P}}^*\mathcal{O}_{\mathbb{P}}(1)
\een
such that the following diagram commute

\ben
\xymatrixcolsep{5pc}\xymatrix{
 p_{\mathcal{X}\times\mathcal{R}^s\times\mathbb{G}_{m},\mathcal{X}}^*\mathcal{F}_{0}\ar[d]_{p_{\mathcal{X}\times\mathcal{R}^s\times\mathbb{G}_{m},\mathcal{X}\times\mathcal{R}^s}^*\check{\varphi}_{\mathcal{X}\times\mathcal{R}^{s}}}   \ar@{=}[r]& p_{\mathcal{X}\times\mathcal{R}^s\times\mathbb{G}_{m},\mathcal{X}}^*\mathcal{F}_{0} \ar[d]^{p_{\mathcal{X}\times\mathcal{R}^s\times\mathbb{G}_{m},\mathcal{X}\times\mathcal{R}^s}^*\check{\varphi}_{\mathcal{X}\times\mathcal{R}^{s}}} \\
	p_{\mathcal{X}\times\mathcal{R}^s\times\mathbb{G}_{m},\mathcal{X}\times\mathcal{R}^s}^*\check{\mathcal{F}}_{\mathcal{X}\times\mathcal{R}^{s}} \ar[r]_-{\widetilde{\Lambda}_{1}\otimes\widetilde{\Lambda}_{2}}        &	p_{\mathcal{X}\times\mathcal{R}^s\times\mathbb{G}_{m},\mathcal{X}\times\mathcal{R}^s}^*\check{\mathcal{F}}_{\mathcal{X}\times\mathcal{R}^{s}}
}
\een	
where 
\ben
\check{\varphi}_{\mathcal{X}\times\mathcal{R}^{s}}: p_{\mathcal{X}\times\mathcal{R}^{s},\mathcal{X}}^*\mathcal{F}_{0}\to \check{\mathcal{F}}_{\mathcal{X}\times\mathcal{R}^{s}}:=(\mathrm{id}_{\mathcal{X}}\times\widetilde{i}_{s})^*p_{\mathcal{X}\times\mathbb{{P}}\times\widetilde{\mathcal{Q}},\mathcal{X}\times\widetilde{\mathcal{Q}}}^*\widetilde{\mathcal{F}}\otimes(\mathrm{id}_{\mathcal{X}}\times\widetilde{i}_{s})^* p_{\mathcal{X}\times\mathbb{P}\times\widetilde{\mathcal{Q}},\mathbb{P}}^*\mathcal{O}_{\mathbb{P}}(1).
\een
Using the similar argument in the proof of [\cite{BS}, Proposition 4.14], one can show that both  $\widetilde{\Lambda}_{1}$ and $\widetilde{\Lambda}_{2}$	are the identity morphisms.
This implies $\check{\mathcal{F}}_{\mathcal{X}\times\mathcal{R}^{s}}$ is invariant with respect to the action of the center $\mathbb{G}_{m}$ of $\mathrm{GL}(V)$.
\end{proof}
\begin{remark}\label{no-asump}
In the proof of [\cite{BS}, Proposition 4.14], the assumption of  irreducibility of $\mathcal{X}$ is used to assert that  their universal family of framed sheaves parameterized by $\mathcal{R}^s$ can be $\mathrm{PGL}(V)$-linearized in order to show that moduli spaces of $\delta$-stable framed sheaves are fine.  While in our setting, the universal family $(\check{\mathcal{F}}_{\mathcal{X}\times\mathcal{R}^{s}},\check{\varphi}_{\mathcal{X}\times\mathcal{R}^{s}})$ of $\delta$-stable pairs is $\mathrm{PGL}(V)$-linearized without this  assumption since $\check{\mathcal{F}}_{\mathcal{X}\times\mathcal{R}^{s}}$ is invariant with respect to the action of the center $\mathbb{G}_{m}$ of $\mathrm{GL}(V)$ by Lemma \ref{uni-fam-inv}.
\end{remark}
Now, we have
\begin{theorem}\label{main-result}
Let $\mathcal{X}$ be a projective Deligne-Mumford stack of dimension $d$ over $k$ with a moduli scheme $\pi:\mathcal{X}\to X$ and a polarization $(\mathcal{E},\mathcal{O}_{X}(1))$. Let $\mathcal{F}_{0}$ be any fixed coherent sheaf  on $\mathcal{X}$, and let $\delta$ be any given stability parameter  which is a rational polynomial with positive leading coefficient and $P$  any given polynomial of degree $\deg\,P\leq d$. Then
the projective scheme $M^{ss}:=M^{ss}_{\mathcal{X}/k}(\mathcal{F}_{0},P,\delta)$ is a moduli space for the moduli functor $\mathcal{M}^{ss}_{\mathcal{X}/k}(\mathcal{F}_{0},P,\delta)$ and  the quasi-projective scheme $M^s:=M^s_{\mathcal{X}/k}(\mathcal{F}_{0},P,\delta)$ is a fine moduli space for the moduli functor $\mathcal{M}^s_{\mathcal{X}/k}(\mathcal{F}_{0},P,\delta)$. Moreover, the Artin stack $\mathfrak{R}^s$ is a $\mathbb{G}_{m}$-gerbe over its moduli scheme $M^s$.
\end{theorem}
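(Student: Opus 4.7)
The plan is to reduce both moduli-theoretic assertions to the already-established identification $\mathcal{M}^{(s)s}_{\mathcal{X}/k}(\mathcal{F}_{0},P,\delta)\cong[\mathfrak{R}^{(s)s}]$ from Theorem \ref{iso-functor}, combined with the good/geometric quotient properties of $\Theta:\mathcal{R}^{(s)s}\to M^{(s)s}$ from Theorem \ref{GIT-quo1}, while the gerbe statement will be separated off as a consequence of analyzing stabilizers via Lemma \ref{stable-iso} and Lemma \ref{sta-group}.

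For corepresentability of $M^{ss}$, I would first fix a test scheme $S$ and an element $\xi\in\mathcal{M}^{ss}(S)$. Via Theorem \ref{iso-functor}, $\xi$ corresponds to an isomorphism class $[(\overline{q}:\overline{P}\to S,\phi:\overline{P}\to\mathcal{R}^{ss})]$ where $\overline{P}\to S$ is a $\mathrm{GL}(V)$-torsor. Given any natural transformation $\Xi:\mathcal{M}^{ss}\to\mathrm{Mor}(-,N)$ into a scheme $N$, apply it to the canonical object on $\mathcal{R}^{ss}$ coming from the universal family $(\check{\mathcal{F}}_{\mathcal{X}\times\mathcal{R}^{ss}},\check{\varphi}_{\mathcal{X}\times\mathcal{R}^{ss}})$ to obtain a morphism $\mathcal{R}^{ss}\to N$. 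Functoriality of $\Xi$ with respect to the $\mathrm{GL}(V)$-action on $\mathcal{R}^{ss}$ forces this morphism to be $\mathrm{GL}(V)$-invariant, and the universal categorical property of the good quotient $\Theta$ in Theorem \ref{GIT-quo1} then produces a unique factorization through $M^{ss}$. Compatibility with $S$-equivalence (which corresponds to orbit closure intersections in $\mathcal{R}^{ss}$) guarantees that this factorization is consistent on geometric points, yielding the natural transformation $\mathcal{M}^{ss}\to\mathrm{Mor}(-,M^{ss})$ and giving corepresentability.

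For representability of $M^s$, the key step is to descend the universal family on $\mathcal{R}^s$ to a universal family on $\mathcal{X}\times M^s$. By Lemma \ref{uni-fam-inv}, the center $\mathbb{G}_{m}\subset\mathrm{GL}(V)$ acts trivially on $(\check{\mathcal{F}}_{\mathcal{X}\times\mathcal{R}^{s}},\check{\varphi}_{\mathcal{X}\times\mathcal{R}^{s}})$, so the $\mathrm{GL}(V)$-linearization factors through a $\mathrm{PGL}(V)$-linearization; crucially, unlike in the framed case (Remark \ref{no-asump}), no irreducibility hypothesis on $\mathcal{X}$ is needed. By Lemma \ref{stable-iso} we have $\mathrm{End}(\mathcal{F},\varphi)\cong k$ for a $\delta$-stable pair, and Lemma \ref{sta-group} identifies $\mathrm{GL}(V)_{([a],[q])}$ with $\mathrm{Aut}(\mathcal{F},\varphi)\cong k^*$ embedded as the central $\mathbb{G}_m$; hence $\mathrm{PGL}(V)$ acts freely on $\mathcal{R}^s$. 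Combined with the geometric quotient property of $\Theta:\mathcal{R}^s\to M^s$, the morphism $\mathcal{R}^s\to M^s$ is a $\mathrm{PGL}(V)$-torsor, and faithfully flat descent then produces a family $(\widehat{\mathcal{F}},\widehat{\varphi})$ on $\mathcal{X}\times M^s$ whose pullback to $\mathcal{X}\times\mathcal{R}^s$ recovers the universal family. This descended family defines a natural transformation $\mathrm{Mor}(-,M^s)\to\mathcal{M}^s$ which is the inverse of the one constructed above, yielding representability.

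Finally, the gerbe statement is almost immediate from what has been assembled: since $\mathrm{PGL}(V)$ acts freely on $\mathcal{R}^s$ with geometric quotient $M^s$, we have $\mathfrak{PR}^s\cong M^s$ as algebraic spaces (and in fact as schemes, since $M^s$ is quasi-projective), and $\mathfrak{R}^s\to\mathfrak{PR}^s$ is a $\mathbb{G}_m$-gerbe by the rigidification along the central $\mathbb{G}_m\subset\mathrm{GL}(V)$ as remarked after Theorem \ref{good-moduli}; composing yields the $\mathbb{G}_m$-gerbe structure $\mathfrak{R}^s\to M^s$. The main obstacle in the whole argument is verifying the freeness of the $\mathrm{PGL}(V)$-action with sufficient care to invoke faithfully flat descent in stacky generality, and then checking that the descended sheaf together with its morphism from $p_{\mathcal{X}}^*\mathcal{F}_{0}$ really does give a flat family of $\delta$-stable pairs of type $P$ — both points follow from the preceding structural lemmas but require careful bookkeeping of the compatibilities already encoded in Lemma \ref{uni-fam-inv}.
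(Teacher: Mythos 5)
Your proposal is correct and follows essentially the same route as the paper, whose proof simply defers to the argument of [BS, Theorem 4.15] with the same ingredients you use: the identification $\mathcal{M}^{(s)s}\cong[\mathfrak{R}^{(s)s}]$ (Theorem \ref{iso-functor}), the universal good/geometric quotient of Theorem \ref{GIT-quo1}, the stabilizer computation via Lemmas \ref{stable-iso} and \ref{sta-group}, the $\mathbb{G}_m$-invariance of the universal family (Lemma \ref{uni-fam-inv}, as stressed in Remark \ref{no-asump}) to descend it along the $\mathrm{PGL}(V)$-torsor $\mathcal{R}^s\to M^s$, and the rigidification remark for the gerbe statement. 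The points you flag as needing bookkeeping (that the free $\mathrm{PGL}(V)$-action with geometric quotient is actually a torsor, e.g.\ via Luna's slice theorem, and that the descended family pulls back to any given family, using that strict automorphisms of a nondegenerate stable pair are trivial) are exactly the details the paper leaves implicit, so the two arguments coincide in substance.
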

\begin{proof}
The proof is completed by following the similar argument in [\cite{BS}, Theorem 4.15] but with our notion of universal families $(\check{\mathcal{F}}_{\mathcal{X}\times\mathcal{R}^{(s)s}},\check{\varphi}_{\mathcal{X}\times\mathcal{R}^{(s)s}})$. The last statement is true since $\mathcal{R}^s\to M^s$ can be shown to be a $\mathrm{PGL}(V)$-torsor.
\end{proof}
\begin{remark}
One may consider the  construction of relative moduli spaces of $\delta$-(semi)stable pairs  by using [\cite{Alper,Ses}] for GIT construction in relative case and  obtaining the relative version of [\cite{Nir1}, Proposition 4.24], Lemma \ref{Gro-lem}  and  Lemma \ref{glo-sec-bound} for the boundedness results. We will investigate this issue elsewhere which is essential to the study of relative orbifold PT theory and its degeneration formula.
\end{remark}

\subsection{Variation of moduli spaces} 
For a given polynomial $P$ and a fixed coherent sheaf $\mathcal{F}_{0}$ on $\mathcal{X}$, as in [\cite{Wan}, Section 5], we will investigate the variation of  moduli spaces $M^{ss}_{\mathcal{X}/k}(\mathcal{F}_{0},P,\delta)$ when one changes the stability parameter $\delta\in\mathbb{Q}[m]$.
To indicate the dependence on $\delta$, we denote by $\mathcal{R}^{(s)s}(\delta)$  the subset of GIT-(semi)stable points $([a],[q])\in\mathcal{Z}^\prime$ corresponding to  $\delta$-(semi)stable pairs.
We generalize [\cite{Wan}, Theorem 5.5] to the case of projective Deligne-Mumford stacks as follows.
\begin{theorem}\label{Chamber}
	Let $\mathcal{X}$ be a projective Deligne-Mumford stack over $k$ with a moduli scheme $\pi:\mathcal{X}\to X$ and a polarization $(\mathcal{E},\mathcal{O}_{X}(1))$.
	There are finitely many critical values $\delta^1,\cdots,\delta^t\in\mathbb{Q}[m]$ satisfying 
	\ben
	\delta^0:=0<\delta^1<\cdots<\delta^t<\delta^{t+1}:=+\infty
	\een
	such that we have the following properties:\\
	$(i)$ For $i=0,\cdots,t$ and $\delta,\delta^\prime\in(\delta^{i},\delta^{i+1})$, one has 
	$\mathcal{R}^{(s)s}(\delta)=\mathcal{R}^{(s)s}(\delta^\prime)$.\\
	$(ii)$ For $i=0,\cdots,t$ and $\delta\in(\delta^{i},\delta^{i+1})$, we have
	\ben
	\mathcal{R}^{ss}(\delta)\subseteq\mathcal{R}^{ss}(\delta^i)\cap\mathcal{R}^{ss}(\delta^{i+1}),\;\;\;\;\mathcal{R}^s(\delta)\supseteq \mathcal{R}^s(\delta^i)\cup\mathcal{R}^s(\delta^{i+1}).
	\een
	$(iii)$ For $i=0,\cdots,t$ and $\delta\in(\delta^{i},\delta^{i+1})$, one has 
	\ben
	M^{ss}_{\mathcal{X}/k}(\mathcal{F}_{0},P,\delta)=M^{s}_{\mathcal{X}/k}(\mathcal{F}_{0},P,\delta).
	\een
	Moreover, we have the chamber structure of the stability parameter as follows:
	\ben
	\xymatrixcolsep{1pc}\xymatrix{
		 M^{0}_{\mathcal{X}/k}(\mathcal{F}_{0},P,\delta)\ar[d] \ar[rd] & \cdots &    M^{t}_{\mathcal{X}/k}(\mathcal{F}_{0},P,\delta)\ar[d] \ar[rd] & &\\
		M^{ss}_{\mathcal{X}/k}(\mathcal{F}_{0},P,\delta^0)  &M^{ss}_{\mathcal{X}/k}(\mathcal{F}_{0},P,\delta^1) \cdots& \cdots M^{ss}_{\mathcal{X}/k}(\mathcal{F}_{0},P,\delta^t) &M^{ss}_{\mathcal{X}/k}(\mathcal{F}_{0},P,\delta^{t+1})
	}
	\een
	where $M^{i}_{\mathcal{X}/k}(\mathcal{F}_{0},P,\delta):=M^{ss}_{\mathcal{X}/k}(\mathcal{F}_{0},P,\delta)$ for some $\delta\in(\delta^i,\delta^{i+1})$, $i=0,\cdots,t$.
\end{theorem}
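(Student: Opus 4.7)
The plan is to derive the chamber structure by analyzing when the polynomial inequality defining $\delta$-(semi)stability changes character as $\delta$ varies. Fix a pair $(\mathcal{F},\varphi)$ of type $P$ and a proper subpair $(\mathcal{F}^\prime,\varphi^\prime)$. Writing $r=r(F_{\mathcal{E}}(\mathcal{F}))$, $r^\prime=r(F_{\mathcal{E}}(\mathcal{F}^\prime))$, the condition $p_{(\mathcal{F}^\prime,\varphi^\prime)}\le p_{(\mathcal{F},\varphi)}$ is equivalent to the polynomial inequality (in $m$)
\ben
D_{\mathcal{F}^\prime}(\delta):=r\cdot P_{\mathcal{E}}(\mathcal{F}^\prime)-r^\prime\cdot P_{\mathcal{E}}(\mathcal{F})+\bigl(r\cdot\epsilon(\varphi^\prime)-r^\prime\cdot\epsilon(\varphi)\bigr)\delta\;\le\;0.
\een
The set of \emph{critical values} should be defined as those $\delta\ge 0$ for which some such $D_{\mathcal{F}^\prime}(\delta)$ vanishes identically (or switches sign in lexicographic order) when $\mathcal{F}^\prime$ ranges over subsheaves underlying subpairs of some $\delta$-semistable pair of type $P$. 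Outside these values the sign of every $D_{\mathcal{F}^\prime}(\delta)$ is strict, so $\delta$-semistability coincides with $\delta$-stability, giving item $(iii)$ and item $(i)$ for pairs in $\mathcal{R}^{ss}(\delta)$.

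To make the critical set finite, the key input is boundedness. First I would fix a large interval $[0,\Delta]$ of stability parameters and observe that by  Proposition \ref{bound1} applied uniformly (together with Lemma \ref{case1-bound} or Lemma \ref{case2-bound} which give bounds depending only on $P$, $\mathcal{F}_{0}$, $X$, not on the specific $\delta$), the family of sheaves $\mathcal{F}$ underlying $\delta$-semistable pairs of type $P$ with $\delta\in[0,\Delta]$ is bounded. Consequently $\hat\mu_{\max}(F_{\mathcal{E}}(\mathcal{F}))$ is uniformly bounded, and by Grothendieck's Lemma \ref{Gro-lem} together with Kleiman's criterion for stacks, the set of modified Hilbert polynomials $P_{\mathcal{E}}(\mathcal{F}^\prime)$ realised by saturated subsheaves of such $\mathcal{F}$ with $p_{\mathcal{E}}(\mathcal{F}^\prime)$ close to $p_{\mathcal{E}}(\mathcal{F})$ is \emph{finite}. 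Since for any subsheaf, its saturation has the same multiplicity and a larger Hilbert polynomial, it suffices to consider saturated subsheaves. Each such $P^\prime$ yields at most one critical value of $\delta$ (from solving $D_{\mathcal{F}^\prime}=0$ with the lexicographic convention on $\mathbb{Q}[m]$), hence only finitely many critical values $0<\delta^1<\cdots<\delta^t<\infty$ arise. Letting $\Delta\to\infty$ one also verifies that above the largest critical value the stability condition stabilises, corresponding to $\delta^{t+1}=+\infty$.

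For item $(ii)$, suppose $\delta\in(\delta^i,\delta^{i+1})$ and $(\mathcal{F},\varphi)$ is $\delta$-stable of type $P$. For any proper subpair $(\mathcal{F}^\prime,\varphi^\prime)$, the polynomial $D_{\mathcal{F}^\prime}(\delta^\prime)$ depends linearly on $\delta^\prime$, and strict negativity at $\delta$ together with the absence of critical values in the open interval forces non-positivity at the endpoints $\delta^i$ and $\delta^{i+1}$, i.e.\ $\delta^i$- and $\delta^{i+1}$-semistability. This establishes the two inclusions $\mathcal{R}^s(\delta^i)\cup\mathcal{R}^s(\delta^{i+1})\subseteq\mathcal{R}^s(\delta)$ and $\mathcal{R}^{ss}(\delta)\subseteq\mathcal{R}^{ss}(\delta^i)\cap\mathcal{R}^{ss}(\delta^{i+1})$, from which the wall-crossing diagram follows by applying Theorem \ref{GIT-quo1} to pass from $\mathrm{SL}(V)$-invariant open subsets of $\mathcal{Z}^\prime$ to their good quotients.

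The main obstacle I expect is the finiteness step: one must confirm that the boundedness constants in Lemma \ref{case1-bound} and Lemma \ref{case2-bound} can indeed be chosen uniformly for $\delta$ in a compact subset of $\mathbb{Q}[m]$ (inspection shows that the estimates there depend only on $P$, $\mathcal{F}_{0}$, and $X$ via $\hat\mu_{\min}(F_{\mathcal{E}}(\mathcal{F}_{0}))$ and $\delta_{1}$, so uniformity on bounded $\delta$-ranges holds). Once uniform boundedness is in hand, the finiteness of modified Hilbert polynomials of relevant subsheaves --- which is the stacky analogue of the classical input in \cite{Wan}, Section 5 --- reduces the chamber decomposition to a bookkeeping of finitely many linear equations in $\delta$.
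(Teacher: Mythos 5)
Your overall architecture is the same as the paper's (which follows [\cite{Wan}, Section 5]): walls as zeros of the affine-in-$\delta$ polynomials $D_{\mathcal{F}'}(\delta)$, boundedness uniform in $\delta$ from Lemma \ref{case1-bound}/\ref{case2-bound} and Proposition \ref{bound1}, Grothendieck's Lemma \ref{Gro-lem} plus Kleiman's criterion for finiteness, and Theorems \ref{git=pair1}, \ref{git=pair2}, \ref{GIT-quo1} to pass to the quotients. The genuine gap is the step you dispose of with ``letting $\Delta\to\infty$ one also verifies that above the largest critical value the stability condition stabilises'': this presupposes that a largest critical value exists, which is precisely what must be proved, and it is where the paper's proof spends its effort. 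Your compact-interval argument gives finitely many walls in each $[0,\Delta]$, but a priori their number can grow with $\Delta$, and the Grothendieck-lemma finiteness is genuinely not uniform in $\delta$: at a wall produced by a subpair with $\epsilon(\varphi')=1$ (i.e.\ $\mathrm{im}\,\varphi\subseteq\mathcal{F}'$) one has $\hat{\mu}_{\mathcal{E}}(\mathcal{F}')=\hat{\mu}_{\mathcal{E}}(\mathcal{F})-\delta_{1}(r-r')/(rr')$, which is unbounded below as the degree-$(d-1)$ coefficient $\delta_{1}$ of $\delta$ grows, so the family of potential destabilizers of this type is not bounded over all $\delta$. The paper closes this by first proving the stacky analogue of [\cite{Wan}, Lemma 5.1] — the family of subsheaves $\mathrm{im}\,\varphi$ coming from $\delta$-semistable pairs for \emph{all} $\delta$ is bounded (via the universal morphism over the projective parameter space) — and then the analogue of [\cite{Wan}, Proposition 5.2]: there is $\delta_{\max}$ of degree $\deg P-1$ such that for $\delta>\delta_{\max}$, $\delta$-semistability is equivalent to ``$\mathcal{F}$ pure and $\dim\mathrm{coker}\,\varphi<\deg P$'', hence independent of $\delta$ and with no strictly semistable pairs; this confines all walls to $[0,\delta_{\max}]$ and also absorbs the regime $\deg\delta\geq\deg P$ (Lemma \ref{no-str-semi}, Lemma \ref{orbi-PT}). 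Your proposal establishes neither the boundedness of $\{\mathrm{im}\,\varphi\}$ nor $\delta_{\max}$, so the finiteness of the critical values is not proved.

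A smaller but real slip occurs in item $(ii)$: the argument you give (a $\delta$-stable pair in the open chamber is semistable at both endpoints, by linearity of $D_{\mathcal{F}'}$) yields only $\mathcal{R}^{ss}(\delta)\subseteq\mathcal{R}^{ss}(\delta^i)\cap\mathcal{R}^{ss}(\delta^{i+1})$ after combining with $(iii)$; it does not give $\mathcal{R}^s(\delta^i)\cup\mathcal{R}^s(\delta^{i+1})\subseteq\mathcal{R}^s(\delta)$. For that inclusion one argues in the opposite direction: if $(\mathcal{F},\varphi)$ were $\delta^i$-stable but not $\delta$-stable for some $\delta$ in the adjacent chamber, the first parameter $\delta^*$ at which some $D_{\mathcal{F}'}$ vanishes would produce a strictly $\delta^*$-semistable pair, i.e.\ a critical value inside the open interval, a contradiction; extracting such a minimal $\delta^*$ again uses the finiteness of the relevant Hilbert polynomials (the stacky version of [\cite{Wan}, Lemma 5.3 and Corollary 5.4], which the paper invokes). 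This is routine once the boundedness input of the previous paragraph is in place, but as written the inclusion is asserted rather than derived.
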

\begin{proof}
With the notation in Remark \ref{univ-fam}, we have the following universal morphism
\ben
p_{\mathcal{X}\times\mathbb{P}\times\widetilde{\mathcal{Q}},\mathcal{X}}^*(H^0(F_{\mathcal{E}}(\mathcal{F}_{0})(m))\otimes\mathcal{E}\otimes\pi^*\mathcal{O}_{X}(-m))\otimes p_{\mathcal{X}\times\mathbb{P}\times\widetilde{\mathcal{Q}},\mathbb{P}}^*\mathcal{O}_{\mathbb{P}}\to p_{\mathcal{X}\times\mathbb{{P}}\times\widetilde{\mathcal{Q}},\mathcal{X}\times\widetilde{\mathcal{Q}}}^*\widetilde{\mathcal{F}}\otimes p_{\mathcal{X}\times\mathbb{P}\times\widetilde{\mathcal{Q}},\mathbb{P}}^*\mathcal{O}_{\mathbb{P}}(1).
\een
As in [\cite{Wan}, Lemma 5.1], one can show	that the set-theoretic family of subsheaves $\mathrm{im}\varphi$ from $\delta$-semistable pairs $(\mathcal{F},\varphi)$ for all $\delta$ is bounded. Using this result together with Grothendieck's Lemma \ref{Gro-lem} and  Kleiman criterion for stacks, and following the similar argument in the proof of  [\cite{Wan}, Proposition 5.2], one has the following result:

There exists a rational polynomial $\delta_{\max}$ of degree $(\deg\,P-1)$ such that for each $\delta>\delta_{\max}$ and each pair $(\mathcal{F},\varphi)$, we have
\ben
\mbox{ $(\mathcal{F},\varphi)$ is $\delta$-semistable}\Longleftrightarrow\mbox{$\mathcal{F}$ is pure and $\mathrm{dim}\, \mathrm{coker}\varphi<\dim \mathcal{F}$}.
\een
It is easy to verify that those $\delta$ satisfying $\deg\delta\geq\dim\mathcal{X}  \;(\mbox{or more generally } \deg\,\delta\geq\deg\,P)$ are also allowed in the inequality $\delta>\delta_{\max}$.

The proof  of [\cite{Wan}, Lemma 5.3 and Corollary 5.4]  also holds in our stacky version. We complete the proof by using  the results above, Theorem \ref{git=pair1} and Theorem \ref{git=pair2}.
\end{proof}
\begin{remark}
Notice that a critical value  is defined to be a value such that when $\delta$ crosses it, the moduli space $M^{ss}_{\mathcal{X}/k}(\mathcal{F}_{0},P,\delta)$ changes. As  in [\cite{Wan}], the maps in the chamber structure above are determined by the property $(ii)$ in Theorem \ref{Chamber} and the universal properties of the universal good quotient in Theorem \ref{GIT-quo1}. In addition, we have $\delta_{\max}\geq\delta^t$. One can choose $\delta_{\max}$ to be $\delta^t$. By the properties $(i), (iii)$ and Theorem \ref{GIT-quo1}, 
 for any two parameters $\delta^\prime, \delta^{\prime\prime}>\delta^{t}$,  one has $M^{s}_{\mathcal{X}/k}(\mathcal{F}_{0},P,\delta^\prime)=M^{ss}_{\mathcal{X}/k}(\mathcal{F}_{0},P,\delta^\prime)\cong M^{ss}_{\mathcal{X}/k}(\mathcal{F}_{0},P,\delta^{\prime\prime})=M^{s}_{\mathcal{X}/k}(\mathcal{F}_{0},P,\delta^{\prime\prime})$. This implies that as is pointed out  in the introduction of  [\cite{Lin18}],  one can choose  a stability parameter $\hat{\delta}$ of degree $(\deg\,P-1)$ with $\hat{\delta}>\delta_{\max}$ such that the construction of moduli space $M^{ss}_{\mathcal{X}/k}(\mathcal{F}_{0},P,\hat{\delta})$ implies the existence of $M^{ss}_{\mathcal{X}/k}(\mathcal{F}_{0},P,\delta)$ for any $\delta$ satisfying $\deg\,\delta\geq\deg\,P$. Furtherly, it is interesting to study the explicit wall-crossing behavior for variation of moduli spaces.
\end{remark}

\section{Deformation-obstruction theories and virtual fundamental classes}
We generalize the deformation and obstruction theory of $\delta$-stable pairs for  smooth projective varieties  in [\cite{Lin18}, Theorem 1.2] and the existence of virtual fundamental classes of moduli spaces of $\delta$-stable pairs for a smooth projective surface in [\cite{Lin18}, Theorem 1.3] to the case of smooth projective Deligne-Mumford stacks. We  will generalize the deformation and obstruction theory developed in [\cite{Inaba}] to our case as an alternative approach, which is useful for proving the existence of virtual fundamental classes as in [\cite{PT1,HT09}] for the case of dimension three. Finally, we give a definition of a stacky version of  Pandharipande-Thomas invariants.

\subsection{Deformation and obstruction theories}
It is proved in [\cite{Kre}, Section 5] that the resolution property holds for a   projective Deligne-Mumford stack $\mathcal{X}$, that is, any coherent sheaf admits a surjective morphism from a locally free coherent sheaf of finite rank. By  inductive use of this property, every coherent sheaf on $\mathcal{X}$ has a locally free resolution. It is interesting to know  whether the locally free resolution  is of finite length or not. It is shown in [\cite{BS}, Appendix B] that a smooth projective Deligne-Mumford stack is of the form $[Z/G]$ where $Z$ is a smooth quasi-projective variety and $G$ is a linear algebraic group,
and hence any coherent sheaf on a smooth projective stack  $\mathcal{X}$  admits a finite resolution by locally free sheaves of finite rank by using  [\cite{CG}, Proposition 5.1.28 and Theorem 5.1.30] and the fact that the category of coherent sheaves on $\mathcal{X}=[Z/G]$ is equivalent to the category of coherent  $G$-equivariant sheaves on $Z$ (see [\cite{BS}, Remark 2.17]).  In this section, we will consider smooth projective Deligne-Mumford stacks.

Let $\mathcal{X}$ be a smooth projective Deligne-Mumford stack over $k$ with a moduli scheme $\pi:\mathcal{X}\to X$ and a polarization $(\mathcal{E},\mathcal{O}_{X}(1))$.
Since $\mathcal{E}$ is a generating sheaf for $\mathcal{X}$, the morphism
\ben
\pi^*(\pi_{*}\mathcal{H}om_{\mathcal{O}_{\mathcal{X}}}(\mathcal{E},\mathcal{G}))\otimes_{\mathcal{O}_{\mathcal{X}}}\mathcal{E}\to\mathcal{G}
\een
is surjective for any coherent sheaf $\mathcal{G}$. As $\pi_{*}\mathcal{H}om_{\mathcal{O}_{\mathcal{X}}}(\mathcal{E},\mathcal{G})$ is a coherent sheaf on the projective scheme $X$, then we can take a positive integer $n_{1}\geq m$ large enough such that $\pi_{*}\mathcal{H}om_{\mathcal{O}_{\mathcal{X}}}(\mathcal{E},\mathcal{G})(n_{1})$ is generated by global sections where $m$ is the integer chosen  in the Section 4. Then we have the following surjective morphism
\ben
H^0(\pi_{*}\mathcal{H}om_{\mathcal{O}_{\mathcal{X}}}(\mathcal{E},\mathcal{G})(n_{1}))\otimes\mathcal{O}_{X}(-n_{1})\to\pi_{*}\mathcal{H}om_{\mathcal{O}_{\mathcal{X}}}(\mathcal{E},\mathcal{G})\to0.
\een 
Thus, we have the following surjective morphism
\ben
H^0(\pi_{*}\mathcal{H}om_{\mathcal{O}_{\mathcal{X}}}(\mathcal{E},\mathcal{G})(n_{1}))\otimes\pi^*\mathcal{O}_{X}(-n_{1})\otimes\mathcal{E}\to\pi^*(\pi_{*}\mathcal{H}om_{\mathcal{O}_{\mathcal{X}}}(\mathcal{E},\mathcal{G}))\otimes\mathcal{E}\to\mathcal{G}\to0.
\een
Denote by $W_{1}=H^0(\pi_{*}\mathcal{H}om_{\mathcal{O}_{\mathcal{X}}}(\mathcal{E},\mathcal{G})(n_{1}))$, we have a  surjection from a locally free sheaf $W_{1}\otimes\mathcal{E}\otimes\pi^*\mathcal{O}_{X}(-n_{1})\to\mathcal{G}\to0$. Let $\mathcal{G}_{1}=\ker(W_{1}\otimes\mathcal{E}\otimes\pi^*\mathcal{O}_{X}(-n_{1})\to\mathcal{G})$. Applying the same process above to $\mathcal{G}_{1}$, one can get
\ben
W_{2}\otimes\mathcal{E}\otimes\pi^*\mathcal{O}_{X}(-n_{2})\twoheadrightarrow\mathcal{G}_{1}\hookrightarrow W_{1}\otimes\mathcal{E}\otimes\pi^*\mathcal{O}_{X}(-n_{1})\twoheadrightarrow\mathcal{G}
\een
or a locally free resolution
\ben
W_{2}\otimes\mathcal{E}\otimes\pi^*\mathcal{O}_{X}(-n_{2})\to W_{1}\otimes\mathcal{E}\otimes\pi^*\mathcal{O}_{X}(-n_{1})\to\mathcal{G}\to0
\een
where $n_{1},n_{2}\geq m$ are  positive integers large enough and $W_{1}, W_{2}$ are vector spaces.
The next step is again applying the same process to $\mathcal{G}_{2}=\ker(W_{2}\otimes\mathcal{E}\otimes\pi^*\mathcal{O}_{X}(-n_{2})\to W_{1}\otimes\mathcal{E}\otimes\pi^*\mathcal{O}_{X}(-n_{1}))$. By induction and the  finite resolution property in [\cite{BS}, Lemma B.3], we  construct a finite locally free resolution of $\mathcal{G}$ (see also Lemma \ref{finite-res}):
\ben
\cdots\to\mathcal{W}_{i+1}\to\mathcal{W}_{i}\to\cdots\to\mathcal{W}_{1}\to\mathcal{G}\to0
\een
where $\mathcal{W}_{i}=W_{i}\otimes\mathcal{E}\otimes\pi^*\mathcal{O}_{X}(-n_{i})$
with $n_{i}\geq m$ large enough and $W_{i}$ is a vector space for any $i\geq1$.

Let $\mathcal{A}rt_{k}$ be the category of Artinian local $k$-algebras with residue field $k$.
For $A, A^\prime\in\mathrm{Ob}\mathcal{A}rt_{k}$ and  let the short exact sequence 
\ben
0\to I \to A^\prime\to A\to0
\een
be a small extension, that is, $\mathtt{m}_{A^\prime}I=0$. Let $\mathcal{F}_{0}$ be any fixed coherent sheaf  on $\mathcal{X}$, and let $\delta$ be any  rational polynomial with positive leading coefficient and $P$  any given polynomial of degree $\deg\,P\leq d$.   Let $[(\mathcal{F},\varphi)]$ be a point in the moduli space $M^s_{\mathcal{X}/k}(\mathcal{F}_{0},P,\delta)$. Suppose $\check{\varphi}_{A}:\mathcal{F}_{0}\otimes A\to \mathcal{F}_{A}$ over $\mathcal{X}_{A}=\mathcal{X}\times \mathrm{Spec}\,A$ is a flat extension of $(\mathcal{F},\varphi)$ where $\mathcal{F}_{A}$ is flat over $A$. Let $\mathbf{I}^{\bullet}:=\{\mathcal{F}_{0} \xrightarrow{\varphi} \mathcal{F}\}$ and  $\mathbf{I}_{A}^{\bullet}:=\{\mathcal{F}_{0}\otimes A\xrightarrow{\check{\varphi}_{A}} \mathcal{F}_{A}\}$ be the complexes concentrated in degree 0 and 1.  Since $\mathcal{X}_{A}\to\mathrm{Spec}\,A$ is a family of projective stacks by Theorem \ref{base-chag-proj}, the morphism 
\ben
(\pi\times\mathrm{id}_{\mathrm{Spec}\,A})^*\left((\pi\times\mathrm{id}_{\mathrm{Spec}\,A})_*\mathcal{H}om(p_{\mathcal{X}_{A},\mathcal{X}}^*\mathcal{E},\mathcal{F}_{A})\right)\otimes p_{\mathcal{X}_{A},\mathcal{X}}^*\mathcal{E}\to\mathcal{F}_{A}
\een
is surjective. Since any coherent sheaf on  $\mathcal{X}_{A}$ has a locally free resolution,
together with using the finite resolution property  for any fiber of $\mathcal{X}_{A}\to\mathrm{Spec}\,A$, the similar argument as above shows that
there exists a finite locally free resolution of $\mathcal{F}_{A}$: 
\ben
\cdots\to\mathcal{W}_{A}^{i-1}\to\mathcal{W}_{A}^{i}\to\cdots \to\mathcal{W}_{A}^{0}\to\mathcal{F}_{A}\to0
\een
where $\mathcal{W}_{A}^i=W^i\otimes p_{\mathcal{X}_{A},\mathcal{X}}^*\mathcal{E}\otimes p_{\mathcal{X}_{A},\mathcal{X}}^*\pi^*\mathcal{O}_{X}(-n_{i})$ with $n_{i}\geq m$ large enough and $W^i$ is a vector space for any $i\leq0$. Again, we take a finite locally free resolution of $\mathcal{F}_{0}$ as follows:
\ben
\cdots\to\mathcal{V}^{i-1}\to\mathcal{V}^{i}\to\cdots\to\mathcal{V}^{0}\to\mathcal{F}_{0}\to0
\een
where $\mathcal{V}^{i}=V^{i}\otimes\mathcal{E}\otimes\pi^*\mathcal{O}_{X}(-m_{i})$
with $m_{i}\geq m$ large enough and $V^{i}$ is a vector space for any $i\leq0$. As in [\cite{Lin18}, Section 5A], lifting $\check{\varphi}_{A}:\mathcal{F}_{0}\otimes A\to \mathcal{F}_{A}$ to a morphism of complexes $\check{\varphi}^\bullet_{A}:\mathcal{V}^{\bullet}\otimes A\to\mathcal{W}_{A}^\bullet$, we have the  commutative diagram as follows
\ben
\xymatrixcolsep{4pc}\xymatrix{
 \cdots\ar[r]^{d_{\mathcal{V}}^{-2}\otimes A} &\mathcal{V}^{-1}\otimes A\ar[d]^{\check{\varphi}_{A}^{-1}} \ar[r]^{d_{\mathcal{V}}^{-1}\otimes A} &    \mathcal{V}^0\otimes A\ar[d]^{\check{\varphi}_{A}^0} \ar[r]& \mathcal{F}_{0}\otimes A\ar[d]^{\check{\varphi}_{A}}\ar[r]& 0\\
\cdots\ar[r]^{d_{\mathcal{W}_{A}}^{-2}} &	\mathcal{W}_{A}^{-1}\ar[r]^{d_{\mathcal{W}_{A}}^{-1}} &\mathcal{W}_{A}^0 \ar[r]& \mathcal{F}_{A}\ar[r] &0.
}
\een
Notice that  $n_{i}, m_{i}\geq m$, one has
\ben
&&H^i(\mathcal{X},\mathcal{F}\otimes\mathcal{E}^\vee\otimes\pi^*\mathcal{O}_{X}(n_{j}))=H^i(X,F_{\mathcal{E}}(\mathcal{F})(n_{j}))=0,\\
&&H^i(\mathcal{X},\mathcal{F}\otimes\mathcal{E}^\vee\otimes\pi^*\mathcal{O}_{X}(m_{j}))=H^i(X,F_{\mathcal{E}}(\mathcal{F})(m_{j}))=0,
\een
for all $i>0$ and $j\leq0$. 

With these preparations, by the similar argument in  [\cite{Lin18}, Section 5], we have the following straightforward generalization of [\cite{Lin18}, Theorem 1.2].
\begin{theorem}\label{def-ob1}
Let $[(\mathcal{F},\varphi)]$ be a point in the moduli space $M^s_{\mathcal{X}/k}(\mathcal{F}_{0},P,\delta)$. Let $\check{\varphi}_{A}:\mathcal{F}_{0}\otimes_{k} A\to \mathcal{F}_{A}$ be a morphism over $\mathcal{X}_{A}=\mathcal{X}\times_{\mathrm{Spec}\,k} \mathrm{Spec}\,A$ extending  $\varphi$, where $\mathcal{F}_{A}$ is a coherent sheaf flat over $A$. Then  for a given small extension $0\to I \to A^\prime\xrightarrow{\sigma} A\to0$, there is a class
\ben
\mathrm{ob}(\check{\varphi}_{A},\sigma)\in\mathrm{Ext}^1(\mathbf{I}^\bullet,\mathcal{F}\otimes I)
\een
such that there exists a flat extension of $\check{\varphi}_{A}$ over $\mathcal{X}_{A^\prime}$ if and only if $\mathrm{ob}(\check{\varphi}_{A},\sigma)=0$. If $\mathrm{ob}(\check{\varphi}_{A},\sigma)=0$, the space of extensions  is a torsor under $\mathrm{Hom}(\mathbf{I}^\bullet,\mathcal{F}\otimes I)$.
\end{theorem}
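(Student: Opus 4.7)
The plan is to follow the argument of [Lin18, Theorem 1.2], adapted to the stacky setting, by working with the explicit finite locally free resolutions $\mathcal{V}^\bullet\to\mathcal{F}_{0}$ and $\mathcal{W}_{A}^\bullet\to\mathcal{F}_{A}$ with all terms of the form $V^i\otimes\mathcal{E}\otimes\pi^*\mathcal{O}_{X}(-m_{i})$ and $W^i\otimes p_{\mathcal{X}_{A},\mathcal{X}}^*\mathcal{E}\otimes p_{\mathcal{X}_{A},\mathcal{X}}^*\pi^*\mathcal{O}_{X}(-n_{i})$, as built immediately before the theorem. Since these terms depend on $A$ only through scalars, I obtain canonical lifts $\mathcal{W}_{A^\prime}^\bullet$ over $\mathcal{X}_{A^\prime}$ of the same form, and I can lift each differential $d_{\mathcal{W}_{A}}^i$ to an $A^\prime$-linear map $\widetilde{d}_{\mathcal{W}_{A^\prime}}^i$; the lifts exist because the $\mathcal{W}_{A^\prime}^i$ are $A^\prime$-flat and $A^\prime\to A$ is surjective on the relevant $\mathrm{Hom}$ sheaves. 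The same construction applied to $\check{\varphi}_{A}^\bullet:\mathcal{V}^\bullet\otimes A\to\mathcal{W}_{A}^\bullet$ produces $A^\prime$-linear lifts $\widetilde{\varphi}_{A^\prime}^i:\mathcal{V}^i\otimes A^\prime\to\mathcal{W}_{A^\prime}^i$. A flat extension of $\check{\varphi}_{A}$ over $\mathcal{X}_{A^\prime}$ is then the same as a choice of such lifts for which $\widetilde{d}^{i+1}\circ\widetilde{d}^i=0$ and the squares $\widetilde{\varphi}_{A^\prime}^{i+1}\circ d_{\mathcal{V}}^i\otimes A^\prime=\widetilde{d}_{\mathcal{W}_{A^\prime}}^i\circ\widetilde{\varphi}_{A^\prime}^i$ commute.

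Second, I will extract the obstruction. The two failures $\widetilde{d}^{i+1}\circ\widetilde{d}^i$ and $\widetilde{d}_{\mathcal{W}_{A^\prime}}^i\circ\widetilde{\varphi}_{A^\prime}^i-\widetilde{\varphi}_{A^\prime}^{i+1}\circ(d_{\mathcal{V}}^i\otimes A^\prime)$ take values in $\mathcal{W}_{A^\prime}^\bullet\otimes I=\mathcal{W}^\bullet\otimes I$ (using $\mathtt{m}_{A^\prime}I=0$), and together give a total cocycle in the Hom double complex $\mathrm{Hom}^\bullet(\mathrm{Cone}(\mathcal{V}^\bullet\to\mathcal{W}^\bullet)[-1],\mathcal{W}^\bullet\otimes I)$. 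Since the mapping cone of $\mathcal{V}^\bullet\to\mathcal{W}^\bullet$ is quasi-isomorphic to $\mathbf{I}^\bullet[1]$, and since $\mathcal{W}^\bullet\xrightarrow{\sim}\mathcal{F}$, this double complex computes $\mathrm{Ext}^\bullet(\mathbf{I}^\bullet,\mathcal{F}\otimes I)$; the total cocycle I obtain lies in degree $+1$ and defines the class $\mathrm{ob}(\check{\varphi}_{A},\sigma)\in\mathrm{Ext}^1(\mathbf{I}^\bullet,\mathcal{F}\otimes I)$. The cohomological input that makes the local-to-global computation work is that the integers $m_{i},n_{i}\geq m$ are chosen so that $H^{>0}(\mathcal{X},\mathcal{F}\otimes\mathcal{E}^\vee\otimes\pi^*\mathcal{O}_{X}(n_{i}))=H^{>0}(X,F_{\mathcal{E}}(\mathcal{F})(n_{i}))=0$ and similarly for the $m_{i}$, so that applying the (exact) functor $F_{\mathcal{E}}$ and $\pi_{*}$ commutes with taking $\mathrm{Hom}$'s against the terms of the resolutions in the relevant range.

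Third, I will verify that the vanishing of $\mathrm{ob}(\check{\varphi}_{A},\sigma)$ is equivalent to the existence of a flat extension, and that the extensions form a torsor. One direction is tautological: an extension furnishes lifts whose failure cocycle is zero. Conversely, if the class vanishes, the cocycle is a coboundary, which allows me to correct the chosen lifts $\widetilde{d}^i$ and $\widetilde{\varphi}_{A^\prime}^i$ by elements of $\mathcal{W}^\bullet\otimes I$ so that the corrected data give an honest chain complex and chain map; flatness of the cokernel $\mathcal{F}_{A^\prime}:=\mathrm{coker}(\widetilde{d}_{\mathcal{W}_{A^\prime}}^0)$ follows from flatness of each $\mathcal{W}_{A^\prime}^i$ together with the local criterion for flatness applied to the resolution. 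For the torsor statement, two such extensions differ, after choosing lifts, by a pair of $I$-valued corrections defining a chain map of resolutions from $\mathcal{V}^\bullet\otimes A$ to $\mathcal{W}^\bullet\otimes I$ in the appropriate direction, hence by an element of $\mathrm{Hom}(\mathbf{I}^\bullet,\mathcal{F}\otimes I)$. The main obstacle I anticipate is the well-definedness of $\mathrm{ob}(\check{\varphi}_{A},\sigma)$ independently of the choice of resolutions and of the intermediate lifts; this is handled, as in the scheme case, by the uniqueness up to chain homotopy of projective (here, locally free with vanishing higher cohomology in the relevant twists) resolutions, but the bookkeeping of signs and the shift convention matching $\mathbf{I}^\bullet=\{\mathcal{F}_{0}\to\mathcal{F}\}$ concentrated in degrees $0,1$ to the cone construction must be carried out carefully to land in $\mathrm{Ext}^1$ rather than $\mathrm{Ext}^2$.
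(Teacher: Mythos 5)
Your proposal is correct and follows essentially the same route as the paper: the paper constructs exactly these finite resolutions by sheaves of the form $W_i\otimes\mathcal{E}\otimes\pi^*\mathcal{O}_{X}(-n_i)$ with the cohomology vanishing $H^{>0}(X,F_{\mathcal{E}}(\mathcal{F})(n_i))=0$, lifts $\check{\varphi}_A$ to a morphism of complexes, and then runs the cocycle/coboundary argument of [Lin18, Section 5] verbatim to produce $\mathrm{ob}(\check{\varphi}_A,\sigma)\in\mathrm{Ext}^1(\mathbf{I}^\bullet,\mathcal{F}\otimes I)$ and the torsor statement. Your spelled-out version (cone identification $\mathrm{Cone}(\mathcal{V}^\bullet\to\mathcal{W}^\bullet)\simeq\mathbf{I}^\bullet[1]$, lifting of differentials, correction by coboundaries, local criterion for flatness, homotopy-independence) is precisely what that citation compresses.
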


\begin{remark}
The deformation and obstruction theory in [\cite{Lin18}, Theorem 1.2] is analogous to the one in  [\cite{She}, Theorem 4.2] (see also  [\cite{Illu}, IV 3.2.12])   for any  small  extension  $0\to I \to A^\prime\to A\to0$ when we view the 2-term complex as a mapping cone. Actually Theorem \ref{def-ob1} provides a deformation and obstruction theory for the stacky version of higher rank Pandharipande-Thomas stable pairs [\cite{She}].
\end{remark}

The deformation and obstruction theory in Theorem \ref{def-ob1} is suitable for showing the existence of virtual fundamental classes for the case of dimension two in the next subsection, but it seems difficult to deal with 3-dimensional case (see Lemma \ref{def-ob-cmg1} and Remark \ref{3-term}). Now, we take another approach to give a stacky version of  the deformation and obstruction theory in [\cite{Inaba}, Section 2]. We start with 
\begin{lemma}\label{qua-iso1}
	Suppose $B\in\mathrm{Ob}\mathcal{A}rt_{k}$, and let $\mathcal{F}^\bullet$ be a bounded complex of coherent sheaves over  $\mathcal{X}_{B}:=\mathcal{X}\times\mathrm{Spec}\,B$ where each $\mathcal{F}^j$ is flat over $B$. Then there are a complex $V^\bullet=(V^i, d_{V^\bullet}^i)$ where $V^i=L_{i}\otimes p_{\mathcal{X}_{B},\mathcal{X}}^*\mathcal{E}\otimes p_{\mathcal{X}_{B},\mathcal{X}}^*\pi^*\mathcal{O}_{X}(-m_{i})$   and a quasi-isomorphism $\phi^{\bullet}:V^\bullet\to\mathcal{F}^\bullet$ such that $(V^\bullet\to\mathcal{F}^\bullet,V^\bullet)$ satisfies the following two conditions:
	
	$(i)$  $H^c(\mathcal{X}_{B},\mathcal{F}^j\otimes p_{\mathcal{X}_{B},\mathcal{X}}^*\mathcal{E}^\vee\otimes p_{\mathcal{X}_{B},\mathcal{X}}^*\pi^*\mathcal{O}_{X}(m_{i}))=0$ for any $i, j$ and any $c>0$.
	
	$(ii)$ Let $U^\bullet:=\mathcal{F}^\bullet\oplus V^\bullet[1]$ be the mapping cone of $\phi^\bullet$ and set $W^i:=\ker(U^i\to U^{i+1})$. Then
	the map $H^0(\mathcal{X}_{B},U^{j-1}\otimes p_{\mathcal{X}_{B},\mathcal{X}}^*\mathcal{E}^\vee\otimes p_{\mathcal{X}_{B},\mathcal{X}}^*\pi^*\mathcal{O}_{X}(m_{j}))\to H^0(\mathcal{X}_{B},W^{j}\otimes p_{\mathcal{X}_{B},\mathcal{X}}^*\mathcal{E}^\vee\otimes p_{\mathcal{X}_{B},\mathcal{X}}^*\pi^*\mathcal{O}_{X}(m_{j}))$ is surjective for any $j$ and $H^c(\mathcal{X}_{B},W^{j}\otimes p_{\mathcal{X}_{B},\mathcal{X}}^*\mathcal{E}^\vee\otimes p_{\mathcal{X}_{B},\mathcal{X}}^*\pi^*\mathcal{O}_{X}(m_{i}))=0$ for $i\leq j$ and $c>0$.\\
Here, $L_{i}$ is a vector space and $m_{i}$ is a sufficiently large integer  for any $i$.
\end{lemma}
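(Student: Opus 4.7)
The plan is to mimic Inaba's construction in [Inaba, Lemma 2.1] but work throughout with twists of the generating sheaf $\mathcal{E}$ on $\mathcal{X}_{B}$, reducing all cohomological vanishing statements to Serre vanishing on the projective scheme $X_{B}=X\times\mathrm{Spec}\,B$ via the exact functor $F_{\mathcal{E}}$. Since $\mathcal{X}_{B}\to\mathrm{Spec}\,B$ is a family of projective stacks by Theorem \ref{base-chag-proj}, for every coherent sheaf $\mathcal{G}$ on $\mathcal{X}_{B}$ there exists $N(\mathcal{G})$ such that for $m\geq N(\mathcal{G})$ one has $H^{c}(\mathcal{X}_{B},\mathcal{G}\otimes p_{\mathcal{X}_{B},\mathcal{X}}^{*}\mathcal{E}^\vee\otimes p_{\mathcal{X}_{B},\mathcal{X}}^{*}\pi^{*}\mathcal{O}_{X}(m))=H^{c}(X_{B},F_{p^{*}\mathcal{E}}(\mathcal{G})(m))=0$ for $c>0$, and the twist is globally generated on $X_{B}$; by the argument recalled at the start of Section 5 it is then a quotient of a locally free sheaf of the form $W\otimes p_{\mathcal{X}_{B},\mathcal{X}}^{*}\mathcal{E}\otimes p_{\mathcal{X}_{B},\mathcal{X}}^{*}\pi^{*}\mathcal{O}_{X}(-m)$.

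The construction proceeds by descending induction on degree. Without loss of generality $\mathcal{F}^{i}=0$ for $i>b$, and we set $V^{i}=0$ for $i>b$. At degree $b$, pick $m_{b}$ large enough that $\mathcal{F}^{b}$ can be written as a quotient $\phi^{b}\colon V^{b}:=V_{b}\otimes p^{*}\mathcal{E}\otimes p^{*}\pi^{*}\mathcal{O}_{X}(-m_{b})\twoheadrightarrow\mathcal{F}^{b}$. Having constructed $V^{j},\dots,V^{b}$ and the maps $\phi^{j},\dots,\phi^{b},\,d_{V^\bullet}^{j},\dots,d_{V^\bullet}^{b-1}$ so that the partial mapping cone is exact in degrees $\geq j+1$ and all the conditions (i), (ii) established so far hold, define $W^{j}=\ker(U^{j}\to U^{j+1})$. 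Choose $m_{j-1}\geq m_{j}$ so large that (a) $\mathcal{F}^{j-1}(m_{i})$ and $W^{j}(m_{i})$ have no higher cohomology for every $i\leq j-1$ that will be used subsequently, (b) $W^{j}\otimes p^{*}\mathcal{E}^\vee\otimes p^{*}\pi^{*}\mathcal{O}_{X}(m_{j-1})$ is globally generated on $X_{B}$, and (c) the vanishing $H^{1}(W^{j-1}(m_{j}))=0$ holds (needed in the next step). Then pick a vector space $V_{j-1}$ and a surjection $V^{j-1}:=V_{j-1}\otimes p^{*}\mathcal{E}\otimes p^{*}\pi^{*}\mathcal{O}_{X}(-m_{j-1})\twoheadrightarrow W^{j}$; its composition with the inclusion $W^{j}\hookrightarrow U^{j}=\mathcal{F}^{j}\oplus V^{j+1}$ yields simultaneously the lift $\phi^{j-1}\colon V^{j-1}\to\mathcal{F}^{j-1}$ and the differential $d_{V^\bullet}^{j-1}\colon V^{j-1}\to V^{j}$. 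The surjectivity of $V^{j-1}\twoheadrightarrow W^{j}$ is what forces $\phi^\bullet$ to be a quasi-isomorphism (it makes the cone $U^\bullet$ exact in degree $j$), while the short exact sequence $0\to W^{j-1}\to U^{j-1}\to W^{j}\to 0$ combined with the vanishing $H^{1}(W^{j-1}(m_{j}))=0$ arranged in (c) gives the surjectivity claim in (ii).

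The main technical difficulty is the simultaneous bookkeeping: condition (i) is required for \emph{every} pair $(i,j)$, condition (ii)(b) for $i\leq j$, and the surjectivity in (ii)(a) amounts to $H^{1}(W^{j-1}(m_{j}))=0$, which mixes the index of the sheaf ($j-1$) with a twist ($m_{j}$) not covered by (ii)(b). The delicate point is that the sheaf $W^{j-1}$ is determined only \emph{after} $m_{j}$ has been fixed, so one cannot first construct $W^{j-1}$ and then enlarge $m_{j}$. I would handle this by recording at each inductive step the finitely many vanishing requirements into which the already-chosen $m_{k}$ for $k\geq j$ enter, enlarging $m_{j-1}$ to subsume them all, and observing (via Serre vanishing applied to the finitely many sheaves $\mathcal{F}^{i}$ and to the previously constructed $W^{k}$, $k\geq j$) that this is always achievable. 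Because $\mathcal{F}^\bullet$ is bounded below by some $a$, the induction terminates after finitely many nontrivial stages (beyond which $W^{j}=0$ and one may take $V^{j}=0$), so the process produces the desired $V^\bullet$ and quasi-isomorphism $\phi^\bullet$ in a finite number of steps.
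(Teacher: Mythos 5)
There is a genuine gap at exactly the point you flag as the main technical difficulty, and your proposed bookkeeping does not close it. In your inductive step you take an \emph{arbitrary} vector space and surjection $V_{j}\otimes p_{\mathcal{X}_{B},\mathcal{X}}^{*}\mathcal{E}\otimes p_{\mathcal{X}_{B},\mathcal{X}}^{*}\pi^{*}\mathcal{O}_{X}(-m_{j})\twoheadrightarrow W^{j}$ (you only require global generation), and you derive the $H^{0}$-surjectivity in condition $(ii)$ from the exact sequence $0\to W^{j-1}\to U^{j-1}\to W^{j}\to 0$ together with the vanishing $H^{1}(\mathcal{X}_{B},W^{j-1}\otimes p_{\mathcal{X}_{B},\mathcal{X}}^{*}\mathcal{E}^{\vee}\otimes p_{\mathcal{X}_{B},\mathcal{X}}^{*}\pi^{*}\mathcal{O}_{X}(m_{j}))=0$, which you claim to arrange when choosing the \emph{next} twist. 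But $W^{j-1}=\ker(\mathcal{F}^{j-1}\oplus V^{j}\to U^{j})$ and the twist $m_{j}$ are both completely determined by the choices already made at stage $j$: neither depends on any later twist, so enlarging later twists has no effect on this requirement, and you cannot go back and enlarge $m_{j}$ either, because that changes $V^{j}$ and hence $W^{j-1}$. Moreover there is no reason such a vanishing should hold at all for an arbitrary choice of surjection; for instance $H^{1}(U^{j-1}\otimes p_{\mathcal{X}_{B},\mathcal{X}}^{*}\mathcal{E}^{\vee}\otimes p_{\mathcal{X}_{B},\mathcal{X}}^{*}\pi^{*}\mathcal{O}_{X}(m_{j}))$ contains the summand $V_{j}\otimes H^{1}(\mathcal{X}_{B},p_{\mathcal{X}_{B},\mathcal{X}}^{*}(\mathcal{E}\otimes\mathcal{E}^{\vee}))$, which is not zero in general, so Serre vanishing is simply not available here.

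The paper's proof (following Inaba) sidesteps this entirely: at each stage it takes $V_{j}:=H^{0}(X_{B},F_{p_{\mathcal{X}_{B},\mathcal{X}}^{*}\mathcal{E}}(W^{j})\otimes p_{X_{B},X}^{*}\mathcal{O}_{X}(m_{j}))$, the \emph{full} space of sections, with the canonical evaluation morphism onto $W^{j}$. Then the surjectivity in $(ii)$ is tautological, with no cohomological input: via the adjunction map $\varphi_{\mathcal{E}}$, any section $v\in V_{j}=H^{0}(\mathcal{X}_{B},W^{j}\otimes p_{\mathcal{X}_{B},\mathcal{X}}^{*}\mathcal{E}^{\vee}\otimes p_{\mathcal{X}_{B},\mathcal{X}}^{*}\pi^{*}\mathcal{O}_{X}(m_{j}))$ is the image of $v\otimes\mathrm{id}_{\mathcal{E}}$, a section of $V^{j}\otimes p_{\mathcal{X}_{B},\mathcal{X}}^{*}\mathcal{E}^{\vee}\otimes p_{\mathcal{X}_{B},\mathcal{X}}^{*}\pi^{*}\mathcal{O}_{X}(m_{j})$, and $V^{j}$ is a direct summand of $U^{j-1}$; this is what the paper means by ``surjectivity \ldots by the definition of $V^{\bullet}$''. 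To repair your argument, replace the arbitrary surjection by this canonical choice; the rest of what you do (reduction to Serre vanishing on $X_{B}$ through the exact functor $F_{\mathcal{E}}$ for $(i)$ and for the $c>0$ part of $(ii)$, and termination of the descending induction by the finite resolution property) is correct and matches the paper. Separately, fix the indexing slip: the piece surjecting onto $W^{j}$ must sit in degree $j$, so that $W^{j}\hookrightarrow U^{j}=\mathcal{F}^{j}\oplus V^{j+1}$ yields $\phi^{j}$ and $d^{j}_{V^{\bullet}}$; as written, your $V^{j-1}\twoheadrightarrow W^{j}$ composed with this inclusion produces maps to $\mathcal{F}^{j}$ and $V^{j+1}$, not to $\mathcal{F}^{j-1}$ and $V^{j}$, and it also conflicts with your own base case $V^{b}\twoheadrightarrow\mathcal{F}^{b}=W^{b}$.
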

\begin{proof}
We follow the similar argument in the poofs of [\cite{Inaba}, Proposition 1.1 and Remark 2.2]. 
Choose  an integer $l$ such that $\mathcal{F}^j=0$ if $j>l$. By Theorem \ref{base-chag-proj}, the sheaf $p_{\mathcal{X}_{B},\mathcal{X}}^*\mathcal{E}$ is a generating sheaf for $\mathcal{X}_{B}$. Then we have the surjective morphism
	\ben
	(\pi\times\mathrm{id}_{\mathrm{Spec}B})^*(F_{p_{\mathcal{X}_{B},\mathcal{X}}^*\mathcal{E}}(\mathcal{F}^l))\otimes p_{\mathcal{X}_{B},\mathcal{X}}^*\mathcal{E}\to\mathcal{F}^l.
	\een
Choose a positive integer $m_{l}$ sufficiently large such that $F_{p_{\mathcal{X}_{B},\mathcal{X}}^*\mathcal{E}}(\mathcal{F}^l)\otimes p_{X_{B},X}^*\mathcal{O}_{X}(m_{l})$ is generated by global sections, and then we have
	the following surjective morphism
	\ben
	V^l:=H^0(X_{B},F_{p_{\mathcal{X}_{B},\mathcal{X}}^*\mathcal{E}}(\mathcal{F}^l)\otimes p_{X_{B},X}^*\mathcal{O}_{X}(m_{l}))\otimes p_{\mathcal{X}_{B},\mathcal{X}}^*\mathcal{E}\otimes p_{\mathcal{X}_{B},\mathcal{X}}^*\pi^*\mathcal{O}_{X}(-m_{l})\to\mathcal{F}^{l}
	\een
	where  $X_{B}:=X\times\mathrm{Spec}\,B$. Set $L_{l}:=H^0(X_{B},F_{p_{\mathcal{X}_{B},\mathcal{X}}^*\mathcal{E}}(\mathcal{F}^l)\otimes p_{X_{B},X}^*\mathcal{O}_{X}(m_{l}))$, $\widetilde{W}^l:=\mathcal{F}^l$ and $\widetilde{W}^{l-1}:=\ker(\mathcal{F}^{l-1}\oplus V^l \to\widetilde{W}^l)$. Inductively define $V^{i}:=L_{i}\otimes p_{\mathcal{X}_{B},\mathcal{X}}^*\mathcal{E}\otimes p_{\mathcal{X}_{B},\mathcal{X}}^*\pi^*\mathcal{O}_{X}(-m_{i})$ and $\widetilde{W}^{i-1}:=\ker(\mathcal{F}^{i-1}\oplus V^i\to\widetilde{W}^{i})$ where 
	$
	L_{i}:=H^0(X_{B},F_{p_{\mathcal{X}_{B},\mathcal{X}}^*\mathcal{E}}(\widetilde{W}^i)\otimes p_{X_{B},X}^*\mathcal{O}_{X}(m_{i}))
	$ and $m_{i}$ is a positive integer sufficiently large. Notice that for any coherent sheaf $\mathcal{G}$ on $\mathcal{X}$ and any $i,j$, we have
	\ben
	H^j(\mathcal{X}_{B},\mathcal{G}\otimes p_{\mathcal{X}_{B},\mathcal{X}}^*\mathcal{E}^\vee\otimes p_{\mathcal{X}_{B},\mathcal{X}}^*\pi^*\mathcal{O}_{X}(m_{i}))=H^j(X_{B},F_{p_{\mathcal{X}_{B},\mathcal{X}}^*\mathcal{E}}(\mathcal{G})\otimes p_{X_{B},X}^*\mathcal{O}_{X}(m_{i})).
	\een
	Actually one can  inductively choose $m_{i}$ sufficiently large to satisfy the condition $(i)$  and
	\ben
	H^c(\mathcal{X}_{B},\widetilde{W}^{j}\otimes p_{\mathcal{X}_{B},\mathcal{X}}^*\mathcal{E}^\vee\otimes p_{\mathcal{X}_{B},\mathcal{X}}^*\pi^*\mathcal{O}_{X}(m_{i}))=0, \;\;\;\mbox{for $i\leq j$ and $c>0$}.
	\een
	The similar argument in the proof of [\cite{Inaba}, Proposition 1.1] shows  one can define the complex $V^\bullet$  with $V^i=0$ for $i>l$ and $d_{V^\bullet}^i:V^i\to \widetilde{W}^i\to V^{i+1}$ such that there is a quasi-isomorphism $\phi^{\bullet}:V^\bullet\to\mathcal{F}^\bullet$. It is easy to verify that 
	$\widetilde{W}^i=W^i:=\ker(U^i\to U^{i+1})$. Then we have the surjection $U^{j-1}=\mathcal{F}^{j-1}\oplus V^j\to W^j$ for any $j$ which implies the surjectivity in the condition $(ii)$ by the definition of $V^\bullet$.
\end{proof}

\begin{remark}\label{qua-iso2}
	Using the similar argument in the proof of [\cite{Inaba}, Lemma 2.1],
	the condition $(i)$ in Lemma \ref{qua-iso1} corresponding to the condition $(*)$ in [\cite{Inaba}, Lemma 2.1]  implies that there are bijective canonical homorphisms for any $c>0$ as follows
	\ben
	H^c(\mathrm{Hom}^\bullet(V^\bullet,\mathcal{F}^\bullet))\to\mathrm{Ext}^c_{\mathcal{X}_{B}}(V^\bullet,\mathcal{F}^\bullet).
	\een
	And the  condition $(ii)$ in Lemma \ref{qua-iso1} corresponding to  the condition $(L_{0})$ in [\cite{Inaba}, Lemma 2.1]  shows that the canonical homomorphisms
	\ben
	H^c(\mathrm{Hom}^\bullet(V^\bullet,V^\bullet))\to H^c(\mathrm{Hom}^\bullet(V^\bullet,\mathcal{F}^\bullet))
	\een
	are surjective for $c\geq0$ and bijective for $c>0$.
\end{remark}
Then, we have
\begin{theorem}\label{def-ob2}
	Let $[(\mathcal{F},\varphi)]$ be a point in the moduli space $M^s_{\mathcal{X}/k}(\mathcal{F}_{0},P,\delta)$. Let $\check{\varphi}_{A}:\mathcal{F}_{0}\otimes_{k} A\to \mathcal{F}_{A}$ be a morphism over $\mathcal{X}_{A}=\mathcal{X}\times_{\mathrm{Spec}\,k} \mathrm{Spec}\,A$ extending  $\varphi$, where $\mathcal{F}_{A}$ is a coherent sheaf flat over $A$. Let $\mathbf{I}^{\bullet}=\{\mathcal{F}_{0} \xrightarrow{\varphi} \mathcal{F}\}$ and  $\mathbf{I}_{A}^{\bullet}=\{\mathcal{F}_{0}\otimes_{k} A\xrightarrow{\check{\varphi}_{A}} \mathcal{F}_{A}\}$ be the complexes concentrated in degree 0 and 1. Then  for a given small extension $0\to I \to A^\prime\xrightarrow{\sigma} A\to0$, there is an element
	\ben
	\omega(\mathbf{I}_{A}^{\bullet})\in\mathrm{Ext}^2(\mathbf{I}^\bullet,\mathbf{I}^\bullet\otimes I)
	\een
	such that there exists a flat extension of $\check{\varphi}_{A}$ over $\mathcal{X}_{A^\prime}$ if and only if $\omega(\mathbf{I}_{A}^{\bullet})=0$. If $\omega(\mathbf{I}_{A}^{\bullet})=0$, then  the space of extensions form a torsor under $\mathrm{Ext}^1(\mathbf{I}^\bullet,\mathbf{I}^\bullet\otimes I)$.
\end{theorem}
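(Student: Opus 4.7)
The strategy is to translate the obstruction problem into one about lifting a complex of locally free sheaves, following the spirit of Inaba's argument but using the stacky resolution machinery provided by Lemma \ref{qua-iso1} and Remark \ref{qua-iso2}. First, I would apply Lemma \ref{qua-iso1} to the two-term complex $\mathbf{I}_A^\bullet$ (viewed as a bounded complex over $\mathcal{X}_A$ with each term flat over $A$) to obtain a quasi-isomorphism $\phi_A^\bullet : V_A^\bullet \to \mathbf{I}_A^\bullet$, where each $V_A^i = V_i \otimes p_{\mathcal{X}_A,\mathcal{X}}^* \mathcal{E} \otimes p_{\mathcal{X}_A,\mathcal{X}}^* \pi^* \mathcal{O}_X(-m_i)$ is locally free and flat over $A$, and where conditions $(i)$ and $(ii)$ of that lemma hold. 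By Remark \ref{qua-iso2}, the cohomology of the Hom-complex $\mathrm{Hom}^\bullet(V^\bullet, V^\bullet \otimes I)$ (with $V^\bullet := V_A^\bullet \otimes_A k$) computes $\mathrm{Ext}^\bullet(\mathbf{I}^\bullet,\mathbf{I}^\bullet \otimes I)$, which is the target group for the obstruction.

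Next, since each $V_A^i$ is of the prescribed form, it lifts tautologically to $V_{A'}^i := V_i \otimes p_{\mathcal{X}_{A'},\mathcal{X}}^* \mathcal{E} \otimes p_{\mathcal{X}_{A'},\mathcal{X}}^* \pi^* \mathcal{O}_X(-m_i)$ over $\mathcal{X}_{A'}$, and each $V_{A'}^i$ is flat over $A'$ with $V_{A'}^i \otimes_{A'} A = V_A^i$. The differentials $d_{V_A}^i$ lift (non-canonically) to morphisms $\widetilde{d}^i : V_{A'}^i \to V_{A'}^{i+1}$ because $V_{A'}^{i+1}$ is locally free and the relevant $\mathrm{Ext}^1$ vanishing from condition $(i)$ in Lemma \ref{qua-iso1} ensures $\mathrm{Hom}(V_{A'}^i, V_{A'}^{i+1}) \twoheadrightarrow \mathrm{Hom}(V_A^i, V_A^{i+1})$. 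The composition $\widetilde{d}^{i+1} \circ \widetilde{d}^i$ need not vanish, but it vanishes modulo $I$; since $\mathtt{m}_{A'} I = 0$, it factors through $I$ and defines an element $\omega^i \in \mathrm{Hom}(V^i, V^{i+2}\otimes I)$. A direct check shows $\{\omega^i\}$ is a $2$-cocycle in $\mathrm{Hom}^\bullet(V^\bullet, V^\bullet\otimes I)$, and its class is independent of the choice of lifts $\widetilde{d}^i$ (any two choices differ by a coboundary). This defines
\[
\omega(\mathbf{I}_A^\bullet) \in H^2(\mathrm{Hom}^\bullet(V^\bullet, V^\bullet \otimes I)) \cong \mathrm{Ext}^2(\mathbf{I}^\bullet,\mathbf{I}^\bullet \otimes I).
\]

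If $\omega(\mathbf{I}_A^\bullet) = 0$, I can adjust the $\widetilde{d}^i$ by $1$-cochains to make $\widetilde{d}^{i+1} \circ \widetilde{d}^i = 0$, producing a complex $V_{A'}^\bullet$ of locally free sheaves over $\mathcal{X}_{A'}$ lifting $V_A^\bullet$. Truncating appropriately (taking cohomology in degrees $0,1$ of $V_{A'}^\bullet$, or equivalently replacing it by a quasi-isomorphic two-term complex via the standard good-truncation construction), one obtains a flat extension $\mathbf{I}_{A'}^\bullet = \{\mathcal{F}_0 \otimes A' \to \mathcal{F}_{A'}\}$ of $\mathbf{I}_A^\bullet$, and flatness of $\mathcal{F}_{A'}$ over $A'$ follows from exactness of $V_{A'}^\bullet \otimes_{A'} (\cdot)$ (since the terms are locally free and the complex is exact in degrees $\neq 0,1$ after base change to any residue field). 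Conversely, given any such flat lift, running Lemma \ref{qua-iso1} over $A'$ and comparing with $V_A^\bullet$ reconstructs a lift of the differentials with zero composition, forcing $\omega = 0$. Finally, two lifts of $\check{\varphi}_A$ differ by a $1$-cocycle in $\mathrm{Hom}^\bullet(V^\bullet, V^\bullet\otimes I)$, modulo coboundaries arising from automorphisms of $V_{A'}^\bullet$ that are the identity mod $I$; this identifies the space of extensions with a torsor under $H^1(\mathrm{Hom}^\bullet(V^\bullet, V^\bullet \otimes I)) \cong \mathrm{Ext}^1(\mathbf{I}^\bullet,\mathbf{I}^\bullet\otimes I)$.

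The main obstacle, as in Inaba's original argument, is verifying the well-definedness of $\omega(\mathbf{I}_A^\bullet)$: namely, that the cocycle condition holds, that the class is independent of the lifts $\widetilde{d}^i$, and crucially that it is independent of the choice of resolution $V_A^\bullet \to \mathbf{I}_A^\bullet$ produced by Lemma \ref{qua-iso1}. This last independence is typically handled by comparing two resolutions via a third one dominating both, and using that the induced maps on Hom-complexes are quasi-isomorphisms by the bijectivity assertion in Remark \ref{qua-iso2}. The technical subtlety in the stacky setting is ensuring all the vanishing hypotheses in Lemma \ref{qua-iso1}(i) and (ii) can be arranged simultaneously for the comparison, which boils down to Serre vanishing on the projective scheme $X$ after applying $F_{\mathcal{E}}$; this is available since $\mathcal{X}_A \to \mathrm{Spec}\,A$ is a family of projective stacks.
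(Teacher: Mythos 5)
Your proposal follows the same route as the paper: it transports Inaba's argument for [\cite{Inaba}, Proposition 2.3] to the stacky setting via the resolution of Lemma \ref{qua-iso1} and the identification of Remark \ref{qua-iso2}, constructs $\omega(\mathbf{I}_A^\bullet)$ as the class of the failure of lifted differentials to square to zero, and checks independence of the choices in the standard way; all of this matches the paper's intended proof.

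There is, however, one genuine gap, and it sits exactly where the paper invokes an extra ingredient. After you kill the obstruction and obtain a complex $V_{A'}^\bullet$ of locally free sheaves lifting $V_A^\bullet$, you claim that ``truncating appropriately (taking cohomology in degrees $0,1$ \dots or good truncation)'' yields a flat extension $\{\mathcal{F}_0\otimes_k A'\to\mathcal{F}_{A'}\}$ of $\check{\varphi}_A$. This does not work as stated: the pair is not recoverable from the cohomology sheaves of the complex (knowing $\ker\varphi$ and $\mathrm{coker}\,\varphi$ does not determine $(\mathcal{F},\varphi)$), and good truncation $\tau^{\geq 0}V_{A'}^\bullet$ produces a two-term complex whose degree-$0$ term is a cokernel of locally free sheaves, not the constant deformation $\mathcal{F}_0\otimes_k A'$ required by Definition \ref{fam-pair}, while flatness of the degree-$1$ term is not delivered by the exactness remark you give. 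Passing from an extension of the complex (up to Inaba's equivalence) to an honest flat extension of the pair, and checking that the resulting torsor is a torsor on pair-extensions rather than complex-extensions, is precisely what the paper handles by noting that isomorphism of pairs is contained in Inaba's equivalence relation and by importing the argument of [\cite{HT09}, Corollary 3.4] or [\cite{Lieb}, Corollary 3.2.12] to upgrade the pseudo-torsor to a torsor. Your write-up needs this step (or an equivalent direct argument reconstructing the pair from the lifted resolution and proving flatness of $\mathcal{F}_{A'}$, e.g.\ by a local-criterion argument over the small extension) to be complete; the rest of the proposal is in line with the paper.
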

\begin{proof}
	Notice that  the bounded complex we  consider here is a  two-term complex  $\mathbf{I}_{A}^{\bullet}$ with $\mathbf{I}^{\bullet}=\mathbf{I}_{A}^{\bullet}\otimes k$ from a stable pair $(\mathcal{F}_{A}, \check{\varphi}_{A})$ and the isomorphic relations in Definition \ref{fam-pair} is contained in the equivalent relations in [\cite{Inaba}, Definition 0.1].  Following the similar argument in the proof of [\cite{Inaba}, Proposition 2.3]   and using  Lemma \ref{qua-iso1} and Remark \ref{qua-iso2} to show that the space of extensions is pseudo-torsor under $\mathrm{Ext}^1(\mathbf{I}^\bullet,\mathbf{I}^\bullet\otimes I)$. The proof is completed by  using the similar argument in the proof of [\cite{HT09}, Corollary 3.4] or [\cite{Lieb}, Corollary 3.2.12] to show it is actually a torsor under $\mathrm{Ext}^1(\mathbf{I}^\bullet,\mathbf{I}^\bullet\otimes I)$.
\end{proof}

Actually, the above result can be generalized to the case of a square zero extension as follows.

\begin{theorem}\label{def-ob3}
	Let $[(\mathcal{F},\varphi)]$ be a point in the moduli space $M^s_{\mathcal{X}/k}(\mathcal{F}_{0},P,\delta)$. Let $\check{\varphi}_{A}:\mathcal{F}_{0}\otimes_{k} A\to \mathcal{F}_{A}$ be a morphism over $\mathcal{X}_{A}=\mathcal{X}\times_{\mathrm{Spec}\,k} \mathrm{Spec}\,A$ extending  $\varphi$, where $\mathcal{F}_{A}$ is a coherent sheaf flat over $A$. Let  $\mathbf{I}_{A}^{\bullet}=\{\mathcal{F}_{0}\otimes_{k} A\xrightarrow{\check{\varphi}_{A}} \mathcal{F}_{A}\}$ be the complex concentrated on degree 0 and 1. Then  for a given square zero extension $0\to I \to A^\prime\to A\to0$, there is an element
	\ben
	\omega(\mathbf{I}_{A}^{\bullet})\in\mathrm{Ext}^2(\mathbf{I}^\bullet_{A},\mathbf{I}^\bullet_{A}\otimes I)
	\een
	such that there exists a flat extension of $\check{\varphi}_{A}$ over $\mathcal{X}_{A^\prime}$ if and only if $\omega(\mathbf{I}_{A}^{\bullet})=0$. If $\omega(\mathbf{I}_{A}^{\bullet})=0$, then  the space of extensions form a torsor under $\mathrm{Ext}^1(\mathbf{I}^\bullet_{A},\mathbf{I}^\bullet_{A}\otimes I)$.
\end{theorem}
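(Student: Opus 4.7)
The plan is to mimic the proof of Theorem \ref{def-ob2} word-for-word, but working over $\mathcal{X}_A$ instead of the closed fibre $\mathcal{X}$, taking advantage of the fact that a square zero ideal $I \subset A'$ is automatically an $A$-module. The reason the same recipe works is that the Inaba-style argument never really uses that $\mathfrak{m}_{A'}I = 0$; what it uses is that $I^2 = 0$ so that multiplication by elements of $I$ factors through $A$, together with the existence of a nice locally free model for the complex.

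First I would apply Lemma \ref{qua-iso1} with $B = A$ and $\mathcal{F}^\bullet = \mathbf{I}^\bullet_A$ to produce a bounded complex $V^\bullet = (V^i, d_{V^\bullet}^i)$ of locally free sheaves on $\mathcal{X}_A$ of the prescribed form $V_i \otimes p^*\mathcal{E} \otimes p^*\pi^*\mathcal{O}_X(-m_i)$, together with a quasi-isomorphism $\phi^\bullet : V^\bullet \to \mathbf{I}^\bullet_A$ satisfying conditions $(i)$ and $(ii)$. By Remark \ref{qua-iso2}, one obtains canonical identifications $H^c(\mathrm{Hom}^\bullet(V^\bullet, V^\bullet \otimes I)) \cong \mathrm{Ext}^c_{\mathcal{X}_A}(\mathbf{I}^\bullet_A, \mathbf{I}^\bullet_A \otimes I)$ for all $c \ge 0$. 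Because each $V^i$ has the very explicit shape above and because $\mathcal{E}$ and $\pi^*\mathcal{O}_X(-m_i)$ pull back canonically under $\mathcal{X}_{A'} \to \mathcal{X}_A$, each $V^i$ admits a canonical locally free lift $V'^i$ on $\mathcal{X}_{A'}$ (concretely, $V'^i := V_i \otimes p^*\mathcal{E} \otimes p^*\pi^*\mathcal{O}_X(-m_i)$ on $\mathcal{X}_{A'}$), and one has $V'^i \otimes_{A'} A = V^i$ and $V'^i \otimes_{A'} I = V^i \otimes_A I$.

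Next I would choose, locally on an affine \'etale cover of $\mathcal{X}_{A'}$, $\mathcal{O}_{\mathcal{X}_{A'}}$-linear lifts $d'^i : V'^i \to V'^{i+1}$ of the differentials $d_{V^\bullet}^i$; such lifts exist because the $V'^i$ are locally free. Since $d^{i+1} \circ d^i = 0$ and $I^2 = 0$, the composition $d'^{i+1} \circ d'^i$ lands in $V'^{i+2} \cdot I$, which by the square zero condition is naturally identified with $V^{i+2} \otimes_A I$. Gluing these local contributions gives an element of $\mathrm{Hom}^2(V^\bullet, V^\bullet \otimes I)$ which one checks is a cocycle, and standard arguments show the class $\omega(\mathbf{I}^\bullet_A) \in H^2(\mathrm{Hom}^\bullet(V^\bullet, V^\bullet \otimes I)) \cong \mathrm{Ext}^2(\mathbf{I}^\bullet_A, \mathbf{I}^\bullet_A \otimes I)$ is independent of all choices. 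It is immediate that $\omega(\mathbf{I}^\bullet_A) = 0$ exactly when one can rechoose the $d'^i$ so that $d' \circ d' = 0$, in which case $(V'^\bullet, d'^\bullet)$ is a complex of locally free sheaves on $\mathcal{X}_{A'}$ quasi-isomorphic to a two-term complex $\mathbf{I}^\bullet_{A'} = \{\mathcal{F}_0 \otimes_k A' \xrightarrow{\check{\varphi}_{A'}} \mathcal{F}_{A'}\}$ extending $\mathbf{I}^\bullet_A$, with $\mathcal{F}_{A'}$ automatically flat over $A'$ by the local criterion of flatness (using $I^2 = 0$).

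For the torsor structure, two lifts $d'^\bullet$ and $\widetilde d'^\bullet$ giving rise to flat extensions differ by a $1$-cocycle with values in $\mathrm{Hom}^\bullet(V^\bullet, V^\bullet \otimes I)[1]$, yielding a natural action of $\mathrm{Ext}^1(\mathbf{I}^\bullet_A, \mathbf{I}^\bullet_A \otimes I)$ on the set of extensions; this makes it a pseudo-torsor, and upgrading to a torsor is done exactly as at the end of the proof of Theorem \ref{def-ob2} via [\cite{HT09}, Corollary 3.4] or [\cite{Lieb}, Corollary 3.2.12]. The main obstacle will be verifying the well-definedness of $\omega(\mathbf{I}^\bullet_A)$ and the torsor action without the simplification $\mathfrak{m}_{A'} I = 0$: one must systematically check that every identification and every coboundary computation is $A$-linear (rather than merely $k$-linear), and in particular that the isomorphism $V'^\bullet \otimes_{A'} I \cong V^\bullet \otimes_A I$ is used consistently so that the obstruction truly lives in $\mathrm{Ext}^2$ taken over $\mathcal{X}_A$ as claimed, not over $\mathcal{X}$.
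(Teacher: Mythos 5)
Your proposal follows essentially the same route as the paper's proof: apply Lemma \ref{qua-iso1} with $B=A$ to $\mathbf{I}^\bullet_{A}$, lift the differentials of $V^\bullet$ to the canonical locally free lifts on $\mathcal{X}_{A^\prime}$, use $I^2=0$ to see that the square of the lifted differential lands in $V^\bullet\otimes I$ and hence defines $\omega(\mathbf{I}^\bullet_{A})\in\mathrm{Ext}^2(\mathbf{I}^\bullet_{A},\mathbf{I}^\bullet_{A}\otimes I)$ via Remark \ref{qua-iso2}, and then conclude the pseudo-torsor statement as in [\cite{Inaba}, Proposition 2.3] with the torsor upgrade via [\cite{HT09}, Corollary 3.4] or [\cite{Lieb}, Corollary 3.2.12]. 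The only cosmetic deviation is that the paper lifts each differential globally (which the very negative twists underlying Lemma \ref{qua-iso1} permit), whereas you lift locally on an \'etale cover and glue; in that variant the honest output is a \v{C}ech hypercohomology $2$-cocycle rather than literally an element of $\mathrm{Hom}^2(V^\bullet,V^\bullet\otimes I)$, but it represents the same class in $\mathrm{Ext}^2$, so the argument is unaffected.
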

\begin{proof}
Notice that since $I^2=0$,  we have a quasi-isomorphism $V^\bullet\to\mathbf{I}_{A}^{\bullet}$ such that $(V^\bullet\otimes I\to\mathbf{I}_{A}^{\bullet}\otimes I, V^\bullet)$ satisfies  conditions $(i)$ and $(ii)$  in Lemma \ref{qua-iso1} (with $B$ replaced by $A$), which will be used to replace the case of the restriction to the closed fiber in the proof of [\cite{Inaba}, Proposition 2.3] in the following argument. By Remark \ref{qua-iso2}, we have 
$H^i(\mathrm{Hom}^\bullet(V^\bullet, V^\bullet\otimes I))\cong \mathrm{Ext}^i(\mathbf{I}^\bullet_{A},\mathbf{I}^\bullet_{A}\otimes I)$ for $i\geq1$. Let 
$\tilde{d}^i_{V^\bullet}: L_{i}\otimes p_{\mathcal{X}_{A^\prime},\mathcal{X}}^*\mathcal{E}\otimes p_{\mathcal{X}_{A^\prime},\mathcal{X}}^*\pi^*\mathcal{O}_{X}(-m_{i})\to L_{i+1}\otimes p_{\mathcal{X}_{A^\prime},\mathcal{X}}^*\mathcal{E}\otimes p_{\mathcal{X}_{A^\prime},\mathcal{X}}^*\pi^*\mathcal{O}_{X}(-m_{i+1})$ be a lift of morphism $d^i_{V^\bullet}: L_{i}\otimes p_{\mathcal{X}_{A},\mathcal{X}}^*\mathcal{E}\otimes p_{\mathcal{X}_{A},\mathcal{X}}^*\pi^*\mathcal{O}_{X}(-m_{i})\to L_{i+1}\otimes p_{\mathcal{X}_{A},\mathcal{X}}^*\mathcal{E}\otimes p_{\mathcal{X}_{A},\mathcal{X}}^*\pi^*\mathcal{O}_{X}(-m_{i+1})$. Since $V^\bullet=(V^\bullet,d^i_{V^\bullet})$ is a complex and $I^2=0$, the image of $\varrho^i:=\tilde{d}^{i+1}_{V^\bullet}\circ\tilde{d}^i_{V^\bullet}$ is in $(L_{i+2}\otimes p_{\mathcal{X}_{A^\prime},\mathcal{X}}^*\mathcal{E}\otimes p_{\mathcal{X}_{A^\prime},\mathcal{X}}^*\pi^*\mathcal{O}_{X}(-m_{i+2}))\otimes I$. This yields an element $\omega(\mathbf{I}_{A}^{\bullet})\in H^2(\mathrm{Hom}^\bullet(V^\bullet, V^\bullet\otimes I))\cong \mathrm{Ext}^2(\mathbf{I}^\bullet_{A},\mathbf{I}^\bullet_{A}\otimes I)$. Again, the proof is completed by following the similar argument in the proof of [\cite{Inaba}, Proposition 2.3] for the pseudo-torsor result and then using the one of [\cite{HT09}, Corollary 3.4] or [\cite{Lieb}, Corollary 3.2.12] for the torsor result. 
\end{proof}

\begin{remark}\label{def-ob-compare}
The first order or infinitesimal deformation theory of the complex $\mathbf{I}^\bullet$ is governed by $\mathrm{Hom}(\mathbf{I}^\bullet,\mathcal{F})$ and $\mathrm{Ext}^1(\mathbf{I}^\bullet,\mathcal{F})$ by Theorem \ref{def-ob1},
 or
$\mathrm{Ext}^1(\mathbf{I}^\bullet,\mathbf{I}^\bullet)$ and $\mathrm{Ext}^2(\mathbf{I}^\bullet,\mathbf{I}^\bullet)$ by Theorem \ref{def-ob2}.
As in [\cite{PT1}, Section 2.1], there is a map for $i=0,1$
\ben
\mathrm{Ext}^i(\mathbf{I}^\bullet,\mathcal{F})\to \mathrm{Ext}^{i+1}(\mathbf{I}^\bullet,\mathbf{I}^\bullet)
\een
obtained by applying the functor $\mathrm{Hom}(\mathbf{I}^\bullet, \cdot)$ to the following distinguished triangle
\ben
\mathcal{F}[-1]\to \mathbf{I}^\bullet\to \mathcal{F}_{0}\to\mathcal{F}.
\een
\end{remark}

\subsection{Virtual fundamental classes}
In this subsection,  assume that  $\mathcal{F}_{0}$ is torsion free and polynomials $\delta$, $P$   satisfy $\deg\,\delta\geq\deg \,P=1$.  Assume that $k=\mathbb{C}$. In order to prove the existence of virtual fundamental classes, it suffices to provide a perfect obstruction theory in the sense of  [\cite{BF,LT}]. We will consider some cases of dimension two and three. We begin with the following result.

\begin{lemma}\label{def-ob-cmg}
Let $\mathcal{X}$ be a smooth projective Deligne-Mumford stack of dimension 2 over $\mathbb{C}$. Let $[(\mathcal{F},\varphi)]$ be a point in the moduli space $M^s_{\mathcal{X}/\mathbb{C}}(\mathcal{F}_{0},P,\delta)$ and  $\mathbf{I}^{\bullet}=\{\mathcal{F}_{0} \xrightarrow{\varphi} \mathcal{F}\}$  be the complex concentrated in degree 0 and 1. Then we have
\ben
\mathrm{Ext}^i(\mathbf{I}^{\bullet},\mathcal{F})=0,\;\;\;\; \mbox{if } i\neq0,1.
\een
\end{lemma}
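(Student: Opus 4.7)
The plan is to apply $\mathrm{RHom}(-,\mathcal{F})$ to the distinguished triangle
\[
\mathbf{I}^\bullet \to \mathcal{F}_{0} \xrightarrow{\varphi} \mathcal{F} \to \mathbf{I}^\bullet[1]
\]
(a rotation of the triangle $\mathcal{F}[-1]\to\mathbf{I}^\bullet\to\mathcal{F}_{0}\to\mathcal{F}$ already recorded in Remark~\ref{def-ob-compare}) and to exploit Serre duality on the smooth projective Deligne-Mumford stack $\mathcal{X}$ to kill the relevant terms. Applying $\mathrm{RHom}(-,\mathcal{F})$ produces the long exact sequence
\[
\cdots\to\mathrm{Ext}^i(\mathcal{F},\mathcal{F})\to\mathrm{Ext}^i(\mathcal{F}_{0},\mathcal{F})\to\mathrm{Ext}^i(\mathbf{I}^\bullet,\mathcal{F})\to\mathrm{Ext}^{i+1}(\mathcal{F},\mathcal{F})\to\mathrm{Ext}^{i+1}(\mathcal{F}_{0},\mathcal{F})\to\cdots.
\]

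Since $\mathcal{X}$ is smooth of dimension $2$ and any coherent sheaf admits a finite locally free resolution (as recalled in the opening of Section~5.1), $\mathrm{Ext}^j$ between coherent sheaves vanishes for $j<0$ and $j>2$. This immediately yields $\mathrm{Ext}^i(\mathbf{I}^\bullet,\mathcal{F})=0$ for $i\leq -1$ (both flanking terms vanish) and for $i\geq 3$ (both flanking terms of the form $\mathrm{Ext}^{\geq 3}$ vanish). The only case that requires work is $i=2$, for which the sequence reduces to
\[
\mathrm{Ext}^2(\mathcal{F},\mathcal{F})\to\mathrm{Ext}^2(\mathcal{F}_{0},\mathcal{F})\to\mathrm{Ext}^2(\mathbf{I}^\bullet,\mathcal{F})\to 0,
\]
so it is enough to show $\mathrm{Ext}^2(\mathcal{F}_{0},\mathcal{F})=0$.

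For this, I will invoke Serre duality on the smooth projective Deligne-Mumford stack $\mathcal{X}$ of dimension $2$, giving
\[
\mathrm{Ext}^2(\mathcal{F}_{0},\mathcal{F})\cong\mathrm{Hom}(\mathcal{F},\mathcal{F}_{0}\otimes\omega_{\mathcal{X}})^{*}.
\]
Since $(\mathcal{F},\varphi)$ is $\delta$-stable of type $P$ with $\deg P=1$, the sheaf $\mathcal{F}$ is pure of dimension $1$; meanwhile $\mathcal{F}_{0}\otimes\omega_{\mathcal{X}}$ is torsion-free, i.e.\ pure of dimension $\dim\mathcal{X}=2$. Any morphism from a pure $1$-dimensional sheaf into a pure $2$-dimensional sheaf must have image of dimension $\leq 1$ sitting inside a pure $2$-dimensional sheaf, and hence must be zero. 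Thus $\mathrm{Hom}(\mathcal{F},\mathcal{F}_{0}\otimes\omega_{\mathcal{X}})=0$, which gives $\mathrm{Ext}^2(\mathbf{I}^\bullet,\mathcal{F})=0$ and completes the proof.

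The main subtle point is justifying the purity/torsion-free obstruction in the stacky setting; this is handled cleanly by transferring through the exact functor $F_{\mathcal{E}}$ (Lemma~\ref{puretopure} and Lemma~\ref{tor-fil}), which takes pure sheaves of dimension $d$ on $\mathcal{X}$ to pure sheaves of dimension $d$ on the moduli scheme $X$, where the standard "no map from lower-dimensional to torsion-free" argument applies. Serre duality for smooth projective tame Deligne-Mumford stacks is standard, and the finite homological dimension needed for the vanishing of $\mathrm{Ext}^{\geq 3}$ is precisely the finite resolution property recalled at the beginning of Section~5.1.
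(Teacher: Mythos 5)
Your strategy is the same as the paper's: the proof given there is precisely a pointer to [Lin18, Lemma 6.1] plus Serre duality for stacks, i.e.\ the long exact sequence coming from the triangle $\mathbf{I}^\bullet\to\mathcal{F}_0\to\mathcal{F}\to\mathbf{I}^\bullet[1]$ together with $\mathrm{Ext}^2(\mathcal{F}_0,\mathcal{F})\cong\mathrm{Hom}(\mathcal{F},\mathcal{F}_0\otimes\omega_{\mathcal{X}})^\vee=0$, using that $\mathcal{F}$ is pure of dimension $1$ while $\mathcal{F}_0\otimes\omega_{\mathcal{X}}$ is torsion free; your handling of $i=2$ (and the transfer of purity through $F_{\mathcal{E}}$) is exactly right. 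However, there is a genuine gap in your treatment of the negative range. For $i=-1$ the relevant segment of your sequence is
\[
0=\mathrm{Ext}^{-1}(\mathcal{F}_0,\mathcal{F})\to\mathrm{Ext}^{-1}(\mathbf{I}^\bullet,\mathcal{F})\to\mathrm{Hom}(\mathcal{F},\mathcal{F})\xrightarrow{\ \circ\varphi\ }\mathrm{Hom}(\mathcal{F}_0,\mathcal{F}),
\]
and the right-hand flanking term $\mathrm{Hom}(\mathcal{F},\mathcal{F})$ certainly does not vanish (it contains the identity), so the vanishing of $\mathrm{Ext}^{-1}(\mathbf{I}^\bullet,\mathcal{F})$ is not ``immediate'' as you claim. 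What the sequence actually gives is $\mathrm{Ext}^{-1}(\mathbf{I}^\bullet,\mathcal{F})\cong\ker\bigl(\mathrm{Hom}(\mathcal{F},\mathcal{F})\xrightarrow{\circ\varphi}\mathrm{Hom}(\mathcal{F}_0,\mathcal{F})\bigr)\cong\mathrm{Hom}(\mathrm{coker}\,\varphi,\mathcal{F})$, and killing this requires the stable-pair structure: since the point of $M^s$ is a nondegenerate $\delta$-stable pair with $\deg\delta\ge\deg P=1$, Lemma \ref{no-str-semi} and Lemma \ref{orbi-PT} give that $\mathrm{coker}\,\varphi$ is $0$-dimensional, while $\mathcal{F}$ is pure of dimension $1$, so any homomorphism $\mathrm{coker}\,\varphi\to\mathcal{F}$ is zero. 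The fix is short, but it is a missing step (and it is the same purity mechanism you already use for $i=2$), not a formal consequence of degree bounds.

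A smaller point: your justification that $\mathrm{Ext}^{j}$ between coherent sheaves vanishes for $j>2$ ``by the finite resolution property'' is not by itself a proof --- a length-two locally free resolution only bounds hyperext degrees by $2+\dim\mathcal{X}$. The clean argument, consistent with the toolkit you already invoke, is again stacky Serre duality: $\mathrm{Ext}^{j}(\mathcal{G},\mathcal{H})\cong\mathrm{Ext}^{2-j}(\mathcal{H},\mathcal{G}\otimes\omega_{\mathcal{X}})^\vee=0$ for $j>2$, applied to the flanking terms $\mathrm{Ext}^{i}(\mathcal{F}_0,\mathcal{F})$ and $\mathrm{Ext}^{i+1}(\mathcal{F},\mathcal{F})$ when $i\ge3$.
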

\begin{proof}
Follow the similar argument in the proof of [\cite{Lin18}, Lemma 6.1] and apply Serre duality in [\cite{BS}, Theorem B.7].
\end{proof}
Let  $\hat{M}^s:=M^s_{\mathcal{X}/\mathbb{C}}(\mathcal{F}_{0},P,\delta)$ be the moduli space of $\delta$-stable pairs. By Theorem \ref{main-result}, there is a universal $\delta$-stable pair which determines a universal complex
\ben
\mathbb{I}^\bullet=\{\mathcal{O}_{\mathcal{X}\times \hat{M}^s}\to\mathbb{F}\}\in\mathrm{D}^b(\mathcal{X}\times \hat{M}^s)
\een
where $\mathbb{F}$ is flat over $\hat{M}^s$. Let $\pi_{\hat{M}^s}: \mathcal{X}\times \hat{M}^s\to\hat{M}^s$ and $\hat{\pi}_{\mathcal{X}}: \mathcal{X}\times \hat{M}^s\to\mathcal{X}$ be the natural projections. As in [\cite{Lin18}, Section 6], due to Theorem \ref{def-ob1}, we will consider computing the following complex 
\ben
R\pi_{\hat{M}^s*}R\mathcal{H}om(\mathbb{I}^\bullet,\mathbb{F})
\een
to obtain the deformation sheaf $\mathcal{D}ef$ and the obstruction sheaf $\mathcal{O}bs$. To resolve $\mathbb{I}^\bullet$ by a finite complex of  locally free sheaves, we need the following generalization of [\cite{BS}, Lemma B.3].
\begin{lemma}\label{finite-res}
	Let $\mathcal{X}$ be a smooth projective Deligne-Mumford stack of dimension d over $\mathbb{C}$ with a moduli scheme $\pi:\mathcal{X}\to X$ and a polarization $(\mathcal{E},\mathcal{O}_{X}(1))$. For any bounded complex  $\mathcal{N}^\bullet$ of coherent sheaves on $\mathcal{X}$, there  is a bounded complex $W^\bullet$ of locally free sheaves of finite rank, which is of the form $W^i=W_{i}\otimes\mathcal{E}\otimes\pi^*\mathcal{O}_{X}(-m_{i})$, such that $W^\bullet\to\mathcal{N}^\bullet$ is a quasi-isomorphism where $W_{i}$ is a vector space and $m_{i}$ is an integer  sufficiently large for any $i\in\mathbb{Z}$.
\end{lemma}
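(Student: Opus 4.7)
The plan is to induct on the length of $\mathcal{N}^\bullet$, reducing to the case of a single coherent sheaf $\mathcal{N}$ in a single degree, and then use the finite homological dimension of $\mathcal{X}$.

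For the base case, I would iteratively construct a (potentially infinite) resolution of the required form. Given any coherent sheaf $\mathcal{K}$, since $\mathcal{E}$ is a generating sheaf, the canonical adjunction morphism $\pi^*(\pi_*\mathcal{H}om(\mathcal{E},\mathcal{K}))\otimes\mathcal{E}\to\mathcal{K}$ is surjective. Choosing $m$ large enough so that $F_{\mathcal{E}}(\mathcal{K})(m)$ is globally generated on $X$ (possible by Serre vanishing for the projective scheme $X$), and setting $W:=H^0(X,F_{\mathcal{E}}(\mathcal{K})(m))$, I obtain a surjection $W\otimes\mathcal{E}\otimes\pi^*\mathcal{O}_X(-m)\twoheadrightarrow\mathcal{K}$ of the required form. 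Iterating on successive kernels produces an infinite left resolution $\cdots\to W^{-i}\to\cdots\to W^0\to\mathcal{N}\to 0$ with every $W^{-i}$ of the required form. Since $\mathcal{X}$ is smooth of dimension $d$, [BS, Lemma B.3] (via the presentation $\mathcal{X}=[Z/G]$ with $Z$ smooth) gives that every coherent sheaf has projective dimension at most $d$, so the $d$-th syzygy $K$ is locally free of finite rank.

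The main obstacle is the \emph{capping-off} step: the locally free syzygy $K$ is not a priori of the form $W_d\otimes\mathcal{E}\otimes\pi^*\mathcal{O}_X(-m_d)$. To handle this I would show separately that any locally free sheaf $\mathcal{V}$ on $\mathcal{X}$ admits a bounded resolution by sheaves of the required form. For this one applies the same iterative procedure to $\mathcal{V}$, observing that every syzygy remains locally free since the terms $W_{i}\otimes\mathcal{E}\otimes\pi^*\mathcal{O}_X(-m_i)$ are locally free and the kernel of a surjection of locally free sheaves is locally free; termination should follow from a Beilinson-type argument, exploiting that $\{\mathcal{E}\otimes\pi^*\mathcal{O}_X(-m)\}_{m\in\mathbb{Z}}$ is a generating family in $D^b(\mathcal{X})$ together with the boundedness of internal Ext groups $\mathrm{Ext}^i(\mathcal{E},\mathcal{V})=0$ for $i>d$ ensuring the syzygy filtration stabilizes. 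Splicing the resolution of $K$ with the partial resolution of $\mathcal{N}$ produces the required bounded $W^\bullet\to\mathcal{N}$.

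For the inductive step on a complex $\mathcal{N}^\bullet$ supported in degrees $[a,b]$ with $b-a\geq 1$, I would use the stupid truncation triangle
\ben
\sigma_{\leq b-1}\mathcal{N}^\bullet\longrightarrow\mathcal{N}^\bullet\longrightarrow\mathcal{N}^b[-b]\longrightarrow\sigma_{\leq b-1}\mathcal{N}^\bullet[1],
\een
apply the inductive hypothesis to each of the two shorter complexes to obtain bounded resolutions $W_1^\bullet\to\sigma_{\leq b-1}\mathcal{N}^\bullet$ and $W_2^\bullet\to\mathcal{N}^b[-b]$ of the required form, and then lift the connecting morphism to a morphism of complexes $W_2^\bullet\to W_1^\bullet[1]$ (using projectivity of the terms $W_j\otimes\mathcal{E}\otimes\pi^*\mathcal{O}_X(-m_j)$ against acyclic complexes in a sufficient range, which is ensured by choosing the $m_j$ large enough so that all relevant higher cohomology vanishes). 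The mapping cone of this lift provides the desired bounded complex $W^\bullet$ quasi-isomorphic to $\mathcal{N}^\bullet$, with all terms of the required form. The hardest part throughout is the termination argument for the locally free capping sheaf, which is where the smoothness hypothesis is essential.
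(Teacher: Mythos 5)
Your base case (iterated surjections $W\otimes\mathcal{E}\otimes\pi^*\mathcal{O}_X(-m)\twoheadrightarrow\mathcal{K}$, then smoothness via [BS, Lemma B.3] to get a locally free syzygy after finitely many steps) agrees with the paper, but the step you yourself flag as the obstacle --- converting the locally free syzygy $K$ into a term of the prescribed shape --- is a genuine gap in your proposal. Your plan is to prove the auxiliary claim that every locally free sheaf is quasi-isomorphic to a \emph{bounded} complex of sheaves of the form $W_i\otimes\mathcal{E}\otimes\pi^*\mathcal{O}_X(-m_i)$, with termination ``following from a Beilinson-type argument'' and from $\mathrm{Ext}^{i}(\mathcal{E},\mathcal{V})=0$ for $i>d$. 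That is not an argument: iterating surjections of the special form onto a locally free sheaf keeps every kernel locally free but provides no mechanism for the process to stop (the kernels are never zero, and local freeness is already automatic from smoothness --- that is exactly the problem, not its solution); vanishing of higher Ext groups says nothing about when a syzygy acquires the special shape, and ``generation'' of $D^b(\mathcal{X})$ by the twists $\mathcal{E}\otimes\pi^*\mathcal{O}_X(-m)$ only yields objects up to cones, shifts \emph{and direct summands}, not bounded complexes whose terms are literally of that form. In effect the auxiliary claim is the lemma itself in the special case of a locally free sheaf, so your reduction is circular unless the missing mechanism is supplied. The paper closes exactly this point by a different (and very short) device: once $K^{l_3-1}=\ker(\widehat W^{l_3}\to\widehat W^{l_3+1})$ is locally free, it uses the injection $K^{l_3-1}\otimes\mathcal{E}^\vee\otimes\pi^*\mathcal{O}_X(m_{l_3})\hookrightarrow \widehat W_{l_3}\otimes\mathcal{O}_{\mathcal{X}}$ to choose a vector space $W_{l_3-1}$ with $K^{l_3-1}\cong W_{l_3-1}\otimes\mathcal{E}\otimes\pi^*\mathcal{O}_X(-m_{l_3})$, and caps the complex there; your proposal contains no counterpart of this final identification.

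There is also a structural mismatch worth noting. The paper does not induct on the length of $\mathcal{N}^\bullet$: it resolves the whole complex simultaneously, Inaba-style, setting $K^{i-1}=\ker(\mathcal{N}^{i-1}\oplus\widehat W^{i}\to K^{i})$, which is precisely what produces \emph{one} term $\widehat W_i\otimes\mathcal{E}\otimes\pi^*\mathcal{O}_X(-m_i)$ per degree. Your induction via stupid truncations and mapping cones puts $W_1^{i+1}\oplus W_2^{i}$ in a single degree, i.e.\ a direct sum of two pieces with different twists, which is no longer literally of the form $W_i\otimes\mathcal{E}\otimes\pi^*\mathcal{O}_X(-m_i)$ demanded by the statement; moreover lifting the connecting morphism to an honest chain map requires choosing the twists of $W_2^\bullet$ sufficiently negative relative to $W_1^\bullet$ (an Inaba-type condition you would need to state and verify). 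These points are repairable, but the capping-off step is not, as proposed, and that is where the proof genuinely fails.
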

\begin{proof}
	We combine the argument in the proof of  [\cite{Inaba}, Proposition 1.1] with the finite resolution property in [\cite{BS}, Lemma B.3]. Let $l_{1}$ and $l_{2}$ be integers such that $\mathcal{N}^i=0$ for $i>l_{1}$ and $i<l_{2}$ where $l_{1}\geq l_{2}$. Choose an integer $m_{l_{1}}$ sufficiently large such that  $\widehat{W}^{l_{1}}:=\widehat{W}_{l_{1}}\otimes\mathcal{E}\otimes\pi^*\mathcal{O}_{X}(-m_{l_{1}})\to\mathcal{N}^{l_{1}}$ is surjective where $\widehat{W}_{l_{1}}:=H^0(X,\pi_{*}\mathcal{H}om_{\mathcal{O}_{\mathcal{X}}}(\mathcal{E},\mathcal{N}^{l_{1}})(m_{l_{1}}))$. Set $K^{l_{1}}:=\mathcal{N}^{l_{1}}$ and  $K^{l_{1}-1}:=\ker(\mathcal{N}^{l_{1}-1}\oplus \widehat{W}^{l_{1}}\to\mathcal{N}^{l_{1}})$, then we have the following quasi-isomorphism of complexes
	\ben
	\{\mathcal{N}^{l_{2}}\to\cdots\to \mathcal{N}^{l_{1}-1}\to K^{l_{1}-1}\to \widehat{W}^{l_{1}}\}\xrightarrow{\sim}\{\mathcal{N}^{l_{2}}\to\cdots\to \mathcal{N}^{l_{1}-1}\to \mathcal{N}^{l_{1}-1}\to \mathcal{N}^{l_{1}}\}
	\een
	Suppose that $K^i$ and $\widehat{W}^{i+1}$ are defined  for $i\geq c$. For $K^c$, we choose a sufficiently large integer $m_{c}$ such that $\widehat{W}^{c}:=\widehat{W}_{c}\otimes\mathcal{E}\otimes\pi^*\mathcal{O}_{X}(-m_{c})\to K^{c}$ is surjective where $\widehat{W}_{c}:=H^0(X,\pi_{*}\mathcal{H}om_{\mathcal{O}_{\mathcal{X}}}(\mathcal{E},K^c)(m_{c}))$. Set $K^{c-1}:=\ker(\mathcal{N}^{c-1}\oplus \widehat{W}^{c}\to K^c)$. By induction, we have the complex $\widehat{W}^\bullet=(\widehat{W}^i,d_{\widehat{W}^\bullet}^i)$ defined by $\widehat{W}^i=\widehat{W}_{i}\otimes\mathcal{E}\otimes\pi^*\mathcal{O}_{X}(-m_{i})$ for $i\leq l_{1}$ and $\widehat{W}^i=0$ for $i> l_{1}$ with
	$d_{\widehat{W}^\bullet}^i: \widehat{W}^{i}\to K^i\to \widehat{W}^{i+1}$. Then we have a quasi-isomorphism $\widehat{W}^\bullet\to\mathcal{N}^\bullet$. Since  $\mathcal{N}^i=0$ for $i<l_{2}$, we have $K^{i}=\ker(\widehat{W}^{i+1}\to K^{i+1})$ and then there is a locally free resolution for $K^{l_{2}-1}$ as follows
	\ben
	\cdots\to \widehat{W}^{l_{2}-2}\to \widehat{W}^{l_{2}-1}\to K^{l_{2}-1}\to0.
	\een
	Since $K^{l_{2}-1}$ is a coherent sheaf on the smooth projective Deligne-Mumford stack $\mathcal{X}$, 
	then there exists an integer $l_{3}\leq l_{2}-1$ such that $K^{l_{3}-1}:=\ker(\widehat{W}^{l_{3}}\xrightarrow{d_{\widehat{W}^\bullet}^{l_{3}}} \widehat{W}^{l_{3}+1})$ is locally free of finite rank by [\cite{BS}, Lemma B.3] or  [\cite{CG},  Theorem 5.1.30]. Then we have a quasi-isomorphism
	\ben
	\{\cdots0\to K^{l_{3}-1}\to \widehat{W}^{l_{3}}\to\cdots\to \widehat{W}^{l_{1}}\to0\cdots\}\xrightarrow{\sim}\mathcal{N}^\bullet
	\een
	Since the morphism $K^{l_{3}-1}\otimes\mathcal{E}^\vee\otimes\pi^*\mathcal{O}_{X}(m_{l_{3}})\to\widehat{W}_{l_{3}}\otimes\mathcal{O}_{\mathcal{X}}$ of locally free sheaves  is injective, one can choose a vector space $W_{l_{3}-1}$ such that $K^{l_{3}-1}\cong W_{l_{3}-1}\otimes\mathcal{E}\otimes\pi^*\mathcal{O}_{X}(-m_{l_{3}})$. The proof is  completed by setting
	\ben
	W^{l_{3}-1}:=W_{l_{3}-1}\otimes\mathcal{E}\otimes\pi^*\mathcal{O}_{X}(-m_{l_{3}});\;\; W^i:=\widehat{W}^i, \;\;\mbox{if } \;l_{3}\leq i\leq l_{1};\;\;W^i:=0, \;\mbox{otherwise}.
    \een
\end{proof}
The locally free resolution for any bounded complex of coherent sheaves in Lemma \ref{finite-res} is called very negative if each $m_{i}$ is chosen very large. We show the following results generalizing [\cite{BS}, Lemmas B.4, B.5 and Proposition B.6] and Serre duality in [\cite{BS}, Theorem B.7], which are useful in our arguments.
\begin{lemma}\label{hom-alg}
	Let $\mathcal{X}$ be a smooth projective Deligne-Mumford stack of dimension d over $\mathbb{C}$. Let $\mathcal{M}^\bullet$, $\mathcal{N}^\bullet$ and $\mathcal{H}^\bullet$ be bounded complexes of coherent sheaves on $\mathcal{X}$. Then we have the following functorial isomorphisms
	\ben
	&&R\mathcal{H}om^\bullet_{\mathcal{X}}(\mathcal{M}^\bullet,\mathcal{N}^\bullet)\mathop{\otimes}\limits^{\mathbf{L}}\mathcal{H}^\bullet\cong R\mathcal{H}om^\bullet_{\mathcal{X}}(\mathcal{M}^\bullet,\mathcal{N}^\bullet\mathop{\otimes}\limits^{\mathbf{L}}\mathcal{H}^\bullet) \\
	&&R\mathcal{H}om^\bullet_{\mathcal{X}}(\mathcal{M}^\bullet\mathop{\otimes}\limits^{\mathbf{L}}\mathcal{H}^\bullet,\mathcal{N}^\bullet)\cong R\mathcal{H}om^\bullet_{\mathcal{X}}(\mathcal{M}^\bullet,R\mathcal{H}om^\bullet_{\mathcal{X}}(\mathcal{H}^\bullet,\mathcal{N}^\bullet))\\
	&&\mathrm{Hom}_{\mathrm{D}^b(\mathcal{X})}(\mathcal{M}^\bullet\mathop{\otimes}\limits^{\mathbf{L}}(\mathcal{H}^{\bullet})^\vee,\mathcal{N}^\bullet)\cong\mathrm{Hom}_{\mathrm{D}^b(\mathcal{X})}(\mathcal{M}^\bullet,\mathcal{N}^\bullet\otimes\mathcal{H}^\bullet)\\
	&&\mathrm{Hom}_{\mathrm{D}^b(\mathcal{X})}(\mathcal{M}^\bullet\mathop{\otimes}\limits^{\mathbf{L}}\mathcal{H}^\bullet,\mathcal{N}^\bullet)\cong\mathrm{Hom}_{\mathrm{D}^b(\mathcal{X})}(\mathcal{M}^\bullet,\mathcal{N}^\bullet\otimes(\mathcal{H}^{\bullet})^\vee)
	\een
	where $(\mathcal{H}^{\bullet})^\vee=R\mathcal{H}om^\bullet_{\mathcal{X}}(\mathcal{H}^\bullet,\mathcal{O}_{\mathcal{X}})$ satisfying $(\mathcal{H}^{\bullet})^{\vee\vee}\cong\mathcal{H}^{\bullet}$.
\end{lemma}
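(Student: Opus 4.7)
The plan is to reduce each identity to classical sheaf-theoretic manipulations on bounded complexes of locally free sheaves of finite rank, using Lemma \ref{finite-res} as the key tool. Since $\mathcal{X}$ is a smooth projective Deligne-Mumford stack, that lemma lets us choose quasi-isomorphisms $W_{\mathcal{M}}^\bullet\to\mathcal{M}^\bullet$, $W_{\mathcal{N}}^\bullet\to\mathcal{N}^\bullet$, and $W_{\mathcal{H}}^\bullet\to\mathcal{H}^\bullet$ where each $W_{\bullet}^\bullet$ is bounded with terms of the form $V_{i}\otimes\mathcal{E}\otimes\pi^*\mathcal{O}_{X}(-m_{i})$ for finite-dimensional $V_{i}$. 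Such terms are locally free of finite rank, hence flat and strongly dualizable; consequently $-\mathop{\otimes}\limits^{\mathbf{L}}\mathcal{H}^\bullet$ is computed as the total complex of the termwise tensor product with $W_{\mathcal{H}}^\bullet$, and $R\mathcal{H}om^\bullet_{\mathcal{X}}(\mathcal{M}^\bullet,-)$ is computed as the total complex $\mathcal{H}om^\bullet(W_{\mathcal{M}}^\bullet,-)$.

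With these replacements in place, the first two isomorphisms follow from the classical sheaf-level identities $\mathcal{H}om(E,F)\otimes G\cong \mathcal{H}om(E,F\otimes G)$ and $\mathcal{H}om(E\otimes G,F)\cong \mathcal{H}om(E,\mathcal{H}om(G,F))$, valid when $E$ (resp.\ $E$ and $G$) is locally free of finite rank. Applied termwise and then passed through the total complex, together with the standard compatibility between $\mathrm{Tot}$, $\otimes$, and $\mathcal{H}om^\bullet$, these give the required quasi-isomorphisms in $\mathrm{D}^b(\mathcal{X})$. Functoriality in all three arguments is inherited from the corresponding functoriality of the sheaf-level identities and of the resolution assignments (any two choices of locally free resolution are canonically quasi-isomorphic).

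For the last two adjunctions, I would first derive the general tensor-Hom adjunction
\[
\mathrm{Hom}_{\mathrm{D}^b(\mathcal{X})}(\mathcal{M}^\bullet\mathop{\otimes}\limits^{\mathbf{L}}\mathcal{H}^\bullet,\mathcal{N}^\bullet)\cong \mathrm{Hom}_{\mathrm{D}^b(\mathcal{X})}(\mathcal{M}^\bullet,R\mathcal{H}om^\bullet_{\mathcal{X}}(\mathcal{H}^\bullet,\mathcal{N}^\bullet))
\]
by applying $R\Gamma$ and taking $H^{0}$ on both sides of the second isomorphism already established. Next, since $\mathcal{H}^\bullet$ may be replaced by the locally free complex $W_{\mathcal{H}}^\bullet$, specializing the first isomorphism to $\mathcal{M}^\bullet=\mathcal{H}^\bullet$, $\mathcal{N}^\bullet=\mathcal{O}_{\mathcal{X}}$ and tensoring with $\mathcal{N}^\bullet$ produces a natural isomorphism $\mathcal{N}^\bullet\otimes(\mathcal{H}^\bullet)^{\vee}\xrightarrow{\sim} R\mathcal{H}om^\bullet_{\mathcal{X}}(\mathcal{H}^\bullet,\mathcal{N}^\bullet)$; the biduality $(\mathcal{H}^\bullet)^{\vee\vee}\cong\mathcal{H}^\bullet$ on the level of $W_{\mathcal{H}}^\bullet$ is termwise and hence holds in $\mathrm{D}^b(\mathcal{X})$. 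Substituting these into the derived tensor-Hom adjunction yields the third and fourth formulas after a reindexing $\mathcal{H}^\bullet\leftrightarrow(\mathcal{H}^\bullet)^{\vee}$.

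The main obstacle is not conceptual but bookkeeping: one must ensure that the three independent locally free replacements interact compatibly in the derived category, and that the various natural transformations built from $\mathrm{Tot}$, $\otimes$, $\mathcal{H}om^\bullet$ and duality genuinely coincide (with correct Koszul signs for the shifts). This is handled uniformly by noting that the finite locally free resolution property of Lemma \ref{finite-res} makes the category of bounded complexes of such locally free sheaves a thick subcategory generating $\mathrm{D}^b(\mathcal{X})$, on which both sides of each displayed formula are explicit termwise constructions that agree by the classical identities.
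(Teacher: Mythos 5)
Your proposal is correct and follows essentially the same route as the paper: the paper's proof is exactly "Lemma \ref{finite-res} plus the techniques of [BBR, Propositions A.86--A.88]," i.e.\ replace the complexes by bounded complexes of locally free sheaves of the form $V_i\otimes\mathcal{E}\otimes\pi^*\mathcal{O}_X(-m_i)$ and conclude by the classical termwise tensor--Hom identities, duality and biduality for finite-rank locally free sheaves. You merely spell out the cited arguments (including deducing the global adjunctions by applying $R\Gamma$ and $H^0$) rather than quoting them, so no substantive difference.
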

\begin{proof}
	 The proof follows from Lemma \ref{finite-res} and the techniques used in the proofs of [\cite{BBR}, Propositions A.86, A.87, A.88].
\end{proof}

\begin{lemma}\label{Serre-duality}
	Let $p:\mathcal{X}\to\mathrm{Spec}\,\mathbb{C}$ be a smooth projective Delinge-Mumford stack of dimension d. Let $\mathcal{M}^\bullet$ and $\mathcal{N}^\bullet$ be bounded complexes of coherent sheaves on $\mathcal{X}$. Then we have 
	\ben
	\mathrm{Ext}^i(\mathcal{M}^\bullet,\mathcal{N}^\bullet)\cong\mathrm{Ext}^{d-i}(\mathcal{N}^\bullet,\mathcal{M}^\bullet\otimes\omega_{\mathcal{X}})^\vee.
	\een
	where $\omega_{\mathcal{X}}$ is the canonical line bundle of $\mathcal{X}$.
\end{lemma}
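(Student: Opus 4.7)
The plan is to reduce the statement to Serre duality for a single coherent sheaf on the smooth projective Deligne-Mumford stack $\mathcal{X}$, which is already available as [BS, Theorem B.7], by first resolving $\mathcal{M}^\bullet$ and $\mathcal{N}^\bullet$ by bounded complexes of locally free sheaves via Lemma \ref{finite-res}, and then invoking the derived adjunctions collected in Lemma \ref{hom-alg}.

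First I would use Lemma \ref{finite-res} to replace $\mathcal{M}^\bullet$ (respectively $\mathcal{N}^\bullet$) by a quasi-isomorphic bounded complex $W_{\mathcal{M}}^\bullet$ (respectively $W_{\mathcal{N}}^\bullet$) of locally free sheaves of finite rank on $\mathcal{X}$. Since $\mathcal{X}$ is smooth and proper over $\mathbb{C}$, the complex $R\mathcal{H}om^\bullet_{\mathcal{X}}(\mathcal{M}^\bullet,\mathcal{N}^\bullet)$ is then represented by the bounded honest complex $(W_{\mathcal{M}}^\bullet)^\vee\otimes W_{\mathcal{N}}^\bullet$, and $\mathrm{Ext}^i(\mathcal{M}^\bullet,\mathcal{N}^\bullet)$ can be computed as the $i$-th hypercohomology group of this complex on $\mathcal{X}$.

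Next, I would use the fact that on a smooth projective Deligne-Mumford stack of dimension $d$ the canonical line bundle $\omega_{\mathcal{X}}$ provides a dualizing complex (shifted by $d$): this is exactly the content of Serre duality as formulated in [BS, Theorem B.7] for coherent sheaves, together with the tameness of $\mathcal{X}$ which permits the usual derived dévissage. Concretely, for any bounded complex $\mathcal{G}^\bullet$ of coherent sheaves on $\mathcal{X}$ one obtains the functorial isomorphism
\[
\mathbb{H}^i(\mathcal{X},\mathcal{G}^\bullet)\;\cong\;\mathrm{Ext}^{d-i}_{\mathcal{X}}(\mathcal{G}^\bullet,\omega_{\mathcal{X}})^{\vee}.
\]
Applying this with $\mathcal{G}^\bullet=R\mathcal{H}om^\bullet_{\mathcal{X}}(\mathcal{M}^\bullet,\mathcal{N}^\bullet)$ identifies the left hand side with $\mathrm{Ext}^i(\mathcal{M}^\bullet,\mathcal{N}^\bullet)$, while the $\otimes$-$\mathcal{H}om$ adjunction of Lemma \ref{hom-alg} rewrites the right hand side as
\[
\mathrm{Ext}^{d-i}_{\mathcal{X}}\!\bigl(R\mathcal{H}om^\bullet_{\mathcal{X}}(\mathcal{M}^\bullet,\mathcal{N}^\bullet),\omega_{\mathcal{X}}\bigr)^{\vee}\cong\mathrm{Ext}^{d-i}_{\mathcal{X}}(\mathcal{N}^\bullet,\mathcal{M}^\bullet\otimes\omega_{\mathcal{X}})^{\vee},
\]
which is the claimed formula.

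The main obstacle is the upgrade from the single-sheaf Serre duality of [BS, Theorem B.7] to the derived statement displayed above for arbitrary bounded complexes. I expect this to be handled either by dévissage on the length of $W_{\mathcal{M}}^\bullet$ and $W_{\mathcal{N}}^\bullet$ (reducing inductively to the locally free sheaf case via the stupid truncation triangles and the five lemma), or, more conceptually, by the hypercohomology spectral sequence $E_2^{p,q}=\mathrm{Ext}^p(\mathcal{H}^{-q}(\mathcal{M}^\bullet),\mathcal{N}^\bullet)\Rightarrow\mathrm{Ext}^{p+q}(\mathcal{M}^\bullet,\mathcal{N}^\bullet)$ together with the functorial duality of $E_2$-terms. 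Once that package is in place, the rest is a formal manipulation using Lemma \ref{hom-alg}, and the identification $\omega_{\mathcal{X}}^{\vee\vee}\cong\omega_{\mathcal{X}}$.
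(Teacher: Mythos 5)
Your proposal is correct and follows essentially the same route as the paper: the paper's proof simply runs the argument of [BS, Theorem B.7] using Serre duality for Deligne--Mumford stacks (Nironi) together with Lemma \ref{hom-alg}, and your plan — resolving by bounded locally free complexes via Lemma \ref{finite-res}, reducing to the sheaf-level duality by dévissage, and then applying the $\otimes$--$\mathcal{H}om$ adjunctions of Lemma \ref{hom-alg} with $(\mathcal{M}^{\bullet})^{\vee\vee}\cong\mathcal{M}^{\bullet}$ — is exactly that argument spelled out. No gap: the adjunction step $\mathrm{Ext}^{d-i}(R\mathcal{H}om^{\bullet}(\mathcal{M}^{\bullet},\mathcal{N}^{\bullet}),\omega_{\mathcal{X}})\cong\mathrm{Ext}^{d-i}(\mathcal{N}^{\bullet},\mathcal{M}^{\bullet}\otimes\omega_{\mathcal{X}})$ is indeed covered by the isomorphisms listed in Lemma \ref{hom-alg}.
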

\begin{proof}
	It follows from the similar argument in the proof of  [\cite{BS}, Theorem B.7] by using Serre duality for Deligne-Mumford stacks in [\cite{Nir1}, Corollary 2.10 and Theorem 2.22] and Lemma \ref{hom-alg}.
\end{proof}

Since $ \hat{M}^s=M^{ss}_{\mathcal{X}/\mathbb{C}}(\mathcal{F}_{0},P,\delta)$ is a projective scheme by Theorem \ref{main-result} and Lemma \ref{no-str-semi}, one can  resolve   $\mathbb{F}$ and  $\mathbb{I}^\bullet$ by   finite complexes $P^\bullet$  and  $J^\bullet$ of locally free sheaves respectively  by Lemma \ref{finite-res}, and hence  $G^\bullet:=R\mathcal{H}om(R\mathcal{H}om(\mathbb{I}^\bullet,\mathbb{F}),\mathcal{O}_{\mathcal{X}\times \hat{M}^s})$  is a bounded complex of coherent sheaves. Again we take a  finite complex $Q^\bullet$ of very negative locally free sheaves resolving $G^\bullet$  by Lemma \ref{finite-res}, then by Lemma \ref{hom-alg} we have
\ben
R\pi_{\hat{M}^s*}R\mathcal{H}om(\mathbb{I}^\bullet,\mathbb{F})\cong R\pi_{\hat{M}^s*}(Q^\bullet)^\vee\cong \pi_{\hat{M}^s*}(Q^\bullet)^\vee
\een
which is a finite complex of locally free sheaves since $Q^\bullet$ is very negative. Denote this complex by $D^\bullet$. Together with Lemma \ref{def-ob-cmg}, following the similar argument in [\cite{Lin18}, Section 6], one has a short exact sequence on $\hat{M}^s$
\ben
0\to\mathcal{D}ef\to \widetilde{D^0}\to \widetilde{D^1}\to\mathcal{O}bs\to0.
\een
where $\widetilde{D^0}$ and $\widetilde{D^1}$ are locally free sheaves. This implies  the obstruction theory is perfect in the sense of [\cite{LT,BF}]. Then we have the following stacky version of [\cite{Lin18}, Theorem 1.3].
\begin{theorem}\label{vir-exi-2}
Let $\mathcal{X}$ be a smooth projective Deligne-Mumford stack of dimension 2 over $\mathbb{C}$. Let $\mathcal{F}_{0}$ be a torsion free sheaf and $\deg\,\delta\geq\deg \,P=1$. Then the moduli space $M^s_{\mathcal{X}/\mathbb{C}}(\mathcal{F}_{0},P,\delta)$ of $\delta$-stable pairs admits a virtual fundamental class.
\end{theorem}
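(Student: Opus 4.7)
The plan is to exhibit a perfect obstruction theory on $\hat{M}^s := M^s_{\mathcal{X}/\mathbb{C}}(\mathcal{F}_0,P,\delta)$ in the sense of \cite{BF,LT} and then invoke the general machinery to produce the virtual fundamental class. Since $\deg\,\delta \geq \deg\,P = 1$, Theorem \ref{main-result} guarantees a universal $\delta$-stable pair on $\mathcal{X} \times \hat{M}^s$, and hence a universal complex $\mathbb{I}^\bullet = \{\hat{\pi}_{\mathcal{X}}^* \mathcal{F}_0 \to \mathbb{F}\}$ concentrated in degrees $0$ and $1$, flat over $\hat{M}^s$. The natural candidate obstruction theory will be built from $R\pi_{\hat{M}^s *} R\mathcal{H}om(\mathbb{I}^\bullet, \mathbb{F})$.

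Pointwise, at a closed point $[(\mathcal{F},\varphi)] \in \hat{M}^s$, Theorem \ref{def-ob1} identifies infinitesimal deformations with $\mathrm{Hom}(\mathbf{I}^\bullet,\mathcal{F})$ and places the obstructions in $\mathrm{Ext}^1(\mathbf{I}^\bullet,\mathcal{F})$. The essential input is Lemma \ref{def-ob-cmg}, which gives the vanishing $\mathrm{Ext}^i(\mathbf{I}^\bullet,\mathcal{F})=0$ for $i\neq 0,1$; this rests on the distinguished triangle $\mathcal{F}[-1] \to \mathbf{I}^\bullet \to \mathcal{F}_0$ of Remark \ref{def-ob-compare}, together with purity of $\mathcal{F}$, torsion-freeness of $\mathcal{F}_0$, and Serre duality in dimension $2$ (Lemma \ref{Serre-duality}). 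This confirms that the obstruction theory is genuinely two-term pointwise.

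To globalise, I would first apply Lemma \ref{finite-res} to resolve $\mathbb{F}$ (and then $\mathbb{I}^\bullet$) by a finite complex of locally free sheaves of the form $W_i \otimes \hat{\pi}_{\mathcal{X}}^*\mathcal{E} \otimes \hat{\pi}_{\mathcal{X}}^*\pi^*\mathcal{O}_X(-m_i)$, so that $R\mathcal{H}om(\mathbb{I}^\bullet,\mathbb{F})$ becomes a bounded complex of coherent sheaves on $\mathcal{X}\times\hat{M}^s$. Setting $G^\bullet := R\mathcal{H}om(R\mathcal{H}om(\mathbb{I}^\bullet,\mathbb{F}),\mathcal{O})$ and choosing a very negative locally free resolution $Q^\bullet$ of $G^\bullet$ (again by Lemma \ref{finite-res}), the identity $R\mathcal{H}om(\mathbb{I}^\bullet,\mathbb{F}) \cong (Q^\bullet)^\vee$ of Lemma \ref{hom-alg} together with the vanishing of higher direct images forced by the very-negativity of $Q^\bullet$ on the projective $\hat{M}^s$ gives
\[
R\pi_{\hat{M}^s *} R\mathcal{H}om(\mathbb{I}^\bullet,\mathbb{F}) \;\cong\; \pi_{\hat{M}^s *}(Q^\bullet)^\vee \;=:\; D^\bullet,
\]
a bounded complex of locally free sheaves. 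Combining the pointwise vanishing of Lemma \ref{def-ob-cmg} with cohomology and base change collapses $D^\bullet$ onto a two-term complex, yielding a short exact sequence $0 \to \mathcal{D}ef \to \widetilde{D^0} \to \widetilde{D^1} \to \mathcal{O}bs \to 0$ with $\widetilde{D^i}$ locally free; this identifies the dual of $D^\bullet$ (suitably shifted) as a perfect complex of amplitude $[-1,0]$.

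The remaining step, which I expect to be the principal obstacle, is the construction of the morphism $E^\bullet \to \mathbb{L}_{\hat{M}^s}$ realising this two-term complex as an obstruction theory and the verification of the Behrend–Fantechi criterion ($h^0$ isomorphism, $h^{-1}$ surjection). The classical route is via the Atiyah class of $\mathbb{I}^\bullet$: one produces a morphism in $D^b(\mathcal{X}\times\hat{M}^s)$ and pushes down via $\pi_{\hat{M}^s*}$ to obtain the desired map on $\hat{M}^s$. This is the same strategy used in \cite{Lin18}, Theorem 1.3 in the smooth scheme setting; in our stacky situation every ingredient (Serre duality, finite locally free resolutions, derived-tensor adjunctions, the projection formula) has already been assembled in Lemma \ref{finite-res}, Lemma \ref{hom-alg}, and Lemma \ref{Serre-duality}, so the argument of \cite{Lin18} transfers with only bookkeeping around the generating sheaf $\mathcal{E}$ and the coarse moduli projection $\pi:\mathcal{X}\to X$. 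Once the perfect obstruction theory is in place, \cite{BF, LT} directly yields the virtual fundamental class.
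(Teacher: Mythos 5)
Your proposal follows essentially the same route as the paper: pointwise two-term vanishing via Theorem \ref{def-ob1} and Lemma \ref{def-ob-cmg}, globalization by the finite locally free resolutions of Lemma \ref{finite-res} and the duality/adjunction identities of Lemma \ref{hom-alg} to get $D^\bullet=\pi_{\hat{M}^s*}(Q^\bullet)^\vee$, and then the four-term sequence $0\to\mathcal{D}ef\to\widetilde{D^0}\to\widetilde{D^1}\to\mathcal{O}bs\to0$ with the remaining verification deferred to the argument of [Lin18, Section 6] and the machinery of [BF, LT] — exactly as the paper does. The only cosmetic difference is that you write the universal complex with $\hat{\pi}_{\mathcal{X}}^*\mathcal{F}_0$ in degree $0$ (which is in fact the correct form for general torsion free $\mathcal{F}_0$), so no gap.
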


Next, we will concentrate on investigating the existence of virtual fundamental classes for some special  case of dimension three.  We start with the following

\begin{lemma}\label{def-ob-cmg1}
Let $\mathcal{X}$ be a smooth projective Deligne-Mumford stack of dimension 3 over $\mathbb{C}$. Let $[(\mathcal{F},\varphi)]$ be a point in the moduli space $M^s_{\mathcal{X}/\mathbb{C}}(\mathcal{F}_{0},P,\delta)$ and  $\mathbf{I}^{\bullet}=\{\mathcal{F}_{0} \xrightarrow{\varphi} \mathcal{F}\}$  be the complex concentrated in degree 0 and 1. Then we have
\ben
\mathrm{Ext}^i(\mathbf{I}^{\bullet},\mathcal{F})=0,\;\;\;\; \mbox{if } i\neq0,1,2.
\een
\end{lemma}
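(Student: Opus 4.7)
The plan is to apply $R\mathrm{Hom}(-,\mathcal{F})$ to the distinguished triangle $\mathcal{F}[-1]\to\mathbf{I}^{\bullet}\to\mathcal{F}_{0}\xrightarrow{\varphi}\mathcal{F}$ recorded in Remark \ref{def-ob-compare}. This produces a long exact sequence
\[
\cdots\to\mathrm{Ext}^{i}(\mathcal{F}_{0},\mathcal{F})\to\mathrm{Ext}^{i}(\mathbf{I}^{\bullet},\mathcal{F})\to\mathrm{Ext}^{i+1}(\mathcal{F},\mathcal{F})\to\mathrm{Ext}^{i+1}(\mathcal{F}_{0},\mathcal{F})\to\cdots,
\]
so it suffices to show that both $\mathrm{Ext}^{i}(\mathcal{F}_{0},\mathcal{F})$ and $\mathrm{Ext}^{i+1}(\mathcal{F},\mathcal{F})$ vanish for all $i\geq 3$, and separately to handle the negative-degree pieces.

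The vanishings for $i+1\geq 4$ are immediate: since $\mathcal{X}$ is smooth of dimension $3$, every $\mathrm{Ext}^{j}$ of coherent sheaves on $\mathcal{X}$ is zero for $j>3$ by the existence of length-$3$ locally free resolutions (Lemma \ref{finite-res}). The one genuine case is $i=3$, where we must kill $\mathrm{Ext}^{3}(\mathcal{F}_{0},\mathcal{F})$. I would invoke Serre duality (Lemma \ref{Serre-duality}) to rewrite this as $\mathrm{Hom}(\mathcal{F},\mathcal{F}_{0}\otimes\omega_{\mathcal{X}})^{\vee}$. Since $\mathcal{F}_{0}$ is torsion free and $\omega_{\mathcal{X}}$ is invertible, $\mathcal{F}_{0}\otimes\omega_{\mathcal{X}}$ is pure of dimension $3$; on the other hand $\mathcal{F}$ is pure of dimension $1$ by $\delta$-stability together with $\deg P=1$. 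Hence any nonzero morphism $\mathcal{F}\to\mathcal{F}_{0}\otimes\omega_{\mathcal{X}}$ would embed a subsheaf of dimension at most one into a pure three-dimensional sheaf, which is impossible. Therefore $\mathrm{Ext}^{3}(\mathcal{F}_{0},\mathcal{F})=0$, and the long exact sequence yields $\mathrm{Ext}^{i}(\mathbf{I}^{\bullet},\mathcal{F})=0$ for all $i\geq 3$.

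For $i\leq -2$ both flanking Ext groups in the sequence vanish trivially. The remaining case $i=-1$ identifies $\mathrm{Ext}^{-1}(\mathbf{I}^{\bullet},\mathcal{F})$ with the kernel of the map $\varphi^{*}\colon\mathrm{Hom}(\mathcal{F},\mathcal{F})\to\mathrm{Hom}(\mathcal{F}_{0},\mathcal{F})$, $\psi\mapsto\psi\circ\varphi$. By Lemma \ref{orbi-PT}, $\dim\mathrm{coker}\varphi<1$, so any $\psi$ annihilating $\varphi$ factors through $\mathrm{coker}\varphi$ and thus has image of dimension zero in $\mathcal{F}$. Purity of $\mathcal{F}$ forces this image to vanish, so $\psi=0$ and $\mathrm{Ext}^{-1}(\mathbf{I}^{\bullet},\mathcal{F})=0$.

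The main obstacle is the $i=3$ case: this Ext group is not killed by a cohomological-dimension argument alone and must be disposed of using the interplay between purity of $\mathcal{F}$ and torsion-freeness of $\mathcal{F}_{0}\otimes\omega_{\mathcal{X}}$ through Serre duality on the stack. This is precisely the step where the hypotheses $\deg P=1$ and $\mathcal{F}_{0}$ torsion free are essential, paralleling the key step in the two-dimensional Lemma \ref{def-ob-cmg}.
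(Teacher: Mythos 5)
Your proof is correct, and it is in essence the argument the paper has in mind (the paper's proof is a pointer to [Lin18, Lemma 6.1] plus Serre duality): reduce $\mathrm{Ext}^i(\mathbf{I}^\bullet,\mathcal{F})$ to Ext groups between sheaves via a distinguished triangle, kill the top degree $i=3$ by Serre duality against a torsion-free sheaf, and kill the negative degrees by purity and dimension counts. The only real difference is which triangle you use: you take $\mathcal{F}[-1]\to\mathbf{I}^\bullet\to\mathcal{F}_{0}$ from Remark \ref{def-ob-compare}, so the critical groups are $\mathrm{Ext}^{3}(\mathcal{F}_{0},\mathcal{F})$ and $\ker\bigl(\mathrm{Hom}(\mathcal{F},\mathcal{F})\to\mathrm{Hom}(\mathcal{F}_{0},\mathcal{F})\bigr)$, whereas the decomposition indicated by Remark \ref{3-term} uses the cohomology sheaves $\ker\varphi$ and $\mathrm{coker}\varphi$, so the critical groups are $\mathrm{Ext}^{3}(\ker\varphi,\mathcal{F})$ and $\mathrm{Hom}(\mathrm{coker}\varphi,\mathcal{F})$. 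Both reductions bottom out in the same two facts — there is no nonzero map from the $1$-dimensional sheaf $\mathcal{F}$ into a torsion-free (hence pure $3$-dimensional) sheaf, and no nonzero map from the $0$-dimensional $\mathrm{coker}\varphi$ into the pure $1$-dimensional $\mathcal{F}$ — so neither route buys anything over the other.

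One small imprecision worth fixing: the vanishing $\mathrm{Ext}^{j}(\mathcal{F}_{0},\mathcal{F})=\mathrm{Ext}^{j}(\mathcal{F},\mathcal{F})=0$ for $j>3$ does not follow just from the existence of length-$3$ locally free resolutions as you state; a length-$3$ resolution bounds the local Ext sheaves, and the local-to-global spectral sequence would then only give vanishing for $j>6$. The clean justification is again Serre duality (Lemma \ref{Serre-duality}), which places all Ext groups between coherent sheaves on the $3$-dimensional smooth stack in degrees $[0,3]$ — the same tool you already invoke for the $i=3$ case, so no new input is needed.
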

\begin{proof}
Follow the similar argument in the proof of  Lemma \ref{def-ob-cmg}.	
\end{proof}
\begin{remark}\label{3-term}
The Ext group $\mathrm{Ext}^2(\mathbf{I}^{\bullet},\mathcal{F})$ fits into the following short exact sequence	
\ben
\cdots\to\mathrm{Ext}^1(\ker\varphi,\mathcal{F})\to\mathrm{Ext}^3(\mathrm{coker}\varphi,\mathcal{F})\to\mathrm{Ext}^2(\mathbf{I}^\bullet,\mathcal{F})\to\mathrm{Ext}^2(\ker\varphi,\mathcal{F})\to\mathrm{Ext}^4(\mathrm{coker}\varphi,\mathcal{F})=0.
\een
It seems difficult to prove the vanishness of $\mathrm{Ext}^2(\mathbf{I}^{\bullet},\mathcal{F})$ even when $\mathcal{F}_{0}=\mathcal{O}_{\mathcal{X}}$.
\end{remark}

Due to Remark \ref{3-term},  in order to define a perfect (two-term) obstruction theory for the moduli space of $\delta$-stable pairs,  we will take the  deformation and obstruction theories obtained in Theorem \ref{def-ob2} and Theorem \ref{def-ob3} as an alternative approach.  
Assume in addition  $\mathcal{F}_{0}=\mathcal{O}_{\mathcal{X}}$. In this case, the moduli space of $\delta$-stable pairs parametrizes the stacky version of PT stable pairs by Remark \ref{stacky-PT}.
\begin{lemma}\label{def-ob-cmg2}
	Let $\mathcal{X}$ be a 3-dimensional smooth projective Deligne-Mumford stack over $\mathbb{C}$. Assume that $[(\mathcal{F},\varphi)]$ is a point in the moduli space $\overline{M}^s:=M^s_{\mathcal{X}/\mathbb{C}}(\mathcal{O}_{\mathcal{X}},P,\delta)$ and   $\bar{\mathbf{I}}^{\bullet}:=\{\mathcal{O}_{\mathcal{X}} \xrightarrow{\varphi} \mathcal{F}\}$  is a complex concentrated in degree 0 and 1. Then we have
	\ben
	\mathcal{E}xt^{\leq-1}(\bar{\mathbf{I}}^{\bullet},\bar{\mathbf{I}}^{\bullet})=0=\mathcal{E}xt^{\leq-1}(\bar{\mathbf{I}}^{\bullet},\mathcal{O}_{\mathcal{X}})\;\;\;\; \mbox{and}\;\;\;\; \mathcal{H}om(\bar{\mathbf{I}}^{\bullet},\bar{\mathbf{I}}^{\bullet})=\mathcal{O}_{\mathcal{X}}=\mathcal{H}om(\bar{\mathbf{I}}^{\bullet},\mathcal{O}_{\mathcal{X}}).
	\een
\end{lemma}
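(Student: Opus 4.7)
The plan is to exploit the distinguished triangle
\[
\bar{\mathbf{I}}^{\bullet} \to \mathcal{O}_{\mathcal{X}} \xrightarrow{\varphi} \mathcal{F} \xrightarrow{+1}
\]
in $\mathrm{D}^b(\mathcal{X})$ coming from the two-term presentation of $\bar{\mathbf{I}}^{\bullet}$, combined with two inputs extracted from $\delta$-stability via Lemma \ref{orbi-PT}: firstly, $\mathcal{F}$ is pure of dimension one on the smooth $3$-dimensional stack $\mathcal{X}$, so the local form of Serre duality obtained from Lemma \ref{Serre-duality} yields $\mathcal{E}xt^i(\mathcal{F},\mathcal{O}_{\mathcal{X}})=0$ for $i\leq 1$; secondly, $\mathrm{coker}\,\varphi$ has $0$-dimensional support, which together with purity of $\mathcal{F}$ forces $\mathcal{H}om(\mathrm{coker}\,\varphi,\mathcal{F})=0$ since any $0$-dimensional subsheaf of $\mathcal{F}$ is zero.

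To obtain the statements involving $\mathcal{O}_{\mathcal{X}}$, I would apply $R\mathcal{H}om(-,\mathcal{O}_{\mathcal{X}})$ to the triangle and read off
\[
\cdots \to \mathcal{E}xt^i(\mathcal{F},\mathcal{O}_{\mathcal{X}}) \to \mathcal{E}xt^i(\mathcal{O}_{\mathcal{X}},\mathcal{O}_{\mathcal{X}}) \to \mathcal{E}xt^i(\bar{\mathbf{I}}^{\bullet},\mathcal{O}_{\mathcal{X}}) \to \mathcal{E}xt^{i+1}(\mathcal{F},\mathcal{O}_{\mathcal{X}}) \to \cdots
\]
For $i\leq -1$ all neighbouring terms vanish (the outer ones by the purity input, the middle one since $\mathcal{O}_{\mathcal{X}}$ is a line bundle), yielding $\mathcal{E}xt^{\leq -1}(\bar{\mathbf{I}}^{\bullet},\mathcal{O}_{\mathcal{X}})=0$; for $i=0$ the sequence collapses to $0\to \mathcal{O}_{\mathcal{X}} \to \mathcal{H}om(\bar{\mathbf{I}}^{\bullet},\mathcal{O}_{\mathcal{X}}) \to 0$, identifying $\mathcal{H}om(\bar{\mathbf{I}}^{\bullet},\mathcal{O}_{\mathcal{X}})\cong \mathcal{O}_{\mathcal{X}}$.

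For the self-$\mathcal{E}xt$ statements I would apply $R\mathcal{H}om(\bar{\mathbf{I}}^{\bullet},-)$ to the same triangle. This requires a preliminary computation of $\mathcal{E}xt^i(\bar{\mathbf{I}}^{\bullet},\mathcal{F})$ for $i\leq -1$, obtained by applying $R\mathcal{H}om(-,\mathcal{F})$: the case $i\leq -2$ is automatic, and for $i=-1$ the long exact sequence identifies $\mathcal{E}xt^{-1}(\bar{\mathbf{I}}^{\bullet},\mathcal{F})$ with the kernel of the precomposition map $\mathcal{H}om(\mathcal{F},\mathcal{F})\to \mathcal{F}$, $g\mapsto g\circ\varphi$. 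Any $g$ in this kernel factors through $\mathrm{coker}\,\varphi$, whose Hom sheaf into $\mathcal{F}$ vanishes by the second input, so the kernel is zero. Substituting into the long exact sequence for $R\mathcal{H}om(\bar{\mathbf{I}}^{\bullet},-)$ together with the two vanishings already established gives $\mathcal{E}xt^{\leq -1}(\bar{\mathbf{I}}^{\bullet},\bar{\mathbf{I}}^{\bullet})=0$ at once and produces an inclusion $\mathcal{H}om(\bar{\mathbf{I}}^{\bullet},\bar{\mathbf{I}}^{\bullet}) \hookrightarrow \mathcal{H}om(\bar{\mathbf{I}}^{\bullet},\mathcal{O}_{\mathcal{X}}) \cong \mathcal{O}_{\mathcal{X}}$.

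The main obstacle, and the last step, is to upgrade this inclusion to an isomorphism, i.e.\ to verify that the connecting map $\mathcal{O}_{\mathcal{X}} \to \mathcal{H}om(\bar{\mathbf{I}}^{\bullet},\mathcal{F})$ is zero. Under the identification just obtained, the generator of $\mathcal{H}om(\bar{\mathbf{I}}^{\bullet},\mathcal{O}_{\mathcal{X}})$ is the canonical morphism $\bar{\mathbf{I}}^{\bullet}\to\mathcal{O}_{\mathcal{X}}$ from the defining triangle, and the connecting map is post-composition with $\varphi$; hence the generator is sent to $\varphi\circ(\bar{\mathbf{I}}^{\bullet}\to\mathcal{O}_{\mathcal{X}})$, which vanishes because consecutive morphisms in a distinguished triangle compose to zero. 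Keeping track of this normalisation of the generator is the delicate point, but once done it concludes $\mathcal{H}om(\bar{\mathbf{I}}^{\bullet},\bar{\mathbf{I}}^{\bullet})\cong \mathcal{O}_{\mathcal{X}}$ and completes the proof.
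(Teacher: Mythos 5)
Your proposal is correct and takes essentially the same route as the paper, whose proof consists of the purity/codimension input $\mathcal{H}om(\mathcal{F},\mathcal{O}_{\mathcal{X}})=0=\mathcal{E}xt^1(\mathcal{F},\mathcal{O}_{\mathcal{X}})$ followed by the triangle arguments of [\cite{PT1}, Lemmas 1.15 and 1.20], which is exactly what you carry out (including killing $\mathcal{H}om(\mathrm{coker}\,\varphi,\mathcal{F})$ by purity and checking the connecting map vanishes on the canonical generator). The only minor point is that the local vanishing $\mathcal{E}xt^{\leq1}(\mathcal{F},\mathcal{O}_{\mathcal{X}})=0$ should be sourced from the local statement on pure sheaves of codimension two (the paper cites [\cite{BS}, Proposition C.1 and Lemma B.4]) rather than from the global duality of Lemma \ref{Serre-duality}, but this does not affect the argument.
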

\begin{proof}
	Notice that 
	for 1-dimensional pure sheaf $\mathcal{F}$, we have
	\ben
	\mathcal{H}om(\mathcal{F},\mathcal{O}_{\mathcal{X}})\otimes\omega_{\mathcal{X}}=\mathcal{H}om(\mathcal{F},\omega_{\mathcal{X}})=0\;\;\; \mbox{and}\;\;\; \mathcal{E}xt^{1}(\mathcal{F},\mathcal{O}_{\mathcal{X}})\otimes\omega_{\mathcal{X}}=\mathcal{E}xt^{1}(\mathcal{F},\omega_{\mathcal{X}})=0
	\een
	by [\cite{BS}, Proposition C.1  and Lemma B.4].
	Then  $\mathcal{H}om(\mathcal{F},\mathcal{O}_{\mathcal{X}})=0=\mathcal{E}xt^{1}(\mathcal{F},\mathcal{O}_{\mathcal{X}})$. And the proof is completed by following the similar argument in the proofs of [\cite{PT1}, Lemma 1.15 and Lemma 1.20].
\end{proof}
The similar argument for the proof of [\cite{PT1}, Proposition 1.21]  shows that a point $[(\mathcal{F},\varphi)]$  in  $\overline{M}^s$ can be taken as an object $\bar{\mathbf{I}}^{\bullet}=\{\mathcal{O}_{\mathcal{X}} \xrightarrow{\varphi} \mathcal{F}\}$ in $D^b(\mathcal{X})$. It follows from Lemma \ref{def-ob-cmg2} and the local-to-global spectral sequence that 
\ben
\mathrm{Ext}^{\leq-1}(\bar{\mathbf{I}}^{\bullet},\bar{\mathbf{I}}^{\bullet})=0\;\;\;\mbox{and}\;\;\;\mathrm{Hom}(\bar{\mathbf{I}}^{\bullet},\bar{\mathbf{I}}^{\bullet})=\mathbb{C}.
\een

By Remark \ref{def-ob-compare}, the first order or infinitesimal deformation theory of the complex $\bar{\mathbf{I}}^\bullet$ is governed by $\mathrm{Ext}^1(\bar{\mathbf{I}}^\bullet,\bar{\mathbf{I}}^\bullet)$ and $\mathrm{Ext}^2(\bar{\mathbf{I}}^\bullet,\bar{\mathbf{I}}^\bullet)$. Now, we have the following result on traceless $\mathrm{Ext}$ groups $\mathrm{Ext}^\bullet(\bar{\mathbf{I}}^\bullet,\bar{\mathbf{I}}^\bullet)_{0}$ (see [\cite{Tho}, Section 3] for the definition of the trace map).

\begin{lemma}\label{def-ob-cmg3}
Let $\mathcal{X}$ be a 3-dimensional smooth projective Deligne-Mumford stack over $\mathbb{C}$. Assume that $[(\mathcal{F},\varphi)]$ is a point in the moduli space $\overline{M}^s$ and   $\bar{\mathbf{I}}^{\bullet}=\{\mathcal{O}_{\mathcal{X}} \xrightarrow{\varphi} \mathcal{F}\}$  is a complex concentrated in degree 0 and 1. Then we have
\ben
\mathrm{Ext}^{i}(\bar{\mathbf{I}}^{\bullet},\bar{\mathbf{I}}^{\bullet})_{0}=0 \;\;\; \mbox{if } i\neq1,2.
\een
\end{lemma}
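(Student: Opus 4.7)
The plan is to reduce everything to Lemma \ref{def-ob-cmg2} via the trace-identity splitting, combined with Serre duality (Lemma \ref{Serre-duality}) on the upper end. Since $\bar{\mathbf{I}}^\bullet$ is a perfect complex of rank $1$ in $K$-theory and we work in characteristic zero, the composition $\mathcal{O}_{\mathcal{X}}\xrightarrow{\mathrm{id}}R\mathcal{H}om(\bar{\mathbf{I}}^\bullet,\bar{\mathbf{I}}^\bullet)\xrightarrow{\mathrm{tr}}\mathcal{O}_{\mathcal{X}}$ is the identity, producing a splitting
\[
R\mathcal{H}om(\bar{\mathbf{I}}^\bullet,\bar{\mathbf{I}}^\bullet)\cong R\mathcal{H}om(\bar{\mathbf{I}}^\bullet,\bar{\mathbf{I}}^\bullet)_{0}\oplus\mathcal{O}_{\mathcal{X}}
\]
in $D^b(\mathcal{X})$, and hence $\mathrm{Ext}^{i}(\bar{\mathbf{I}}^\bullet,\bar{\mathbf{I}}^\bullet)\cong\mathrm{Ext}^{i}(\bar{\mathbf{I}}^\bullet,\bar{\mathbf{I}}^\bullet)_{0}\oplus H^{i}(\mathcal{X},\mathcal{O}_{\mathcal{X}})$ after applying $R\Gamma$. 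It therefore suffices to prove $\mathrm{Ext}^{i}(\bar{\mathbf{I}}^\bullet,\bar{\mathbf{I}}^\bullet)\cong H^{i}(\mathcal{X},\mathcal{O}_{\mathcal{X}})$ for $i\neq 1,2$.

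First I handle $i\leq 0$. Lemma \ref{def-ob-cmg2} supplies $\mathcal{E}xt^{q}(\bar{\mathbf{I}}^\bullet,\bar{\mathbf{I}}^\bullet)=0$ for $q\leq -1$ together with $\mathcal{H}om(\bar{\mathbf{I}}^\bullet,\bar{\mathbf{I}}^\bullet)=\mathcal{O}_{\mathcal{X}}$, so in the local-to-global spectral sequence $E_{2}^{p,q}=H^{p}(\mathcal{X},\mathcal{E}xt^{q}(\bar{\mathbf{I}}^\bullet,\bar{\mathbf{I}}^\bullet))\Rightarrow\mathrm{Ext}^{p+q}$ only the row $q=0$ contributes in total degree $\leq 0$. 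This yields $\mathrm{Ext}^{i}(\bar{\mathbf{I}}^\bullet,\bar{\mathbf{I}}^\bullet)\cong H^{i}(\mathcal{X},\mathcal{O}_{\mathcal{X}})$ for every $i\leq 0$, matching the trace summand.

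For $i\geq 3$ I invoke Serre duality. Because $\omega_{\mathcal{X}}$ is invertible, $R\mathcal{H}om(\bar{\mathbf{I}}^\bullet,\bar{\mathbf{I}}^\bullet\otimes\omega_{\mathcal{X}})\cong R\mathcal{H}om(\bar{\mathbf{I}}^\bullet,\bar{\mathbf{I}}^\bullet)\otimes\omega_{\mathcal{X}}$, so Lemma \ref{def-ob-cmg2} yields the twisted local computation
\[
\mathcal{E}xt^{q}(\bar{\mathbf{I}}^\bullet,\bar{\mathbf{I}}^\bullet\otimes\omega_{\mathcal{X}})=0\;\;(q\leq -1),\;\;\;\;\mathcal{H}om(\bar{\mathbf{I}}^\bullet,\bar{\mathbf{I}}^\bullet\otimes\omega_{\mathcal{X}})=\omega_{\mathcal{X}}.
\]
The same spectral-sequence argument then gives $\mathrm{Ext}^{j}(\bar{\mathbf{I}}^\bullet,\bar{\mathbf{I}}^\bullet\otimes\omega_{\mathcal{X}})\cong H^{j}(\mathcal{X},\omega_{\mathcal{X}})$ for every $j\leq 0$. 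Combining Lemma \ref{Serre-duality} with ordinary Serre duality on $\mathcal{X}$,
\[
\mathrm{Ext}^{i}(\bar{\mathbf{I}}^\bullet,\bar{\mathbf{I}}^\bullet)\cong\mathrm{Ext}^{3-i}(\bar{\mathbf{I}}^\bullet,\bar{\mathbf{I}}^\bullet\otimes\omega_{\mathcal{X}})^{\vee}\cong H^{3-i}(\mathcal{X},\omega_{\mathcal{X}})^{\vee}\cong H^{i}(\mathcal{X},\mathcal{O}_{\mathcal{X}})
\]
for all $i\geq 3$, again matching the trace summand.

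I do not expect a serious obstacle. The one point requiring verification is that the trace-identity splitting is genuinely available in our stack setting; for this one replaces $\bar{\mathbf{I}}^\bullet$ by the finite locally free resolution furnished by Lemma \ref{finite-res}, on which the construction of $\mathrm{tr}$ is the classical one and where rank $1$ together with characteristic zero ensures that $\mathrm{tr}\circ\mathrm{id}=\mathrm{id}$. Everything else is bookkeeping around two applications of the local-to-global spectral sequence, one untwisted and one twisted by $\omega_{\mathcal{X}}$.
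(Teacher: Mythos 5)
Your proposal is correct and follows essentially the same route as the paper: the trace splitting of $R\mathcal{H}om(\bar{\mathbf{I}}^\bullet,\bar{\mathbf{I}}^\bullet)$, the local-to-global spectral sequence fed by Lemma \ref{def-ob-cmg2} (untwisted for $i\leq 0$, twisted by $\omega_{\mathcal{X}}$ via Lemma \ref{hom-alg}), and Serre duality (Lemma \ref{Serre-duality}) to identify $\mathrm{Ext}^3\cong H^3(\mathcal{O}_{\mathcal{X}})$ and kill the traceless part. You merely spell out the $i\leq 0$ and $i\geq 4$ cases that the paper treats implicitly before and around its proof of the $i=3$ case.
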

\begin{proof}
It remains to show that $\mathrm{Ext}^3(\bar{\mathbf{I}}^{\bullet},\bar{\mathbf{I}}^{\bullet})_{0}=0$. By Lemma \ref{def-ob-cmg2} and Lemma \ref{hom-alg}, we have
\ben
\mathcal{E}xt^{\leq-1}(\bar{\mathbf{I}}^{\bullet},\bar{\mathbf{I}}^{\bullet}\otimes\omega_{\mathcal{X}})=0;\;\;\;\mathcal{H}om(\bar{\mathbf{I}}^{\bullet},\bar{\mathbf{I}}^{\bullet}\otimes\omega_{\mathcal{X}})\cong\omega_{\mathcal{X}}
\een
Using the local-to-global spectral sequence, we have
\ben
\mathrm{Hom}(\bar{\mathbf{I}}^{\bullet},\bar{\mathbf{I}}^{\bullet}\otimes\omega_{\mathcal{X}})\cong H^0(\omega_{\mathcal{X}})
\een
By Lemma \ref{Serre-duality}, we have
\ben
\mathrm{Ext}^3(\bar{\mathbf{I}}^{\bullet},\bar{\mathbf{I}}^{\bullet})\cong H^3(\mathcal{O}_{\mathcal{X}})
\een
which implies  $\mathrm{Ext}^3(\bar{\mathbf{I}}^{\bullet},\bar{\mathbf{I}}^{\bullet})_{0}=0$.
\end{proof}	
Next, we  follow the similar argument in [\cite{PT1,HT09,GT,Zhou1}] to  construct a perfect obstruction theory with fixed determinant for $\overline{M}^s$.
Since $\overline{M}^s$ is a fine moduli space, there is a universal  complex
\ben
\bar{\mathbb{I}}^\bullet=\{\mathcal{O}_{\mathcal{X}\times \overline{M}^s}\to\mathbb{F}\}\in\mathrm{D}^b(\mathcal{X}\times \overline{M}^s).
\een
where $\mathbb{F}$ is flat over $\overline{M}^s$.
Let $\overline{\pi}_{\overline{M}^s}:\mathcal{X}\times \overline{M}^s\to \overline{M}^s$ and $\overline{\pi}_{\mathcal{X}}:\mathcal{X}\times \overline{M}^s\to\mathcal{X}$ be the projections. Since $\overline{M}^s$ is a projective scheme  by Theorem \ref{main-result} and Lemma \ref{no-str-semi},    we take a finite complex of locally free sheaves $A^\bullet$ resolving $\bar{\mathbb{I}}^\bullet$ such that
\ben
R\mathcal{H}om(\bar{\mathbb{I}}^{\bullet},\bar{\mathbb{I}}^{\bullet})\cong (A^\bullet)^\vee\otimes A^\bullet\cong\mathcal{O}_{\mathcal{X}\times \overline{M}^s}\oplus ((A^\bullet)^\vee\otimes A^\bullet)_{0}.
\een
As in [\cite{PT1}, Section 2.3] or [\cite{HT09}, Section 4.2],
we have $R\mathcal{H}om(\bar{\mathbb{I}}^{\bullet},\bar{\mathbb{I}}^{\bullet})\cong\mathcal{O}_{\mathcal{X}\times \overline{M}^s}\oplus R\mathcal{H}om(\bar{\mathbb{I}}^{\bullet},\bar{\mathbb{I}}^{\bullet})_{0}$  and $R\mathcal{H}om(\bar{\mathbb{I}}^{\bullet},\bar{\mathbb{I}}^{\bullet})_{0}\cong((A^\bullet)^\vee\otimes A^\bullet)_{0}$.
Let $\mathbb{L}_{\mathcal{X}\times \overline{M}^s}$ be the truncated cotangent complex (two-term complex quasi-isomorphic to the truncation $\tau^{\geq-1}L^\bullet_{\mathcal{X}\times \overline{M}^s}$ of Illusie's cotangent complex $L^\bullet_{\mathcal{X}\times \overline{M}^s}$, see also [\cite{HT09}, Definition 2.1]), we have the truncated Atiyah class of $\bar{\mathbb{I}}^{\bullet}$
\ben
\mathrm{At}(\bar{\mathbb{I}}^{\bullet})\in\mathrm{Ext}^1(\bar{\mathbb{I}}^{\bullet},\bar{\mathbb{I}}^{\bullet}\otimes\mathbb{L}_{\mathcal{X}\times \overline{M}^s})
\een
Composing the map $\bar{\mathbb{I}}^{\bullet}\to\bar{\mathbb{I}}^{\bullet}\otimes\mathbb{L}_{\mathcal{X}\times \overline{M}^s}[1]$ with the projection $\mathbb{L}_{\mathcal{X}\times \overline{M}^s}\to \overline{\pi}_{\overline{M}^s}^*\mathbb{L}_{\overline{M}^s}$ and restricting to the traceless part of $R\mathcal{H}om(\bar{\mathbb{I}}^{\bullet},\bar{\mathbb{I}}^{\bullet})$, we have a class in
\ben
\mathrm{Ext}^1(R\mathcal{H}om(\bar{\mathbb{I}}^{\bullet},\bar{\mathbb{I}}^{\bullet})_{0},\overline{\pi}_{\overline{M}^s}^*\mathbb{L}_{\overline{M}^s}).
\een 
Tensoring the map $R\mathcal{H}om(\bar{\mathbb{I}}^{\bullet},\bar{\mathbb{I}}^{\bullet})_{0}\to\overline{\pi}_{\overline{M}^s}^*\mathbb{L}_{\overline{M}^s}[1]$ with $\overline{\pi}_{\mathcal{X}}^*\omega_{\mathcal{X}}$, together with smooth Serre duality in [\cite{Nir1}, Theorem 2.22], we have
\ben
R\mathcal{H}om(\bar{\mathbb{I}}^{\bullet},\bar{\mathbb{I}}^{\bullet})_{0}\otimes\overline{\pi}_{\mathcal{X}}^*\omega_{\mathcal{X}}\to\overline{\pi}_{\overline{M}^s}^*\mathbb{L}_{\overline{M}^s}\otimes\overline{\pi}_{\mathcal{X}}^*\omega_{\mathcal{X}}[1]=\overline{\pi}_{\overline{M}^{s}}^!\mathbb{L}_{\overline{M}^s}[-2].
\een
By Serre duality for Deligne-Mumford stacks in [\cite{Nir1}, Corollary 2.10], we have
\ben
\Phi: E^\bullet:=R\overline{\pi}_{\overline{M}^{s}*}(R\mathcal{H}om(\bar{\mathbb{I}}^{\bullet},\bar{\mathbb{I}}^{\bullet})_{0}\otimes\overline{\pi}_{\mathcal{X}}^*\omega_{\mathcal{X}})[2]\to\mathbb{L}_{\overline{M}^s}.
\een

\begin{theorem}\label{perf-ob1}
In the notation of [\cite{BF}], the map $\Phi$ is a perfect obstruction theory for $\overline{M}^s$.
\end{theorem}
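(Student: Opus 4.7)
The plan is to verify the two conditions defining a perfect obstruction theory in the sense of Behrend--Fantechi: (i) $E^\bullet$ is perfect of amplitude $[-1,0]$, and (ii) $h^0(\Phi)$ is an isomorphism while $h^{-1}(\Phi)$ is surjective. The key inputs will be the traceless Ext vanishing of Lemma~\ref{def-ob-cmg3}, the deformation-obstruction package of Theorems~\ref{def-ob2} and \ref{def-ob3} restricted to fixed determinant, and the standard Atiyah-class construction as in [\cite{HT09}, Section~4].

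First I would check the perfectness and amplitude of $E^\bullet$. Since $\overline{M}^s$ is projective and $\bar{\mathbb{I}}^\bullet$ admits a finite locally free resolution by Lemma~\ref{finite-res}, the complex $R\mathcal{H}om(\bar{\mathbb{I}}^\bullet,\bar{\mathbb{I}}^\bullet)_0 \otimes \overline{\pi}_\mathcal{X}^*\omega_\mathcal{X}$ is a bounded complex of coherent sheaves flat over $\overline{M}^s$, and after pushforward through $\overline{\pi}_{\overline{M}^s}$ it is again representable by a finite complex of locally free sheaves (using the very-negative resolutions of Lemma~\ref{finite-res} exactly as done for the 2-dimensional case just above the theorem). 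Base change together with Serre duality (Lemma~\ref{Serre-duality}) identifies the fiber cohomology of $E^\bullet$ at a point $[(\mathcal{F},\varphi)]$ with $\mathrm{Ext}^{1-i}(\bar{\mathbf{I}}^\bullet,\bar{\mathbf{I}}^\bullet)_0^\vee$ in degree $i$. By Lemma~\ref{def-ob-cmg3} these vanish for $i\neq -1,0$, so $E^\bullet$ is concentrated in amplitude $[-1,0]$.

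Next I would construct $\Phi$ precisely and analyze its cohomology. The map comes from the truncated Atiyah class $\mathrm{At}(\bar{\mathbb{I}}^\bullet)\in\mathrm{Ext}^1(\bar{\mathbb{I}}^\bullet,\bar{\mathbb{I}}^\bullet\otimes\mathbb{L}_{\mathcal{X}\times\overline{M}^s})$, composed with the projection onto $\overline{\pi}_{\overline{M}^s}^*\mathbb{L}_{\overline{M}^s}$ and restricted to the traceless summand, followed by Grothendieck--Serre duality $\overline{\pi}_{\overline{M}^s}^!(-)=\overline{\pi}_{\overline{M}^s}^*(-)\otimes\overline{\pi}_\mathcal{X}^*\omega_\mathcal{X}[d]$ on the smooth stack $\mathcal{X}$. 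Writing out the long exact sequence of functors applied to a square-zero extension $0\to I\to A'\to A\to 0$ and invoking Theorem~\ref{def-ob3} shows that the obstruction to lifting a family $\check{\varphi}_A$ with fixed trivial determinant lies in $\mathrm{Ext}^2(\bar{\mathbf{I}}^\bullet_A,\bar{\mathbf{I}}^\bullet_A\otimes I)_0$, with lifts forming a torsor under $\mathrm{Ext}^1(\bar{\mathbf{I}}^\bullet_A,\bar{\mathbf{I}}^\bullet_A\otimes I)_0$. Restricting to the closed fiber via Theorem~\ref{def-ob2} and dualizing through Serre duality yields the identifications $h^0(\Phi)^\vee=\mathrm{Ext}^1(\bar{\mathbf{I}}^\bullet,\bar{\mathbf{I}}^\bullet)_0$ and $h^{-1}(\Phi)^\vee=\mathrm{Ext}^2(\bar{\mathbf{I}}^\bullet,\bar{\mathbf{I}}^\bullet)_0$. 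I would then run the standard argument (cf.\ [\cite{HT09}, Theorem~4.1]) that the Atiyah-class map induced by $\Phi$ reproduces, on fibers, the Kodaira--Spencer map and obstruction class from Theorems~\ref{def-ob2}, \ref{def-ob3}; this yields $h^0(\Phi)$ an isomorphism and $h^{-1}(\Phi)$ surjective.

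The main obstacle will be the compatibility of the determinant-fixing (traceless) cut with the stacky version of Inaba's construction from Theorem~\ref{def-ob2}. In the variety case [\cite{HT09}] one uses that the trace of the Atiyah class recovers the Atiyah class of $\det \bar{\mathbb{I}}^\bullet$, and since $\det \bar{\mathbb{I}}^\bullet \cong \overline{\pi}_\mathcal{X}^*\det\mathbf{I}^\bullet$ is pulled back from $\mathcal{X}$ (here just $\mathcal{O}_\mathcal{X}$ up to a line bundle factor), the tracelessness is preserved under the obstruction map. I would need to verify that the same splitting of the trace map is available on the Deligne--Mumford stack $\mathcal{X}$, which should follow from the projection formula of Lemma~\ref{polarization}-related statements and the fact (Lemma~\ref{def-ob-cmg2}) that $\mathcal{H}om(\bar{\mathbf{I}}^\bullet,\bar{\mathbf{I}}^\bullet)=\mathcal{O}_\mathcal{X}$; this should be enough to split off the trace summand and carry the whole obstruction-theory diagram through to the traceless part, completing the verification.
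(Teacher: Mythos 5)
Your overall skeleton coincides with the paper's: the map $\Phi$ is built from the truncated Atiyah class plus Serre duality for Deligne--Mumford stacks, the traceless part is interpreted as fixing the determinant via the argument of [\cite{Tho}], and perfect amplitude $[-1,0]$ is obtained from Lemma \ref{finite-res} (very negative resolutions), cohomology and base change, Lemma \ref{def-ob-cmg3}, and the standard trimming of [\cite{PT1}, Lemma 2.1]/[\cite{HT09}, Lemma 4.2] (you should state the trimming step explicitly: fiberwise vanishing of the cohomology alone does not yet place $E^\bullet$ in amplitude $[-1,0]$).

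The genuine gap is in how you verify that $\Phi$ is an obstruction theory. You invoke Theorems \ref{def-ob2} and \ref{def-ob3} only over (small/square-zero) extensions of \emph{Artinian local} $k$-algebras and then assert that this ``yields $h^0(\Phi)$ an isomorphism and $h^{-1}(\Phi)$ surjective.'' Pointwise deformation--obstruction data at closed points does not by itself give these sheaf-level statements (equivalently, the vanishing of $h^{-1}$ and $h^0$ of the cone of $\Phi$), and this reduction is not part of [\cite{BF}]. The criterion actually used, [\cite{BF}, Theorem 4.5], requires the lifting/torsor statement for \emph{arbitrary} square-zero extensions $T\subset\overline{T}$ of affine $k$-schemes with arbitrary ideal sheaf $J$ and arbitrary $g:T\to\overline{M}^s$. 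Accordingly, the paper re-runs the Inaba-type argument of Theorem \ref{def-ob3} for $\mathcal{X}\times T\subset\mathcal{X}\times\overline{T}$ with $T,\overline{T}$ affine (the resolutions of Lemma \ref{qua-iso1} must be produced over such bases, which works because $\mathcal{X}\times T\to T$ is a family of projective stacks), identifies the square-of-the-differential obstruction class with the product of the truncated Atiyah and Kodaira--Spencer classes as in [\cite{HT09}, Section 3], passes to traceless parts by the argument of [\cite{Tho}, Theorem 3.23], and --- crucially --- uses that $\overline{M}^s$ is a fine moduli space with universal complex $\bar{\mathbb{I}}^\bullet$ (Theorem \ref{main-result}) so that extensions of $g$ to $\overline{T}$ correspond exactly to fixed-determinant extensions of $\hat{g}^*\bar{\mathbb{I}}^\bullet$. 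To repair your proposal you would need both upgrades: the deformation theory of the two-term complex over general affine square-zero extensions (not just $\mathcal{A}rt_k$), and the explicit use of fineness to translate extensions of morphisms into extensions of complexes; your determinant/trace-splitting discussion, on the other hand, is in the right spirit and matches the paper's use of $\mathcal{H}om(\bar{\mathbf{I}}^{\bullet},\bar{\mathbf{I}}^{\bullet})=\mathcal{O}_{\mathcal{X}}$ together with Thomas's argument.
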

\begin{proof}
We combine the argument in the proof of [\cite{HT09}, Theorem 4.1] and the one in [\cite{Zhou1}, Theorem 6.2].
Let $T\to\overline{T}$ be an extension of $k$-schemes with ideal sheaf $J$ such that $J^2=0$ and $g: T\to \overline{M}^s$ be a morphism of $k$-schemes. An element $\omega(g)\in\mathrm{Ext}^1(g^*\mathbb{L}_{\overline{M}^s},J)$ is given by composing the natural map $g^*\mathbb{L}_{\overline{M}^s}\to\mathbb{L}_{T}$ with the truncated Kodaira-Spencer class of $T\subset \overline{T}$
\ben
\kappa(T/\overline{T})\in\mathrm{Ext}^1(\mathbb{L}_{T},J).
\een
The composition of $\omega(g)$ with $g^*\Phi$ gives an element 
\ben
\Phi^*\omega(g)\in\mathrm{Ext}^1(g^*E^\bullet,J).
\een
By the argument in the proof of [\cite{BF}, Theorem 4.5],  to show that $\Phi$ is an obstruction theory, we should prove that the obstruction $\Phi^*\omega(g)$ vanishes if and only if an extension $\overline{g}$ of $g$ to $\overline{T}$ exists, and when $\Phi^*\omega(g)=0$ the space of extensions form  a torsor under $\mathrm{Ext}^0(g^*E^\bullet,J)$.

Set $\hat{g}:=\mathrm{id}_{\mathcal{X}}\times g:\mathcal{X}\times T\to\mathcal{X}\times \overline{M}^s$ and let $p:\mathcal{X}\times T\to\mathcal{X}$ and $q:\mathcal{X}\times T\to T$ be the natural projections. Then we have 
\ben
g^*(R\overline{\pi}_{\overline{M}^{s}*}(R\mathcal{H}om(\bar{\mathbb{I}}^{\bullet},\bar{\mathbb{I}}^{\bullet})_{0}\otimes\overline{\pi}_{\mathcal{X}}^*\omega_{\mathcal{X}}))\cong Rq_{*}(R\mathcal{H}om(\hat{g}^*\bar{\mathbb{I}}^{\bullet},\hat{g}^*\bar{\mathbb{I}}^{\bullet})_{0}\otimes p^*\omega_{\mathcal{X}}).
\een
Using Serre duality in [\cite{Nir1}, Corollary 2.10] for the map $q$, we have
\ben
\Phi^*\omega(g)\in\mathrm{Ext}^1(Rq_{*}(R\mathcal{H}om(\hat{g}^*\bar{\mathbb{I}}^{\bullet},\hat{g}^*\bar{\mathbb{I}}^{\bullet})_{0}\otimes p^*\omega_{\mathcal{X}})[2],J)\cong\mathrm{Ext}^2(\hat{g}^*\bar{\mathbb{I}}^{\bullet},\hat{g}^*\bar{\mathbb{I}}^{\bullet}\otimes q^*J)_{0}
\een
and 
\ben
\mathrm{Ext}^0(g^*E^\bullet,J)\cong\mathrm{Ext}^0(Rq_{*}(R\mathcal{H}om(\hat{g}^*\bar{\mathbb{I}}^{\bullet},\hat{g}^*\bar{\mathbb{I}}^{\bullet})_{0}\otimes p^*\omega_{\mathcal{X}})[2],J)\cong\mathrm{Ext}^1(\hat{g}^*\bar{\mathbb{I}}^{\bullet},\hat{g}^*\bar{\mathbb{I}}^{\bullet}\otimes q^*J)_{0}.
\een
Then the obstruction $\Phi^*\omega(g)$ is the traceless part of the obstruction class which is a product of a truncated Atiyah class  $\overline{\mathrm{At}}(\hat{g}^*\bar{\mathbb{I}}^{\bullet})$ in $\mathrm{Ext}^1(\hat{g}^*\bar{\mathbb{I}}^{\bullet},\hat{g}^*\bar{\mathbb{I}}^{\bullet}\otimes q^*\mathbb{L}_{T})$ and a truncated Kodaira-Spencer class $q^*\kappa(T/\overline{T})$ in $\mathrm{Ext}^1(q^*\mathbb{L}_{T},q^*J)$, that is, 
\ben
\Phi^*\omega(g)=\left((\mathrm{id}_{\hat{g}^*\bar{\mathbb{I}}^{\bullet}}\otimes q^*\kappa(T/\overline{T}))\circ\overline{\mathrm{At}}(\hat{g}^*\bar{\mathbb{I}}^{\bullet})\right)_{0}
\een
As in the argument of the proof in [\cite{BF}, Theorem 4.5], to prove that $\Phi$ is an obstruction theory we only need to consider the case when both $T$ and $\overline{T}$ are affine. The similar argument in [\cite{HT09}, Section 3] shows that the obstruction class to extending $\hat{g}^*\bar{\mathbb{I}}^{\bullet}$ from $\mathcal{X}\times T$ to $\mathcal{X}\times\overline{T}$ defined as a square of the differential in the proof of Theorem \ref{def-ob3} (see also [\cite{Lieb}, Section 3.3]) up to a sign is exactly the obstruction class which is the product of a truncated Atiyah class and a truncated Kodaira-Spencer class. Using Lemma \ref{finite-res} to resolve the extension of $\hat{g}^*\bar{\mathbb{I}}^{\bullet}$ on $\mathcal{X}\times \overline{T}$ (flat over $\overline{T}$) and its restriction $\hat{g}^*\bar{\mathbb{I}}^{\bullet}$ on $\mathcal{X}\times T$, the similar argument in the proof of [\cite{Tho}, Theorem 3.23]  shows that the traceless part is the deformation and obstruction with  fixed determinant $\det(\hat{g}^*\bar{\mathbb{I}}^{\bullet})$. By Theorem \ref{def-ob3}, $\Phi^*\omega(g)=0$ if and only if there is an extension of the complex $\hat{g}^*\bar{\mathbb{I}}^{\bullet}$ from $\mathcal{X}\times T$ to $\mathcal{X}\times\overline{T}$, and in that case the space of extensions with fixed determinant forms a torsor under $\mathrm{Ext}^1(\hat{g}^*\bar{\mathbb{I}}^{\bullet},\hat{g}^*\bar{\mathbb{I}}^{\bullet}\otimes q^*J)_{0}$. Since $\overline{M}^s$ is a fine moduli space and $\bar{\mathbb{I}}^{\bullet}$ is a universal complex, then an extension $\overline{g}$
 of $g$ to $\overline{T}$  exists if and only if $\hat{g}^*\bar{\mathbb{I}}^{\bullet}$ extends from $\mathcal{X}\times T$ to $\mathcal{X}\times\overline{T}$. Thus $\Phi$ is an obstruction theory by  [\cite{BF}, Theorem 4.5].

Next, we will show that the complex $R\overline{\pi}_{\overline{M}^{s}*}R\mathcal{H}om(\bar{\mathbb{I}}^{\bullet},\bar{\mathbb{I}}^{\bullet})_{0}$ is quasi-isomorphic to a perfect $2$-term complex of locally free sheaves with amplitude contained in $[1,2]$. Since $R\mathcal{H}om(\bar{\mathbb{I}}^{\bullet},\bar{\mathbb{I}}^{\bullet})_{0}\cong((A^\bullet)^\vee\otimes A^\bullet)_{0}$ is a bounded complex of coherent sheaves, as in the $2$-dimensional case, one can take a finite very negative locally free resolution $B^\bullet$ of the bounded complex $R\mathcal{H}om(R\mathcal{H}om(\bar{\mathbb{I}}^{\bullet},\bar{\mathbb{I}}^{\bullet})_{0},\mathcal{O}_{\mathcal{X}\times \overline{M}^s})$. Then
\ben
R\overline{\pi}_{\overline{M}^{s}*}R\mathcal{H}om(\bar{\mathbb{I}}^{\bullet},\bar{\mathbb{I}}^{\bullet})_{0}\cong R\overline{\pi}_{\overline{M}^{s}*}(B^\bullet)^\vee\cong\overline{\pi}_{\overline{M}^{s}*}(B^\bullet)^\vee
\een
 is a finite complex of locally free sheaves. Denoted this complex by $N^\bullet$.
 By cohomology and base change theorem (cf. [\cite{Hall}])
and Lemma \ref{def-ob-cmg3}, the complex $N^\bullet$ has cohomology only in degree 1 and 2. By the standard argument in the proof of [\cite{PT1}, Lemma 2.1] or [\cite{HT09}, Lemma 4.2] for triming the  complex $N^\bullet$, one can show that $R\overline{\pi}_{\overline{M}^{s}*}R\mathcal{H}om(\bar{\mathbb{I}}^{\bullet},\bar{\mathbb{I}}^{\bullet})_{0}$ is quasi-isomorphic to 2-term complex of locally free sheaves concentrated only in degree 1 and 2. This implies that $R\overline{\pi}_{\overline{M}^{s}*}(R\mathcal{H}om(\bar{\mathbb{I}}^{\bullet},\bar{\mathbb{I}}^{\bullet})_{0}\otimes\overline{\pi}_{\mathcal{X}}^*\omega_{\mathcal{X}})[2]$ is perfect of perfect amplitude contained in $[-1,0]$ in the notation of [\cite{BF}]. Then $\Phi$ is perfect.
\end{proof}
\begin{corollary}\label{vir-exi-3}
Let $\mathcal{X}$ be a 3-dimensional smooth projective Deligne-Mumford stack  over $\mathbb{C}$. Assume that  polynomials $\delta$ and  $P$ satisfy $\deg\,\delta\geq\deg \,P=1$.
Then there exists a virtual fundamental class $[\overline{M}^s]^{\mathrm{vir}}\in A_{\mathrm{vdim}}(\overline{M}^s)$ of virtual dimension $\mathrm{vdim}=\mathrm{rk}(E^\bullet)$ where $\overline{M}^s:=M^s_{\mathcal{X}/\mathbb{C}}(\mathcal{O}_{\mathcal{X}},P,\delta)$ and $E^\bullet:=R\overline{\pi}_{\overline{M}^{s}*}(R\mathcal{H}om(\bar{\mathbb{I}}^{\bullet},\bar{\mathbb{I}}^{\bullet})_{0}\otimes\overline{\pi}_{\mathcal{X}}^*\omega_{\mathcal{X}})[2]$.
\end{corollary}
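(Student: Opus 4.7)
The statement is essentially a direct consequence of Theorem \ref{perf-ob1} combined with the general machinery of Behrend--Fantechi. My plan is to first confirm that all hypotheses needed to apply the construction in [\cite{BF}] are in place, and then to invoke the main construction to produce the virtual fundamental class. Since Theorem \ref{perf-ob1} already establishes that $\Phi: E^\bullet \to \mathbb{L}_{\overline{M}^s}$ is a perfect obstruction theory of perfect amplitude contained in $[-1,0]$ (with $E^\bullet$ quasi-isomorphic to a two-term complex of locally free sheaves in degrees $-1$ and $0$), the bulk of the work has already been done.

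First, I would verify the ambient hypotheses on the moduli space itself. By Theorem \ref{main-result}, the scheme $\overline{M}^s = M^s_{\mathcal{X}/\mathbb{C}}(\mathcal{O}_{\mathcal{X}}, P, \delta)$ is a quasi-projective scheme, and by Lemma \ref{no-str-semi} (applicable because $\deg\delta \geq \deg P = 1$) we have the identification $M^s_{\mathcal{X}/\mathbb{C}}(\mathcal{O}_{\mathcal{X}}, P, \delta) = M^{ss}_{\mathcal{X}/\mathbb{C}}(\mathcal{O}_{\mathcal{X}}, P, \delta)$, so $\overline{M}^s$ is in fact projective. In particular, $\overline{M}^s$ is a separated Deligne--Mumford type scheme of finite type over $\mathbb{C}$, which is the context to which [\cite{BF}] applies.

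Second, I would apply the Behrend--Fantechi construction [\cite{BF}, Section~5]: given a perfect obstruction theory $\Phi: E^\bullet \to \mathbb{L}_{\overline{M}^s}$ of amplitude $[-1,0]$, one forms the intrinsic normal cone $\mathfrak{C}_{\overline{M}^s}$ and embeds it via $\Phi$ into the vector bundle stack $\mathfrak{E} := h^1/h^0((E^\bullet)^\vee)$. Intersecting with the zero section of $\mathfrak{E}$ yields a cycle class
\[
[\overline{M}^s]^{\mathrm{vir}} := 0^{!}_{\mathfrak{E}}[\mathfrak{C}_{\overline{M}^s}] \in A_{\mathrm{vdim}}(\overline{M}^s),
\]
where the virtual dimension is $\mathrm{vdim} = \mathrm{rk}(E^\bullet)$, the latter being well-defined as the alternating sum of ranks in any two-term locally free resolution of $E^\bullet$.

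There is essentially no obstacle here: the delicate ingredients---the two-term locally free presentation in the correct degrees, and the map to the truncated cotangent complex---are exactly what Theorem \ref{perf-ob1} provides, so Corollary \ref{vir-exi-3} follows immediately from the Behrend--Fantechi formalism. The only minor remark worth including is that the construction in [\cite{BF}] is originally stated for Deligne--Mumford stacks (or schemes) of finite type over a field, a setting in which our projective $\overline{M}^s$ comfortably lies.
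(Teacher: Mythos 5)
Your proposal is correct and follows exactly the route the paper intends: Corollary \ref{vir-exi-3} is an immediate consequence of the perfect obstruction theory $\Phi: E^\bullet\to\mathbb{L}_{\overline{M}^s}$ established in Theorem \ref{perf-ob1}, together with the Behrend--Fantechi construction [\cite{BF,LT}] applied to the projective scheme $\overline{M}^s$ (projectivity coming from Theorem \ref{main-result} and Lemma \ref{no-str-semi}). The paper gives no separate argument beyond this, so your verification of the hypotheses and invocation of the intrinsic normal cone construction matches its reasoning.
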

\begin{remark}
Since $\overline{M}^s$ is a projective scheme, then it is proper and  $[\overline{M}^s]^{\mathrm{vir}}$ can be integrated. By Remark \ref{stacky-PT}, the intersection theory on  $[\overline{M}^s]^{\mathrm{vir}}$ actually produces a stacky version of Pandharipande-Thomas theory.
\end{remark}

\subsection{Orbifold Pandharipande-Thomas theory}

In [\cite{BCR}], the authors give a definition of orbifold Pandharipande-Thomas invariants for  Calabi-Yau 3-orbifolds, which are smooth projective Deligne-Mumford stacks $\mathcal{Y}$ with generically trivial stabilizer groups satisfying $\omega_{\mathcal{Y}}\cong\mathcal{O}_{\mathcal{Y}}$ and $H^1(\mathcal{Y},\mathcal{O}_{\mathcal{Y}})=0$. Compared with their definition empolying Behrend's weighted Euler characteristic [\cite{Beh}], our apparoach here is to take integrations against virtual fundamental classes for more general cases. Let $\mathcal{X}$ be a smooth projective Deligne-Mumford stack of dimension 3 over $\mathbb{C}$ with a moduli scheme $\pi:\mathcal{X}\to X$ and a polarization $(\mathcal{E},\mathcal{O}_{X}(1))$. We first recall some notation in [\cite{BCR}, Section 2]. Let $\mathrm{Perf}(\mathcal{X})$ be the subcategory of $\mathrm{D}^b(\mathcal{X})$ with objects being perfect complexes, that is, those locally isomorphic to a bounded 
complex of locally free sheaves. By Lemma \ref{finite-res}, we have $\mathrm{Perf}(\mathcal{X})=\mathrm{D}^b(\mathcal{X})$. Define the Euler pairing
\ben
\chi(F^\bullet,G^\bullet)=\sum_{i}(-1)^i\dim\mathrm{Hom}(F^\bullet,G^\bullet[i])
\een
for any $F^\bullet\in\mathrm{Perf}(\mathcal{X})$ and $G^\bullet\in\mathrm{D}^b(\mathcal{X})$. The complex $G^\bullet$ is called numerically trivial if $\chi(F^\bullet,G^\bullet)=0$ for all $F^\bullet\in\mathrm{Perf}(\mathcal{X})$. Denote by $K(\mathcal{X})=K(D^b(\mathcal{X}))=K(\mathrm{Coh}(\mathcal{X}))$  the Grothendieck group of $\mathcal{X}$. The numerical Grothendieck group  $N(\mathcal{X})$ is define to be the quotient of $K(\mathcal{X})$ by the subgroup generated by numerically trivial complexes. Let $\mathrm{Coh}_{\leq d}(\mathcal{X})\subset\mathrm{Coh}(\mathcal{X})$ be the subcategory of sheaves supported in dimension at most $d$. Define $N_{\leq d}(\mathcal{X})\subset N(\mathcal{X})$ as the subgroup generated by classes of sheaves in $\mathrm{Coh}_{\leq d}(\mathcal{X})$. Set $N_d(\mathcal{X})=N_{\leq d}(\mathcal{X})/N_{\leq d-1}(\mathcal{X})$. One can choose a splitting of $N_{\leq1}(\mathcal{X})$ as follows
\ben
N_{\leq1}(\mathcal{X})\cong N_{1}(\mathcal{X})\oplus N_{0}(\mathcal{X}).
\een
Given a class $\beta=(\beta_{1},\beta_{0})\in N_{\leq1}(\mathcal{X})$ with
a sheaf $\mathcal{F}\in \mathrm{Coh}_{\leq1}(\mathcal{X})$ satisfying $[\mathcal{F}]=\beta$ where $\beta_{i}\in N_{i}(\mathcal{X})$ for $i=0,1$, then the modified Hilbert polynomial of  $\mathcal{F}$ is
\ben
P_{\mathcal{E}}(\mathcal{F})(m)=\chi(\mathcal{X},\mathcal{F}\otimes\mathcal{E}^\vee\otimes\pi^*\mathcal{O}_{X}(m)):=l(\mathcal{F})\cdot m+\deg(\mathcal{F}).
\een
where $\deg\mathcal{F}=\chi(F_{\mathcal{E}}(\mathcal{F}))$.
By the definition of numerical Grothendieck group, $P_{\mathcal{E}}(\mathcal{F})(m)$ is independent of the choice of representative $\mathcal{F}$ in $[\mathcal{F}]$. Thus $P_{\mathcal{E}}(\beta)(m)$, $l(\beta)$ and $\deg(\beta)$ (or $\chi(F_{\mathcal{E}}(\beta))$) are well defined for any class $\beta\in N_{\leq1}(\mathcal{X})$  and we have $P_{\mathcal{E}}(\beta)(m)=l(\beta)\cdot m+\deg(\beta)$. 

Notice that the sheaf $\mathcal{F}$ underlying a $\delta$-stable pair $(\mathcal{F},\varphi)$ is pure of dimension one. Consider any fixed nonzero class $\beta\in N_{\leq1}(\mathcal{X})$, we have the corresponding degree one polynomial $P_{\mathcal{E}}(\beta)(m)=P(m)$. For such a polynomial $P$, and let $\delta$ be a rational polynomial with positive leading coefficient and $\deg\,\delta\geq1=\deg\,P$, we have  the subfunctor $\mathcal{M}^{(s)s}_{\mathcal{X}/\mathbb{C}}(\mathcal{O}_{\mathcal{X}},\beta,\delta)$ of  $\mathcal{M}^{(s)s}_{\mathcal{X}/\mathbb{C}}(\mathcal{O}_{\mathcal{X}},P,\delta)$ defined as follows. For any $\mathbb{C}$-scheme $S$ of finite type, $\mathcal{M}^{(s)s}_{\mathcal{X}/\mathbb{C}}(\mathcal{O}_{\mathcal{X}},\beta,\delta)(S)$ is the set of isomorphism classes of flat families of nondegenerate $\delta$-(semi)stable pairs $(\mathcal{F},\varphi)$ with the fixed numerical class $\beta$  parametrized by a scheme $S$, that is, such a flat family $(\mathcal{F},\varphi)$ satisfies that for each point $s\in S$, the pair $(\mathcal{F}_{s},\varphi|_{(\pi_{\mathcal{X}}^*\mathcal{O}_{\mathcal{X}})_{s}})$ is a $\delta$-(semi)stable pair with $[\mathcal{F}_{s}]=\beta$ (hence with the modified Hilbert polynomial $P_{\check{\pi}_{s}^*\mathcal{E}}(\mathcal{F}_{s})=P$ where $\check{\pi}_{s}:\mathcal{X}\times\mathrm{Spec}(k(s))\to\mathcal{X}$ is the projection). See Definition \ref{moduli-functor} for more details. 

Following the same argument in Section 3 and Section 4 for the functor $\mathcal{M}^{(s)s}_{\mathcal{X}/\mathbb{C}}(\mathcal{O}_{\mathcal{X}},\beta,\delta)$ and the corresponding $\delta$-(semi)stable pairs with fixed numerical class $\beta$,  and noticing  Lemma \ref{no-str-semi}, one has

\begin{theorem}\label{moduli-spaces2}
Let $\mathcal{X}$ be a 3-dimensional smooth projective Deligne-Mumford stack over $\mathbb{C}$ with a moduli scheme $\pi:\mathcal{X}\to X$ and a polarization $(\mathcal{E},\mathcal{O}_{X}(1))$. Assume $0\neq\beta\in N_{\leq1}(\mathcal{X})$ and the rational polynomial $\delta$ satisfy $\deg\,\delta\geq1$.
	Then there is a projective scheme $\overline{M}^{s}_{\beta}:=M^{s}_{\mathcal{X}/\mathbb{C}}(\mathcal{O}_{\mathcal{X}},\beta,\delta)$, which is a fine moduli space for the moduli functor $\mathcal{M}^{s}_{\mathcal{X}/\mathbb{C}}(\mathcal{O}_{\mathcal{X}},\beta,\delta)$.
\end{theorem}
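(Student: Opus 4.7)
The plan is to realize $\overline{M}^s_\beta$ as a closed (in fact open and closed) subscheme of the fine moduli space $\overline{M}^s = M^s_{\mathcal{X}/\mathbb{C}}(\mathcal{O}_{\mathcal{X}},P,\delta)$ constructed in Theorem~\ref{main-result}, where $P := P_{\mathcal{E}}(\beta) = l(\beta)\cdot m + \deg(\beta)$. Since $\deg\,\delta \geq 1 = \deg\,P$, Lemma~\ref{no-str-semi} gives $M^s = M^{ss}$ on the locus of nondegenerate pairs, so $\overline{M}^s$ is projective, and Theorem~\ref{main-result} already equips it with a universal family $(\mathbb{F},\varphi)$ (equivalently the universal complex $\bar{\mathbb{I}}^\bullet$). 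The functor $\mathcal{M}^s_{\mathcal{X}/\mathbb{C}}(\mathcal{O}_{\mathcal{X}},\beta,\delta)$ is by definition the subfunctor of $\mathcal{M}^s_{\mathcal{X}/\mathbb{C}}(\mathcal{O}_{\mathcal{X}},P,\delta)$ cut out by the condition $[\mathcal{F}_s] = \beta$ in $N_1(\mathcal{X})$ for each geometric point $s$.

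The key step is to show that, for any flat family $(\mathcal{F},\varphi)$ of $\delta$-stable pairs parameterized by a scheme $S$ with $P_{\mathcal{E}}(\mathcal{F}_s) = P$, the fiberwise numerical class $[\mathcal{F}_s] \in N_{\leq 1}(\mathcal{X})$ is locally constant on $S$. By definition of the numerical Grothendieck group, it suffices to check that for every perfect complex $H^\bullet \in \mathrm{Perf}(\mathcal{X})$ the function $s \mapsto \chi(H^\bullet,\mathcal{F}_s) = \chi\bigl(\mathcal{X}\times\{s\},\, R\mathcal{H}om(H^\bullet,\mathcal{F}_s)\bigr)$ is locally constant. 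This follows from the flatness of $\mathcal{F}$ over $S$, cohomology and base change on the proper morphism $\mathcal{X}\times S \to S$, and the fact that the Euler characteristic of a bounded complex with flat cohomology sheaves is locally constant in flat families (the standard argument, applied to the DM stack $\mathcal{X}\times S\to S$ which is proper, carries over using for instance \cite{Nir1} or \cite{Hall}).

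Given local constancy of $[\mathcal{F}_s]$, the locus
\[
\overline{M}^s_\beta := \{\,x \in \overline{M}^s \;:\; [\mathbb{F}_x] = \beta\,\}
\]
is both open and closed in $\overline{M}^s$, hence a union of connected components. As a closed subscheme of the projective scheme $\overline{M}^s$, it is itself projective. The restriction $(\mathbb{F}|_{\mathcal{X}\times\overline{M}^s_\beta},\varphi|_{\mathcal{X}\times\overline{M}^s_\beta})$ of the universal family tautologically has numerical class $\beta$ on every fiber and remains a universal family: any family parameterized by $S$ with fixed class $\beta$ induces a unique morphism $S\to \overline{M}^s$ by the universal property of Theorem~\ref{main-result}, and this morphism factors uniquely through the open-and-closed subscheme $\overline{M}^s_\beta$ because $[\mathcal{F}_s]=\beta$ everywhere. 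This gives the fine moduli space property for $\mathcal{M}^s_{\mathcal{X}/\mathbb{C}}(\mathcal{O}_{\mathcal{X}},\beta,\delta)$.

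The main (and only non-formal) obstacle is establishing local constancy of the numerical class $[\mathcal{F}_s]$ in the stacky setting, since the definition of $N(\mathcal{X})$ involves pairing with all perfect complexes, not just pulled-back line bundles from the coarse moduli scheme. Once this is granted, everything else is a standard transfer of universal properties from $\overline{M}^s$ to its open-and-closed subscheme $\overline{M}^s_\beta$; no additional GIT or boundedness argument beyond what was established in Sections~3 and~4 is required.
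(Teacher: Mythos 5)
Your proposal is correct, but it takes a different (and more economical) route than the paper. The paper's own proof is essentially one sentence: it re-runs the boundedness and GIT arguments of Sections~3 and~4 for the subfunctor $\mathcal{M}^{s}_{\mathcal{X}/\mathbb{C}}(\mathcal{O}_{\mathcal{X}},\beta,\delta)$, invoking Lemma~\ref{no-str-semi} to get projectivity, i.e.\ it constructs $\overline{M}^s_{\beta}$ directly by the same machinery rather than extracting it from $\overline{M}^s$. You instead start from the fine moduli space $\overline{M}^s=M^{s}_{\mathcal{X}/\mathbb{C}}(\mathcal{O}_{\mathcal{X}},P,\delta)$ of Theorem~\ref{main-result} with $P=P_{\mathcal{E}}(\beta)$ and show that fixing the fiberwise class in $N(\mathcal{X})$ cuts out an open-and-closed locus, to which the universal family restricts; the universal property then transfers formally. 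What your approach buys is that it isolates the only non-formal ingredient, namely local constancy of $s\mapsto[\mathcal{F}_s]$ in flat families, which you correctly reduce to local constancy of $\chi(H^\bullet,\mathcal{F}_s)$ for all perfect $H^\bullet$ and verify via finite locally free resolutions (Lemma~\ref{finite-res}), flatness, and properness (equivalently by pushing to the coarse moduli scheme using tameness and base change, cf.\ [\cite{Nir1,Hall}]); it also proves, rather than merely asserts, the disjoint-union decomposition $M^s_{\mathcal{X}/\mathbb{C}}(\mathcal{O}_{\mathcal{X}},P,\delta)=\coprod_{\beta} M^{s}_{\mathcal{X}/\mathbb{C}}(\mathcal{O}_{\mathcal{X}},\beta,\delta)$ that the paper states immediately after the theorem. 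The paper's route keeps the treatment uniform with the general GIT construction, but note that it implicitly needs the same local-constancy fact (to know that the fixed-$\beta$ condition behaves well on the parameter space), so your identification of this as the key step is apt; your sketch of it is sound and, modulo writing out the resolution/base-change details, your argument is complete.
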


 Let $\tilde{\pi}_{\overline{M}^s_{\beta}}:\mathcal{X}\times \overline{M}^s_{\beta}\to \overline{M}^s_{\beta}$ and $\tilde{\pi}_{\mathcal{X}}:\mathcal{X}\times \overline{M}^s_{\beta}\to\mathcal{X}$ be the projections. Denote by $\bar{\mathbb{I}}^\bullet_{\beta}:=\{\mathcal{O}_{\mathcal{X}\times \overline{M}^{s}_{\beta}}\to\mathbb{F}\}\in\mathrm{D}^b(\mathcal{X}\times \overline{M}^{s}_{\beta})$ the universal complex where $\mathbb{F}$ is the universal sheaf on $\mathcal{X}\times \overline{M}^{s}_{\beta}$. It follows from the  same argument in Section 5.1 and Section 5.2 for the moduli space $\overline{M}^{s}_{\beta}$ that
\begin{theorem}\label{perf-ob2}
	In the sense of [\cite{BF}], the map 
	\ben
	\Phi_{\beta}:E^\bullet_{\beta}:=R\tilde{\pi}_{\overline{M}^s_{\beta}*}(R\mathcal{H}om(\bar{\mathbb{I}}^{\bullet}_{\beta},\bar{\mathbb{I}}^{\bullet}_{\beta})_{0}\otimes\tilde{\pi}_{\mathcal{X}}^*\omega_{\mathcal{X}})[2]\to\mathbb{L}_{\overline{M}^s_{\beta}}.
	\een
	is a perfect obstruction theory for $ \overline{M}^{s}_{\beta}:=M^{s}_{\mathcal{X}/\mathbb{C}}(\mathcal{O}_{\mathcal{X}},\beta,\delta)$. And there exists a virtual fundamental class $[ \overline{M}^{s}_{\beta}]^{\mathrm{vir}}\in A_{\mathrm{vdim}}( \overline{M}^{s}_{\beta})$ of virtual dimension $\mathrm{vdim}=\mathrm{rk}(E^\bullet_{\beta})$.
\end{theorem}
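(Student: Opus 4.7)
The plan is to replicate the argument for Theorem \ref{perf-ob1} almost verbatim, since $\overline{M}^{s}_{\beta}$ is a closed subscheme of $\overline{M}^{s}$ cut out by fixing the numerical class $\beta\in N_{1}(\mathcal{X})$ refining the Hilbert polynomial $P=P_{\mathcal{E}}(\beta)$. First I would construct the map $\Phi_{\beta}$: take the truncated Atiyah class $\mathrm{At}(\bar{\mathbb{I}}^{\bullet}_{\beta})\in\mathrm{Ext}^{1}(\bar{\mathbb{I}}^{\bullet}_{\beta},\bar{\mathbb{I}}^{\bullet}_{\beta}\otimes\mathbb{L}_{\mathcal{X}\times\overline{M}^{s}_{\beta}})$, compose with the projection to $\tilde{\pi}_{\overline{M}^{s}_{\beta}}^{*}\mathbb{L}_{\overline{M}^{s}_{\beta}}$, restrict to the traceless part of $R\mathcal{H}om$, tensor with $\tilde{\pi}_{\mathcal{X}}^{*}\omega_{\mathcal{X}}$, and then invoke relative Serre duality of [\cite{Nir1}, Corollary 2.10] for $\tilde{\pi}_{\overline{M}^{s}_{\beta}}$ to land in $\mathbb{L}_{\overline{M}^{s}_{\beta}}$ after the shift by $2$.

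Next I would verify the obstruction-theory axiom in the sense of [\cite{BF}]. For a square-zero extension $T\hookrightarrow\overline{T}$ with ideal $J$ and a morphism $g:T\to\overline{M}^{s}_{\beta}$, the composition $\Phi_{\beta}^{*}\omega(g)$ identifies (via the chain of isomorphisms used in the proof of Theorem \ref{perf-ob1}) with the traceless part of the product of the truncated Atiyah class of $\hat{g}^{*}\bar{\mathbb{I}}^{\bullet}_{\beta}$ with the truncated Kodaira-Spencer class $q^{*}\kappa(T/\overline{T})$. By Theorem \ref{def-ob3}, this obstruction vanishes if and only if the complex $\hat{g}^{*}\bar{\mathbb{I}}^{\bullet}_{\beta}$ extends flatly to $\mathcal{X}\times\overline{T}$ with fixed determinant, and in that case the space of extensions is a torsor under $\mathrm{Ext}^{1}(\hat{g}^{*}\bar{\mathbb{I}}^{\bullet}_{\beta},\hat{g}^{*}\bar{\mathbb{I}}^{\bullet}_{\beta}\otimes q^{*}J)_{0}$. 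Since Theorem \ref{moduli-spaces2} gives a fine moduli space and $\bar{\mathbb{I}}^{\bullet}_{\beta}$ is universal, such complex-theoretic extensions correspond exactly to extensions $\overline{g}:\overline{T}\to\overline{M}^{s}_{\beta}$ of $g$; the only point requiring a separate remark is that the numerical class $\beta$ is locally constant in flat families of pairs of type $P$, so any flat extension automatically lies in $\overline{M}^{s}_{\beta}\subseteq\overline{M}^{s}$ rather than merely in $\overline{M}^{s}$.

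Then I would establish perfectness. Because $\overline{M}^{s}_{\beta}$ is a projective scheme, one can apply Lemma \ref{finite-res} to resolve $\bar{\mathbb{I}}^{\bullet}_{\beta}$ by a finite complex of locally free sheaves of the stacky form $V_{i}\otimes p^{*}\mathcal{E}\otimes p^{*}\pi^{*}\mathcal{O}_{X}(-m_{i})$, then resolve the dual of $R\mathcal{H}om(\bar{\mathbb{I}}^{\bullet}_{\beta},\bar{\mathbb{I}}^{\bullet}_{\beta})_{0}$ by a finite very negative locally free complex, so that $R\tilde{\pi}_{\overline{M}^{s}_{\beta}*}R\mathcal{H}om(\bar{\mathbb{I}}^{\bullet}_{\beta},\bar{\mathbb{I}}^{\bullet}_{\beta})_{0}$ is quasi-isomorphic to a finite complex of locally free sheaves. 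Since the fibrewise traceless $\mathrm{Ext}^{i}$ groups vanish for $i\neq 1,2$ by Lemma \ref{def-ob-cmg3} applied to any point $[(\mathcal{F},\varphi)]\in\overline{M}^{s}_{\beta}\subseteq\overline{M}^{s}$, cohomology and base change combined with the trimming lemma of [\cite{PT1}, Lemma 2.1] (or [\cite{HT09}, Lemma 4.2]) concentrates this complex in degrees $1,2$. After tensoring with the canonical line bundle and shifting by $2$, $E^{\bullet}_{\beta}$ has perfect amplitude contained in $[-1,0]$, which is the perfectness requirement of [\cite{BF}]. Finally the existence of $[\overline{M}^{s}_{\beta}]^{\mathrm{vir}}\in A_{\mathrm{vdim}}(\overline{M}^{s}_{\beta})$ of the stated virtual dimension follows immediately from [\cite{BF,LT}].

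The main obstacle is the bookkeeping issue in step two: one must confirm that restricting deformations from $\overline{M}^{s}$ to $\overline{M}^{s}_{\beta}$ is automatic because $\beta\in N_{1}(\mathcal{X})$ is a flat-deformation invariant, so that the torsor and vanishing statements provided by Theorem \ref{def-ob3} over $\overline{M}^{s}$ transfer without modification to the closed subscheme $\overline{M}^{s}_{\beta}$; everything else is a direct transcription of the proofs of Theorem \ref{perf-ob1} and Corollary \ref{vir-exi-3}.
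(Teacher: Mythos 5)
Your proposal is correct and follows essentially the same route as the paper, which proves Theorem \ref{perf-ob2} simply by noting that the arguments of Sections 5.1--5.2 (i.e.\ the proof of Theorem \ref{perf-ob1} and Corollary \ref{vir-exi-3}) apply verbatim to the fine projective moduli space $\overline{M}^{s}_{\beta}$ of Theorem \ref{moduli-spaces2} with its universal complex $\bar{\mathbb{I}}^{\bullet}_{\beta}$. Your extra remark that the numerical class $\beta$ is invariant under square-zero (infinitesimal) deformations, so extensions supplied by Theorem \ref{def-ob3} automatically stay in $\overline{M}^{s}_{\beta}$, is exactly the point implicitly absorbed by the fineness of $\overline{M}^{s}_{\beta}$ for its own moduli functor.
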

Set
\ben
S_{P,\delta}=\{\beta=[\mathcal{F}]:[(\mathcal{F},\varphi)]\in M^s_{\mathcal{X}/\mathbb{C}}(\mathcal{O}_{\mathcal{X}},P,\delta)\}.
\een
Then we have the following decomposition as a disjoint union
\ben
M^s_{\mathcal{X}/\mathbb{C}}(\mathcal{O}_{\mathcal{X}},P,\delta)=\coprod_{\beta\in S_{P,\delta}} M^{s}_{\mathcal{X}/\mathbb{C}}(\mathcal{O}_{\mathcal{X}},\beta,\delta).
\een

Next,  we will follow the similar definition of Donaldson-Thomas invariants in [\cite{Zhou1}, Section 6] to give a stacky version of Pandharipande-Thomas invariants. We briefly recall some notation and definitions in [\cite{Tse}, Section 2 and Appendix A] to define certain Chern character. Let $I\mathcal{X}$ be the inertia stack, which is defined to be the fiber product $I\mathcal{X}:=\mathcal{X}\times_{\Delta,\mathcal{X}\times\mathcal{X},\Delta}\mathcal{X}$ where $\Delta:\mathcal{X}\to\mathcal{X}\times\mathcal{X}$ is the diagonal map. Its underlying category has objects  of the form 
$
\mathrm{Ob}(I\mathcal{X})=\{(x,g)|x\in\mathrm{Ob}(\mathcal{X}), g\in\mathrm{Aut}_{\mathcal{X}}(x)\}
$. Then there is a natural projection $\pi_{0}:I\mathcal{X}\to\mathcal{X}$ with the map $\pi_{0}((x,g))=x$ on the level of objects and we have a  decomposition of $I\mathcal{X}$ as a disjoint union $I\mathcal{X}:=\coprod_{i\in\mathcal{J}}\mathcal{X}_{i}$ for some index set $\mathcal{J}$. We also have a canonical involution $\iota:I\mathcal{X}\to I\mathcal{X}$ with the map $\iota((x,g))=(x,g^{-1})$ on objects. For any $(x,g)\in\mathcal{X}_{i}$, one has a decomposition of tangent space $T_{x}\mathcal{X}=\bigoplus_{0\leq t<r_{i}}U^{(t)}$ where $U^{(t)}$ is an  eigenspace with eigenvalue $\zeta_{r_{i}}^t$, $0\leq t<r_{i}$ and  $\zeta_{r_{i}}=\exp(\frac{2\pi i}{r_{i}})$. Define $\mathrm{age}_{i}:=\frac{1}{r_{i}}\sum_{0\leq t<r_{i}}t\cdot \dim_{\mathbb{C}}U^{(t)}$.
Similarly, for any vector bundle $W$ on $I\mathcal{X}$,  there exists  a decomposition of $W$ as follows \ben
W=\bigoplus_{\zeta}W^{(\zeta)}
\een
 where $W^{(\zeta)}$ is an eigenbundle with the eigenvalue $\zeta$.
\begin{definition}
The map $\rho: K(I\mathcal{X})\to K(I\mathcal{X})_{\mathbb{C}}$ is defined to be
\ben
\rho(W):=\sum_{\zeta}\zeta W^{(\zeta)}\in K(I\mathcal{X})_{\mathbb{C}}.
\een
Define $\widetilde{\mathrm{ch}}: K(\mathcal{X})\to A^*(I\mathcal{X})_{\mathbb{C}}$ to be 
\ben
\widetilde{\mathrm{ch}}(V):=\mathrm{ch}(\rho(\pi_{0}^*V))
\een
where  $\mathrm{ch}$ is the usual Chern character.
\end{definition}
The orbifold or Chen-Ruan cohomology of $\mathcal{X}$ (cf. [\cite{AGV}, Section 7.3]) is defined as
\ben
A^*_{\mathrm{orb}}(\mathcal{X}):=\bigoplus_{i}A^{*-\mathrm{age}_{i}}(\mathcal{X}_{i})
\een
where $\mathrm{age}_{i}$ is the degree shift number. Define orbifold Chern character $\widetilde{\mathrm{ch}}^{\mathrm{orb}}: K(\mathcal{X})\to A^*_{\mathrm{orb}}(\mathcal{X})$ as
\ben
\widetilde{\mathrm{ch}}^{\mathrm{orb}}_{k}\bigg|_{\mathcal{X}_{i}}:=\widetilde{\mathrm{ch}}_{k-\mathrm{age}_{i}}\bigg|_{\mathcal{X}_{i}}.
\een
For any $\gamma\in A^l_{\mathrm{orb}}(\mathcal{X})$, define the operators
\ben
\widetilde{\mathrm{ch}}_{k+2}^{\mathrm{orb}}(\gamma): A_{*}(\overline{M}^s_{\beta})\to A_{*-k+1-l}(\overline{M}^s_{\beta})
\een
to be
\ben
\widetilde{\mathrm{ch}}_{k+2}^{\mathrm{orb}}(\gamma)(\xi):=\pi_{2*}\left(\widetilde{\mathrm{ch}}_{k+2}^{\mathrm{orb}}(\mathbb{F})\cdot \iota^*\pi_{1}^*\gamma\cap \pi_{2}^*\xi\right)
\een
where $\mathbb{F}$ is the universal complex on $\mathcal{X}\times \overline{M}^s_{\beta}$ and  the maps $\pi_{1}$, $\pi_{2}$ are the natural projections from $I\mathcal{X}\times \overline{M}^s_{\beta}$ to the first and second factor respectively. The operator $\widetilde{\mathrm{ch}}_{k+2}^{\mathrm{orb}}(\gamma)$ has the  degree changed as above  due to the identity $\mathrm{age}_{i}+\mathrm{age}_{\iota(i)}=\dim_{\mathbb{C}}\mathcal{X}-\dim_{\mathbb{C}}\mathcal{X}_{i}$ (cf. [\cite{CR2}, Lemma 3.2.1]). To follow the similar definition of Pandharipande-Thomas invariants on nonsingular projective 3-folds in [\cite{PT2}, Section 0.5], we use the notation  $\overline{M}^s_{n,\beta}:=M^{s}_{\mathcal{X}/\mathbb{C}}(\mathcal{O}_{\mathcal{X}},n,\beta,\delta)$ to denote $\overline{M}^s_{\beta}$, which is the moduli space of  orbifold PT  stable pairs  with $[\mathcal{F}]=\beta=(\beta_{1},\beta_{0})\in N_{\leq1}(\mathcal{X})$ and $\chi(F_{\mathcal{E}}(\mathcal{F}))=n$  by Remark \ref{stacky-PT}.
\begin{definition}\label{descen-PT}
Given $\gamma_{i}\in A_{\mathrm{orb}}^*(\mathcal{X})$, $1\leq i\leq r$, define the Pandharipande-Thomas invariants with descendents as
\ben
\bigg\langle\prod_{i=1}^r\tau_{k_{i}}(\gamma_{i})\bigg\rangle_{n,\beta}^{\mathcal{X}}&:=&\int_{[M^{s}_{\mathcal{X}/\mathbb{C}}(\mathcal{O}_{\mathcal{X}},n,\beta,\delta)]^{\mathrm{vir}}}\prod_{i=1}^r\widetilde{\mathrm{ch}}_{k_{i}+2}^{\mathrm{orb}}(\gamma_{i})\\
&=&\int_{M^{s}_{\mathcal{X}/\mathbb{C}}(\mathcal{O}_{\mathcal{X}},n,\beta,\delta)}\prod_{i=1}^r\widetilde{\mathrm{ch}}_{k_{i}+2}^{\mathrm{orb}}(\gamma_{i})([M^{s}_{\mathcal{X}/\mathbb{C}}(\mathcal{O}_{\mathcal{X}},n,\beta,\delta)]^{\mathrm{vir}})
\een
The partition function is defined to be 
\ben
Z_{\mathrm{PT},\beta_{1}}\bigg(\prod_{i=1}^r\tau_{k_{i}}(\gamma_{i})\bigg):=\sum_{n}\bigg\langle\prod_{i=1}^r\tau_{k_{i}}(\gamma_{i})\bigg\rangle_{n,\beta}^{\mathcal{X}}q^n
\een
\end{definition}
\begin{remark} 
In the proof of Lemma \ref{case2-bound}, we have 
a short exact sequence $0\to F_{\mathcal{E}}(\mathrm{im}\varphi)\to F_{\mathcal{E}}(\mathcal{F})\to F_{\mathcal{E}}(\mathrm{coker}\,\varphi)\to0$,  where
$\mathrm{coker}\,\varphi$ and $F_{\mathcal{E}}(\mathrm{coker}\,\varphi)$ are 0-dimensional. Then  we have 
\ben
n=\chi(F_{\mathcal{E}}(\mathcal{F}))=\chi(F_{\mathcal{E}}(\mathrm{im}\varphi))+\chi(F_{\mathcal{E}}(\mathrm{coker}\,\varphi))\geq\chi(F_{\mathcal{E}}(\mathrm{im}\varphi)).
\een
For a fixed $\beta_{1}$, we have no idea whether $\overline{M}^s_{n,\beta}$ is empty for $n$ very negative. If one can show the boundedness of $\chi(F_{\mathcal{E}}(\mathrm{im}\varphi))$ from below, then $Z_{\mathrm{PT},\beta_{1}}\big(\prod_{i=1}^r\tau_{k_{i}}(\gamma_{i})\big)$ is a Laurent series in $q$ and hence one can further study its rationality   as the one of PT stable pair invariants (e.g., [\cite{Toda,Bri2,PP3,PP4,PP5}]). Alternatively, one can define the partition function as in  [\cite{BCR}].
\end{remark}
 To conclude this section, we consider some special cases as follows.
If $\mathcal{X}$ is  a  3-dimensional smooth projective Deligne-Mumford stack over $\mathbb{C}$ satisfying $\omega_{\mathcal{X}}\cong\mathcal{O}_{\mathcal{X}}$. In this case, using Serre duality for Deligne-Mumford stacks, we have two isomorphisms
\ben
E^\bullet\xrightarrow{\theta} E^{\bullet\vee}[1];\;\;\;\;E^\bullet_{\beta}\xrightarrow{\theta_{\beta}} E_{\beta}^{\bullet\vee}[1]
\een
satisfying  $\theta^\vee[1]=\theta$ and $\theta_{\beta}^\vee[1]=\theta_{\beta}$. This shows that two perfect obstruction theories are symmetric in the sense of [\cite{Beh}]. Then we have $\mathrm{rk}(E^\bullet)=\mathrm{rk}(E^\bullet_{\beta})=0$. Then $[M^s_{\mathcal{X}/\mathbb{C}}(\mathcal{O}_{\mathcal{X}},P,\delta)]^{\mathrm{vir}}$ and $[M^s_{\mathcal{X}/\mathbb{C}}(\mathcal{O}_{\mathcal{X}},\beta,\delta)]^{\mathrm{vir}}$ are  0-cycles. Let $\nu_{\overline{M}^s}$ and $\nu_{\overline{M}^s_{\beta}}$ be the Behrend's  constructible functions in [\cite{Beh}] on $M^s_{\mathcal{X}/\mathbb{C}}(\mathcal{O}_{\mathcal{X}},P,\delta)$ and $M^{s}_{\mathcal{X}/\mathbb{C}}(\mathcal{O}_{\mathcal{X}},\beta,\delta)$ respectively. Since $M^s_{\mathcal{X}/\mathbb{C}}(\mathcal{O}_{\mathcal{X}},P,\delta)$ and $M^{s}_{\mathcal{X}/\mathbb{C}}(\mathcal{O}_{\mathcal{X}},\beta,\delta)$ are proper, by [\cite{Beh}, Theorem 4.18], we have the following 
\begin{definition}\label{CY-PT1}
Let $\mathcal{X}$ be a smooth projective Deligne-Mumford stack of dimension 3 satisfying $\omega_{\mathcal{X}}\cong\mathcal{O}_{\mathcal{X}}$. We define Pandharipande-Thomas invariants of $\mathcal{X}$ corresponding to $P$ and $\beta$  as follows
\ben
&&\mathrm{PT}(\mathcal{O}_{\mathcal{X}},P,\delta):=\chi(M^s_{\mathcal{X}/\mathbb{C}}(\mathcal{O}_{\mathcal{X}},P,\delta),\nu_{\overline{M}^s})=\deg([M^s_{\mathcal{X}/\mathbb{C}}(\mathcal{O}_{\mathcal{X}},P,\delta)]^{\mathrm{vir}}),\\
&&\mathrm{PT}(\mathcal{O}_{\mathcal{X}},\beta,\delta):=\chi(M^s_{\mathcal{X}/\mathbb{C}}(\mathcal{O}_{\mathcal{X}},\beta,\delta),\nu_{\overline{M}^s_{\beta}})=\deg([M^s_{\mathcal{X}/\mathbb{C}}(\mathcal{O}_{\mathcal{X}},\beta,\delta)]^{\mathrm{vir}}).
\een
\end{definition}
\begin{remark}\label{CY-PT2}
When $\mathcal{X}$ is a 3-dimensional Calabi-Yau orbifold, the invariant $\mathrm{PT}(\mathcal{O}_{\mathcal{X}},\beta,\delta)$ in the above definition is corresponding to $\mathrm{PT}(\mathcal{X})_{\beta}$ defined in [\cite{BCR}, Section 1 (1.4)].
\end{remark}


\begin{thebibliography}{999}
\bibitem{ACV} D. Abramovich, A. Corti, A. Vistoli, {\em Twisted bundles and admissible covers},  Comm. Algebra 31 (2003) 3547--3618.
	
\bibitem{AGV} D. Abramovich, T. Graber, A. Vistoli, {\em Gromov-Witten theory of Deligne-Mumford stacks},  Amer. J. Math. 130 (2008) 1337--1398.	
	
\bibitem{AOV} D. Abramovich, M. Olsson and A. Vistoli, {\em Tame stacks in positive characteristic}, Ann. Inst. Fourier (Grenoble) 58 (2008), no. 4, 1057--1091.

\bibitem{AV} D. Abramovich and A. Vistoli, {\em Compactifying the space of stable maps}, J. Amer. Math. Soc. 15 (2002), no. 1, 27--75.

\bibitem{Alper} J. Alper, {\em Good moduli spaces for Artin stacks}, Ann. Inst. Fourier (Grenoble) 63 (6) (2013), 2349--2402.

\bibitem{BBR} C. Bartocci, U. Bruzzo, D. Hernández Ruipérez, {\em Fourier-Mukai and Nahm Transforms in Geometry and Mathematical Physics},  Progr. Math., vol. 276, Birkhäuser, 2009.

\bibitem{BCR} S. V. Beentjes, J. Calabres,  J. V. Rennemo, {\em A proof of the Donaldson-Thomas crepant resolution conjecture}, Invent. Math. 229 (2022),  451--562.

\bibitem{Beh} K. Behrend, {\em Donaldson-Thomas type invariants via microlocal geometry},  Ann. of  Math. (2) 170 (2009), no. 3, 1307--1338.

\bibitem{BF} K. Behrend and B. Fantechi, {\em The intrinsic normal cone}, Invent.
Math. 128(1):45--88, 1997.

%\bibitem{Bri1}  T. Bridgeland, {\em Stability conditions on triangulated categories},   Ann. of Math. (2), Vol. 166, pp. 317--345, 2007.

\bibitem{Bri2}  T. Bridgeland, {\em Hall algebras and curve-counting invariants},   J. Amer. Math. Soc. 24 (2011), no. 4, 969--998.

\bibitem{Bri3}  T. Bridgeland, {\em An introduction to motivic Hall algebras},   Adv. Math. 229 (2012), no. 1, 102--138.

\bibitem{BS} U. Bruzzo, F. Sala, {\em Framed sheaves on projective stacks},  (with an appendix by M. Pedrini), Adv. Math. 272 (2015) 20--95.

\bibitem{BCY} J. Bryan, C. Cadman and B. Young, {\em The orbifold topological vertex},  Adv. Math. 229 (2012), no. 1, 531--595.


\bibitem{BP} J. Bryan and R. Pandharipande, {\em The local Gromov-Witten theory of curves}, J. Amer. Math. Soc. 21 (2008), no. 1, 101--136.


\bibitem{CR1} W. Chen, Y Ruan, {\em Orbifold Gromov-Witten theory},  Orbifolds in Mathematics and Physics
(Madison, WI, 2001), Contemp. Math., vol. 310, Amer. Math. Soc, Providence, RI, 2002, 25--85.



\bibitem{CR2} W. Chen, Y Ruan, {\em A new cohomology theory of orbifold},  Comm. Math. Phys. 248 (2004) 1--31.


\bibitem{CG} N. Chriss and V. Ginzburg, {\em Representation theory and complex geometry},  Birkh$\ddot{a}$user Boston, Inc., Boston, MA, 1997.

\bibitem{DM} P. Deligne and D. Mumford, {\em The irreducibility of the space of curves of given genus},   Inst. Hautes $\acute{\mathrm{E}}$tudes Sci. Publ. Math. 36 (1969), 75--109.

\bibitem{EHKV} D. Edidin, B. Hassett, A. Kresch, A. Vistoli, {\em Brauer groups and quotient stacks},  Amer. J. Math. 123 (2001) 761--777.

\bibitem{GT} A. Gholampour and H.-H. Tseng, {\em On Donaldson-Thomas invariants of threefold stacks and gerbes},  Proc. Amer. Math. Soc. 141 (2013), no. 1, 191--203.


\bibitem{Hall} J. Hall, {\em Cohomology and base change for algebraic stacks},  Math. Z. 278 (2014), 401--429.

\bibitem{HL1} D. Huybrechts and M. Lehn, {\em Stable pairs on curves and surfaces}, J. Algebraic Geom. 4 (1995) 67--104.

\bibitem{HL2} D. Huybrechts and M. Lehn, {\em Framed modules and their moduli},  Internat. J. Math. 6 (1995) 297--324.

\bibitem{HL3} D. Huybrechts and M. Lehn, {\em The geometry of moduli spaces of sheaves}, 2nd ed., Cambridge Mathematical Library, Cambridge Univ. Press, Cambridge, 2010.



\bibitem{HT09} D. Huybrechts and R. P. Thomas, {\em Deformation-obstruction theory for complexes via Atiyah and Kodaira-Spencer classes}, Math. Ann. 346 (2010), no. 3, 545--569.

\bibitem{Illu} L. Illusie, {\em Complexe cotangent et déformations I},  Lecture Notes Math., vol. 239. Springer, Heidelberg (1971).

\bibitem{Inaba} Michi-aki Inaba, {\em Toward a definition of moduli of complexes of coherent sheaves on a projective scheme},  J. Math. Kyoto Univ., 42(2):317--329, 2002.

%\bibitem{Joyce} D. Joyce, {\em Configurations in abelian categories IV. Invariants and changing stability conditions},  Advances in Math., Vol. 217, pp. 125--204, 2008.


\bibitem{KM} S. Keel, S. Mori, {\em Quotients by groupoids},  Ann. of Math. (2) 145 (1997) 193--213.

\bibitem{Kle} S. Kleiman, {\em Les théorèmes de finitude pour le foncteur de Picard},   in: Théorie des intersections et théorème de Riemann-Roch, in: Lecture Notes in Math., vol. 225, Springer-Verlag, Berlin, 1971, Séminaire de Géométrie Algébrique du Bois-Marie 1966--1967 (SGA 6).

\bibitem{Kre} A. Kresch, {\em On the geometry of Deligne-Mumford stacks}, in: Algebraic Geometry. Part 1, Seattle, 2005, in: Proc. Sympos. Pure Math., vol. 80, American Mathematical Society, Providence, RI, 2009, 259--271.


%\bibitem{Lan1} Adrian Langer, {\em Semistable sheaves in positive characteristic}, 
%Ann. of Math. (2) 159 (2004), no. 1, 251--276.


\bibitem{LMB} G. Laumon, L. Moret-Bailly, {\em Champs algébriques},  Ergeb. Math. Grenzgeb. (3), vol. 39, Springer-Verlag, Berlin, 2000.

\bibitem{LeP} J. Le Potier, {\em Syst$\grave{e}$mes coh$\acute{e}$rents et structures de niveau},  Ast$\acute{e}$risque 214, 143 (1993)

\bibitem{LW} J. Li and B. Wu, {\em Good degeneration of Quot-schemes and coherent systems},  Comm. Anal. Geom. 23 (2015), no. 4, 841--921.

\bibitem{LT} J. Li and G. Tian, {\em Virtual moduli cycles and Gromov-Witten invariants of algebraic varieties},  J. Amer. Math. Soc., 11(1):119--174, 1998.

\bibitem{Lieb} M. Lieblich, {\em Moduli of complexes on a proper morphism},  J. Alg. Geom. 15, 175--206 (2006).


\bibitem{Lin18} Yinbang Lin, {\em Moduli spaces of stable pairs},  Pacific J. Math., 294(1):123--158, 2018.

\bibitem{MNOP1} D. Maulik, N. Nekrasov, A. Okounkov and R. Pandharipande, {\em Gromov-Witten theory and Donaldson-Thomas theory, I},  Compos. Math. 142 (2006), no. 5, 1263--1285.

\bibitem{MNOP2} D. Maulik, N. Nekrasov, A. Okounkov and R. Pandharipande, {\em Gromov-Witten theory and Donaldson-Thomas theory, II},  Compos. Math. 142 (2006), no. 5, 1286--1304.

\bibitem{MO} D. Maulik and A. Oblomkov, {\em Donaldson-Thomas theory of $A_{n}\times\mathbb{P}^1$},  Compos. Math. 145 (2009), no. 5,  1249--1276.

\bibitem{MOOP} D. Maulik, A. Oblomkov, A. Okounkov, R. Pandharipande, {\em Gromov-Witten/Donaldson-Thomas correspondence for toric 3-folds},   Invent. Math. 186, (2011), no. 2, 435--479.


\bibitem{MFK} D. Mumford, J. Fogarty, F. Kirwan, {\em Geometric Invariant Theory},  third edition, Ergebnisse der Mathematik und ihrer Grenzgebiete (2) 34, Springer-Verlag, Berlin 1994.

\bibitem{MPP} D. Maulik, R. Pandharipande, and R. Thomas, {\em Curves on K3 surfaces and modular forms}, (with an
appendix by A. Pixton), J. Topol. 3, (2010), no. 4, 937--996.

\bibitem{Nir1} F. Nironi, {\em Moduli spaces of semistable sheaves on projective Deligne–Mumford stacks}, 2008, arXiv:0811.1949.

\bibitem{Nir2} F. Nironi, {\em Grothendieck duality for Deligne-Mumford stacks},  2008, arXiv:0811.1955.

\bibitem{OOP1} A. Oblomkov, A. Okounkov and R. Pandharipande, {\em GW/PT descendent correspondence via vertex operators},  Communications in Mathematical Physics. 2020. no. 374. 1321--1359.

\bibitem{OP1} A. Okounkov and R. Pandharipande, {\em The local Donaldson-Thomas theory of curves},  Geom. Topol. 14 (2010), no. 3, 1503--1567.

\bibitem{OS03} M. Olsson and J. Starr, {\em Quot functors for Deligne-Mumford stacks},  Comm. Algebra 31 (2003), no. 8, 4069--4096.

\bibitem{PP3} R. Pandharipande and A. Pixton, {\em Descendents on local curves: Rationality},  Comp. Math. 149 (2013), 81--124.

\bibitem{PP4} R. Pandharipande and A. Pixton, {\em Descendents on local curves: Stationary theory},   in Geometry and arithmetic, 283--307, EMS Ser. Congr. Rep., Eur. Math. Soc., Z$\mathrm{\ddot{u}}$rich, 2012.

\bibitem{PP5} R. Pandharipande and A. Pixton, {\em Descendent theory for stable pairs on toric 3-folds},  Jour. Math. Soc. Japan. 65 (2013), 1337--1372.

\bibitem{PP1} R. Pandharipande and A. Pixton, {\em Gromov-Witten/Pairs descendent correspondence for toric 3-folds}, Geom. Topol. 18 (2014), 2747--2821.

\bibitem{PP2} R. Pandharipande and A. Pixton, {\em Gromov-Witten/Pairs correspondence for the quintic}, JAMS 30 (2017), 389--449.



\bibitem{PT1} R. Pandharipande and R. P. Thomas, {\em Curve counting via stable pairs in the derived category}, Invent. Math. 178 (2009), no. 2, 407--447.


\bibitem{PT2} R. Pandharipande and R. P. Thomas, {\em The 3-fold vertex via stable pairs}, Geom. Topol. 13, 1835--1876 (2009).

\bibitem{Rom} M. Romagny, {\em Group actions on stacks and applications},   Michigan Math. J. 53 (2005) 209--236.

\bibitem{Ros} D. Ross, {\em On the Gromov-Witten/Donaldson-Thomas correspondence and Ruan’s conjecture for Calabi-Yau 3-orbifolds},   Comm. Math. Phys. 340 (2015), no. 2, 851--864.

\bibitem{RZ1} D. Ross and Z. Zong, {\em The gerby Gopakumar-Mari$\tilde{n}$o-Vafa formula},  Geom. Topol. 17 (2013), no. 5, 2935--2976.


\bibitem{RZ2} D. Ross and Z. Zong, {\em Cyclic Hodge integrals and loop Schur functions},   Adv. Math. 285 (2015), 1448--1486.


\bibitem{Ses} C. S. Seshadri, {\em Geometric reductivity over arbitrary base}, Advances in Math. 26:3 (1977), 225--274.

\bibitem{Sha} S. S. Shatz, {\em The decomposition and specialization of algebraic families of vector bundles},  Compositio Math. 35:2 (1977), 163--187.

\bibitem{She} A. Sheshmani, {\em Higher rank stable pairs and virtual localization}, Commun.  Anal.  Geom., 24 (2016), no. 1, 139--193.

\bibitem{ST} J. Stoppa and R. P. Thomas, {\em Hilbert schemes and stable pairs: GIT and derived category wall crossings}, Bull.
Soc. Math. Fr. 139(3), 297--339 (2011).

\bibitem{Tho} R. P. Thomas, {\em A holomorphic Casson invariant for Calabi-Yau 3-folds, and bundles on K3 fibrations}, J. Differential Geom. 54 (2000), no. 2, 367--438.

\bibitem{Toda} Y. Toda, {\em Curve counting theories via stable objects I. DT/PT correspondence},  J. Amer. Math. Soc. 23 (2010), no. 4, 1119--1157.


\bibitem{Tse} H.-H. Tseng, {\em Orbifold quantum Riemann-Roch, Lefschetz and Serre},  Geom. Topol. 14 (2010), no. 1, 1--81.


\bibitem{Vis} A. Vistoli, {\em Intersection theory on algebraic stacks and on their moduli spaces}, Invent. Math. 97 (1989), no. 3, 613--670.

\bibitem{Wan} M. Wandel, {\em Moduli spaces of semistable pairs in Donaldson-Thomas theory}, Manuscripta Math., 147(3-4):477--500, 2015.



\bibitem{Zhou1} Z. Zhou, {\em Relative orbifold Donaldson-Thomas theory and the degeneration formula}, J. Algebraic Gemo. 5 (4) (2018), 464--522.



\bibitem{Zhou2} Z. Zhou, {\em Donaldson-Thomas theory of $[\mathbb{C}^2/\mathbb{Z}_{n+1}]\times\mathbb{P}^1$},  Selecta Math. (N.S.) 24 (2018), no. 4, 3663--3722.

\bibitem{ZZ} Z. Zhou and Z. Zong, {\em Gromov-Witten theory of $[\mathbb{C}^2/\mathbb{Z}_{n+1}]\times\mathbb{P}^1$}, Algebra Number Theory 16 (2022), no. 1, 1--58.

\bibitem{Zong1} Z. Zong, {\em Generalized Mari$\tilde{n}$o-Vafa formula and local Gromov-Witten theory of orbi-curves},  J. Differential Geom. 100 (2015), no. 1, 161--190.





\end{thebibliography}
\end{document}